\documentclass[11pt, letterpaper]{amsart}
\usepackage[left=1in,right=1in,bottom=1in,top=1in]{geometry}
\usepackage{amsmath,amsthm,amssymb}
\usepackage{calligra,mathrsfs}
\usepackage[all,cmtip]{xy}
\usepackage{cite}
\usepackage{hyperref}
\usepackage{enumerate}
\usepackage{graphicx}
\usepackage{enumitem} \setlist[enumerate]{label={\upshape(\roman*)}}

\newcommand{\on}[1]{\operatorname{#1}}
%bold
\newcommand{\mathfont}{\mathbf}

\newcommand{\ZZ}{\mathfont Z}

\newcommand{\NN} {\mathfont N}
\newcommand{\FF}{\mathfont F}

\newcommand{\PP}{\mathfont{P}}

%fraktur

%script
\DeclareFontFamily{OT1}{rsfs}{}
\DeclareFontShape{OT1}{rsfs}{n}{it}{<-> rsfs10}{}
\DeclareMathAlphabet{\mathscr}{OT1}{rsfs}{n}{it}

\newcommand{\Fscr}{\mathscr{F}}
\newcommand{\Gscr}{\mathscr{G}}

\newcommand{\Lscr}{\mathscr{L}}

%\newcommand{\wh}[1]{\widehat{#1}}

%algebra

\newcommand{\Hom}{\on{Hom}}

\newcommand \tensor[1] {\otimes_{#1}}
\renewcommand{\Im}{\on{Im}}

\newcommand{\ord}{\on{ord}}

\newcommand{\into}{\hookrightarrow}

\DeclareMathOperator{\nil}{nil}
\DeclareMathOperator{\bij}{bij}

%number theory

\newcommand{\D}{\mathbf{D}}
\renewcommand{\O}{\mathcal{O}}

%analysis

%algebraic geometry
\newcommand{\Spec}{\on{Spec}}

%algebraic groups/lie algebras

%\newcommand{\PSL}{\on{PSL}}

%\newcommand{\sp}{\mathfrak{sp}} 

\newcommand{\Ga}{\mathfont{G}_a}

\newcommand{\gr}{\on{gr}}

%theorems
\theoremstyle{plain}
\newtheorem{lem}{Lemma}
\newtheorem{thm}[lem]{Theorem}
\newtheorem{prop}[lem]{Proposition}
\newtheorem{cor}[lem]{Corollary}

\newtheorem{fact}[lem]{Fact}

\theoremstyle{definition}
\newtheorem{defn}[lem]{Definition}
\newtheorem{example}[lem]{Example}
\newtheorem{remark}[lem]{Remark}

\numberwithin{equation}{section}
\numberwithin{lem}{section}

%Custom Commands
\newcommand{\spn}{\on{span}}
\newcommand{\HOM}{\underline{\Hom}}
\newcommand{\Ebar}{\overline{E}}
\newcommand{\coef}{\on{coef}}
    
\renewcommand*{\L}{\mathscr{L}}

\newcommand{\n}{\on{n}} %tango number

\title{$a$-Numbers of Curves in Artin-Schreier Covers}
\author{Jeremy Booher and Bryden Cais}
\date{\today}

\email{jeremybooher@math.arizona.edu}
\address{Department of Mathematics\\
  University of Arizona\\
  Tucson, Arizona 85721}

\email{cais@math.arizona.edu}
\address{Department of Mathematics\\
  University of Arizona\\
  Tucson, Arizona 85721}

%for compatibility with journal numbering

\begin{document} 

\CompileMatrices
\UseTips

\begin{abstract}
 Let $\pi : Y \to X$ be a branched $\mathbf{Z}/p \mathbf{Z}$-cover of smooth, projective, geometrically connected curves over a perfect field of characteristic $p>0$.  We investigate the relationship between the $a$-numbers of $Y$ and $X$ and the ramification of the map $\pi$.  This is analogous to the relationship between the genus (respectively $p$-rank) of $Y$ and $X$ given the Riemann-Hurwitz (respectively Deuring--Shafarevich) formula.  Except in special situations, the $a$-number of $Y$ is not determined by the $a$-number of $X$ and the ramification of the cover, so we instead give bounds on the $a$-number of $Y$.  We provide examples showing our bounds are sharp.  The bounds come from a detailed analysis of the kernel of the Cartier operator.
 \end{abstract}
 
\keywords{arithmetic geometry, Artin-Schreier covers, $a$-numbers, invariants of curves}
\subjclass[2010]{14G17 14H40 11G20} 

\maketitle

\section{Introduction}

Let $k$ be a field and 
$\pi:Y\rightarrow X$ a finite morphism of smooth, projective, and geometrically connected curves
over $k$ that is generically Galois with group $G$.  The most fundamental numerical invariant of a curve
is its genus, and the famous {\em Riemann--Hurwitz formula} says that the genus of $Y$
is determined by that of $X$ and the ramification of the cover $\pi$: letting $S \subset X(\overline{k})$ denote the branch locus, 
\begin{equation}
    2g_Y-2 = |G| \cdot (2g_X-2) + \sum_{y\in \pi^{-1}(S)} \sum_{i\ge 0} (|G_i(y)| - 1).\label{RH}
\end{equation}
Here $G_i(y)\leqslant G$ is the $i$-th ramification
group (in the lower numbering) at $y$.

When $k$ is perfect of characteristic $p>0$, which we will assume henceforth, there are 
important numerical invariants of curves beyond the genus coming from the existence of the Frobenius
morphism.  Writing $\sigma$ for the $p$-power Frobenius automorphism of $k$,  the {\em Cartier operator} is a $\sigma^{-1}$-semilinear map $ V: H^0(X,\Omega^1_{X/k})\rightarrow H^0(X,\Omega^1_{X/k})$ which is dual to the pullback by absolute Frobenius on $H^1(X,\O_X)$ using Grothendieck--Serre duality.

The Cartier operator gives 
the $k$-vector space of holomorphic differentials on  $X$ the structure of 
a (left) module of finite length over the (non-commutative in general) polynomial ring $k[V]$.  Fitting's Lemma
provides a canonical direct sum decomposition of $k[V]$-modules
\begin{equation*}
    H^0(X,\Omega^1_{X/k}) = H^0(X,\Omega^1_{X/k})^{\bij} \oplus H^0(X,\Omega^1_{X/k})^{\nil}
\end{equation*}
with $V$ bijective (respectively nilpotent) on $H^0(X,\Omega^1_{X/k})^{\star}$ for $\star=\bij$ (respectively
$\star=\nil$).
Let us write $f_X$ for the $k$-dimension of $H^0(X,\Omega^1_{X/k})^{\bij}$; this integer is 
called the {\em $p$-rank} of $X$, or more properly of the Jacobian $J_X$ of $X$, 
since one also has the description $f_X = \dim_{\FF_p} \Hom(\mu_p,J_X[p])$.
When $\pi:Y\rightarrow X$ is a branched $G$-cover with $G$ a $p$-group,
the {\em Deuring--Shafarevich formula} relates the $p$-ranks of $X$ and $Y$:
\begin{equation}
    f_Y - 1 = |G|\cdot (f_X - 1) + \sum_{y\in \pi^{-1}(S)} (|G_0(y)|-1)\label{DS}
\end{equation}
Like the Riemann--Hurwitz formula, (\ref{DS}) says that the numerical invariant $f_Y$
of $Y$ is determined by $f_X$ and the ramification of $\pi$; unlike the Riemann--Hurwitz formula,
it {\em only} applies when $G$ has $p$-power order, and requires only limited information about the ramification filtration.  As Crew points out in \cite[Remark 1.8.1]{crew84}, there can be no 
version of the Deuring--Shafarevich formula if $G$ is not assumed to be a $p$-group, 
since (for example) if $p> 2$ any elliptic curve $E$ over $k$ is a $\ZZ/2\ZZ$-cover of the projective line
branched at exactly 4 points (necessarily with ramification degree 2), but $f_E$ can be 0 or 1,
so that $f_E$ is {\em not} determined by $f_{\PP^1}=0$ and the ramification of $\pi: E\rightarrow \PP^1$.
Of course, thanks to the solvability of $p$-groups, the essential case of (\ref{DS})
is when $G=\ZZ/p\ZZ$.

Since the $k$-dimension $\delta_X$ of the nilpotent part $H^0(X,\Omega^1_{X/k})^{\nil}$
satisfies $\delta_X= g_X - f_X$, together the Riemann--Hurwitz and Deuring--Shafarevich formulae
provide a similar formula relating $\delta_{X}$, $\delta_{Y}$, and the (wild) ramification of $\pi$ for any $p$-group branched cover $\pi: Y\rightarrow X$.
Beyond this fact, very little seems to be understood about the behavior of the nilpotent part
in $p$-group covers.  

In this paper, we will study the behavior of the {\em $a$-number} of curves in 
branched $\ZZ/p\ZZ$-covers $\pi:Y\rightarrow X$.  By definition, the $a$-number
of a curve $C$ is
\begin{equation}
    a_{C}:=\dim_k \ker \left(V: H^0(C,\Omega^1_{C/k})\rightarrow H^0(C,\Omega^1_{C/k})\right).
\end{equation}
Equivalently, $a_{C}$ is the number of nonzero cyclic direct summands in the invariant factor
decomposition of $H^0(C,\Omega^1_{C/k})^{\nil}$ as a $k[V]$-module.  Yet a third interpretation
is $a_{C}=\dim_{k} \Hom(\alpha_p, J_C[p])$, where $\alpha_p$ denotes the group-scheme $\ker(F: \Ga\rightarrow \Ga)$ over $k$.\footnote{The equivalence of
this description with the given definition follows from Dieudonn\'e theory and theorem of Oda \cite[Corollary 5.11]{Oda},
which provides a canonical isomorphism of $k[V]$-modules $H^0(C,\Omega^1_{C/k})\simeq k\otimes_{k,\sigma^{-1}} \D(J_C[F])$, where $\D(\cdot)$ is the contravariant Dieudonn\'e
module.}
A curve $X$ is said to be \emph{ordinary} if $a_X = 0$.  For elliptic curves, the $a$-number is $0$ or $1$ depending on whether the curve is ordinary or supersingular in the standard senses.

Although this fundamental numerical invariant of curves in positive characteristic has been extensively studied
({\em e.g.} \cite{CMHyper},\cite{Fermat},\cite{ReBound},\cite{ElkinPriesanum1},\cite{Johnston},\cite{ElkinCyclic},\cite{Suzuki},\cite{Dummigan}, \cite{FermatHurwitz},\cite{Frei} \cite{zhou19}%\cite{Yui}\cite{MillerInvertible}
%Frei,Johnston,ElkinPriesanum1,Yui,MillerInvertible,Fermat,%SullivanKummer,
%ReBound,ElkinCyclic,Suzuki,Dummigan,CMHyper,FermatHurwitz,
), it remains rather mysterious.  
When $p=2$ and $X$ is ordinary, Voloch \cite{VolochChar2} establishes
an explicit formula for $a_Y$ in terms of the ramification of $\pi$ and the genus of $X$.
If in addition $X=\PP^1$, Elkin and Pries \cite{ElkinPriesChar2} show that 
this data completely determines the Ekedahl-Oort type of $J_Y[p]$;
%and in particular the
%$k[V]$-module structure of $H^0(Y,\Omega^1_{Y/k})$; 
note that this situation is quite special, 
as every Artin-Schreier cover of $\PP^1$ in characteristic 2 is hyperelliptic.
For general $p$, Farnell and Pries  \cite{fp13} study branched $\ZZ/p\ZZ$-covers $\pi: Y\rightarrow \PP^1$,
and prove that there is an explicit formula for $a_Y$ in terms of the ramification of $\pi$
whenever the unique break in the ramification filtration at every ramified point is a divisor of $p-1$. 
Unfortunately, there can be no such ``$a$-number formula"
in the spirit of (\ref{DS}) in general: simple examples with $p>2$ show that there are $\ZZ/p\ZZ$-covers 
even of $X=\PP^1$ branched only at $\infty$ which have {\em identical} ramification filtrations,
but different $a$-numbers; {\em cf}. Example~\ref{ex:selectpolys}.

Nonetheless, we will prove that the possibilities for the $a$-number of
$Y$ are tightly constrained by the $a$-number of $X$ and the ramification of $\pi$:

\begin{thm}\label{thm:main}
       Let $\pi:Y\rightarrow X$ be a finite morphism of smooth, projective and geometrically connected curves
       over a perfect field $k$ of characteristic $p>0$ that is 
       generically Galois with group $\ZZ/p\ZZ$.
       Let $S\subseteq X(\overline{k})$ be the finite set of geometric closed points over which $\pi$
       ramifies, and for $Q\in S$ let $d_Q$ be the unique break in the lower-numbering ramification filtration
    at the unique point of $Y$ over $Q$.  Then for any $1\le j \le p-1$,
    \[
       \sum_{Q\in S} \sum_{i=j}^{p-1} \left( \left\lfloor\frac{id_Q}{p}\right\rfloor - \left\lfloor\frac{id_Q}{p} - \left(1-\frac{1}{p}\right)\frac{jd_Q}{p}\right\rfloor\right)
       \le a_Y \le  pa_X + \sum_{Q\in S} \sum_{i=1}^{p-1} \left( \left\lfloor\frac{id_Q}{p}\right\rfloor - (p-i) \left\lfloor\frac{id_Q}{p^2}\right\rfloor  \right).
    \]
\end{thm}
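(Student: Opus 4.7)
The plan is to analyze the Cartier operator on $H^0(Y,\Omega^1_{Y/k})$ by exploiting the Artin--Schreier structure of $\pi$ together with a local analysis at each ramified point. After a standard normalization, write the cover as $y^p - y = f$ where $f\in k(X)$ has pole divisor $\sum_{Q\in S} d_Q\cdot Q$ with $\gcd(p, d_Q)=1$, and let $\sigma$ be the generator of $\ZZ/p\ZZ$ with $\sigma(y)=y+1$.  I would introduce the increasing filtration
\[
0 = M_{-1} \subset M_0 \subset M_1 \subset \cdots \subset M_{p-1} = H^0(Y,\Omega^1_{Y/k}), \qquad M_j = \ker\bigl((\sigma-1)^{j+1}\bigr).
\]
Because $\sigma$ is a $k$-automorphism of $Y$, the Cartier operator $V$ commutes with $\sigma$ and hence preserves $M_\bullet$.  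The bottom piece $M_0 = \pi^*H^0(X,\Omega^1_{X/k})$ is the space of $\sigma$-invariant differentials, on which $V$ is identified with $V_X$; the higher graded pieces $M_j/M_{j-1}$ admit descriptions, via multiplication by $y^j$ on the generic fibre, as spaces of sections of line bundles on $X$ built from the ramification divisor, with dimensions computable by Riemann--Roch.

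Next I would carry out the core local computation.  Working in the complete local ring at a point of $Y$ above $Q\in S$, the relation $y^p = y + f$ allows one to rewrite any power series in $y$ and a local parameter modulo $M_{j-1}$ in a normalized form, and then to apply explicit formulas for $V$ (essentially the coefficient-extraction formulas for Cartier on power series).  The outcome is, for each $j$, a local ``symbol'' map $M_j/M_{j-1}\to W_j$ compatible with $V$, whose target $W_j$ is a finite-dimensional space indexed by pole multiplicities at the $Q\in S$, and whose kernel counts are exactly given by the floor-function expressions appearing in the theorem.

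The upper bound follows by observing that $\dim_k\ker V_Y \le \sum_{j=0}^{p-1}\dim_k\ker(V\text{ on }M_j/M_{j-1})$.  On $M_0/M_{-1}$ the kernel contributes $a_X$.  For $j\geq 1$, the $y^j$-identification relates $V$ on $M_j/M_{j-1}$ to the Cartier operator on $X$ twisted by a line bundle plus a local contribution; the line-bundle part contributes at most $a_X$ through a ``ghost'' copy arising from the $p$-fold nature of Artin--Schreier theory, accumulating the remaining $(p-1)a_X$ beyond $M_0$, while the local parts produce the ramification sum $\sum_Q\sum_{i=1}^{p-1}\bigl(\lfloor id_Q/p\rfloor - (p-i)\lfloor id_Q/p^2\rfloor\bigr)$.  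For the lower bound, I would exhibit, for each $j$ in the range $1\le j\le p-1$ and each $Q\in S$, explicit elements of $\ker V$ of the form $y^i\cdot\omega_{Q,i}$ with $j\leq i\leq p-1$, where $\omega_{Q,i}$ is constructed so that its local expansion makes $V$ vanish by direct inspection.  Independence modulo lower filtration pieces is built into the construction, and the count of such elements is precisely the claimed floor-function sum.

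The main obstacle will be the delicate bookkeeping in the local Cartier computation.  One must track, simultaneously, (i) the $y$-adic expansion of a differential modulo the Artin--Schreier relation, (ii) its $t$-adic expansion at each ramified point, and (iii) the effect of $V$ on these expansions, while correctly identifying how $V$ can lower the $(\sigma-1)$-filtration and how the ``pullback from $X$'' contributions multiply to produce the factor $pa_X$.  The precise floor-function expressions are dictated by this interaction, and their appearance in both bounds reflects the dimension counts of the kernel and image of the local symbol map constructed above.
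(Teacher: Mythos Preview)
Your filtration $M_j=\ker((\sigma-1)^{j+1})$ is indeed preserved by $V_Y$, and the inequality $\dim_k\ker V_Y\le\sum_j\dim_k\ker(\mathrm{gr}^j V_Y)$ is valid; but it is strictly \emph{weaker} than the upper bound claimed, so your argument cannot reach the stated conclusion.  Under the identification $M_j/M_{j-1}\simeq H^0(X,\Omega^1_X(E_j))$ the induced map is just $V_X$, so your bound is $\sum_j\delta(H^0(X,\Omega^1_X(E_j)))$.  For $X=\PP^1$, $p=5$, a single branch point with $d_Q=32$, this sum is $20+16+10+5+0=51$, whereas the theorem asserts $a_Y\le 41$.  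The missing piece is the \emph{cross-level} constraints: by Corollary~\ref{cor:determined}, if $V_Y(\sum_i\omega_iy^i)=0$ then $V_X(\omega_j)$ is forced by the $\omega_i$ with $i>j$, and the resulting $\omega_j$ may violate the pole bound $\ord_Q(\omega_j)\ge -n_{Q,j}$ needed for regularity on $Y$.  These constraints depend on data from $M_i/M_{i-1}$ for $i>j$, so no ``symbol map'' out of a single graded piece $M_j/M_{j-1}$ can see them.  The paper's sheaves $\Gscr_j$, the maps $g'_{j+1}$ of Theorem~\ref{thm:technicalses}, and the set $T$ of Proposition~\ref{prop:t} are built precisely to count these relations; they are what produce the $(p-i)\lfloor id_Q/p^2\rfloor$ correction.

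Your lower-bound construction has the dual gap.  A pure element $y^i\omega$ is essentially never in $\ker V_Y$: by Lemma~\ref{lem:cartierformula}, $V_Y(y^i\omega)=\sum_{j\le i}\binom{i}{j}V_X\bigl((-f)^{i-j}\omega\bigr)\,y^j$, and the terms with $j<i$ involve $f$ and do not vanish merely because $V_X(\omega)=0$.  One must add lower-degree corrections $\omega_{i-1}y^{i-1}+\cdots$, and the hard part---illustrated in Example~\ref{ex:keyexample}---is whether those corrections satisfy the regularity bounds on $Y$.  The paper does not exhibit explicit kernel elements; rather (Lemma~\ref{lem:lowerbound} and Lemma~\ref{lem:imagegj}) it takes the larger space $U_j$ of candidates, allowing bad poles in the corrections, and bounds from above the number that fail regularity via the constants $c(i,j,Q)$.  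A small side correction: $M_0=H^0(Y,\Omega^1_Y)^G$ equals $H^0(X,\Omega^1_X(E_0))$, not $\pi^*H^0(X,\Omega^1_X)$, because of the wild ramification.
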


In fact, our main result (Theorem \ref{thm:cleanbounds}) features a slightly sharper---if somewhat messier and less explicit in general --- upper bound,
and Theorem \ref{thm:main} is an immediate consequence of this.  

\begin{remark}
The lower bound is largest for $j\approx p/2$, and in applications we will take $j=\lceil p/2\rceil$.
Fixing $X$ and $S\subseteq X(\overline{k})$ and writing $T:=(p-1)\sum_{Q\in S} d_Q$,
%where by a (slight) abuse of notation we also write $d_{Q}$ for the unique break in the ramification filtration
%at the unique point above $Q$,
elementary estimates show that our lower (respectively upper) bound is asymptotic to 
$(1-\frac{1}{p^2})\frac{T}{4}$ (respectively $(1-\frac{1}{2p})\frac{T}{3}$) as $T\rightarrow 
\infty$; see Corollary \ref{cor:explicit} for more precise estimates.
Equivalently, since $g_Y\sim \frac{T}{2}$ as $T\rightarrow \infty$ by Riemann--Hurwitz,
our lower and upper bounds are asymptotic to $(1-\frac{1}{p^2}) \frac{1}{2}g_Y$ and $(1-\frac{1}{2p}) \frac{2}{3}g_Y$,
respectively, as $g_Y\rightarrow \infty$ with $X$ and $S$ fixed.  In contrast, $f_Y/ g_Y$ approaches $0$ as $T \to \infty$.
\end{remark}
%In other words, for a branched $\ZZ/p\ZZ$-cover $Y\rightarrow X$ of a fixed curve $X$ with fixed branch locus $S$,
%the $a$-number of $Y$ is asymptotically approximately between half and two-thirds of the genus of $Y$ as the latter grows.

\begin{remark} \label{rmk:trivialbounds}
Using only information about $X$, elementary arguments give ``trivial'' bounds
\[
\dim_k \ker \left ( V:  H^0(X,\Omega^1_X(E_0)) \to  H^0(X,\Omega^1_X(E_0)) \right)   \leq a_Y \leq p \cdot g_X - p \cdot f_X + \sum_{Q \in S} \frac{1}{2}(p-1)(d_Q-1)
\]
where $E_0 =  \sum_{Q \in S} (d_Q - \lfloor d_Q/p \rfloor)[Q]) $.  
The trivial lower bound comes from the fact that there is an inclusion $\Omega^1_X(E_0) \into \pi_*\Omega^1_Y$ compatible with the Cartier operator: see Lemma~\ref{lem:cartierformula}.  
The trivial upper bound comes from the fact that $a_Y + f_Y \leq g_Y$ and from applying the Riemann-Hurwitz formula for the genus and Deuring-Shafarevich formula for the $p$-rank to $\pi : Y \to X$. 

The lower bound is explicit.  
When $X =\PP^1$, for a divisor $D = \sum n_i [P_i]$ with $n_i \geq 0$ we know that
\[
\dim_k \ker \left( V : H^0(X,\Omega^1_{X}(D)) \to H^0(X,\Omega^1_{X}(D))\right)  = \sum_i \left(n_i - \left\lceil \frac{n_i}{p} \right\rceil\right)
\]
so the lower bound is explicit.  For general $X$, a theorem of Tango (Fact~\ref{fact:tango}, the main theorem of \cite{tango72}) generalizes this to provide information about the kernel of the Cartier operator when the degree of $D$ is sufficiently large.

Theorem~\ref{thm:main} is substantially better than the trivial bounds: see Example~\ref{ex:numerics} for an illustration.
\end{remark}

Note that when $p=2$ and $a_X=0$, the upper and lower bounds of Theorem \ref{thm:cleanbounds} (with $j=1$) {\em coincide}, and we recover Voloch's formula \cite[Theorem 2]{VolochChar2};
see Remark~\ref{rmk:char2}.
Similarly, when $p$ is odd, all $d_Q$ divide $p-1$, and $a_X=0$, we prove in Corollary~\ref{cor:anumformulaordinary} that 
our (sharpest) upper bound and our lower bound with $j=\lceil p/2\rceil = (p+1)/2$ also coincide, thereby establishing the following ``$a$-number formula:"

\begin{cor}\label{cor:anumformula}
    With hypotheses and notation as in Theorem \ref{thm:main} and $p$ odd, assume that $d_Q | (p-1)$ for all $Q\in S$
    and that $X$ is {\em ordinary} $(${\em i.e.} $a_X=0)$.  Then
    \[
           a_Y = \sum_{Q\in S} a_Q\qquad\text{where}\qquad a_Q:=\frac{(p-1)}{2}(d_Q-1) - \frac{p-1}{d_Q}\left\lfloor \frac{(d_Q-1)^2}{4}\right\rfloor .
    \]
\end{cor}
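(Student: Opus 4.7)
The strategy is to show that under the hypotheses of the corollary, the sharper upper bound from Theorem~\ref{thm:cleanbounds} and the lower bound of Theorem~\ref{thm:main} with $j=(p+1)/2$ coincide, and are both equal to $\sum_{Q\in S}a_Q$. Since $a_X=0$ the global term $pa_X$ disappears, and since $d_Q\mid p-1$ forces $d_Q\le p-1<p$, every floor $\lfloor id_Q/p^2\rfloor$ that appears in the upper bound of Theorem~\ref{thm:main} vanishes for $1\le i\le p-1$. Both bounds therefore reduce to sums of purely local contributions, and it suffices to match these local terms to the explicit value $a_Q$.

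For the lower bound, I would first compute $(1-1/p)\cdot jd_Q/p=(p^2-1)d_Q/(2p^2)=d_Q/2-d_Q/(2p^2)$. The key observation is that the fractional part of $id_Q/p-d_Q/2=d_Q(2i-p)/(2p)$ is a nonzero multiple of $1/(2p)$ (using $\gcd(d_Q,p)=1$ together with a short parity-based check that rules out integrality), while the perturbation $d_Q/(2p^2)$ is strictly smaller than $1/(2p)$. Hence the perturbation does not change the floor, and the $Q$-th local contribution to the lower bound collapses to
\[
\sum_{i=(p+1)/2}^{p-1}\left(\left\lfloor\frac{id_Q}{p}\right\rfloor-\left\lfloor\frac{id_Q}{p}-\frac{d_Q}{2}\right\rfloor\right).
\]

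I would then evaluate this sum by splitting on the parity of $d_Q$. When $d_Q$ is even, each summand equals $d_Q/2$, giving a local total of $(p-1)d_Q/4$. When $d_Q$ is odd, the summand is $(d_Q+1)/2$ or $(d_Q-1)/2$ according as $\{id_Q/p\}<1/2$ or $\ge1/2$; counting the $i\in[(p+1)/2,p-1]$ satisfying $\{id_Q/p\}<1/2$ --- using the permutation action of $d_Q$ on $(\ZZ/p\ZZ)^\times$, the symmetry $i\mapsto p-i$, and the divisibility $d_Q\mid p-1$ --- yields $(p-1)(d_Q-1)/(4d_Q)$, so the local total works out to $(p-1)(d_Q^2-1)/(4d_Q)$. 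A direct algebraic check shows that both the even and odd values agree with the closed form $\frac{p-1}{2}(d_Q-1)-\frac{p-1}{d_Q}\lfloor(d_Q-1)^2/4\rfloor$ given in the corollary.

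The remaining step is to verify that the local contribution of the sharper upper bound from Theorem~\ref{thm:cleanbounds} reduces, under the same hypotheses and simplifications, to this same value $a_Q$, thereby pinching $a_Y$ between equal bounds. I expect this to be the main obstacle: the sharper upper bound is less explicit than the formula in Theorem~\ref{thm:main}, so the matching requires a careful termwise comparison of floor-function expressions, again exploiting $d_Q\mid p-1$ in parallel with the lower-bound analysis carried out above.
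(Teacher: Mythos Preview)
Your overall strategy and your treatment of the lower bound are correct and match the paper's. Your perturbation argument (reducing the second floor to $\lfloor id_Q/p-d_Q/2\rfloor$) followed by the parity split is a valid alternative to the paper's route, which instead sets $f_Q:=(p-1)/d_Q$ and rewrites the two floors as $\lfloor i/f_Q\rfloor$ (with a correction when $f_Q\mid i$) and $\lfloor(i-j)/f_Q\rfloor$; either tactic yields the local value $a_Q$.

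The genuine gap is the upper bound, which you explicitly leave open. Under the hypotheses the sharper bound of Theorem~\ref{thm:cleanbounds} reads
\[
a_Y\le\sum_{Q\in S}\left(\frac{(p-1)(d_Q-1)}{2}-\tau_p(d_Q)\right),
\]
so matching it to $\sum_Q a_Q$ is equivalent to the exact evaluation
\[
\tau_p(d_Q)=\frac{p-1}{d_Q}\left\lfloor\frac{(d_Q-1)^2}{4}\right\rfloor\qquad\text{whenever }d_Q\mid p-1.
\]
This is \emph{not} a floor-function comparison parallel to your lower-bound analysis: by definition $\tau_p(d,i)$ is a residue count (the number of positive $n\le\lfloor id/p\rfloor$ with $-n\equiv md\pmod p$ for some $0<m\le p-1-i$), and summing it over $i$ requires a separate combinatorial argument that the paper isolates as Proposition~\ref{prop:taupformula} and the ensuing formula~\eqref{eq:tauformula}. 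The easy estimate $\tau_p(d,i)\ge(p-1-i)\lfloor id/p^2\rfloor$ from \eqref{eq:taubound} vanishes here since $d_Q<p$, so without the exact evaluation the upper bound degenerates to the trivial $(p-1)(d_Q-1)/2$ and never meets your lower bound. You should either supply this computation of $\tau_p(d_Q)$ directly or invoke Proposition~\ref{prop:taupformula}.
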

Specializing Corollary \ref{cor:anumformula} to the case of $X=\PP^1$ recovers the main result of \cite{fp13}.

To get a sense of the bounds in Theorem \ref{thm:main}, in \S\ref{sec:examples} we work out a number of
examples.  We show in particular that for $X=\PP^1$ our upper bound of Theorem \ref{thm:cleanbounds} is {\em sharp},
with the family of covers $y^p - y = t^{-d}$ (for $t$ a choice of coordinate on $\PP^1$)
achieving the upper bound for all $p>2$ and all $d$ with $p \nmid d$; see Example \ref{ex:monomial}.
We similarly find in our specific examples that the lower bound is sharp and that ``most'' covers have $a$-number equal to the lower bound
\footnote{When $p=3$ or $p=5$ and $X$ is ordinary,  for any branch locus $S$ and choice of $d_Q$ for $Q \in S$ with $p \nmid d_Q$, subsequent work constructs covers of $X$ whose $a$-number is the lower bound \cite{anumber}. }.
For various $p$, we also provide examples using covers of the elliptic curve with affine equation $y^2=x^3-x$, which has $a_X=1$ when $p\equiv 3\bmod 4$ and $a_X=0$ when $p\equiv 1\bmod 4$.

\subsection{Outline of the Proof}

Without loss of generality, we may assume that $k$ is algebraically closed.  
A key idea in the proof is that the Cartier operator is not defined only on global differentials, but actually is a map of sheaves.  
Let $X$ be a smooth projective curve over $k$. Functorially associated to the finite flat absolute Frobenius map $F:X\rightarrow X$
by Grothendieck's theory of the trace \cite[2.7.36]{Conrad} 
is an $\O_X$-linear map of sheaves 
\[
 V_X:  F_*\Omega^1_X \to \Omega^1_X ;
\]
the Cartier operator considered previously is obtained by taking global sections.
(For the remainder of the paper, we include subscripts to clarify which curve/ring we are working with.)    The advantage of this perspective is that the Cartier operator admits a simple description on stalks, allowing local arguments.  
In particular, the Cartier operator on completed stalks at any $k$-point
is given by 
\begin{equation} \label{eq:localcartier}
 V \left (\sum_i a_i t^i \frac{dt}{t} \right) = \sum_j a_{pj}^{1/p} t^j \frac{dt}{t};
\end{equation}
see, for example \cite[Proposition 2.1]{CaisHida1}.
To relate the kernels of $V_X$ and $V_Y$ on global differentials, we will combine an analysis over the generic point with an analysis at stalks at the points where the cover $\pi : Y \to X$ is ramified.  This strategy allows the use of geometric methods, and allows us to work with general Artin-Schreier covers instead of only covers of $\PP^1$: previous work has focused on curves defined by explicit equations or covers $Y$ of $\PP^1$ where it is possible to find a nice and explicit basis of $H^0(Y,\Omega^1_{Y})$.

For now, we will ignore a few technical issues and sketch the argument.  None of these technical issues arise for $X =\PP^1$, which is a helpful simplification on a first reading.  
Writing $\eta$ for the generic point of $X$, one has an isomorphism
\begin{equation}
\pi_* \Omega^1_{Y,\eta} \simeq \bigoplus_{i=0}^{p-1} \Omega^1_{X,\eta}.\label{eq:genericisom}
\end{equation}
This follows from the fact that the function field $K' = k(Y)$ of $Y$ is an Artin-Schreier extension of the function field $K = k(X)$ given by
$y^p - y = f$ for some $f \in K$. 
This induces an isomorphism
\begin{equation} \label{eq:genericiso}
 (\pi_* \ker V_Y)_\eta \simeq \bigoplus_{i=0}^{p-1} (\ker V_X)_\eta.
\end{equation}
This is Proposition~\ref{prop:isogeneric}; 
using (\ref{eq:genericisom}) to write $\omega\in \pi_* \Omega^1_{Y,\eta}$ as
$\displaystyle \omega = \sum_{i=0}^{p-1} \omega_i y^i$ with $\omega_i \in \Omega^1_{X,\eta}$, the key observation is that
if $V_Y(\omega)=0$ then for all $0 \leq j \leq p-1$, $V_X(\omega_j)$ is determined by $V_X(\omega_i)$ for $j < i \leq p-1$.

Unfortunately, (\ref{eq:genericisom})--(\ref{eq:genericiso}) do not generalize to  isomorphisms of sheaves.  Instead,
there are explicit divisors
$E_i$ (Definition~\ref{defn:collecteddefs}) depending on the ramification of $\pi$ and an isomorphism of $\O_X$-modules
\begin{equation} 
\pi_* \Omega^1_Y \simeq \bigoplus_{i=0}^{p-1} \Omega^1_X(E_i), 
\label{eq:diffsplit}
\end{equation}
as well as an injection (Definition~\ref{defn:varphi})
\begin{equation} 
 \varphi: \pi_* \ker V_Y \into \bigoplus_{i=0}^{p-1} \ker V_X(F_* E_i)
 \label{eq:kersplit}
\end{equation}
inducing \eqref{eq:genericiso} at the generic point.  Here $\ker V_X(F_* E_i):= (\ker V_X )_\eta \cap F_* (\Omega^1_X(E_i))$.  But $\varphi$ is not surjective as a map of sheaves.  The problem is that while (in the generic fiber) $V_X(\omega_j)$ is determined by $\omega_i$ for $i>j$, it is not automatic that the resulting form $\omega_j$ satisfies $\ord_Q(\omega_j) \geq -\ord_Q(E_j)$ when $\ord_Q(\omega_i) \geq -\ord_Q(E_i)$ for $i>j$ and $Q \in S$.  Example~\ref{ex:keyexample} is a key illustration of this problem.

To deal with this issue, we wish to find relations that describe the image of $\varphi$.   Fortunately, this is a purely local problem at the points $S$ where $\pi$ is ramified.  We will identify $\pi_* \ker V_Y$ with a certain sub-sheaf of the target of
(\ref{eq:kersplit})
cut out by linear relations on the coefficients of the power series expansions of elements at points $Q \in S$.  These relations force the corresponding differentials $\omega_i$ on $X$ to be regular.  These relations are expressed in \S\ref{ses:kernel} as maps to skyscraper sheaves supported on $S$, and the kernels of these maps describe the image of $\varphi$; see Theorem~\ref{thm:technicalses}. 

The final step is to extract useful information about the $a$-number of $Y$  (the dimension of the space of global sections of $\pi_* \ker V_Y$) from the short exact sequences resulting from our description of the image of $\varphi$.  The key to doing so is knowledge about the dimension of the kernel of the Cartier operator on $H^0(X,\Omega^1_X(D))$ for various divisors $D$, and related questions about the existence of elements of that space with specified behavior at points in $S$.  In \S\ref{sec:bounds}, we analyze these questions using a theorem of Tango (Fact~\ref{fact:tango}, the main theorem of \cite{tango72}) and obtain bounds on the $a$-number of $Y$ by taking global sections of the exact sequences from Theorem~\ref{thm:technicalses}.

%\begin{example}
%For example, when $X=\PP^1$, Tango's theorem applies to any effective divisor and we deduce the following (which is elementary in this situation):
%\begin{itemize}
%    \item  For a divisor $D =\sum_i n_i [P_i]$ with $n_i  \ge 0$,
%    \[
%    \dim_k \ker\left( V_{X} : H^0(X,\Omega^1_{X}(D)) \to H^0(X,\Omega^1_{X}(D))\right)  = \sum_i \left(n_i - \left\lceil \frac{n_i}{p} \right\rceil\right).
%    \]
%\end{itemize}
%We generalize this to arbitrary $X$ using Tango's theorem in \S\ref{sec:tools}.
%\end{example}

\begin{remark}
    Tango's theorem yields precise results only when the degree of the divisor is sufficiently large.  By exploiting the flexibility of our local analysis---which in particular allows us to arbitrarily increase the degree of certain auxiliary divisors---we may always work in this case; see the proof of Theorem \ref{thm:cleanbounds}.  Given additional information about the size of the dimension of the kernel of the Cartier operator on $H^0(X,\Omega^1_X(D))$ for divisors $D$ of small degree, modest improvements are possible.  
    See \S\ref{sec:unramified} for an example with unramified covers.
\end{remark}

\begin{remark}
    The trivial lower bound of Remark \ref{rmk:trivialbounds}
    follows from 
    the inclusion $\Omega^1_{X}(E_0)\hookrightarrow \pi_*\Omega^1_{Y}$
    that is compatible with the Cartier operator.  This inclusion is a consequence of (\ref{eq:diffsplit}), which is an isomorphism of $\O_X$-modules and is not in general compatible 
    with the Cartier operator.  
    The significant improvements in the bounds come from incorporating information about the Cartier operator to obtain a more refined inclusion (\ref{eq:kersplit}), analyzing the image,  and using Tango's theorem.
\end{remark}

\begin{remark}
As mentioned previously, there are some technical complications to the strategy outlined above.  In the end, these have no effect on the final result.  The issues are:
\begin{itemize}
    \item the short exact sequence 
    \[
 0 \to \ker V_X \to F_* \Omega^1_X \to \Im V_X \to 0
    \]
    is not always split, although it does split when $X=\PP^1$.   In \S\ref{sec:splittings}, we show that we may produce maps which split the sequence over the generic point and introduce poles in a controlled manner.  This is used to define the map $\varphi$ of (\ref{defn:varphi}).
    
    \item  The Artin-Schreier extension of function fields cannot always be described as $y^p-y = f$ where $f$ is regular away from $S$, and $\ord_Q(f) = -d_Q$ for $Q \in S$.  This is possible when $X = \PP^1$ by using the theory of partial fractions.  In \S\ref{sec:asc} we allow $f$ to have a pole at one additional (non-branched) point $Q'$ to ensure the desired property holds for $Q \in S$, and then keep track of this complication throughout the remainder of the argument.  
\end{itemize}
\end{remark}

\begin{remark}
The same arguments, with minor modifications, should yield bounds on the dimension of the kernel of powers of the Cartier operator. We leave that for future work.
\end{remark}

\subsection{Acknowledgments}
We thank Jack Hall, Daniel Litt, Dulip Piyaratne, and Rachel Pries for helpful conversations.  We thank the referee for reading carefully and making many helpful comments.  The second author was partially supported by NSF grant number DMS-1902005.

\section{Producing Splittings} \label{sec:splittings}

Let $k$ be an algebraically closed field of characteristic $p$, and $X$ a smooth projective and connected curve over $k$.  Writing $V_X: F_*\Omega^1_{X/k}\rightarrow \Omega^1_{X/k}$ for the Cartier operator, we are interested in splitting the tautological 
short exact sequence of $\O_X$-modules
\begin{align} \label{eq:cartiersequence}
 0 \to \ker V_X \to F_* \Omega^1_X \to \Im V_X \to 0.
\end{align}

\begin{lem} \label{lem:p1}
 When $X = \PP^1_k$, \eqref{eq:cartiersequence}
is a split exact sequence of sheaves.
\end{lem}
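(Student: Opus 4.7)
The plan is to identify $F_*\Omega^1_{\PP^1}$ explicitly as a direct sum of line bundles and then read off the splitting of \eqref{eq:cartiersequence} by inspection.

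First I would pin down the splitting type of $F_*\Omega^1_{\PP^1_k}$. By Grothendieck's theorem, every vector bundle on $\PP^1_k$ is a direct sum of line bundles, so since $F$ is finite flat of degree $p$ we may write $F_*\Omega^1_{\PP^1}\cong\bigoplus_{i=1}^{p}\O(a_i)$. Finiteness of $F$ gives $H^j(\PP^1,F_*\Omega^1)\cong H^j(\PP^1,\Omega^1)$ for all $j$, so $h^0=0$ and $h^1=1$; this forces each $a_i\le -1$ together with $\sum_i\max(-a_i-1,0)=1$. Combining this with the $\chi$ identity $\sum_i(a_i+1)=\chi(\Omega^1_{\PP^1})=-1$, the unique solution is one $a_i=-2$ with the remaining $p-1$ equal to $-1$, giving
\[
 F_*\Omega^1_{\PP^1_k}\cong\O(-2)\oplus\O(-1)^{\oplus(p-1)}.
\]

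Next I would unpack the Cartier map $V_X:F_*\Omega^1_{\PP^1}\to\Omega^1_{\PP^1}=\O(-2)$ in this decomposition. Because $\Hom(\O(-1),\O(-2))=H^0(\PP^1,\O(-1))=0$, the restriction of $V_X$ to the $\O(-1)^{\oplus(p-1)}$ summand vanishes, so $V_X$ is projection onto the $\O(-2)$ summand followed by multiplication by some scalar $c\in k$. The local formula \eqref{eq:localcartier} shows that $V_X$ is surjective on completed stalks and hence surjective as a map of sheaves, so $c\ne 0$.

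It follows at once that $\ker V_X\cong\O(-1)^{\oplus(p-1)}$ and that \eqref{eq:cartiersequence} is the tautologically split sequence attached to this direct sum decomposition; an explicit splitting of $V_X$ is $c^{-1}$ times the inclusion of the $\O(-2)$ summand into $F_*\Omega^1_{\PP^1}$. Equivalently, one could simply invoke the vanishing $\on{Ext}^1(\O(-2),\O(-1)^{\oplus(p-1)})=H^1(\PP^1,\O(1))^{\oplus(p-1)}=0$. I do not foresee any real obstacle: the only mildly subtle point is the sheaf-level surjectivity of $V_X$, which is immediate from the local description, and the rest is a standard Grothendieck-splitting computation on $\PP^1$.
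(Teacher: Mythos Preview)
Your argument is correct, but it proceeds quite differently from the paper's. The paper works in explicit coordinates: it identifies $(\ker V_X)_\eta$ and $(\Im V_X)_\eta$ directly using the local formula \eqref{eq:localcartier}, writes down a concrete section $s\left(\sum_i a_i t^i\,dt/t\right)=\sum_i a_i^p t^{pi}\,dt/t$ over the generic point, and then checks by hand that this formula preserves regularity at every closed point, hence extends to a splitting of sheaves. Your route is the abstract one: you determine the Grothendieck splitting type $F_*\Omega^1_{\PP^1}\cong\O(-2)\oplus\O(-1)^{\oplus(p-1)}$ from the cohomology of $\Omega^1_{\PP^1}$, use $\Hom(\O(-1),\O(-2))=0$ to see that $V_X$ kills the $\O(-1)$ summands, and conclude either directly or via $\Ext^1(\O(-2),\O(-1))=0$.

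Each approach has its advantages. Yours is cleaner and yields the pleasant byproduct $\ker V_X\cong\O(-1)^{\oplus(p-1)}$. The paper's explicit formula, however, is not merely decorative: the resulting projector $r$ in Remark~\ref{rmk:p1} is invoked in concrete computations later (e.g.\ Example~\ref{ex:keyexample}), and the explicit shape of $s$ mirrors the general section constructed in Corollary~\ref{cor:secondsplitting}, making the $\PP^1$ case a transparent model for the general machinery. If you adopt your argument, you would still want to record the explicit $r$ and $s$ separately for use in those examples.
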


\begin{proof}
Identify the generic fiber of $X$  with $\Spec( k(t))$.  From \eqref{eq:localcartier}, we know that $V_X(\frac{dt}{t}) = \frac{dt}{t}$ and $V_X(t^i \frac{ dt}{t}) = 0$ if $p \nmid i$.  Thus we see that
\[
 (\ker V_X)_\eta = \left\{ \sum_{i=1}^{p-1} h_i(t) t^i \frac{dt}{t} : h_i \in k(t^p) \right\} \quad \text{and} \quad (\Im V_X)_\eta = \Omega^1_{X,\eta}.
\]
An explicit splitting $s:  (\Im V_X)_\eta \to F_*\Omega^1_{X,\eta}$ of \eqref{eq:cartiersequence} over the generic fiber  is given by
\begin{equation} \label{eq:explicitsplitting}
 s\left( \sum_i a_i t^i \frac{dt}{t} \right) = \sum_i a_i^p t^{pi} \frac{dt}{t}.
\end{equation}
For $Q \in \PP^1_k$, a direct calculation shows that for a section $\omega$ to $\Omega^1_X$, if $\ord_Q(\omega) \geq 0$ then $\ord_Q(s (\omega))\geq 0$.  Thus this defines a map of sheaves.
\end{proof}

\begin{remark} \label{rmk:p1}
The corresponding projector $r : F_* \Omega^1_X \to \ker V_X$ is given by
\begin{equation} \label{eq:explicitprojector}
 r\left( \sum_i a_i t^i \frac{dt}{t} \right) =  \sum_{p \nmid i} a_i t^i \frac{dt}{t}.
\end{equation}
\end{remark}

In general, \eqref{eq:cartiersequence} splits over the generic point for any smooth curve $X$, although it is not clear the sequence itself splits.  However, it {\em does} split if we allow the splitting to introduce controlled poles.  

Let $D$ be a divisor on $X$.  
Note that $(F_* \Omega^1_X) (D) = F_* (\Omega^1_X(pD))$, which complicates the relationship between twists and order of vanishing.  To more closely connect twists with order of vanishing, we make the following definition.

\begin{defn} \label{defn:ftwists}
For a subsheaf $\Fscr \subset F_* \Omega^1_X$ and divisor $D$ on $X$ we define a sheaf $\Fscr(F_*D)$ via
\[
 \Fscr(F_* D)(U) := F_* (\Omega^1_X(D))(U) \cap \Fscr_\eta
\]
for open $U \subset X$.
In particular,
\begin{equation}
\ker V_X (F_*D)(U) = \{ \omega \in \Omega^1_{X}(D)(U) : V_X(\omega) = 0 \}.
\end{equation}
\end{defn}

\begin{example} \label{ex:twisting}
It is clear upon taking $\Fscr = F_* \Omega^1_X(D)$ that $(F_* \Omega^1_X)(F_*D)= F_* (\Omega^1_X(D)) $.
Note that $\ker V_X (F_* D)$
consists of differentials $\omega$ that lie in the kernel of $V_X$ and satisfy $\ord_Q(\omega) \geq - \ord_Q(D)$ for all $Q$.  On the other hand, $\ker V_X (D) = \ker V_X \otimes \O_X(D)= \ker V_X (F_* p D) $, which consists of differentials that lie in the kernel of $V_X$ and satisfy $\ord_Q(\omega) \geq -p \ord_Q(D)$.
\end{example}

For any divisor $E = \sum_i n_i P_i \geq 0$, define $\overline{E} := \sum_i \lceil n_i/p \rceil P_i$.

\begin{lem} \label{lem:twisted}
 There is an exact sequence of $\O_X$-modules
 \begin{equation}
  0 \to \ker V_X (F_* E) \to F_*( \Omega^1_X(E)) \overset{V_X} \to \Im V_X (\overline{E}) \to 0.
 \end{equation}
 Furthermore, each term is locally free.
\end{lem}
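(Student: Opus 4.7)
The plan is to verify exactness of the sequence stalk-locally using the local Cartier formula \eqref{eq:localcartier}, and then to deduce local freeness from general properties of coherent sheaves on a smooth curve.

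First I would check that $V_X$ restricts to a well-defined map $F_*(\Omega^1_X(E)) \to \Im V_X(\overline{E})$.  Fix a closed point $P$ with $n := \ord_P(E)$ and uniformizer $t$.  Writing $\omega = \sum_i a_i t^i \frac{dt}{t}$, the condition $\ord_P(\omega) \geq -n$ translates to $a_i = 0$ for $i \leq -n$.  Applying \eqref{eq:localcartier}, $V_X(\omega) = \sum_j a_{pj}^{1/p} t^j \frac{dt}{t}$ has vanishing $j$-th coefficient whenever $pj \leq -n$, i.e.\ whenever $j \leq \lfloor -n/p \rfloor = -\lceil n/p \rceil$, so $\ord_P(V_X(\omega)) \geq -\lceil n/p \rceil = -\ord_P(\overline{E})$.

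Injectivity and exactness at the middle term are then immediate from Definition~\ref{defn:ftwists}: $\ker V_X(F_*E)$ is literally the kernel of $V_X$ restricted to $F_*(\Omega^1_X(E))$.  For surjectivity onto $\Im V_X(\overline{E})$, I would check on completed stalks; away from $\mathrm{supp}(E)$ the map is simply the untwisted surjection $V_X\colon F_*\Omega^1_X \onto \Im V_X$.  At a point $P\in\mathrm{supp}(E)$ with $n = \ord_P(E)$, given a local section $\eta = \sum_j b_j t^j \frac{dt}{t}$ of $\Im V_X(\overline{E})$ (so $b_j = 0$ for $j \leq -\lceil n/p \rceil$), apply formula \eqref{eq:explicitsplitting} with the local uniformizer $t$ to produce $\omega := \sum_j b_j^p t^{pj} \frac{dt}{t}$; one has $V_X(\omega) = \eta$ by \eqref{eq:localcartier}, and the elementary inequality $p\lceil n/p\rceil \leq n+p-1$ yields $\ord_P(\omega) \geq -n$, so $\omega$ is a local section of $F_*(\Omega^1_X(E))$ lifting $\eta$.

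For local freeness, $F_*(\Omega^1_X(E))$ is locally free of rank $p$ because $F$ is a finite flat morphism of smooth curves and $\Omega^1_X(E)$ is a line bundle.  Both $\ker V_X(F_*E)$ and $\Im V_X(\overline{E})$ are coherent subsheaves of locally free sheaves (of $F_*(\Omega^1_X(E))$ and $\Omega^1_X(\overline{E})$ respectively), hence torsion-free; on the smooth curve $X$ every torsion-free coherent $\O_X$-module is locally free.  The only genuinely subtle point throughout is the arithmetic bookkeeping between the pole-order conditions $\ord_P(\omega) \geq -n$ and the $p$-divisibility selection rule in \eqref{eq:localcartier}; this reduces to the identities $\lfloor -n/p\rfloor = -\lceil n/p\rceil$ and $p\lceil n/p\rceil \leq n+p-1$.
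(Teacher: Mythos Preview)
Your proof is correct and follows essentially the same approach as the paper's: both verify well-definedness of the right map by a local pole-order computation, identify the kernel by definition, check surjectivity on completed stalks via the explicit lift $\sum_j b_j t^j\frac{dt}{t}\mapsto \sum_j b_j^p t^{pj}\frac{dt}{t}$, and deduce local freeness from torsion-freeness on a smooth curve.  Your arithmetic bookkeeping with $\lfloor -n/p\rfloor = -\lceil n/p\rceil$ and $p\lceil n/p\rceil \le n+p-1$ is slightly more explicit than the paper's, but the substance is identical.
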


\begin{proof}
For a closed point $Q$ of $X$ and a section $\omega$ of $F_* (\Omega^1_X(E))$ defined at $Q$, a local calculation shows that $\ord_Q(V_X(\omega)) \geq \lceil \ord_Q(\omega) /p \rceil$, so the right map is well-defined.  The kernel is $\ker V_X (F_* E)$ by definition. As the completion $\O^\wedge_{X,Q}$ is faithfully flat over $\O_{X,Q}$, 
we may check surjectivity on completed stalks. Since $-n_i \leq p \left \lceil \frac{-n_i}{p} \right \rceil$, for $t_Q$ a local uniformizer at $Q$ we have
\[
 V_X\left( \sum_i a_i^p t_Q^{ip} \frac{dt_Q}{t_Q} \right) = \sum a_i t_Q^i \frac{dt_Q}{t_Q}
\]
thanks to (\ref{eq:localcartier}).  Over a smooth curve, to check local freeness it suffices to check the sheaves are torsion-free, which is clear as the sheaves are subsheaves of $F_* \Omega^1_{X,\eta}$.
\end{proof}

We will prove the following:

\begin{prop} \label{prop:abstractsplittings}
Let $S$ be a finite set of points on a smooth projective curve $X$ over $k$ and
\[
0 \to \Fscr_1 \overset{\imath} \to \Fscr_2 \to \Fscr_3 \to 0
\]
an exact sequence of locally free sheaves.  There exists a divisor $D = \sum_i [P_i]$ with the $P_i$ distinct points of $X$ not in $S$ and a morphism $r: \Fscr_2\to \Fscr_1(D)$
such that $r \circ \imath$ is the natural inclusion $\Fscr_1 \to \Fscr_1(D)$.
\end{prop}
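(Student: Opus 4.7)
The plan is to phrase the desired morphism $r$ as a cohomological lifting problem on $X$, and then force the obstruction to vanish by choosing $D$ of sufficiently large degree.

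First, I would apply the contravariant functor $\HOM(-,\Fscr_1(D))$ to the given exact sequence.  Because $\Fscr_3$ is locally free, the sheaf $\mathcal{E}xt^{1}(\Fscr_3,\Fscr_1(D))$ vanishes, and one obtains a short exact sequence of locally free sheaves
\[
0 \to \HOM(\Fscr_3,\Fscr_1(D)) \to \HOM(\Fscr_2,\Fscr_1(D)) \overset{\imath^{*}}{\to} \HOM(\Fscr_1,\Fscr_1(D)) \to 0,
\]
in which $\imath^{*}$ sends a local section $\rho$ to $\rho\circ\imath$.  Writing $\iota_D$ for the canonical inclusion $\Fscr_1 \hookrightarrow \Fscr_1(D)$, viewed as an element of $H^0(X,\HOM(\Fscr_1,\Fscr_1(D)))$, the morphism $r$ required by the proposition is exactly a global section of $\HOM(\Fscr_2,\Fscr_1(D))$ mapping to $\iota_D$ under $\imath^{*}$.

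Next, the long exact cohomology sequence shows that such a lift exists as soon as $H^1(X,\HOM(\Fscr_3,\Fscr_1(D))) = 0$.  Using local freeness of $\Fscr_3$, this sheaf is naturally identified with $\HOM(\Fscr_3,\Fscr_1)\otimes\O_X(D)$, where $\HOM(\Fscr_3,\Fscr_1)$ is a fixed locally free sheaf on $X$ whose isomorphism class does not depend on $D$.  By Serre's vanishing theorem (or, equivalently on a curve, by Serre duality combined with the fact that a vector bundle of sufficiently negative slope has no nonzero global sections), $H^1$ of the tensor product vanishes once $\deg D$ exceeds an explicit bound depending only on $\HOM(\Fscr_3,\Fscr_1)$ and the genus of $X$.

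Finally, because $k$ is algebraically closed the curve $X$ has infinitely many closed points, so I can choose arbitrarily many distinct closed points $P_1,\ldots,P_n \notin S$ and set $D = \sum_{i=1}^{n} [P_i]$; taking $n$ large enough ensures both the structural constraints on $D$ and the analytic requirement that $\deg D$ surpass the vanishing threshold.  The main obstacle in the argument is thus simultaneously meeting the geometric form of $D$ (reduced and disjoint from $S$) and the positivity needed for cohomological vanishing, and it is resolved at once by the abundance of closed points of $X$.
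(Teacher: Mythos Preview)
Your argument is correct and follows essentially the same route as the paper: apply $\HOM(-,\Fscr_1(D))$ to the sequence, pass to the long exact cohomology sequence, and kill the obstruction $H^1\bigl(X,\HOM(\Fscr_3,\Fscr_1)\otimes\O_X(D)\bigr)$ by taking a reduced effective $D$ of large degree supported off $S$; the paper isolates this last vanishing as a separate lemma (proved via comparison with a fixed ample bundle rather than Serre duality), but the structure is identical.  One small caution on phrasing: ``a vector bundle of sufficiently negative slope has no nonzero global sections'' is false as stated (e.g.\ $\O_X\oplus\O_X(-n)$), but what you actually need---that $H^0$ of a \emph{fixed} bundle twisted by $\O_X(-D)$ vanishes once $\deg D\gg 0$---is true, since a nonzero section would yield a sub-line-bundle of nonnegative degree.
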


The following corollary will be useful in \S\ref{sec:stalks}, especially in Definition~\ref{defn:varphi}.

\begin{cor} \label{cor:splittings}
Let $S$ be a finite set of points of $X$.  Fix an effective divisor $E$ supported on $S$.  There is a divisor $D = \sum_i [P_i]$ with the $P_i$ distinct points of $X$ not in $S$ and a morphism
\[
 r : F_*(\Omega^1_X(E)) \to \ker V_X(F_*(E + pD)) 
\]
such that $r \circ \imath$ is the natural inclusion $\ker V_X(F_* E) \to \ker V_X(F_*(E+pD))$, where $\imath$ is the inclusion $\ker V_X(F_* E) \to F_*( \Omega^1 _X(E))$.
\end{cor}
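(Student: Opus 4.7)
The plan is to obtain the corollary by applying Proposition~\ref{prop:abstractsplittings} to the short exact sequence from Lemma~\ref{lem:twisted} and then rewriting the resulting twisted target sheaf. Concretely, Lemma~\ref{lem:twisted} furnishes the locally free short exact sequence
\[
0 \to \ker V_X(F_*E) \to F_*(\Omega^1_X(E)) \xrightarrow{V_X} \Im V_X(\overline{E}) \to 0.
\]
Feeding this into Proposition~\ref{prop:abstractsplittings} with $\Fscr_1 = \ker V_X(F_*E)$, $\Fscr_2 = F_*(\Omega^1_X(E))$, and the given finite set $S$ produces an effective divisor $D = \sum_i [P_i]$ with distinct supporting points disjoint from $S$, together with a morphism $r : F_*(\Omega^1_X(E)) \to \ker V_X(F_*E)(D)$ such that $r \circ \imath$ is the natural inclusion $\ker V_X(F_*E) \into \ker V_X(F_*E)(D)$.

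The remaining step is to identify $\ker V_X(F_*E)(D)$ with $\ker V_X(F_*(E+pD))$. Since $F$ is the absolute Frobenius, $F^*[P] = p[P]$, so tensoring any subsheaf $\Fscr \subseteq F_*\Omega^1_X$ with $\O_X(D)$ realizes it inside $F_*\Omega^1_X(pD)$. Working locally near a point $Q$ in the support of $D$ with uniformizer $t_Q$, and using that $V_X$ is $\O_X$-linear as a map $F_*\Omega^1_X \to \Omega^1_X$ (so a regular function $g$ acts on the source as multiplication by $g^p$), a direct check shows that a local section $\eta$ of $F_*\Omega^1_X(E+pD)$ lies in $\ker V_X(F_*E)(D)$ precisely when $V_X(\eta) = 0$. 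This is the twisted analogue of the identification already recorded in Example~\ref{ex:twisting}. Once it is in hand, $r$ becomes the desired morphism $F_*(\Omega^1_X(E)) \to \ker V_X(F_*(E+pD))$ with the claimed compatibility.

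The main obstacle lies entirely in Proposition~\ref{prop:abstractsplittings} itself, which is responsible for producing the pole divisor $D$ supported outside $S$; granting that proposition, the corollary reduces to the routine sheaf-theoretic identification above.
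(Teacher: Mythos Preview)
Your proposal is correct and follows exactly the same approach as the paper: apply Proposition~\ref{prop:abstractsplittings} to the exact sequence of Lemma~\ref{lem:twisted}, and then identify $(\ker V_X(F_*E))(D)$ with $\ker V_X(F_*(E+pD))$. The paper's proof dispatches the identification in a single sentence by appeal to Definition~\ref{defn:ftwists} (cf.\ Example~\ref{ex:twisting}), while you spell out the local verification; the content is the same.
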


\begin{proof}
Apply Proposition~\ref{prop:abstractsplittings} to the exact sequence of Lemma~\ref{lem:twisted}.  It is elementary to verify using Definition~\ref{defn:ftwists} that $(\ker V_X(F_* E))(D) = \ker V_X(F_*(E + p D))$.
\end{proof}

The remainder of this section is devoted to proving Proposition~\ref{prop:abstractsplittings}.  The key input is:

\begin{lem} \label{lem:distinctdivisor}
 Let $S$ be a finite set of points of $X$, and $\Fscr$ be a locally free sheaf on $X$.  Then for any divisor $D = \sum_i [P_i]$ with the $P_i$ distinct points of $X$ not in $S$ and $\deg D \gg 0$, we have 
 \[
  H^1(X, \Fscr \otimes \O_X(D)) = 0.
 \]
\end{lem}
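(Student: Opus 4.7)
The plan is to establish the stronger statement that $H^1(X,\Fscr\otimes L)=0$ for any line bundle $L$ on $X$ of sufficiently large degree, and then to observe that divisors of the prescribed form exist with arbitrary (large) degree because $k$ is algebraically closed.

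First, I would reduce to the case of line bundles by producing a filtration
\[
 0 = \Fscr_0 \subset \Fscr_1 \subset \cdots \subset \Fscr_r = \Fscr
\]
by coherent subsheaves whose successive quotients $L_i := \Fscr_i/\Fscr_{i-1}$ are line bundles. Such a filtration exists on any smooth projective curve: choose a line bundle $M$ of sufficiently negative degree so that $\Fscr\otimes M^{-1}$ has a global section by Riemann--Roch, which gives an inclusion $M\hookrightarrow \Fscr$; saturating produces a line sub-bundle, and iterating on the quotient (which remains locally free since $X$ is a smooth curve, so torsion-free equals locally free) yields the desired filtration.

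Next, by Serre duality on the curve, for a line bundle $N$ one has $H^1(X,N)^{\vee}\cong H^0(X,\Omega^1_X\otimes N^{-1})$, which vanishes whenever $\deg N > 2g_X - 2$. Setting
\[
 N_0 := 2g_X - 1 - \min_{1\le i\le r} \deg L_i,
\]
any divisor $D$ with $\deg D \ge N_0$ satisfies $\deg(L_i\otimes \O_X(D)) > 2g_X - 2$ for every $i$, so $H^1(X, L_i\otimes \O_X(D)) = 0$. Tensoring the short exact sequences $0\to \Fscr_{i-1}\to \Fscr_i\to L_i\to 0$ with $\O_X(D)$ (which is exact since $\O_X(D)$ is locally free) and taking the long exact sequence, an induction on $i$ yields $H^1(X,\Fscr\otimes \O_X(D))=0$ as soon as $\deg D\ge N_0$.

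Finally, since $k$ is algebraically closed, the set $X(k)\setminus S$ is infinite, so for any $N\ge N_0$ one may choose $N$ distinct points $P_1,\dots,P_N$ of $X$ outside $S$ and form $D=\sum_i[P_i]$. By the preceding paragraph, any such $D$ yields the desired vanishing. There is no real obstacle here: this is standard Serre vanishing on a curve, and the mild technical point worth spelling out carefully is the existence of the line bundle filtration used to reduce to the line bundle case.
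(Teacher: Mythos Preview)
Your proof is correct, but it takes a different route from the paper's. The paper argues as follows: fix an ample line bundle $\Lscr=\O_X(D')$ and invoke Serre's cohomological criterion for ampleness to find $N$ with $H^1(X,\Fscr\otimes\Lscr^N)=0$; then, once $\deg D$ is large enough that $\O_X(D-ND')$ has a global section (by Riemann--Roch), use that section to build a short exact sequence comparing $\Fscr\otimes\O_X(ND')$ and $\Fscr\otimes\O_X(D)$ with skyscraper cokernel, and conclude by the long exact sequence. Your approach instead filters $\Fscr$ by line sub-bundles and reduces to the line bundle case via Serre duality. Your argument is arguably more elementary (it avoids Serre vanishing as a black box) and yields an explicit degree threshold $N_0$ in terms of $g_X$ and the degrees of the subquotients $L_i$; the paper's argument, by contrast, avoids constructing the filtration and works directly with $\Fscr$. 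One minor comment: your final paragraph on the existence of divisors of the prescribed form is not needed for the lemma as stated (which asserts vanishing \emph{for all} such $D$ of large degree, not the existence of one), though it is relevant for how the lemma is applied in the paper.
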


\begin{proof}
Pick an ample line bundle $\Lscr = \O_X(D')$.
  By Serre's cohomological criterion for ampleness, we know that there is an $N$ such that
\begin{equation}
 H^1(X, \Fscr \otimes \Lscr^n)=0 \label{eq:ample}
 \end{equation}
for $n \geq N$.
Writing $g$ for the genus of $X$, the Riemann-Roch theorem gives
\[
h^0(X,\O_X(D- N D')) - h^1(X,\O(D-N D')) = \deg(\O(D-ND')) - g + 1,
\]
and when $\deg (\O(D-ND')) > 2g-2$, we have $h^1(X,\O(D-N D')) = 0$ for degree reasons.  Thus when $\deg D \gg 0$ we conclude that
\[
 h^0(X,\O_X(D- N D')) > 0.
\]
Using a global section of $\O_X(D-N D')$, we obtain a short exact sequence
\[
 0 \to \O_X \to \O_X(D'- N D) \to \Gscr \to 0
\]
where $\Gscr$ is a skyscraper sheaf supported on $D'  - ND$.  Tensoring with $\O_X(ND')$ is exact, as is tensoring with the locally free $\Fscr$, so we obtain a short exact sequence
\[
 0 \to \Fscr \otimes \O_X(ND') \to \Fscr \otimes \O_X(D) \to \Gscr' \to 0
\]
where $\Gscr'$ is still a skyscraper sheaf supported on $D - N D'$.  Part of the long exact sequence of cohomology is
$$  H^1(X, \Fscr \otimes \O_X(ND')) \to H^1(X,\Fscr \otimes \O_X(D)) \to H^1 (X, \Gscr')$$
The left term vanishes by \eqref{eq:ample}, and the right vanishes as $\Gscr'$ is a skyscraper sheaf.  Thus $H^1(X,\Fscr \otimes \O_X(D))=0$.
\end{proof}

We now prove  Proposition~\ref{prop:abstractsplittings}.  By looking at stalks we see that the Hom-sheaf
$
 \HOM_{\O_X}(\Fscr_3, \Fscr_1) 
$
is locally free.  Also, notice that for a divisor $D$
\begin{equation}
 \HOM_{\O_X} (\Fscr_3 , \Fscr_1(D))  \simeq \Fscr_3^\vee \otimes \Fscr_1 \otimes \O_X(D) 
 \simeq \HOM_{\O_X}(\Fscr_3 , \Fscr_1)  \otimes \O_X(D).\label{eq:F3iden}
\end{equation}
Applying the functor $\HOM_{\O_X}(\cdot, \Fscr_1(D))$  to the exact sequence in Proposition~\ref{prop:abstractsplittings} and using the assumption that
$\Fscr_3$ is locally free, we obtain an exact sequence
\begin{align*}
 0 \to \HOM_{\O_X}( \Fscr_3, \Fscr_1(D))  \to \HOM_{ \O_X} (\Fscr_2 , \Fscr_1(D)) 
 \to  \HOM_{ \O_X}(\Fscr_1 , \Fscr_1(D) ) \to 0.
\end{align*}
Passing to global sections,
part of the long exact sequence of cohomology is
\begin{align*}
 H^0(X, \HOM_{ \O_X} (\Fscr_2 , \Fscr_1(D)))  \overset{f} \to H^0(X, \HOM_{ \O_X}(\Fscr_1 , \Fscr_1(D) )) \to H^1(X, \HOM_{\O_X}( \Fscr_3, \Fscr_1(D))).
\end{align*}
Applying Lemma~\ref{lem:distinctdivisor} with $\Fscr = \HOM_{\O_X}(\Fscr_3, \Fscr_1)$
and appealing to (\ref{eq:F3iden}), we choose $D = \sum_i [P_i]$ where the distinct $P_i$ avoid $S$ such that $H^1(X, \HOM_{\O_X}(\Fscr_3,\Fscr_1(D))) =0$.  Thus $f$ is surjective, and the desired morphism $r$ is the pre-image of the natural inclusion in $H^0(X, \HOM_{ \O_X}(\Fscr_1 , \Fscr_1(D) ))$.
%This means that we may find a preimage of the natural inclusion in the second term giving the desired morphism. 
\qed

\begin{cor} \label{cor:secondsplitting} 
A choice of projector $r : F_* (\Omega^1_X(E)) \to \ker V_X(F_* (E + p D))$ as in Corollary~\ref{cor:splittings} is equivalent to a choice of splitting of \eqref{eq:cartiersequence}
$$s : \Im V_X(\Ebar) \to F_*( \Omega^1_X(E + p D))$$ such that $V_X \circ s$ is the natural inclusion $\Im V_X(\Ebar) \to \Im V_X(\Ebar+D)$.  Furthermore, we may choose $s$ so that for any point $Q \in \sup(E)$, if $\ord_Q(\omega) \geq d$ then 
\begin{equation}
 \ord_Q(s(\omega)) \geq  (d+1)p-1.\label{eq:sectionpolarprop}
\end{equation}
\end{cor}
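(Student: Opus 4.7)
My plan has two parts: an abstract bijection between retractions $r$ and sections $s$, followed by a local-then-global argument for the order bound \eqref{eq:sectionpolarprop}.

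For the bijection, given $r$ as in Corollary~\ref{cor:splittings}, I will define $s(\omega) := \tilde\omega - r(\tilde\omega)$, where $\tilde\omega \in F_*(\Omega^1_X(E))$ is any preimage of $\omega \in \Im V_X(\Ebar)$ under $V_X$. Well-definedness follows from the hypothesis that $r$ is the natural inclusion when restricted to $\ker V_X(F_*E)$, and $V_X \circ s$ is the natural inclusion $\Im V_X(\Ebar) \hookrightarrow \Im V_X(\Ebar+D)$ because $V_X(r(\tilde\omega)) = 0$. Conversely, given $s$, setting $r(\tilde\omega) := \tilde\omega - s(V_X(\tilde\omega))$ (viewed inside $F_*(\Omega^1_X(E+pD))$) produces a retraction, and a routine check confirms that the two constructions are mutually inverse.

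For the order bound, I would argue formally at each $Q \in \sup(E)$. After choosing a local uniformizer $t_Q$, the explicit formula
\[
s^{\mathrm{expl}}_Q\Bigl(\sum_i a_i t_Q^i \tfrac{dt_Q}{t_Q}\Bigr) := \sum_i a_i^p t_Q^{pi}\tfrac{dt_Q}{t_Q},
\]
read off from \eqref{eq:localcartier}, defines a section of $V_X$ on the formal stalk at $Q$ that lands in $F_*(\Omega^1_X(E))_{\widehat{Q}}$ (the inequality $p\lceil n/p\rceil \leq n+p-1$ with $n = \ord_Q(E)$ confirms this), and it manifestly satisfies the order property: if $\ord_Q(\omega) \geq d$, so $a_i = 0$ for $i \leq d$, then $s^{\mathrm{expl}}_Q(\omega)$ is supported in degrees $\geq p(d+1)$, forcing $\ord_Q(s^{\mathrm{expl}}_Q(\omega)) \geq p(d+1)-1 = (d+1)p-1$.

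To globalize, start with an arbitrary $s_0$ produced from the first part. The local discrepancies $\delta_Q := s_0 - s^{\mathrm{expl}}_Q$ are kernel-valued sections of $\HOM_{\O_X}(\Im V_X(\Ebar), \ker V_X(F_*(E+pD)))$ on a formal neighborhood of $Q$; since \eqref{eq:sectionpolarprop} constrains only finitely many leading coefficients, only a finite-order jet of $\delta_Q$ need be matched. Applying Lemma~\ref{lem:distinctdivisor} to the above Hom-sheaf, upon further enlarging $D$ by adjoining generic points outside $S$, kills the relevant $H^1$, so global sections surject onto the product of finite-order jets along $\sup(E)$ and deliver a global $\delta$ realizing the prescribed corrections. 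Setting $s := s_0 - \delta$ yields a section still satisfying $V_X \circ s = $ inclusion and with the required order behavior at each $Q$. The main obstacle is this final globalization step, which refines Proposition~\ref{prop:abstractsplittings} by demanding that the splitting match prescribed jets along $\sup(E)$; the same Serre-vanishing strategy underlies the extension.
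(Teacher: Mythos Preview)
Your bijection between $r$ and $s$ matches the paper's argument exactly.

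For the order bound \eqref{eq:sectionpolarprop}, your local-to-global strategy is workable but considerably more laborious than what the paper does. The paper avoids any jet-matching or correction argument entirely via the following trick: replace $E$ by the smaller divisor
\[
E' := \sum_{Q \in \sup(E)} \Bigl(p\bigl(\lceil \ord_Q(E)/p\rceil - 1\bigr) + 1\Bigr)[Q],
\]
whose coefficients are congruent to $1 \pmod p$ and satisfy $\overline{E}' = \overline{E}$ and $E' \le E$. Applying Corollary~\ref{cor:splittings} to $E'$ (with the same $D$) yields a section $s': \Im V_X(\overline{E}) \to F_*(\Omega^1_X(E'+pD))$, and composing with the inclusion into $F_*(\Omega^1_X(E+pD))$ gives the desired $s$. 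The order bound is then automatic from $\O_X$-linearity: if $\ord_Q(\omega) \ge d$ then $\omega = t_Q^{d+\overline{E}_Q}\cdot \omega'$ with $\omega'$ a local section, so $\ord_Q(s(\omega)) \ge p(d+\overline{E}_Q) - \ord_Q(E') = (d+1)p-1$.

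Your approach has the virtue of being more explicit about what the section looks like locally, but it forces you to prove a genuine strengthening of Proposition~\ref{prop:abstractsplittings} (splittings with prescribed finite jets along $\sup(E)$), and as you note this may require further enlarging $D$. The paper's trick sidesteps all of this: by shrinking $E$ to $E'$ while keeping $\overline{E}$ fixed, the target of the section is already small enough that no local correction is needed, and one invokes Corollary~\ref{cor:splittings} as a black box rather than reopening its proof.
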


\begin{proof}
Given $r$, a section of
\[
 0 \to (\ker V_X)_\eta \overset{\imath}\to (F_* \Omega^1_{X,\eta}) \to (\Im V_X)_\eta \to 0
\]
is given by $s(m) = \widetilde{m} - \imath r (\widetilde{m})$, where $\widetilde{m}$ is any preimage of $m \in (\Im V_X)_\eta$.  It is independent of the choice of lift, which allows us to make local calculations using \eqref{eq:localcartier} to check that this recipe defines a map $s: \Im V_X(\overline{E}) \to F_*(\Omega^1_X(E + p D))$.  Conversely, given $s$ we define $r(x) = x - s V_X(x)$ for any section $x$ of $F_* (\Omega^1_X(E))$.  It is straightforward to check that $r \circ \imath$ is the natural inclusion and that the recipes for $r$ given $s$ and $s$ given $r$ are inverse to each other, so that this process really does give an equivalence between a splitting $s$ and a projector $r$ as claimed.

To get the last claim, set
\begin{equation} \label{eq:defne'}
E' := \sum_{Q \in \sup(E)} p (\left \lceil \frac{\ord_Q(E)}{p} \right \rceil-1) +1
\end{equation}
and note that $\overline{E}' = \overline{E}$ and $E' \le E$.
Thanks to Corollary~\ref{cor:splittings} and its proof, we obtain
a morphism $r': F_*(\Omega^1_X(E'))\rightarrow \ker V_X(F_*(E'+pD))$
%(with $r'\circ \iota$ the natural inclusion) 
with the {\em same} $D$ as above.  
Via the correspondence already established,
we obtain a section
%From that choice of $r'$ and what we have observed above, 
 $$s' : \Im V_X(\overline{E}) \to F_*( \Omega^1_X(E' + pD)).$$
% be the section corresponding to $r'$ as above.
Composing with the natural inclusion $F_*( \Omega^1_X(E' + pD)) \into F_* (\Omega^1_X(E+pD))$, we obtain the desired map $s$: the condition on orders of vanishing follows by twisting.  
\end{proof}

%Let $r : F_*(\Omega^1_X(E+pD)) \to \ker V_X(F_*(E+pD))$ be the corresponding map under the equivalence above.  

\begin{remark}
Let $r : F_*(\Omega^1_X(E)) \to \ker V_X(F_*(E+pD))$ be the projector corresponding to a splitting $s : \Im V_X(\Ebar) \to F_*( \Omega^1_X(E + p D)) $.  No matter the choice of $E$ and $D$, the generic fiber of \eqref{eq:cartiersequence} is split by $r$ and $s$.
\end{remark}

\begin{remark}
 When $X = \PP^1$, the explicit splitting of Lemma~\ref{lem:p1} shows we may take $D=0$.  This is a good simplification for subsequent arguments on a first reading.
\end{remark}

\section{Artin-Schreier Covers and Differential Forms} \label{sec:asc}

Let $\pi: Y \to X$ be a branched Galois cover of
smooth
projective and connected curves over an algebraically closed field $k$ of characteristic $p$, with Galois group $G\simeq \ZZ/p\ZZ$
and branch locus $S\subset X$.
We are mainly interested in the case that $S$ is non-empty, though we do not assume this at the outset.
By Artin-Schreier theory, $K':=k(Y)$ is a degree-$p$ Artin--Schreier extension of $K:=k(X)$;
that is, there exists $f \in K$ such that $K'= K(y)$, where
\begin{equation}
 y^p -  y = f.\label{eq:ASexplicit}   
\end{equation}
We may and do assume a fixed choice of generator $\tau$ of $G$ sends $y$ to $y+1$, and  
we write $g_X$ for the genus of $X$.  We henceforth {\em fix} a closed point $Q' \in X$ with $\ord_{Q'}(f)=0$.

The choice of $f$ is far from unique.  In particular, by replacing
 $y$ with $y + h$ for $h\in K$ we may replace $f$ with $f + h^p - h$ but obtain the same extension.  Nonetheless, it is possible to normalize the choice of $f$ as follows.
 
\begin{defn}\label{def:minimal}
We say $\psi \in K$ is {\em minimal} for $\pi : Y \to X$ (or for $K'/K$) if
  \begin{itemize}
 \item $\ord_Q(\psi) \geq 0$ or $p \nmid \ord_Q(\psi)$ for all $Q \in X$ with $Q\neq Q'$.

 \item  $p | \ord_{Q'}(\psi)$ and $-\ord_{Q'}(\psi) \leq p (2g_X-2)$.
\end{itemize}
  \end{defn}
  
\begin{lem} \label{lem:minimality}
The Artin-Schreier extension $K'/K$ may be described as in $(\ref{eq:ASexplicit})$
with $f$ minimal.        
 \end{lem}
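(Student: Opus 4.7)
The plan is to start from an arbitrary representative $f \in K$ with $K' = K(y)$, $y^p - y = f$, and modify via $f \mapsto f + h^p - h$ (which corresponds to replacing $y$ by $y + h$ and preserves the extension) until both bullets of Definition~\ref{def:minimal} hold. The argument proceeds in two stages.

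\emph{Stage 1 (first bullet).} Suppose $Q \in X \setminus \{Q'\}$ with $\ord_Q(f) = -pn < 0$; let $a \in k^\times$ be the leading coefficient of $f$ in a local uniformizer $t_Q$, and choose $b \in k$ with $b^p = a$ (using that $k$ is algebraically closed). For any $N$ with $n - 1 + N > 2g_X - 2$, Riemann-Roch gives
\begin{equation*}
\dim L(n[Q] + N[Q']) - \dim L((n-1)[Q] + N[Q']) = 1,
\end{equation*}
so there exists $h \in L(n[Q] + N[Q'])$ with $\ord_Q(h) = -n$ and leading $t_Q^{-n}$-coefficient equal to $b$. Then $h^p - h$ has leading coefficient $a$ at $Q$, is regular outside $\{Q,Q'\}$, so $f - (h^p-h)$ has strictly larger valuation at $Q$, unchanged valuations at all other points outside $\{Q,Q'\}$, and (possibly) introduces a pole at $Q'$ of order $p \cdot |\ord_{Q'}(h)|$, which is automatically divisible by $p$. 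Iterating (since the total $p$-divisible polar order away from $Q'$ strictly decreases each step) achieves the first bullet in finitely many steps.

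\emph{Stage 2 (second bullet).} After Stage 1, write $\ord_{Q'}(f) = -pm$ with $m \geq 0$: divisibility of $\ord_{Q'}(f)$ by $p$ is automatic, because $Q' \notin S$ forces the local extension at $Q'$ to be unramified, and over the algebraically closed residue field $k$ every unramified Artin-Schreier extension is split (since $\wp(x) := x^p - x$ is surjective on $k$), i.e., $f = \wp(g)$ for some $g \in \widehat{K}_{Q'} = k(\!(t_{Q'})\!)$ with $\ord_{Q'}(g) = -m$. Suppose $m > 2g_X - 2$. Riemann-Roch again provides $h \in L(m[Q'])$ with $\ord_{Q'}(h) = -m$ and the same leading $t_{Q'}^{-m}$-coefficient as $g$. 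Setting $g' := g - h$, we have $\ord_{Q'}(g') \geq -(m-1)$, and by additivity of $\wp$ in characteristic $p$,
\begin{equation*}
f - (h^p - h) = \wp(g) - \wp(h) = \wp(g-h) = \wp(g').
\end{equation*}
Hence $\ord_{Q'}(f - (h^p-h)) \geq -p(m-1)$, so the parameter $m$ strictly decreases. The first bullet is preserved because $h$ has no poles outside $Q'$. Iterating drives $m$ down until $m \leq 2g_X - 2$.

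The main obstacle is Stage 2: one must simultaneously invoke the local triviality of the extension at $Q'$ (to obtain the formal Artin-Schreier antiderivative $g$) and a global existence result that promotes $g$ to a function $h \in K$ with no extraneous poles. The latter is precisely Riemann-Roch for single-point divisors $m[Q']$, which only guarantees the existence of elements of prescribed leading behavior once $m > 2g_X - 2$; this is exactly where the Riemann-Roch threshold in the minimality bound $-\ord_{Q'}(\psi) \leq p(2g_X - 2)$ comes from.
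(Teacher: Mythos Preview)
Your argument is correct, but it takes a longer route than the paper's. The paper exploits the standing hypothesis $\ord_{Q'}(f)=0$ for the \emph{initial} $f$ and runs a single stage: at each step it chooses $h$ with a pole of order $d$ at $Q$ and a pole of order at most $2g_X-2$ at $Q'$ (Riemann--Roch applied to $d[Q]+(2g_X-2)[Q']$). Since the cumulative correction is $\wp(H)$ with $H=\sum h_i$ and $\ord_{Q'}(H)\ge -(2g_X-2)$, the final $f$ automatically satisfies the second bullet, with no Stage~2 needed.

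By contrast, you let the pole at $Q'$ grow freely in Stage~1 and then repair it in Stage~2. This works, but your Stage~2 justification of $p\mid \ord_{Q'}(f)$ via local splitting is heavier than required: because the modified $f$ equals the original $f_0$ (with $\ord_{Q'}(f_0)=0$) plus $\wp(H)$ for a single global $H$, one has $\ord_{Q'}(f)=\min(0,\,p\cdot\ord_{Q'}(H))$ directly, and the Stage~2 iteration can proceed without ever invoking unramifiedness or completions. The paper's approach is shorter; yours is more modular, and your local-splitting observation has the virtue that it would still give $p\mid \ord_{Q'}(f)$ even if one did not know $\ord_{Q'}(f_0)=0$, relying only on $Q'\notin S$.
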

 
\begin{proof}
      Let $f\in K$ be as in (\ref{eq:ASexplicit}),
and suppose $f$ has a pole of order $p \cdot d$ at $Q \neq Q'$.  
By Riemann--Roch, 
 there exists a function with a pole of order $m$ at $Q$ and a pole of order $n$ at $Q'$ provided $m+n > 2g_X-2$.
We may therefore
choose $h\in K$ to have a pole of order $d$ at $Q$ and no poles except possibly at $Q'$,
and satisfy $\ord_Q(f+h^p-h) > \ord_Q(f)$ and 
$\ord_{Q'}(h^p-h) \geq - p(2 g_X-2)$.  Replacing $f$ with $f+h^p-h$ and repeating this procedure, we may thereby arrange for $f$ to be minimal.
\end{proof}

We now {\em fix} a choice of minimal $f\in K$ giving the Artin--Schreier extension
$K'/K$ as in (\ref{eq:ASexplicit}).

 \begin{remark} \label{remark:p1}
 When $X = \PP^1$, it is easy to arrange for $f$ to have poles only at the branch locus since there are functions on $\PP^1$ with a single simple pole.  In this case, our analysis below at $Q'$ is unnecessary: this is a helpful simplification on a first reading.  This reduction is not possible in general: see \cite[\S7]{shabat}, especially the second example after Proposition 49.
\end{remark}

We now fix some notation relating to the cover $\pi : Y \to X$
and our fixed choice of $Q'$ and minimal $f$ giving the corresponding extension
of function fields as in (\ref{eq:ASexplicit}).
 
\begin{defn} \label{defn:collecteddefs}
With notation as above:
\begin{itemize}
    \item Let $S':=S\cup \{Q'\}$.
    \item For $Q\in X$ define $d_Q:=\max\{0,-\ord_Q(f)\}$.
    \item  For $0\leq i \leq p-1$ and $Q\in X$ define
    \[
    n_{Q,i} := \begin{cases} \left \lceil \displaystyle\frac{(p-1-i) d_Q}{p} \right \rceil & \text{if}\ Q\neq Q' \\
    (p-1-i) d_{Q'} & \text{if}\ Q=Q'
    \end{cases}
    \]
    and set
    \[
      E_i := \sum_{Q \in S'}  n_{Q,i} [Q]  \quad \text{and} \quad \overline{E}_i := \sum_{Q \in S'} \lceil n_{Q,i} /p \rceil [Q].
    \]
    %Note that $n_{Q,i}=0$ for all $i$ when $Q\not\in S'$.
\end{itemize}
\end{defn}

\begin{lem} \label{lem:ramification}
The map $\pi:Y\rightarrow X$ is ramified over $Q\in X$
if and only if $d_Q>0$ and $p\nmid d_Q$,
in which case $d_Q$
 is the unique break in the lower-numbering ramification filtration of $G$ above $Q$. 
\end{lem}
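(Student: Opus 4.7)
The plan is to reduce the statement to a local question at each closed point $Q\in X$ and invoke the classical theory of Artin--Schreier extensions of local fields. Passing to the completion $\widehat{K}_Q$ (with uniformizer $t$), the cover $\pi$ is governed by the local extension $\widehat{L}_Q/\widehat{K}_Q$, which is again given by $y^p - y = f$; the ramification of $\pi$ above $Q$ is determined entirely by this local extension.

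For the local step I would invoke the standard fact (see, e.g., Serre, \emph{Local Fields}, Ch.\ IV \S2): after Artin--Schreier substitutions $f \mapsto f + h^p - h$ with $h \in \widehat{K}_Q$, one may assume either $\ord_Q(f) \geq 0$---in which case $y^p - y - f$ reduces modulo the maximal ideal to a separable polynomial over $k$, splits by Hensel's lemma, and $\pi$ is unramified (in fact split) at $Q$---or $\ord_Q(f) = -d$ with $p \nmid d$, in which case $\widehat{L}_Q/\widehat{K}_Q$ is totally ramified of degree $p$. In the latter case I would pick positive integers $a,b$ with $ap - bd = 1$ (Bezout) so that $\pi_L := t^a y^b$ is a uniformizer of $\widehat{L}_Q$; using $(y+1)^b - y^b = by^{b-1} + \cdots$ together with $p \nmid b$ (which follows from $-bd \equiv 1 \pmod p$), a direct computation gives $w(\tau\pi_L - \pi_L) = ap - (b-1)d = d+1$ for the generator $\tau : y \mapsto y+1$, which identifies $d$ as the unique break in the lower-numbering ramification filtration.

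With this local dictionary in hand, the lemma falls out by unpacking Definition \ref{def:minimal}. For $Q \neq Q'$, minimality forces $\ord_Q(f) \geq 0$ or $p \nmid \ord_Q(f)$, so $f$ is already in the normalized local form of the previous paragraph and the conclusion reads off directly. For $Q = Q'$, minimality only guarantees $\ord_{Q'}(f) \geq 0$ or $p \mid \ord_{Q'}(f)$; in the second case, writing $f = c t^{-pm} + \cdots$ in the completion and taking $h = c^{1/p} t^{-m} \in \widehat{K}_{Q'}$ (using that $k$ is perfect), the substitution $f \mapsto f + h^p - h$ strictly raises the local valuation, so iterating this \emph{purely local} move reduces to the unramified case---consistent with the fact that $d_{Q'}$ is either $0$ or a positive multiple of $p$ and so never satisfies ``$d_{Q'}>0$ and $p \nmid d_{Q'}$.'' There is no conceptual obstacle: the real content is the classical local ramification computation of the second paragraph, and the only bookkeeping hurdle is handling $Q'$, where poles of $f$ whose order is divisible by $p$ are permitted globally by minimality but are always killable by an Artin--Schreier substitution available in the completion.
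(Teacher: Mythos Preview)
Your argument is considerably more detailed than the paper's, which simply cites \cite[Proposition 3.7.8]{stichtenoth}; the local computation you give for $Q\neq Q'$ (producing the uniformizer $t^a y^b$ via B\'ezout and reading off $\ord_P(\tau\pi_L-\pi_L)=d+1$) is exactly the standard proof behind that reference and is correct.

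There is, however, a genuine gap in your treatment of $Q=Q'$. You assert that because $p\mid \ord_{Q'}(f)$, iterating the leading-term-killing substitution in the completion ``reduces to the unramified case.'' But each step only strictly raises the valuation; the process halts as soon as the new valuation is either nonnegative \emph{or} negative and prime to $p$, and minimality at $Q'$ does not rule out the second outcome. For a concrete counterexample, take $p$ odd and $f=t^{-p}+t^{-1}$: one substitution leaves $2t^{-1}$, so the local extension is ramified with break $1$ even though $p\mid\ord(f)$. Thus $p\mid\ord_{Q'}(f)$ by itself does \emph{not} force $\pi$ to be unramified over $Q'$.

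The fix is to invoke the paper's \emph{choice} of $Q'$ rather than minimality. The point $Q'$ was selected so that the original (pre-minimality) $f$ satisfies $\ord_{Q'}(f)=0$; this immediately makes the local extension at $Q'$ split (Hensel applied to the separable reduction of $y^p-y-f$). Replacing $f$ by a minimal representative changes $f$ but not the extension $K'/K$, hence not the ramification at $Q'$. With this one-line correction your argument is complete. (A trivial aside: with $h=c^{1/p}t^{-m}$ the map $f\mapsto f+h^p-h$ doubles the leading term rather than cancelling it; you want $h=(-c)^{1/p}t^{-m}$, or equivalently the substitution $f\mapsto f-h^p+h$.)
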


\begin{proof}
This is well known; see for example \cite[Proposition 3.7.8]{stichtenoth}.
%If $\ord_Q(f) < 0$ and $p \nmid d_Q$, let $P$ be a point of $Y$ over $Q$.  Then the equation $y^p - y = f$ and the fact that $d_Q>0$ implies that $\ord_P(y^p-y) = p \ord_P(y) = \ord_P( f) = e_{P/Q} \ord_Q(f)$.  Since $p \nmid d_Q$, we see that $\pi$ is ramified over $Q$.
%Let $t_Q$ be a uniformizer of $\O_{X,Q}$, and $P$ a point over $Q$.  Pick positive integers $a$ and $b$ such that $1 = ap - b d_Q$, using the fact that $p \nmid d_Q$.  Then $u = t_Q^a y^b$ is a uniformizer of $\O_{Y,P}$.  We compute
%\[
% \ord_P(\tau u - u) = \ord_Q\left( t_Q^a \left( (y+1)^b - y^b \right) \right) = \ord_P\left( t_Q^a \left( b y^{b-1} + \ldots \right) \right)= p a + (b-1) (-d_Q) = d_Q+1.
%\]
%This shows that $d_Q$ is the break in the lower-numbering filtration above $Q$.
%If $\ord_Q(f) <0$ and $p | d_Q$ then since we chose $f$ to be minimal we must have that $Q = Q'$, and $\pi$ is unramified at $Q'$.  Otherwise $\ord_Q(f) \geq 0$, and direct inspections shows that there are $p$ points above $Q$ so $\pi$ is unramified above $Q$. 
\end{proof}

\begin{remark}\label{rem:nQzero}
It follows from Lemma \ref{lem:ramification} and the fact that the fixed $f$ is minimal that if $Q\not\in S'$ then $d_{Q}=0$ and hence also $n_{Q,i}=0$ for all $i$.
\end{remark}

We now investigate when a meromorphic differential on $Y$ is regular.

\begin{lem} \label{lem:regular}
Let $\omega\in (\pi_* \Omega^1_Y)_\eta$ be a meromorphic differential
on $Y$, and write $\omega=\sum_i \omega_i y^i$
with $\omega_i\in \Omega^1_{X,\eta}$ using the identification $(\ref{eq:genericisom})$.
Let $P\in Y$ with $\pi(P)=Q \neq Q'$.
Then $\omega$ is regular at $P$ provided $\ord_Q(\omega_i) \ge -n_{Q,i}$ 
for all $0 \leq i \leq p-1$.
\end{lem}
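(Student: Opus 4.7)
The plan is to treat the unramified case $Q\notin S$ and the ramified case $Q\in S$ separately, showing in each that every summand $\omega_i y^i$ is itself regular at $P$, so that the sum $\omega$ is regular at $P$ as well.

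If $Q\notin S$, then by minimality of $f$ together with our standing assumption $Q\neq Q'$, Lemma~\ref{lem:ramification} and Remark~\ref{rem:nQzero} give $d_Q=0$; in particular $f$ is regular at $Q$, so $y$ is integral over $\O_{X,Q}$ and regular at $P$, and every $n_{Q,i}=0$. The hypothesis $\ord_Q(\omega_i)\geq 0$ then pulls back, using that $\pi$ is \'etale at $P$, to $\ord_P(\omega_i)\geq 0$, and regularity of each $\omega_i y^i$ at $P$ is immediate.

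For $Q\in S$, the cover is totally wildly ramified with $e_{P/Q}=p$. First I would verify $\ord_P(y)=-d_Q$: since $\ord_P(f)=p\cdot\ord_Q(f)=-p\,d_Q<0$, the equation $y^p-y=f$ forces $y$ to have a pole at $P$, and then $p\cdot\ord_P(y)=\ord_P(y^p-y)=-p\,d_Q$. Next, to determine $\ord_P(\pi^{*}dt)$ for a uniformizer $t$ at $Q$, I would differentiate the Artin--Schreier equation: in characteristic $p$ this gives $dy=-f'(t)\,dt$. Expanding $y$ at a uniformizer of $P$ and using $p\nmid d_Q$ yields $\ord_P(dy)=-d_Q-1$, while $\ord_Q(f'(t))=-d_Q-1$ (again since $p\nmid d_Q$) gives $\ord_P(f'(t))=-p(d_Q+1)$. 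Comparing produces
\[
\ord_P(\pi^{*}dt)=(p-1)(d_Q+1),
\]
consistent with the standard different formula for an Artin--Schreier cover with unique break $d_Q$.

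Combining these, from $\ord_Q(\omega_i)\geq -n_{Q,i}$, $\ord_P(\pi^{*}\omega_i)=p\,\ord_Q(\omega_i)+(p-1)(d_Q+1)$, and $\ord_P(y^i)=-i\,d_Q$, I would conclude
\[
\ord_P(\omega_i y^i) \;\geq\; -p\,n_{Q,i}+(p-1)(d_Q+1)-i\,d_Q.
\]
The defining choice $n_{Q,i}=\lceil (p-1-i)d_Q/p\rceil$ is calibrated precisely so that $p\,n_{Q,i}\leq (p-1-i)d_Q+(p-1)$; substituting this estimate collapses the right-hand side to $0$, and summing over $i$ finishes the proof. The main technical point is the local computation of $\ord_P(\pi^{*}dt)$ at the wildly ramified point $P$; after that the rest is pure arithmetic. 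The hypothesis $p\nmid d_Q$, guaranteed by minimality of $f$ and Lemma~\ref{lem:ramification}, is used crucially both in that computation and in the final ceiling estimate.
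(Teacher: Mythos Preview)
Your proof is correct and reaches the stated conclusion by a route that differs from the paper's in two places.

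In the ramified case $Q\in S$, both proofs compute $\ord_P(\pi^*dt_Q)=(p-1)(d_Q+1)$, but the paper does so by constructing an explicit uniformizer $u=t_Q^a y^b$ at $P$ (with $ap-bd_Q=1$) and computing $du$ directly, whereas you differentiate the Artin--Schreier equation to get $dy=-df$ and compare valuations. Your argument is a bit slicker; the paper's has the mild advantage of exhibiting the uniformizer concretely.

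In the unramified case $Q\notin S'$, your argument is genuinely simpler: you only need that $y$ is integral over $\O_{X,Q}$ and that $\pi$ is \'etale, so each summand $\omega_i y^i$ is visibly regular. The paper instead passes to the $p$ points over $Q$, writes the $\lambda_j=\sum_i \omega_i(h+j)^i$, and inverts a Vandermonde matrix with entries in $\O_{X,Q}$. The point of the Vandermonde argument is that it proves the \emph{converse} as well: $\omega$ regular above $Q$ implies each $\omega_i$ regular at $Q$. This ``only if'' direction is not stated in the lemma, but is used later (for instance in verifying that the map $\psi_i$ in Proposition~\ref{prop:ws} is well-defined). Similarly, in the ramified case the paper observes that the valuations $\ord_P(\omega_i y^i)$ are pairwise distinct modulo $p$, so $\ord_P(\omega)=\min_i \ord_P(\omega_i y^i)$, again yielding the converse. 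Your proof, as written, establishes exactly what the lemma claims but not this stronger bidirectional statement.
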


\begin{proof}
This is also probably known, but we were unable to locate a reference in the degree of generality we need: for example, Boseck only treats the case that $X =\PP^1$ \cite[Satz 15]{boseck}.  We provide a proof for the convenience of the reader.

Let $P\in Y$ and set $Q=\pi(P)$.
Suppose first that $Q\in S$.  Let $t_Q$ be a uniformizer of $\O_{X,Q}$, and note that the fraction field of $\O_{Y,P}$ can be obtained from the fraction field of $\O_{X,Q}$ by adjoining a root of the polynomial $y^p-y = f$ where $f$ has a  pole of order $d_Q$ at $Q$
by Lemma \ref{lem:ramification}; in particular, $\ord_P(y) = - d_Q$. %that $\ord_Q(dy) = \ord_Q(df_Q) = -d_Q-1$, and hence that $\ord_P(dy) = -p (d_Q+1)$.  

As $p\nmid d_Q$, we may choose positive integers $a$ and $b$ with $1 = ap - b d_Q$, so that $u = t_Q^a y^b$ is a uniformizer of $\O_{Y,P}$.  A direct computation shows that $du = u^{-(p-1) (d_Q+1)} \beta dt_Q$ where $\beta \in \O_{Y,P}^{\times}$. Hence $\ord_P(dt_Q) = (p-1) (d_Q + 1)$, and $\ord_P(\omega_i) = (p-1) (d_Q + 1) + p \ord_Q(\omega_i)$.  
We conclude that 
\begin{equation}
 \ord_P( \omega_i y^i) = p \ord_Q(\omega_i) + (p-1) (d_Q + 1) - i d_Q.\label{eq:valomega}
\end{equation}
This is non-negative precisely when $\ord_Q(\omega_i) \geq - n_{Q,i}$.  %(An elementary argument shows the two expressions involving floor and ceiling functions are equal.)  
As $p\nmid d_Q$ and $0 \le i \le p-1$, the $p$ integers in (\ref{eq:valomega})
are all distinct modulo $p$, and hence distinct, so 
\begin{equation}
    \ord_P(\omega) = \min_{0\le i\le p-1} \ord_P(\omega_iy^i)
\end{equation}
and $\omega$ is regular at $P$ precisely when $\ord_Q(\omega_i) \geq - n_{Q,i}$ for all $i$.

Now suppose $Q\not\in S'$, so that $\pi$ is \'etale over $Q$.
There are then $p$ points $P=P_0, \ldots , P_{p-1}$ over $Q$, and $\O_{Y,P_j} \simeq \O_{X,Q}$ (and likewise for differentials) for all $j$.
  Under these identifications, the function $y$ corresponds to regular functions $h, h+1,\ldots ,h+(p-1)$ in each of the $\O_{Y,P_j} \simeq \O_{X,Q}$.  Thus $\omega = \sum_i \omega_i y^i$ is regular at one (and hence every) point of $Y$ over $Q$ provided
\[
 \ord_Q\left( \sum_i \omega_i (h+j)^i\right) \geq 0
\]
for $0 \leq j \leq p-1$.  Define $\lambda_j = \sum_i \omega_i (h+j)^i \in \Omega^1_{X,Q}$, depending on $\omega$.  The $\lambda_j$ and $\omega_i$ are related by the Vandermonde matrix $M = ((h+j)^i)_{0 \leq i, j \leq p-1}$:
\[
\begin{pmatrix}
 (h+0)^0 & (h+0)^1 & \ldots & (h+0)^{p-1} \\ 
 (h+1)^0 & (h+1)^1 & \ldots & (h+1)^{p-1} \\
 \vdots & \vdots & \vdots & \vdots \\
 (h+(p-1))^0 & (h+(p-1))^1 & \ldots & (h+(p-1))^{p-1}
 \end{pmatrix} \cdot
 \begin{pmatrix} \omega_1 \\
   \omega_2 \\
   \vdots \\
   \omega_{p-1}
  \end{pmatrix}
= \begin{pmatrix}
   \lambda_1 \\
   \lambda_2 \\
   \vdots \\
   \lambda_{p-1}
  \end{pmatrix}.
\]
%It is clear that if each $\omega_i$ is regular at $Q$ then the $\lambda_j$ are regular at $Q$.  On the other hand, 
The determinant of $M$ is well-known to be
\[
 \prod_{\substack{1 \leq i,j \leq p \\ i \neq j }} ( (h+i) - (h+j)) = \prod_{i \neq j} (i-j) \in \FF_p^\times,
\]
and it follows that all $\omega_i$ are regular at $Q$ if and only if
all $\lambda_j$ are. This completes the proof.
\end{proof}

\begin{remark}
If $\pi(P)=Q'$, then the analogous condition that a meromorphic differential
on $Y$ be regular at $P$
is considerably more complicated as $\ord_{P}(y) < 0$, so the function $h$ in the proof of Lemma \ref{lem:regular} has a pole at $Q$ and the entries of $M$ are no longer in $\O_{X,Q}$.  One simple case will be analyzed in the proof of Proposition~\ref{prop:ws}, while a more complete analysis is deferred to Lemma~\ref{lem:fixQ}.
\end{remark}

We next study a filtration on $\pi_*\Omega^1_Y$ arising from the $G\simeq \ZZ/p \ZZ$ action on $Y$.  
We fix a generator $\tau \in G$ and obtain a map of sheaves $\Omega^1_Y \to \tau_* \Omega^1_Y$.  After pushing forward using the equivariant $\pi$, we obtain a map $\tau: \pi_* \Omega^1_Y \to \pi_* \Omega^1_Y$.  This induces a map at the stalk at the generic point.  

For technical reasons, instead of $\pi_*\Omega^1_Y$,
we will work with a slightly larger sheaf of meromorphic differentials
defined as follows.
Recall that $d_{Q'}$ is the order of the pole of $f$ above $Q'$ and that $p | d_{Q'}$.  Furthermore, $y$ has a pole of order $d_{Q'}$ at any point above $Q'$.  Recall we defined $n_{Q',i} = (p-1-i) d_{Q'}$.

\begin{defn} \label{defn:F0}
Define the sheaf $\Fscr_0 \subset \pi_* \Omega^1_{Y,\eta}$ on $X$ which on an open set $U$ has sections
\[
 \left \{ \omega = \sum_i \omega_i y^i :  \omega \text{ is regular above all } Q \in U\setminus\{Q'\}\ \text{and } \ord_{Q'} \omega_i \geq - n_{Q',i}\ \text{for all}\ i\ \text{if}\ Q' \in U \right\}.
\]
\end{defn}

\begin{lem}\label{lem:F0props}
The natural $G$-action on $\pi_*\Omega^1_{Y,\eta}$ preserves
 $\Fscr_0$, and $\pi_* \Omega^1_Y$ is a subsheaf of $\Fscr_0$.
\end{lem}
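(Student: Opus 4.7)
The plan is as follows. For the $G$-equivariance, given a section $\omega = \sum_i \omega_i y^i \in \Fscr_0(U)$, expand
\[
\tau(\omega) = \sum_{i=0}^{p-1} \omega_i (y+1)^i = \sum_{j=0}^{p-1} \omega'_j y^j, \qquad \omega'_j := \sum_{i \geq j} \binom{i}{j} \omega_i.
\]
Regularity above any $Q \in U \setminus \{Q'\}$ is automatic, since $\tau$ is an $X$-automorphism of $Y$ permuting the fiber over $Q$ and hence preserving regularity of differentials above $Q$. At $Q'$, note that $-(p-1-i) d_{Q'}$ is non-decreasing in $i$ (as $d_{Q'} \geq 0$), so its minimum for $i \geq j$ is attained at $i = j$; thus $\ord_{Q'}(\omega'_j) \geq \min_{i \geq j} \ord_{Q'}(\omega_i) \geq -(p-1-j) d_{Q'} = -n_{Q',j}$, proving $\tau(\omega) \in \Fscr_0(U)$.

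For $\pi_*\Omega^1_Y \subseteq \Fscr_0$, let $\omega = \sum_i \omega_i y^i \in \pi_*\Omega^1_Y(U)$ be regular on $\pi^{-1}(U)$. Regularity above $Q \in U \setminus \{Q'\}$ follows immediately from the assumption. When $Q' \in U$, we must verify $\ord_{Q'}(\omega_i) \geq -(p-1-i) d_{Q'}$ for each $i$. Since $p \mid d_{Q'}$, Lemma~\ref{lem:ramification} tells us $\pi$ is unramified at $Q'$. Working in the complete local field $K_0 := \mathrm{Frac}(\widehat{\O}_{X,Q'})$, successive cancellation of the pole terms of $f$ via $p$-th roots of leading coefficients (available since $k$ is algebraically closed) produces $h_0 \in K_0$ solving $h_0^p - h_0 = f$ with $\ord_{Q'}(h_0) \geq -d_{Q'}$. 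This forces $Q'$ to split completely into $p$ points $P_0, \ldots, P_{p-1}$ on $Y$, with $y$ corresponding to the germ $h_0 + j$ at $P_j$ under the isomorphism $\widehat{\O}_{Y, P_j} \cong \widehat{\O}_{X, Q'}$.

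Regularity of $\omega$ at each $P_j$ then says $\lambda_j := \sum_i \omega_i (h_0 + j)^i$ is a regular section of $\Omega^1_X$ at $Q'$. The coefficient matrix $M_{ji} := (h_0 + j)^i$ is Vandermonde with determinant $\prod_{j \neq k}((h_0 + j) - (h_0 + k)) = \prod_{j \neq k}(j - k) \in \FF_p^\times$, so Lagrange interpolation gives
\[
\omega_i = \sum_j \frac{c_{ij}(h_0)}{\prod_{k \neq j}(j - k)} \lambda_j,
\]
where $c_{ij}(h_0)$, the coefficient of $y^i$ in $\prod_{k \neq j}(y - h_0 - k)$, is a polynomial in $h_0$ of degree $p-1-i$. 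Combining this with $\ord_{Q'}(\lambda_j) \geq 0$ and $\ord_{Q'}(h_0) \geq -d_{Q'}$ yields $\ord_{Q'}(\omega_i) \geq -(p-1-i) d_{Q'}$, as required.

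The main obstacle is the local analysis at the unramified point $Q'$: one must produce $h_0 \in K_0$ via explicit Artin-Schreier cancellation and then track pole orders through Vandermonde inversion. The definition of $n_{Q',i}$ is deliberately generous enough at $Q'$ to accommodate this estimate, so the inclusion holds with room to spare.
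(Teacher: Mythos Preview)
Your proof is correct and takes a slightly different route from the paper's.

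For the containment $\pi_*\Omega^1_Y \subseteq \Fscr_0$, the paper proceeds by descending induction using the operators $(\tau-1)^n$: since $\omega$ is regular, so is $(\tau-1)^n\omega = n!\,\omega_n + \sum_{j>n} f_j(y)\,\omega_j$ with $\deg_y f_j \le j-n$; evaluating at the local root $h$ (which the paper notes has $\ord_{Q'}(h)=-d_{Q'}/p \ge -d_{Q'}$) and invoking the inductive hypothesis on $\omega_{n+1},\ldots,\omega_{p-1}$ gives the bound on $\omega_n$. You instead invert the Vandermonde system directly via Lagrange interpolation, bounding the degree-$(p-1-i)$ coefficients $c_{ij}(h_0)$ using $\ord_{Q'}(h_0)\ge -d_{Q'}$. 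Both arguments rest on the same local data (the root $h_0$ and its pole order) and the same non-sharp estimate $\ord_{Q'}(h_0)\ge -d_{Q'}$; yours is closer in spirit to the Vandermonde argument already used in Lemma~\ref{lem:regular} for unramified $Q\ne Q'$, while the paper's $(\tau-1)^n$ trick is a bit more structural and avoids writing down the inverse matrix explicitly.

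Two small remarks. First, the Vandermonde determinant is $\prod_{j<k}(k-j)$, not $\prod_{j\ne k}$ (the latter is its square up to sign), though of course either way it lies in $\FF_p^\times$. Second, your phrase ``successive cancellation of the pole terms of $f$ via $p$-th roots'' is more machinery than needed: since $\pi$ is \'etale over $Q'$ and $k$ is algebraically closed, $Q'$ splits completely and $y$ automatically restricts to an element $h_0\in K_0$ with $h_0^p-h_0=f$; the bound $\ord_{Q'}(h_0)=-d_{Q'}/p$ then follows immediately from comparing valuations.
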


\begin{proof}
As the first assertion is clear, 
it suffices to show that any meromorphic differential $\omega = \sum \omega_i y^i$ 
on $Y$ that is regular above $Q'$ has $\ord_{Q'} (\omega_i) \geq - (p-1-i) d_{Q'}$.  We will do so by descending induction on $i$.  Observe that
\[
 (\tau-1)^{p-1} \omega = (p-1)! \omega_{p-1}
\]
which must be regular above $Q'$: this happens only if $\ord_{Q'} (\omega_{p-1}) \geq 0$.  To deal with $i=n$ for the inductive step, note that
\begin{equation}
 (\tau-1)^{n} \omega = n! \omega_{n} + f_{n+1}(y) \omega_{n+1}  + \ldots + f_{p-1}(y) \omega_{p-1}\label{eq:regaftertau} 
\end{equation}
where $f_j(y)$ is a polynomial of degree at most $j-n$ of $y$.  
As in the proof of Lemma \ref{lem:regular},
let $h, h+1,h+2,\ldots , h+(p-1) \in \O_{X,Q'}$
correspond to the image of $y$ in the local rings $\O_{Y,P'}$
under the identifications $\O_{Y,P'}\simeq \O_{X,Q'}$ for all $P'$
lying over $Q'$.
Note that $\ord_{Q'}(h) = \frac{-d_{Q'}}{p} \geq -d_{Q'}$.  
Together with (\ref{eq:regaftertau}), the assumption that
$\omega$ is regular above $Q'$ gives
\[
 \ord_{Q'}\left( n! \omega_n + f_{n+1}(h) \omega_{n+1} + \ldots f_{p-1}(h) \omega_{p-1} \right) \geq 0.
\]
Furthermore, $\ord_{Q'}(f_{j}(h) \omega_{j}) \geq (j-n) (-d_Q) - (p-1-j) d_{Q'} = - (p-1-n) d_{Q'}$ by our inductive hypothesis, and it follows that  $\ord_{Q'} (\omega_n )\geq - (p-1-n) d_{Q'}$.
\end{proof}

Thanks to Lemma \ref{lem:F0props}, our fixed generator
$\tau\in G$ induces a map $\tau:\Fscr_0\rightarrow \Fscr_0$
that is compatible with the canonical $G$-action on $\pi_*\Omega^1_Y$.
We use this to define a filtration on $\Fscr_0$:

\begin{defn} \label{defn:wi}
For $-1\le i\le p-1$ let $W_i :=  \ker( (\tau-1)^{i+1} : \Fscr_0 \to \Fscr_0) \subset  \Fscr_0$.
\end{defn}

Note that $W_{-1} =  0$ and $W_{p-1} = \Fscr_0$. 
%that $S' = S \cup {Q'}$, that $n_{Q',i} := (p-1-i) d_{Q'} $, and that $E_i = \sum_{Q \in %S'}  n_{Q,i} [Q] $.

\begin{prop} \label{prop:ws}
For $0 \leq i \leq p-1$, we have split exact sequences of $\O_X$-modules
\begin{equation}
 0 \to W_{i-1} \to W_i \to  \Omega^1_X(E_i) \to 0\label{eq:Wfilexact}    
\end{equation}
with $E_i$ as in Definition~$\ref{defn:collecteddefs}$. A splitting is given by sending a section $\omega$ of  $\Omega^1_X(E_i)$ to $\omega y^i$.
\end{prop}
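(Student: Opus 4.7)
The plan is to first give an explicit polynomial description of $W_i$, then read off the projection and splitting from that description.

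The key preliminary observation is that for a local section $\omega = \sum_{j=0}^{p-1}\omega_j y^j$ of $\Fscr_0$ (written in the decomposition arising from $(\ref{eq:genericisom})$), one has $\omega \in W_i$ if and only if $\omega_j = 0$ for all $j > i$. Indeed, since $\tau$ fixes $\pi^{-1}\O_X$ and sends $y$ to $y+1$, we have $(\tau-1)y^j = (y+1)^j - y^j$, a polynomial of degree $j-1$ with leading coefficient $j$. Iterating, $(\tau-1)^{i+1} y^j$ is zero when $j \le i$, and otherwise a polynomial of degree $j-i-1$ in $y$ whose leading coefficient is $j!/(j-i-1)!$, a product of consecutive integers strictly between $0$ and $p$ and hence nonzero in $k$. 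Thus the differentials $(\tau-1)^{i+1}(\omega_j y^j)$ for $i < j \le p-1$ have pairwise distinct $y$-degrees, so their sum vanishes iff each $\omega_j$ does. In particular this shows $W_i$ is $\O_X$-linear (which is also immediate from $\tau$ being $\O_X$-linear).

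Given this, I would define the projection $W_i \to \Omega^1_X(E_i)$ by $\omega \mapsto \omega_i$, the coefficient of $y^i$. To see it is well-defined, I need the pole bound $\ord_Q(\omega_i) \ge -n_{Q,i}$ at every $Q \in X$. For $Q \neq Q'$ this is a consequence of Lemma~\ref{lem:regular}: the argument there in fact shows the equivalence $\omega$ regular above $Q \iff \ord_Q(\omega_j) \ge -n_{Q,j}$ for all $j$ (using that the integers $p\ord_Q(\omega_j) + (p-1)(d_Q+1) - jd_Q$ are pairwise incongruent mod $p$ in the ramified case, and using the Vandermonde computation in the étale case). For $Q = Q'$ the bound $\ord_{Q'}(\omega_i) \ge -n_{Q',i}$ is built into Definition~\ref{defn:F0} of $\Fscr_0$. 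The kernel of this projection is tautologically $W_{i-1}$ by the explicit description above.

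It remains to verify the splitting. Given a section $\omega$ of $\Omega^1_X(E_i)$, I claim $\omega y^i$ lies in $W_i$. First, $(\tau-1)^{i+1}(\omega y^i) = \omega \cdot (\tau-1)^{i+1}(y^i) = 0$, so it suffices to check $\omega y^i \in \Fscr_0$. For $Q \ne Q'$, the reverse direction of Lemma~\ref{lem:regular} applied to the expansion $\omega y^i = \sum_j \omega'_j y^j$ with $\omega'_i = \omega$ and all other $\omega'_j = 0$ gives regularity above $Q$ from the single inequality $\ord_Q(\omega) \ge -n_{Q,i}$. For $Q' \in U$, the inequality $\ord_{Q'}(\omega) \ge -n_{Q',i}$ is the defining condition of $\Fscr_0$. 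Clearly the composition $\omega \mapsto \omega y^i \mapsto \omega$ is the identity, giving the desired splitting of $(\ref{eq:Wfilexact})$.

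I don't anticipate a serious obstacle here: the explicit polynomial description of $W_i$ makes both the projection and splitting transparent, and the two directions of Lemma~\ref{lem:regular} handle exactly the two verifications required. The only mild subtlety is the bookkeeping forced by the different roles of $Q'$ and the ramified points in $S$, but this is absorbed into the design of $\Fscr_0$ and the definition of $n_{Q,i}$.
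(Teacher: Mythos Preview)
Your proposal is correct and follows essentially the same approach as the paper's own proof: both identify $W_i$ via the $y$-degree using the action of $\tau-1$, define the projection $\omega\mapsto\omega_i$, verify the pole bounds via Lemma~\ref{lem:regular} for $Q\neq Q'$ and via Definition~\ref{defn:F0} for $Q=Q'$, and then check that $\omega\mapsto\omega y^i$ splits the sequence. Your write-up is slightly more explicit than the paper's (e.g., spelling out why $(\tau-1)^{i+1}y^j$ has nonzero leading coefficient in $k$, and noting that both directions of Lemma~\ref{lem:regular} are used), but the argument is the same.
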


\begin{proof}
We compute that $(\tau-1)$ reduces the degree in $y$:
\[
 (\tau-1)\omega_j y^j = \omega_j ((y+1)^j - y^j) = \omega_j ( j y^{j-1} + \ldots ) .
\]
It follows that $\displaystyle (W_i)_\eta = \bigoplus_{j=0}^i (\Omega_X^1)_\eta y^j$. 
We define a map $\psi_i : W_i \to \Omega^1_X(E_i)$ by the formula
\[
\psi_i(\sum_{j=0}^{i} \omega_j y^j) := \omega_i.
\]
To check $\psi_i$ is well-defined, 
given any element $\displaystyle \omega = \sum_{j=0}^{i} \omega_j y^j$ in the stalk of $W_i$ at $Q\in X$, it suffices to check that $\ord_Q(\omega_i) \geq - \ord_Q(E_i)$.  Recall the definition of $E_i$ from Definition~\ref{defn:collecteddefs} and that $\ord_Q(E_i) = n_{Q,i}$.
 For $Q \neq Q'$, the claim follows immediately from Lemma~\ref{lem:regular}, while for $Q= Q'$ it follows from the definitions; for the $i=0$ case use Definition \ref{defn:F0}. One checks easily that
 the kernel of $\psi_i$ is $W_{i-1}$, and that the map sending a section $\omega$ of $\Omega^1_X(E_i)$ to the section $\omega y^i$ of $W_i$ indeed provides a splitting.
\end{proof}

\begin{remark}
The filtration $W_i$ on $\Fscr_0$ is our replacement for the (perhaps more natural)
filtration $V_i:=\ker((\tau-1)^i : \pi_*\Omega^1_Y\rightarrow \pi_*\Omega^1_Y)$
on $\pi_*\Omega^1_Y$.  Unfortunately, the corresponding exact sequence
\[
0 \to V_{i-1} \to V_i \to \Omega^1_X(\widetilde{E}_i) \to 0
\]
with  $\widetilde{E}_i := \sum_{Q \in S}  n_{Q,i} [Q] $ is {\em not} split:  
the natural splitting at the generic point given by $\omega \mapsto \omega y^i$ does not extend to a map of sheaves on $X$ as $\omega y^i$ is not regular above $Q'$ because $y$ has a pole above $Q'$.  It is precisely for this reason that we work instead
with the sheaf $\Fscr_0$ which has modified behavior above $Q'$.
%When we do not need to use $Q'$ (for example if $X=\PP^1$) this filtration suffices and agrees with the filtration of Definition~\ref{defn:wi}.
\end{remark}

 Let $\gr^\bullet \Fscr_0$ denote the associated graded sheaf for the filtration $\{W_i\}$.  

\begin{cor} \label{cor:omega}
There is an isomorphism of $\O_X$-modules  
\begin{equation}
\displaystyle \Fscr_0 \simeq \gr^\bullet \Fscr_0 \simeq \bigoplus_{i=0}^{p-1} \Omega^1_X(E_i).\label{eq:gradedisom}
\end{equation}
\end{cor}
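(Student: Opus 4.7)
The plan is to extract the decomposition directly from the split exact sequences produced in Proposition \ref{prop:ws}, with no new ingredients required.

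First I would compute the associated graded. By Definition \ref{defn:wi} we have $\gr^i \Fscr_0 = W_i/W_{i-1}$ for $0\le i \le p-1$, and Proposition \ref{prop:ws} identifies this quotient with $\Omega^1_X(E_i)$. Summing over $i$ immediately gives the second isomorphism in \eqref{eq:gradedisom}.

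Next I would reconstruct $\Fscr_0$ itself from the splittings. The sequence \eqref{eq:Wfilexact} is split via the explicit section $\omega \mapsto \omega y^i$ from $\Omega^1_X(E_i)$ into $W_i$. Proceeding by induction on $i$, starting from $W_{-1}=0$, the splitting at stage $i$ gives $W_i \simeq W_{i-1} \oplus \Omega^1_X(E_i)$, so combining these splittings yields
\[
W_i \simeq \bigoplus_{j=0}^{i} \Omega^1_X(E_j)
\]
as $\O_X$-modules. Taking $i=p-1$ and using $W_{p-1}=\Fscr_0$ gives the first isomorphism in \eqref{eq:gradedisom}.

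There is no serious obstacle: the content lies entirely in Proposition \ref{prop:ws}, which provides a simultaneous splitting of the filtration by sending a section of $\Omega^1_X(E_i)$ to $\omega y^i$. One should simply note for cleanliness that these component splittings are mutually compatible (the image of the $i$-th splitting lands in $W_i$, whose intersection with the lower $W_{i-1}$ is trivial under the projection $\psi_i$ of the proof of Proposition \ref{prop:ws}), so assembling them produces a genuine direct-sum decomposition and not merely an isomorphism on graded pieces.
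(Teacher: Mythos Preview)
Your proposal is correct and follows exactly the approach of the paper: the corollary is an immediate consequence of the splittings in Proposition~\ref{prop:ws}, and your inductive assembly of these splittings is precisely what the paper's one-line proof (``The isomorphism is provided by the splittings of the sequences (\ref{eq:Wfilexact}) of Proposition~\ref{prop:ws}'') leaves implicit. Your extra care in noting compatibility of the component splittings is fine but not strictly necessary, since each splitting $\omega\mapsto \omega y^i$ lands in $W_i$ by construction and the projections $\psi_i$ already separate the pieces.
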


\begin{proof}
  The isomorphism is provided by the 
  splittings of the sequences (\ref{eq:Wfilexact}) of Proposition \ref{prop:ws}.
   %provide the claimed isomorphism.
\end{proof}

\begin{remark}
The isomorphism of Corollary \ref{cor:omega} is only as $\O_X$-modules: it is not compatible with the Cartier operator.
\end{remark}

\section{The Cartier Operator on Stalks} \label{sec:stalks}

We keep the notation and assumptions of \S\ref{sec:asc}.
In particular, $\pi:Y\rightarrow X$ is a branched Galois cover of
smooth, projective, and connected curves over $k=\overline{k}$
with group $G\simeq \ZZ/p\ZZ$.  Recall that 
we have fixed a point $Q'$ of $X$ over which $\pi$ is unramified, 
and that the degree-$p$ Artin--Schreier extension 
$K':=k(Y)$ of $K=k(X)$ is the splitting field of $y^p-y=f$
with $f\in K$ {\em minimal} in the sense of Definition \ref{def:minimal}.
Writing simply $F$ for the absolute Frobenius morphism, we denote by
$V_Y : F_* \Omega^1_Y \to \Omega^1_Y$ and $V_X : F_* \Omega^1_X \to \Omega^1_X$ the Cartier operators on $Y$ and $X$ respectively.  In this section, we will analyze these  
maps at the generic points $\eta'$ and $\eta$ of $Y$ and $X$, respectively,
and will construct an isomorphism
\[
(\ker V_Y)_{\eta'} \simeq (\pi_* \ker V_Y)_\eta \simeq \bigoplus_{i=0}^{p-1} (\ker V_X)_\eta
\]
which we will show gives rise to 
an inclusion of sheaves
$\varphi:\pi_*\ker V_Y\hookrightarrow \bigoplus_{i=0}^{p-1} \ker V_X(F_*(E_i+pD_i))$
for certain auxiliary divisors $D_i$ on $X$ that we are able to control.
In the next section, we will analyze the image of this map.

\begin{lem} \label{lem:cartierformula}
For $\omega \in \Omega^1_{Y,\eta'}$, write $\displaystyle \omega = \sum_{i=0}^{p-1} \omega_i y^i$ with $\omega_i \in \Omega^1_{X,\eta}$.  Then
\[
 V_Y(\omega) = \sum_{j=0}^{p-1}\left( \sum_{i=j}^{p-1} V_X\left(\binom{i}{j} \omega_i (-f)^{i-j} \right)\right) y^j.
\] 
\end{lem}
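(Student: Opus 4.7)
My plan is to reduce the computation of $V_Y(\omega)$ to applications of $V_X$ on $\Omega^1_{X,\eta}$ by exploiting the Artin--Schreier relation $y^p = y + f$ and the semilinearity relation $V(g^p \eta) = g V(\eta)$.

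By additivity of $V_Y$, it suffices to compute $V_Y(\omega_i y^i)$ for each $0 \le i \le p-1$. Rewriting the relation as $y = y^p - f$, the binomial theorem gives
\[
y^i = (y^p - f)^i = \sum_{k=0}^{i} \binom{i}{k} y^{pk} (-f)^{i-k},
\]
hence $\omega_i y^i = \sum_{k=0}^{i} \binom{i}{k} (-f)^{i-k} \omega_i\, y^{pk}$. Applying $V_Y$ term-by-term and using $y^{pk} = (y^k)^p$ together with the semilinearity $V_Y((y^k)^p \eta) = y^k V_Y(\eta)$, I obtain
\[
V_Y(\omega_i y^i) = \sum_{k=0}^{i} \binom{i}{k} y^k\, V_Y\!\left((-f)^{i-k} \omega_i\right).
\]

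Next, since $(-f)^{i-k} \omega_i$ lies in the image of $\Omega^1_{X,\eta} \hookrightarrow \Omega^1_{Y,\eta'}$ induced by the inclusion $K \subset K'$, I claim that $V_Y$ applied to such an element coincides with $V_X$ applied to it. This is the compatibility of the Cartier operator with separable field extensions: because $\pi$ is generically \'etale, any separating parameter on $X$ remains separating on $Y$, so the local description \eqref{eq:localcartier} of the Cartier operator is identical in either function field. Equivalently, the restriction of $V_Y$ to $\Omega^1_{X,\eta}$ satisfies all the defining semilinearity properties uniquely characterizing $V_X$, so the two must agree.

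Combining these observations yields
\[
V_Y(\omega_i y^i) = \sum_{k=0}^{i} \binom{i}{k} y^k\, V_X\!\left((-f)^{i-k} \omega_i\right),
\]
and summing over $i$, interchanging the order of summation, and reindexing by setting $j = k$ produces the stated formula. The only genuine subtlety is the compatibility of $V_Y$ with $V_X$ on pulled-back differentials; the remainder of the argument is purely formal manipulation using the Artin--Schreier relation and the $p$-th power semilinearity of the Cartier operator.
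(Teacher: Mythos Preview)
Your proof is correct and follows essentially the same approach as the paper: both substitute $y = y^p - f$, expand via the binomial theorem, and use the $p$-th power semilinearity of the Cartier operator to pull out $y^j$. You are in fact slightly more careful than the paper in explicitly justifying why $V_Y$ restricted to $\Omega^1_{X,\eta}$ agrees with $V_X$, a point the paper's proof leaves implicit.
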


\begin{proof}
Using the relation $y = y^p - f$ and the fact that the Cartier operator is $1/p$-linear, we compute
\begin{align*}
 V_Y( \omega_i y^i) &= V_Y\left( \omega_i \left( y^p - f \right)^i  \right) \\
  &= V_Y\left( \sum_{j=0}^i \omega_i \binom{i}{j} y^{p j} (-f)^{i-j}   \right) \\
 & = \sum_{j=0}^i V_X \left( \omega_i\binom{i}{j} (-f)^{i-j} \right) y^{j},
\end{align*}
and the result follows by collecting $y^j$ terms.
\end{proof}

\begin{cor} \label{cor:determined}
Let $\omega_0,\ldots, \omega_{p-1} \in \Omega^1_{X,\eta}$ and set $\omega:=\sum_{i=0}^{p-1} \omega_i y^i$.  Then $V_Y(\omega)=0$ if and only if
\begin{equation}
  V_X( \omega_j ) = -\sum_{i=j+1}^{p-1} V_X\left(\binom{i}{j} \omega_i (-f)^{i-j} \right) \qedhere \label{eq:VXrelations}
  \end{equation}
for $0\le j\le p-1$.  In particular, if $V_Y(\omega)=0$ then 
$V_X(\omega_{p-1})=0$ and $V_X(\omega_j)$ is determined via \eqref{eq:VXrelations}
by $\omega_i$ for $i > j$.
  %In particular, $V_X(\omega_{p-1})=0$.
\end{cor}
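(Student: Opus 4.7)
This corollary is a direct consequence of Lemma~\ref{lem:cartierformula}; the plan is simply to isolate the $i=j$ contribution in each inner sum and invoke the uniqueness of the expansion in the basis $1,y,\ldots,y^{p-1}$.

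First I would rewrite the formula of Lemma~\ref{lem:cartierformula} by splitting off the $i=j$ summand from the inner sum. Since $\binom{j}{j}=1$ and $(-f)^{0}=1$, the $i=j$ contribution is exactly $V_X(\omega_j)$, so
\[
 V_Y(\omega) = \sum_{j=0}^{p-1}\left( V_X(\omega_j) + \sum_{i=j+1}^{p-1} V_X\left(\binom{i}{j} \omega_i (-f)^{i-j} \right)\right) y^j.
\]
Next I would appeal to the decomposition (\ref{eq:genericisom}), which exhibits $\pi_*\Omega^1_{Y,\eta}$ as a free $\Omega^1_{X,\eta}$-module with basis $\{1,y,y^2,\ldots,y^{p-1}\}$. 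Consequently, the vanishing of $V_Y(\omega)$ at the generic point $\eta'$ is equivalent to the vanishing of the coefficient of $y^j$ in the display above for each $0\le j\le p-1$. Rearranging each such equation yields precisely (\ref{eq:VXrelations}), which gives the stated equivalence.

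For the ``in particular'' statement, I would note two elementary observations: when $j=p-1$ the sum $\sum_{i=j+1}^{p-1}$ is empty, so (\ref{eq:VXrelations}) collapses to $V_X(\omega_{p-1})=0$; and for general $j$, the right-hand side of (\ref{eq:VXrelations}) involves only $\omega_i$ with $i>j$, so knowing $\omega_{j+1},\ldots,\omega_{p-1}$ suffices to determine $V_X(\omega_j)$. This gives a descending recursion on $j$. There is no real obstacle to overcome here, since all of the substantive work is already packaged in Lemma~\ref{lem:cartierformula}; the corollary is essentially a matter of bookkeeping with the basis provided by Artin--Schreier theory.
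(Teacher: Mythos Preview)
Your proposal is correct and follows essentially the same approach as the paper, whose proof reads in its entirety ``Clear from Lemma~\ref{lem:cartierformula}''; you have simply spelled out the bookkeeping. One small phrasing correction: $\Omega^1_{X,\eta}$ is not a ring, so it is not quite right to call $\pi_*\Omega^1_{Y,\eta}$ a ``free $\Omega^1_{X,\eta}$-module''; rather, (\ref{eq:genericisom}) is an isomorphism of $k(X)$-vector spaces (or $\O_{X,\eta}$-modules), and the point is that $\{1,y,\ldots,y^{p-1}\}$ is a $k(X)$-basis of $k(Y)$, so the coefficients $\omega_j\in\Omega^1_{X,\eta}$ in the expansion of any $\omega\in\pi_*\Omega^1_{Y,\eta}$ are uniquely determined.
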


\begin{proof}
 Clear from Lemma~\ref{lem:cartierformula}
\end{proof}

Recall the notation of Definition~\ref{defn:collecteddefs}.  
Using Corollaries~\ref{cor:splittings} and \ref{cor:secondsplitting} with $S$ enlarged to include $Q'$ and the zeroes of $f$, for each $0 \leq i \leq p-1$ pick a divisor $D_i = \sum_j [P_{i,j}]$ consisting of distinct points $P_{i,j}$ of $X$ where $f$ has neither pole nor zero (so $\pi$ is unramified over $P_{i,j}$ by Lemma~\ref{lem:ramification}) and maps 
\begin{equation}
    r_i : F_* (\Omega^1_X(E_i)) \to \ker V_X(F_*(E_i + p D_i)), \quad s_i : \Im V_X(\Ebar_i) \to F_* (\Omega^1_X(E_i+p D_i)).\label{eq:projectormaps}
\end{equation}
These induce corresponding maps on stalks at the generic point of $X$,
which we again denote simply by $r_i$ and $s_i$, respectively.
Note that, by the very construction of these maps, any $\omega_i\in \Omega^1_{X,\eta}$ is
determined by $V_X(\omega_i)$ and $\nu_i:=r_i(\omega_i)$ via
\begin{equation} \label{eq:omegaj}
\omega _i = \nu_i + s_i(V_X(\omega_i)).    
\end{equation}

%Let $\psi : \Fscr_{0} \simeq (\gr^\bullet \Fscr_0)$ be the isomorphism of Corollary~\ref{cor:omega}.  

\begin{defn} \label{defn:varphi}
Given the fixed choices of $r_i$ and $D_i$ above, we define a map of $\O_X$-modules
\begin{equation} \label{eq:varphi}
 \varphi: \pi_* \ker V_Y \into F_* \Fscr_0  \simeq F_* \gr^\bullet \Fscr_0 \simeq \bigoplus_{i=0}^{p-1} F_* (\Omega^1_X(E_i)) \overset{\oplus r_i} \to \bigoplus_{i=0}^{p-1} \ker V_X(F_*(E_i +p D_i))
\end{equation}
with the first inclusion coming from Lemma~\ref{lem:F0props} and the middle isomorphisms coming from Corollary~\ref{cor:omega}. 

Let $\varphi_\eta$ denote the induced map on stalks at the generic point of $X$. 
\end{defn}

\begin{prop} \label{prop:isogeneric}
The map $\varphi_\eta$ is an isomorphism of $k(X)$-vector spaces. 
 For $\omega =\displaystyle \sum_{i=0}^{p-1} \omega_i y^i\in (\pi_* \ker V_Y)_\eta$ we have
\[
\varphi_\eta(\omega) = ( r_0(\omega_0), r_1(\omega_1), \ldots , r_{p-1}(\omega_{p-1})).
\]
\end{prop}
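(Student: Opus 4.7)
The plan is to first read off the claimed formula for $\varphi_\eta$ directly from the definition, and then deduce that $\varphi_\eta$ is an isomorphism by combining a dimension count with a short injectivity argument.

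For the formula: at the generic point $\eta$ of $X$ the divisors $E_i$ vanish, and $\Fscr_0$ agrees with $\pi_*\Omega^1_Y$ on this stalk. The splittings of \eqref{eq:Wfilexact} constructed in Proposition~\ref{prop:ws} simply send $\omega$ to $\omega y^i$, so the isomorphism of Corollary~\ref{cor:omega} identifies $(\Fscr_0)_\eta$ with $\bigoplus_{i=0}^{p-1}\Omega^1_{X,\eta}$ via $\sum_i \omega_i y^i \leftrightarrow (\omega_0,\dots,\omega_{p-1})$. Applying the projectors $r_i$ component-wise then yields the stated formula $\varphi_\eta(\omega)=(r_0(\omega_0),\dots,r_{p-1}(\omega_{p-1}))$.

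Next I would verify that both source and target are $k(X)$-vector spaces of dimension $p(p-1)$. The local formula \eqref{eq:localcartier} shows that $V_X:(F_*\Omega^1_X)_\eta\to\Omega^1_{X,\eta}$ is a surjective $k(X)$-linear map from a $p$-dimensional space to a $1$-dimensional one, so $(\ker V_X)_\eta$ has $k(X)$-dimension $p-1$; since the $E_i$ and $D_i$ are irrelevant at $\eta$, each of the $p$ target summands has $k(X)$-dimension $p-1$. The same argument applied to $V_Y$ shows that $(\ker V_Y)_{\eta'}$ has $k(Y)$-dimension $p-1$, hence $k(X)$-dimension $p(p-1)$ since $[k(Y):k(X)]=p$.

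For injectivity, suppose $r_i(\omega_i)=0$ for every $0\le i\le p-1$. The relation $r_i(x)=x-s_i(V_X(x))$ from Corollary~\ref{cor:secondsplitting} then gives $\omega_i=s_i(V_X(\omega_i))$ at the generic point. A descending induction completes the argument: for $i=p-1$, Corollary~\ref{cor:determined} forces $V_X(\omega_{p-1})=0$, so $\omega_{p-1}=s_{p-1}(0)=0$; once $\omega_{j+1}=\cdots=\omega_{p-1}=0$, the same corollary gives $V_X(\omega_j)=0$ and hence $\omega_j=s_j(0)=0$. Thus $\omega=0$, so $\varphi_\eta$ is injective, and equality of dimensions promotes this to an isomorphism. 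The only mildly delicate point, which I do not expect to cause real trouble, is recognizing that the generic-fiber version of Corollary~\ref{cor:omega} is literally coefficient extraction in the basis $\{1,y,\dots,y^{p-1}\}$; this is forced by the shape of the splittings in Proposition~\ref{prop:ws}.
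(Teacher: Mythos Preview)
Your proof is correct. The formula for $\varphi_\eta$ is read off correctly from Definition~\ref{defn:varphi} and Corollary~\ref{cor:omega}, the dimension count is accurate (both sides are $k(X)$-vector spaces of dimension $p(p-1)$), and the descending-induction injectivity argument using $r_i(x)=x-s_i(V_X(x))$ together with Corollary~\ref{cor:determined} is clean and valid.

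The paper takes a different route: rather than counting dimensions, it writes down an explicit inverse. Given $\nu=(\nu_0,\dots,\nu_{p-1})$, it sets $\omega_{p-1}:=\nu_{p-1}$ and then recursively defines
\[
\omega_j:=\nu_j+s_j\!\left(-\sum_{i=j+1}^{p-1}V_X\!\left(\binom{i}{j}\omega_i(-f)^{i-j}\right)\right),
\]
checks via Corollary~\ref{cor:determined} that $\omega=\sum_i\omega_iy^i$ lies in $(\pi_*\ker V_Y)_\eta$, and verifies that $\nu\mapsto\omega$ is inverse to $\varphi_\eta$. Your approach is arguably tidier for the bare statement. The trade-off is that the paper's explicit reconstruction formula \eqref{eq:reconstructioneq} is used repeatedly afterward (e.g.\ in \eqref{eq:omegajdecomp}, Lemma~\ref{lem:consolidatedvanishing}, and throughout \S\ref{ses:kernel}) to track orders of vanishing of the $\omega_j$ in terms of the $\nu_i$; with your argument alone that formula would still need to be derived separately before proceeding.
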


\begin{proof}
    Let $\nu:=(\nu_0,\ldots,\nu_{p-1}) \in \bigoplus_{i=0}^{p-1} (\ker V_X)_{\eta}$ be arbitrary,
    and define $\omega_{0},\ldots,\omega_{p-1}\in \Omega^1_{X,\eta}$
    as follows.  Set $\omega_{p-1}:=\nu_{p-1}$, and if $\omega_i$ has been defined
    for $i>j$, then define 
    \begin{equation} \label{eq:reconstructioneq}
    \omega_j := \nu_j + s_j \left(-\sum_{i=j+1}^{p-1} V_X\left(\binom{i}{j} \omega_i (-f)^{i-j} \right)\right).
\end{equation}
  By construction, since $V_X \circ s_j$ is the natural inclusion the differentials $\omega_j$ satisfy the relation \eqref{eq:VXrelations} for all $j$, so that
$\omega:=\sum_{i=0}^{p-1} \omega_i y^i \in (\pi_*\Omega^1_Y)_{\eta}$ lies in 
$(\pi_*\ker V_Y)_{\eta}$ thanks to Corollary \ref{cor:determined}.
One checks easily that the resulting map $\nu\mapsto \omega$
is inverse to $\varphi_{\eta}$.
\end{proof}

\begin{remark} \label{rmk:omegav}
%We will often write $\varphi( \omega) = ( \nu_0,\ldots ,\nu_{p-1})$ with $\nu_i = r_i(\omega_i) \in \ker V_X (F_*(E_i + p D_i))$.  
Notice that the proof of Proposition \ref{prop:isogeneric}
shows that for fixed $j$, $\omega_j$ depends only on $\nu_i$ for $j \geq i$,
and in particular that $\omega_i=0$ for $i\ge j$ if and only if $\nu_i=0$ for $i\ge j$.
\end{remark}

Unfortunately, (\ref{eq:varphi}) is not itself an isomorphism:

\begin{example} \label{ex:keyexample}
Let $p=5$, $X = \PP^1$, and consider the Artin-Schreier covers
$Y_i\rightarrow X$ for $i=1,2$ given by $y^p-y = f_i$ where
$f_1(t) = t^{-3}$, and $f_2(t) = t^{-3} + t^{-2}$; each of these covers
is ramified only over $Q:=0$ and has  $d_Q = 3$.
  The $a$-number of $Y_1$ is $4$, while the $a$-number of $Y_2$
  is $3$.
Using the simple projectors of Remark~\ref{rmk:p1}, which allow us to take $D_i =0$ for all $0 \leq i \leq p-1$, we obtain for $Y=Y_1,Y_2$ a map
\[
\varphi : \pi_* \ker V_Y \to \bigoplus_{i=0}^{4} \ker V_{\PP^1}(F_*( n_{Q,i} [Q])).
\]
The induced map on global sections is injective (as $\varphi_\eta$ is an isomorphism), but need not be an isomorphism in general.  
Indeed, given an element $(\nu_0,\ldots,\nu_4) \in \bigoplus_i H^0(\PP^1,\ker V_{\PP^1}(F_* (n_{Q,i} [Q])))$, let us  investigate when it lies in the image of $\varphi$.

 Recalling Definition~\ref{defn:collecteddefs}, we calculate
\[
 n_{Q,i} = \begin{cases}0 & i= 4\\
 1 & i=3\\
 2 & i = 2\\
 2 & i =1\\
 3 & i =0 \end{cases} \quad \quad \text{and} \quad \quad
 \dim_{\FF_p} H^0(\PP^1,\ker V_{\PP^1}( F_*(n_{Q,i}[Q]))) =
 \begin{cases}
  0 & i = 4\\
  0 & i=3\\
  1 & i =2 \\
  1 & i =1 \\
  2 & i =0
 \end{cases}
\]
so the elements of  $\bigoplus_i H^0(\PP^1,\ker V_{\PP^1}(F_*( n_{Q,i} [Q])))$ are easy to describe.  The space is four dimensional, with basis 
\[
 \nu_{0,3} = (t^{-3} dt,0,0,0,0),\   \nu_{0,2} = (t^{-2} dt,0,0,0,0), \  \nu_{1,2} = (0,t^{-2} dt,0,0,0),\ \nu_{2,2} = (0,0,t^{-2} dt,0,0).
\]
Using \eqref{eq:reconstructioneq}, 
we may compute the (unique) preimage
under $\varphi_{\eta}$ of these differentials
for each of the covers under consideration.
For $Y_1$, we find
\[
 \varphi^{-1}_\eta(\nu_{0,3}) = t^{-3} dt, \quad  \varphi^{-1}_\eta(\nu_{0,2})  = t^{-2}dt, \quad  \varphi^{-1}_\eta(\nu_{1,2}) = t^{-2} dt \cdot y, \quad \varphi^{-1}_\eta(\nu_{2,2}) = t^{-2} dt \cdot y^2.
\]
On the other hand, for $Y_2$ we have
\[
\varphi^{-1}_\eta(\nu_{0,3}) = t^{-3} dt, \quad  \varphi^{-1}_\eta(\nu_{0,2})  = t^{-2}dt, \quad  \varphi^{-1}_\eta(\nu_{1,2}) = t^{-2} dt \cdot y, \quad \varphi^{-1}_\eta(\nu_{2,2}) = -t^{-6} dt + t^{-2} dt \cdot y^2.
\]

The computation of $\varphi^{-1}_\eta( \nu_{2,2})$
is special, as it depends on the exact choice of Artin-Schreier equation $y^p- y =f$.  With $f=f_1 = t^{-3}$, we have
\[
\varphi^{-1}_\eta(\nu_{2,2}) = t^{-2} dt y^2 
\]
because there are no terms of the form $t^i dt$ with $i \equiv -1 \pmod{5}$ appearing in $V_{\PP^1}(\omega_0)$ when using Corollary~\ref{cor:determined}.  On the other hand, for $f=f_2 = t^{-3}  + t^{-2}$ we have
\[
 \varphi^{-1}_\eta(\nu_{2,2}) = -t^{-6} dt + t^{-2} dt y^2 
\]
since $V_{\PP^1}(\omega_0) = -  V_{\PP^1}(t^{-2} ( t^{-6} + 2 t^{-5} + t^{-4})dt)$.
This is not an element of $H^0(Y,\Omega^1_Y)$ because of the $t^{-6} dt$ term.

All of the other differentials showing up are regular. Thus, 
$Y_1$ has $a$-number $4$, while $Y_2$ has $a$-number $3$.  
This behavior illustrates why the $a$-number of the cover cannot depend only on the $d_Q$ and must incorporate finer information (in this case, expressed as whether certain coefficients of powers of $f$ are non-zero).

This example shows that to ensure the regularity of $\varphi_\eta^{-1} (\nu_i)$, the coefficients of the $\nu_i$ need to satisfy certain relations (in this case, the coefficient of $t^{-2} dt$ in $\nu_2$ must be zero).  These relations describe the image of $\varphi$, and are the subject of the next section.  This example will be reinterpreted in that context in Example~\ref{ex:followup}.
\end{example}

\section{Short Exact Sequences and the Kernel}\label{ses:kernel}
 
 Continuing with the conventions and notation of \S\ref{sec:asc}--\ref{sec:stalks},
 and motivated by Example \ref{ex:keyexample}, the goal of this section is to 
 describe the image of the map (\ref{eq:varphi}) by means of a collection 
 of linear relations on the coefficients of local expansions
 of meromorphic differentials at a fixed set of closed points containing the branch
 locus $S$.
 These linear relations will be encoded 
 via an ascending filtration by subsheaves
\begin{equation}
    0 \subset \Gscr_{-1} \subset \Gscr_0 \subset \ldots \subset \Gscr_{p-1} \subset \Gscr_p = \bigoplus_{i=0}^{p-1} \ker V_X(F_*(E_i +p D_i)),\label{eq:Gjfilt}
\end{equation}
with $\Gscr_{-1}=\Im(\varphi)$ and whose successive gradeds are identified with explicit skyscraper sheaves;
see Theorem \ref{thm:technicalses} for a precise statement.

For most of the argument, we will work with the sheaf $\Fscr_0$ of Definition \ref{defn:F0} and variants thereof  having modified behavior at the auxiliary point $Q'$.  Only at the very end (see Lemma~\ref{lem:fixQ}) will we properly account for the modified behavior at $Q'$ in order to recover  
the desired $\Im(\varphi)\simeq \pi_* \ker V_Y$, rather than the (generally larger) $\pi_*\ker(V_Y|_{\Fscr_0})$.
At a first reading, it could be useful to assume that $X = \PP^1$ to remove the need for $Q'$ and the divisors $D_i$ coming from Corollary~\ref{cor:splittings}.

\subsection{Filtration and Skyscraper Sheaves}
The first step is to define the filtration (\ref{eq:Gjfilt}).
Recall we have fixed projectors $r_i$ as in (\ref{eq:projectormaps}) and used them to define $\varphi$ in Definition~\ref{defn:varphi}.  
Recall the notation of Definition \ref{defn:collecteddefs}, noting Remark \ref{rem:nQzero}, and
for $Q\in X$ choose a local uniformizer $t_Q$ at $Q$.

If $A$ is an abelian group, then by a slight abuse of notation we will again write
$A$ to denote the constant sheaf associated to $A$.
For any $k$-valued point $P: \Spec k \rightarrow X$,
we write $P_*A$ for the pushforward of (the constant sheaf) $A$
along $P$; it is the skyscraper sheaf on $X$
supported at $P$ with stalk $(P_*A)_P=A$.  Likewise, if $h:A\rightarrow B$
is any homomorphism of abelian groups, we write $P_*h : P_* A\rightarrow P_*B$
for the induced morphism of sheaves on $X$.

\begin{defn}\label{defn:gjdef}
Set $S_j' := S' \cup \sup(D_j)$, 
and define $g_{j+1}:=\rho_j\circ \iota$ where
 $\iota$ is the inclusion
 \begin{equation*}
     \xymatrix@C=17pt{
         \iota:\displaystyle\bigoplus_{i=0}^{p-1} \ker V_X(F_*(E_i +p D_i)) \ar@{^{(}->}[r] &  
         \displaystyle\bigoplus_{i=0}^{p-1} (\ker V_X)_\eta \ar[r]_-{\simeq}^-{\varphi_{\eta}^{-1}} & (\pi_* \ker V_Y)_\eta 
         \ar@{^{(}->}[r] & 
     F_*(\pi_*\Omega^1_{Y})_{\eta} \ar[r]_-{\simeq}^-{(\ref{eq:genericisom})} & \displaystyle\bigoplus_{i=0}^{p-1} F_* \Omega^1_{X,\eta}
         %\ar`r[d]`d[]`[dlll]`d[][ll]\\
        }
    \end{equation*}    
and $\rho_j$ is the composite
\begin{equation*} 
\xymatrix{
         \rho_j: \displaystyle\bigoplus_{i=0}^{p-1} F_* \Omega^1_{X,\eta} \ar[r]^-{\pi_j} &
         F_* \Omega^1_{X,\eta} \ar[r]^-{\sigma_j} & 
         F_* \displaystyle\bigoplus_{Q \in S_j'} Q_*\left( \Omega^1_{X,Q}\left[\frac{1}{t_Q}\right] / t_Q^{-n_{Q,j}} \Omega^1_{X,Q} \right)
     }.
 \end{equation*}
 Here, the first and third maps in the definition of $\iota$ are the natural inclusions,
 the map $\pi_j$ is projection onto the $j$-th summand of the direct sum, and 
 $\sigma_j$ is induced by 
 the identifications $\Omega^1_{X,\eta} = \Omega^1_{X,Q}[1/t_Q]$ and the canonical quotient maps.
\end{defn}

If $\nu:=(\nu_0,\ldots, \nu_{p-1})$ is a section of $\displaystyle\bigoplus_{i=0}^{p-1} \ker V_X(F_*(E_i+pD_i))$ with 
$\iota(\nu)=(\omega_0,\ldots, \omega_{p-1})$
then $r_i(\omega_i)=\nu_i$ and writing 
$\displaystyle \omega := \sum_{i=0}^{p-1} \omega_i y^i \in (\pi_* \ker V_Y)_{\eta}$ we have $\varphi_{\eta}(\omega)=\nu$
and $g_{j+1}(\nu)= \sigma_j(\omega_j)$.  

\begin{defn} \label{defn:gs}
For $0 \leq j \leq p$, let $\Gscr_j$ be the $\O_X$-submodule
$$\displaystyle \Gscr_j \subset \bigoplus_{i=0}^{p-1} \ker V_X(F_*(E_i +pD_i))$$ 
whose sections are precisely
those sections $\nu=(\nu_0,\ldots,\nu_{p-1})$ %of the direct sum 
with 
$\iota(\nu)=(\omega_0,\ldots,\omega_{p-1})$ such that
$\omega_i$ is a section of $F_* (\Omega^1_X(E_i))$ whenever $j\le i \le p-1$.
%to be the sections $(v_0,\ldots,v_{p-1}) = \varphi_\eta(\sum_i \omega_i y^i)$ for which for each $j \leq i \leq p-1$, $\omega_i$ is a section of $F_* (\Omega^1_X(E_i))$.
Define $\Gscr_{-1} = \Im(\varphi)$. 
\end{defn}

\begin{lem} \label{lem:filtered}
Suppose that $0\le j \le p-1$.  Then
\begin{enumerate}
\item  $\Gscr_j = \ker(g_{j+1} |_{\Gscr_{j+1}})$. \label{it:kernel}

 \item  For $Q\in X - S_j'$, the inclusion $\Gscr_j\hookrightarrow \Gscr_{j+1}$
 induces an isomorphism on stalks at $Q$. 
 \label{it:outside}
 
\item  the action of $G = \ZZ/p \ZZ$ on $(F_* \pi_* \Omega^1_Y)_\eta \overset{\varphi_\eta}\simeq \bigoplus_i (\ker V_X)_\eta $ preserves $\Gscr_j$.\label{it:Gstable}

 \item  $\Gscr_0 \simeq \pi_* \ker (V_Y|_{\Fscr_0})$.\label{it:G0}
 \item  $\Gscr_{-1} \simeq \pi_* \ker V_Y$.\label{it:G-1}
 \item $\displaystyle \Gscr_p = \bigoplus_{i=0}^{p-1} \ker V_X(F_*(E_i +pD_i))$.\label{it:Gp}

\end{enumerate}

\end{lem}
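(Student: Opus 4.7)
The plan is to verify each assertion by unpacking the definitions and exploiting the recursive reconstruction underlying Proposition~\ref{prop:isogeneric}. Given a local section $\nu=(\nu_0,\ldots,\nu_{p-1})$ of $\bigoplus_{i} \ker V_X(F_*(E_i+pD_i))$, the map $\iota$ produces $(\omega_0,\ldots,\omega_{p-1}) \in \bigoplus_i \Omega^1_{X,\eta}$ via $\omega_{p-1}=\nu_{p-1}$ and the recursion \eqref{eq:reconstructioneq}. By construction, $g_{j+1}(\nu)$ records the class of $\omega_j$ in $\bigoplus_{Q \in S_j'} \Omega^1_{X,Q}[1/t_Q]/t_Q^{-n_{Q,j}}\Omega^1_{X,Q}$, which vanishes if and only if $\ord_Q(\omega_j) \ge -n_{Q,j}$ for every $Q \in S_j'$.

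For parts (i) and (ii), observe that $\nu \in \Gscr_{j+1}$ imposes $\omega_i \in F_*\Omega^1_X(E_i)$ only for $i \ge j+1$, while $\nu \in \Gscr_j$ adds this condition at index $j$. Since $E_j$ is supported on $S' \subseteq S_j'$, the extra condition at points of $S_j'$ is exactly $g_{j+1}(\nu) = 0$. At $Q \notin S_j'$ one has $Q \ne Q'$ and $Q \notin S$, so minimality of $f$ (Definition~\ref{def:minimal}) together with Lemma~\ref{lem:ramification} gives $\ord_Q(f) \ge 0$; combined with $E_i = 0$ and $D_i = 0$ at such $Q$, this forces each summand on the right of \eqref{eq:reconstructioneq} to be regular at $Q$, the $s_j$-term because Corollary~\ref{cor:secondsplitting} shows $s_j$ introduces no poles outside $\sup(E_j + pD_j)$. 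Hence $\omega_j$ is automatically regular at $Q$, simultaneously proving that $\Gscr_j = \ker(g_{j+1}|_{\Gscr_{j+1}})$ and that the stalks of $\Gscr_j$ and $\Gscr_{j+1}$ agree off $S_j'$.

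For (iii), transporting the $G$-action along $\varphi_\eta$ corresponds on the $\omega$-side to $\tau \cdot (\omega_0,\ldots,\omega_{p-1}) = (\tilde\omega_0,\ldots,\tilde\omega_{p-1})$ with $\tilde\omega_i = \sum_{k \ge i} \binom{k}{i}\omega_k$. The monotonicity $n_{Q,k} \le n_{Q,i}$ for $k \ge i$ yields $F_*\Omega^1_X(E_k) \subseteq F_*\Omega^1_X(E_i)$, so $\tau$ preserves the conditions defining $\Gscr_j$. Part (vi) is immediate as the defining condition on $\Gscr_p$ is vacuous. For (iv), Corollary~\ref{cor:omega} identifies $F_*\Fscr_0$ with $\bigoplus_i F_*\Omega^1_X(E_i)$, so the condition cutting out $\Gscr_0$ is exactly $\omega \in F_*\Fscr_0$; coupled with $\omega \in (\pi_*\ker V_Y)_\eta$ built into $\iota$, this gives $\Gscr_0 \simeq \pi_*\ker(V_Y|_{\Fscr_0})$. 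For (v), $\varphi$ is injective as a sheaf map, since $\pi_*\ker V_Y$ is a torsion-free sheaf on a smooth curve and $\varphi_\eta$ is an isomorphism by Proposition~\ref{prop:isogeneric}; hence $\pi_*\ker V_Y \simeq \Im(\varphi) = \Gscr_{-1}$.

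The main obstacle will be the regularity analysis underlying (i) and (ii): one must carefully verify that the recursion \eqref{eq:reconstructioneq} produces no unintended poles outside $S_j'$, simultaneously leveraging minimality of $f$, the choice of each $D_i$ off $S'$, and the polar control on $s_j$ from Corollary~\ref{cor:secondsplitting}. Once this is established, the remaining parts are formal consequences of the identifications already in hand.
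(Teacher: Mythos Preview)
Your proof is correct and follows essentially the same approach as the paper's, supplying more detail for why $\omega_j$ is automatically regular at $Q \notin S_j'$ (the paper simply asserts this). One small slip: only $D_j$, not every $D_i$, is guaranteed to vanish at such $Q$, but this is harmless since the regularity of $\omega_i$ for $i>j$ at $Q$ comes from the $\Gscr_{j+1}$-condition together with $\sup(E_i)\subseteq S'$, not from any hypothesis on $D_i$.
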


\begin{proof}
Let $\nu=(\nu_0,\ldots,\nu_{p-1})$ be a section of $\bigoplus_{i=0}^{p-1} \ker V_X(F_*(E_i+pD_i))$
over an open set $U\subseteq X$ with $\iota(\nu)=(\omega_0,\ldots, \omega_{p-1})$.
By definition, if $\nu\in \Gscr_{j+1}(U)$ then $\omega_i\in F_*(\Omega^1_X(E_i))(U)$ for $i > j$.
In addition, $\omega_j$ lies in $F_*( \Omega^1_X(E_j))(U)$ if and only if
\begin{equation}
 \ord_Q(\omega_j) \geq - \ord_Q(E_j )=-n_{Q,j}\label{eq:valcond}
\end{equation}
for every point $Q$ of $U$.  Condition~\eqref{eq:valcond} holds automatically for $Q \not\in S_j'$.
On the other hand, $\nu$ lies in the kernel of $g_{j+1}$ if and only if
 $\omega_j \in t^{-n_{Q,j}}_Q \Omega^1_{X,Q}$ for all $Q \in S_j'\cap U$, which
 is equivalent to the condition (\ref{eq:valcond}), and this gives the
 claimed identification $\Gscr_j=\ker(g_{j+1}|_{\Gscr_{j+1}})$ in \ref{it:kernel}.  This also shows \ref{it:outside}.

Since $E_i  \geq E_j $ if $i <j$ and the induced action of a generator $\tau$ of $G$ on $\Gscr_p$ from 
Proposition~\ref{prop:isogeneric}
 is given by 
$\tau(y)=y+1$, it is clear that this action preserves $\Gscr_j$ for each $j$. 
 
It follows from Lemma~\ref{lem:regular} and Definition \ref{defn:gs} that $\nu\in \Gscr_0(U)$
if and only if $\omega:=\sum_{i=0}^{p-1} \omega_i y^i \in (\pi_*\Omega^1_{Y})_{\eta}$
is regular over every point $Q$ of $U$ except possibly $Q=Q'$. It is then clear from
the definition of the map $\iota$, Definition~\ref{defn:F0}, and Definition \ref{defn:varphi} that $\omega \in \Fscr_0(U)$
and $\omega$ is killed by $V_Y$; that is, $\omega$ is a section of $\pi_*\ker(V_Y|_{\Fscr_0})$,
which gives the identification \ref{it:G0}.
Since $\varphi$ is injective with source $\pi_*\ker V_Y$, it induces
an isomorphism $\pi_*\ker V_Y \simeq \Im(\varphi)=:\Gscr_{-1}$ as in \ref{it:G-1},
and the description \ref{it:Gp} of $\Gscr_p$ is clear from definitions.
\end{proof}

The main step is to analyze the quotient $\Gscr_{j+1}/\Gscr_{j}$: it will be a skyscraper sheaf supported on $S_j'$.  We will do so by studying the image of $g_{j+1}$.  In particular, for $Q \in S_j'$ we will define $\O_{X,Q}$-modules $M_{j,Q}$ and a map of skyscraper sheaves
\begin{equation}
 c_{j} : \Im(g_{j+1}|_{\Gscr_{j+1}}) \to \bigoplus_{Q \in S_j'} Q_*(M_{j,Q}).  \label{eq:cjmap}
\end{equation}
Composing with the restriction of $g_{j+1}$ to $\Gscr_{j+1}$, we obtain maps of sheaves on $X$
\[
 g'_{j+1} = c_j \circ \left(g_{j+1}|_{\Gscr_{j+1}}\right): \Gscr_{j+1} \to \bigoplus_{Q \in S_j'} Q_*(M_{j,Q})
\]
which we will show via stalk-wise calculations induce isomorphisms
$$g'_{j+1}: \Gscr_{j+1} / \Gscr_{j} \xrightarrow{\sim} \bigoplus_{Q \in S_j'} Q_*(M_{j,Q}),$$ 
for $0\le j\le p-1$, thereby providing an explicit description of these quotients.

In order to motivate the definition of $M_{j,Q}$, we first 
record a result about orders of vanishing that will be useful in 
what follows.

%For the remainder of this section, we will define $M_{j,Q}$ and record some results about orders of vanishing that explain the definition.   

\begin{lem} \label{lem:consolidatedvanishing}
Let $\nu =(\nu_0,\ldots,\nu_{p-1}) \in H^0(X,\Gscr_p)$ and 
set $(\omega_0,\ldots,\omega_{p-1}):=\imath(\nu)$.  
For $0 \leq j \leq p-1$:
 \begin{enumerate}
 \item  For fixed $j$ and $Q\in S$, suppose that $\nu_i=0$ for $i>j$ and $\ord_Q(\nu_j) \ge -n$ for some nonnegative integer $n$.  Let $\mu_{Q,i}$ be the largest multiple of $p$ such that $\mu_{Q,i}+1 \leq n + d_Q(j-i)$.
 Then $\omega_i=0$ for $i>j$, $\ord_Q(\omega_j) \geq -n$, and
  for $i < j$
 \begin{equation}
    \ord_Q(\omega_i) \geq   \min( -\mu_{Q,i} - 1 , \ord_Q(\nu_{i}) )   \geq  -n - d_Q( j-i).\label{eq:orderQinSdesired}
\end{equation}    
    \label{ordvanishQinS}
 \item  If $Q\in D_j$ then $\ord_Q(\omega_j) \geq - p$. \label{ordvanishQinD}
 
 \item  $\ord_{Q'}(\omega_j ) \geq - d_{Q'} (p-1-j)$.\label{ordvanishQ'}
 
 \end{enumerate}
\end{lem}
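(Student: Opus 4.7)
The plan is to exploit the explicit recursive construction of $\omega_i$ from the proof of Proposition~\ref{prop:isogeneric}:
\[
\omega_{p-1}=\nu_{p-1}, \qquad \omega_i = \nu_i + s_i\!\left(-\sum_{k=i+1}^{p-1} V_X\!\left(\binom{k}{i}\omega_k(-f)^{k-i}\right)\right) \quad (0\leq i<p-1).
\]
Orders at a fixed point $Q$ will be controlled by two tools: the behavior of $V_X$ on local expansions established in the proof of Lemma~\ref{lem:twisted}, and the two properties of $s_i$ from Corollary~\ref{cor:secondsplitting}---the amplification estimate \eqref{eq:sectionpolarprop} at $Q\in\sup(E_i)$, and the fact that at \emph{every} point $s_i$ lands in $F_*(\Omega^1_X(E_i+pD_i))$.

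For (i) the vanishings $\omega_i=0$ for $i>j$ are immediate from Remark~\ref{rmk:omegav}, and the recursion at index $j$ collapses to $\omega_j=\nu_j$, giving $\ord_Q(\omega_j)\geq -n$. The inequality for $i<j$ I would prove by descending induction on $i$, establishing the min-bound together with its weakened form $\min(-\mu_{Q,k}-1,\ord_Q(\nu_k))\geq -n-d_Q(j-k)$. The weakened form is formal: $-\mu_{Q,k}-1\geq -n-d_Q(j-k)$ is literally the defining condition on $\mu_{Q,k}$, and $\ord_Q(\nu_k)\geq -n_{Q,k}=-\lceil(p-1-k)d_Q/p\rceil \geq -d_Q\geq -n-d_Q(j-k)$ using $j-k\geq 1$. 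The uniform estimate $\ord_Q(\omega_k(-f)^{k-i})\geq -n-d_Q(j-i)$ then holds for each $k$ in the sum; feeding it through $V_X$ and using the arithmetic identity $\lceil(n+d_Q(j-i))/p\rceil=\mu_{Q,i}/p+1$ (a direct case analysis on $n+d_Q(j-i) \bmod p$) gives $\ord_Q(\mathrm{stuff}_i)\geq -\mu_{Q,i}/p-1$, and applying $s_i$ (valid since $Q\in S\subset \sup(E_i)$ for $i<p-1$) recovers $-\mu_{Q,i}-1$; the $\min$ with $\ord_Q(\nu_i)$ comes from the other summand of the recursion.

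For (ii) I would first exploit the independence afforded by Corollary~\ref{cor:splittings} to choose the $D_i$ with pairwise disjoint supports, all disjoint from $S'$ and from the zeros of $f$; this is consistent with the construction and affects no previously established property. For $Q\in D_j$ this places $Q$ outside every other $D_i$ and outside $S'$, and $f$ is then regular and nonvanishing at $Q$. Descending induction on $i$, based on $\omega_{p-1}=\nu_{p-1}$ being regular at $Q$, shows $\omega_i$ is regular at $Q$ for all $i>j$: the recursion's inner sum is $V_X$ of regular sections at $Q$, hence regular, and $s_i$ of a regular section remains regular at $Q$ because $Q\notin\sup(E_i+pD_i)$ when $i>j$. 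At $i=j$ the sum is again regular at $Q$, but this time $s_j$ lands in $F_*(\Omega^1_X(E_j+pD_j))$, which has pole of order at most $p$ at $Q\in D_j$; combined with $\ord_Q(\nu_j)\geq -p$, this yields $\ord_Q(\omega_j)\geq -p$.

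For (iii) I would proceed by descending induction on $j$ from the trivial base $j=p-1$. For the inductive step the hypothesis gives $\ord_{Q'}(\omega_i(-f)^{i-j})\geq -(p-1-j)d_{Q'}$ for each $i>j$; the minimality assumption $p\mid d_{Q'}$ is crucial here, as it forces the $V_X$-estimate to be an equality that exactly meets the threshold $-(p-1-j)d_{Q'}/p=-\lceil n_{Q',j}/p\rceil$ for $\mathrm{stuff}_j$ to be a section of $\Im V_X(\overline{E}_j)$ near $Q'$. Then $s_j$ lands in $F_*(\Omega^1_X(E_j+pD_j))$, which, since $Q'\notin D_j$, yields $\ord_{Q'}(s_j(\mathrm{stuff}_j))\geq -n_{Q',j}$; combined with $\ord_{Q'}(\nu_j)\geq -n_{Q',j}$, this closes the induction. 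The principal technical difficulty lies in part (i): the $\mu_{Q,i}$-bookkeeping must be sharp enough to survive alternating applications of $V_X$ (approximate division by $p$) and $s_i$ (approximate multiplication by $p$), and the choice of $\mu_{Q,i}$ as the \emph{largest multiple of $p$} below $n+d_Q(j-i)$ is precisely the combinatorial trick that makes $V_X\circ s_i$ close up cleanly without leaking a factor of $p$ at each recursive step.
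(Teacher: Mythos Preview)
Your argument is correct and follows essentially the same route as the paper: all three parts run the recursion \eqref{eq:reconstructioneq} together with the local $V_X$-estimate and the property \eqref{eq:sectionpolarprop} of $s_i$, exactly as the paper does. Your bookkeeping in part (i) (the identity $\lceil(n+d_Q(j-i))/p\rceil=\mu_{Q,i}/p+1$ and the observation that $\ord_Q(\nu_k)\geq -n_{Q,k}\geq -d_Q\geq -n-d_Q(j-k)$) matches the paper's inductive step precisely.

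The one point worth flagging is your added hypothesis in part (ii) that the $D_i$ have pairwise disjoint supports. This is not stated in the paper, but it is harmless (Corollary~\ref{cor:splittings} lets you avoid any prescribed finite set, so you can build the $D_i$ inductively to be disjoint) and, more importantly, it is what actually makes the descending induction go through: without it, if $Q\in D_j\cap D_k$ for some $k>j$, then $\omega_k$ can have $\ord_Q(\omega_k)=-p$, whence $V_X(\xi_j)$ acquires a simple pole at $Q$, and the sheaf-map bound on $s_j$ (after twisting) yields only $\ord_Q(s_j(V_X(\xi_j)))\geq -2p$, not $-p$. The paper's sketch (``similar---but simpler---argument\ldots details to the reader'') does not address this, so your disjointness assumption is a genuine clarification rather than a detour. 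It is also worth noting that every downstream use of part (ii) in the paper (\S\ref{sec:lcps} and the definition of $\widetilde{g}_{j+1}$) is unaffected by imposing disjointness on the auxiliary $D_i$.

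For part (iii), your use of the sheaf-map property of $s_j$ at $Q'$ is a valid variant of the paper's approach (which instead invokes \eqref{eq:sectionpolarprop} directly to get $\ord_{Q'}(s_j(V_X(\xi)))\geq\ord_{Q'}(\xi)$); both give the same bound.
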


\begin{proof}
Suppose first that $Q \in S$,
%By Definition~\ref{defn:gjdef}, we have $\ord_Q(\nu_i) \geq - n_{Q,i}$ for $0 \leq i \leq p-1$.   
fix $j$ with $0\le j \le p-1$, and assume that $\nu_i=0$ for $i>j$.
Then $\omega_i=0$ for $i>j$ thanks to Remark \ref{rmk:omegav},
while if $i=j$ we have $\omega_j = \nu_j$ thanks to
\eqref{eq:reconstructioneq},
so our assumption that $\ord_Q(\nu_j)\ge -n$ gives $\ord_Q(\omega_j)\ge -n$
as well.
Thus to establish \ref{ordvanishQinS}, it remains to prove
\eqref{eq:orderQinSdesired} for each $i < j$.
We will establish this by descending induction on $i$.  
So suppose that $\ell < j$ and that \eqref{eq:orderQinSdesired}
holds for all $i$ with $\ell < i < j$.  
Since $\omega_i=0$ for $i>j$, Corollary \ref{cor:determined} 
gives $V_X(\omega_{\ell})= V_X(\xi)$ for
\begin{equation}
    \xi := -  \sum_{i=\ell+1}^{j} \binom{i}{\ell} \omega_i (-f)^{i-\ell}.
\end{equation}
Since $\ord_Q(f)=-d_Q$, our inductive hypothesis
and the already established $\ord_Q(\omega_j)\ge -n$
immediately imply that 
\begin{equation*}
    \ord_Q(\xi) \ge \min_{\ell < i \le j} \left\{-n-d_Q(j-i) - d_Q(i-\ell)\right\} = -n -d_Q(j-\ell).
\end{equation*}
%Let $m_{Q,\ell}$ be the largest multiple of $p$ such that 
%$m_{Q,\ell} +1 \leq n + d_Q(j-\ell).$
Using \eqref{eq:localcartier}, we see that
$\ord_Q(V_X(\xi)) \ge -\mu_{Q,\ell}/p-1$.
It then follows from
Corollary \ref{cor:secondsplitting} that 
$\ord_Q(s_{\ell}(V_X(\xi)))\ge -\mu_{Q,\ell} -1 \geq -n - d_Q(j-\ell)$, so using 
\eqref{eq:omegaj} and remembering that 
$\ord_Q(\nu_{\ell})\ge -n_{Q,\ell} = -\lceil (p-1-\ell) d_Q/p\rceil \ge -d_Q$
yields \eqref{eq:orderQinSdesired} for $i=\ell$, 
completing the inductive step.

The proof of \ref{ordvanishQinD} proceeds by a similar---but simpler---argument,
using descending induction on $j$ and the fact that
for $Q\in D_j$ one has
$\ord_Q (f) =0$ and $\ord_Q(\nu_j) \geq -p$ by definition; we leave the details to the
reader.
Case \ref{ordvanishQ'} likewise follows from a similar argument, using $\ord_{Q'}(f) = -d _{Q'}$ and $\ord_{Q'}(\nu_j) \geq -(p-1-j) d_{Q'}$ by definition; see the proof of  
Lemma~\ref{lem:surj3} for a more detailed version of the analysis in this case.
\end{proof}

\begin{cor} \label{cor:simplevanishing}
Let $\nu =(\nu_0,\ldots,\nu_{p-1}) \in H^0(X,\Gscr_p)$ and 
set $(\omega_0,\ldots,\omega_{p-1}):=\imath(\nu)$.  For $Q \in S$, we have that
\[
\ord_Q(\omega_i) \geq - d_Q (p-1 -i).
\]
\end{cor}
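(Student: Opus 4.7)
The plan is to reduce directly to Lemma~\ref{lem:consolidatedvanishing}\ref{ordvanishQinS} by exploiting the linearity of the reconstruction map $\iota$. The recursion \eqref{eq:reconstructioneq} which defines $\varphi_\eta^{-1}$ (and hence $\iota$) is manifestly $k$-linear in the input $\nu$, so it is enough to verify the claim on a convenient set of building blocks and assemble.

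Given $\nu = (\nu_0, \ldots, \nu_{p-1}) \in H^0(X, \Gscr_p)$, write $\nu = \sum_{j=0}^{p-1} \nu^{(j)}$, where $\nu^{(j)} := (0, \ldots, 0, \nu_j, 0, \ldots, 0)$ has all components zero except the $j$-th. Each $\nu^{(j)}$ is itself a global section of $\Gscr_p$, and upon setting $(\omega_0^{(j)}, \ldots, \omega_{p-1}^{(j)}) := \iota(\nu^{(j)})$, linearity yields the decomposition $\omega_i = \sum_{j=0}^{p-1} \omega_i^{(j)}$ for each $i$.

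Fix $j$. Since $\nu_j \in \ker V_X(F_*(E_j + pD_j))$, for $Q \in S$ we have $\ord_Q(\nu_j) \geq -n_{Q,j}$, and the remaining components of $\nu^{(j)}$ vanish. Thus Lemma~\ref{lem:consolidatedvanishing}\ref{ordvanishQinS} applies with $n = n_{Q,j}$, yielding $\omega_i^{(j)} = 0$ for $i > j$, the inequality $\ord_Q(\omega_j^{(j)}) \geq -n_{Q,j}$, and the bound $\ord_Q(\omega_i^{(j)}) \geq -n_{Q,j} - d_Q(j-i)$ for $i < j$. Combining these with the elementary estimate $n_{Q,j} = \lceil (p-1-j) d_Q / p \rceil \leq (p-1-j) d_Q$ upgrades them uniformly to $\ord_Q(\omega_i^{(j)}) \geq -d_Q(p-1-i)$ for every $i \leq j$. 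Taking the minimum over $j$ then gives $\ord_Q(\omega_i) \geq -d_Q(p-1-i)$, as desired.

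No serious obstacle arises in this plan: Lemma~\ref{lem:consolidatedvanishing} already carries out the substantive valuation analysis, and the only point requiring care is that its apparently restrictive hypothesis (vanishing of the higher components of $\nu$) can be arranged for free by the linear decomposition above.
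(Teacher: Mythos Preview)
Your argument is correct, but it works harder than necessary. The paper's proof is a single line: take $j=p-1$ and $n=0$ in Lemma~\ref{lem:consolidatedvanishing}\ref{ordvanishQinS}. The point you missed is that with $j=p-1$ the hypothesis ``$\nu_i=0$ for $i>j$'' is vacuous, and since $n_{Q,p-1}=\lceil 0\cdot d_Q/p\rceil=0$ one may take $n=0$ directly. The lemma then gives $\ord_Q(\omega_{p-1})\ge 0$ and $\ord_Q(\omega_i)\ge -d_Q(p-1-i)$ for $i<p-1$, which is exactly the corollary---no decomposition needed.

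Your route via the additive splitting $\nu=\sum_j\nu^{(j)}$ is valid (the map $\iota$ is certainly additive, being induced by a morphism of $\O_X$-modules), and the estimate $n_{Q,j}\le (p-1-j)d_Q$ correctly closes the gap. But this invokes the lemma $p$ times and then takes a minimum, whereas a single invocation with the top index suffices. The decomposition trick is a reasonable instinct when a lemma has a restrictive-looking hypothesis, but here the restriction evaporates at $j=p-1$.
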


\begin{proof}
Take $j = p-1$ and $n=0$ in Lemma~\ref{lem:consolidatedvanishing}\ref{ordvanishQinS}.
\end{proof}

For a $k[\![t_Q]\!]$-module $M$, let $F_* M$ denote the $k[\![t_Q^p]\!]$-module with underlying additive group $M$ and the action of $t_Q^p$ on $F_* M$ given by multiplication by $t_Q$ on $M$.  Recall the definition of $n_{Q,i}$ from Definition~\ref{defn:collecteddefs}. 

\begin{defn} \label{defn:notation}
For $0\le j \le p-1$ and $Q\in S_j'$ let $m_{Q,j}:=p(n_{Q,j}-1)$ and $\beta_{Q,j}:=\lfloor (p-1)n_{Q,j}/p\rfloor$,
%$m_{Q,j}$ be the largest integral multiple of $p$ with $-m_{Q,j} \geq -(p-1-j) d_Q +1$, and let $r_{Q,j}$ be the number of integer multiples of $p$ between $-m_{Q,j}$ and $-n_{Q,j}$ inclusive.  That is, 
%    m_{Q,j}:=p\left\lceil\frac{(p-1-j)d_Q}{p}\right\rceil-p,\ \text{and}\ 
%    r_{Q,j}:=\left\lfloor \frac{m_{Q,j}-n_{Q,j}}{p}\right\rfloor+1
%$$
and set
\begin{equation*}
    M_{j,Q}:=\begin{cases}
         k[\![t_Q^p]\!] / (t_Q^{p \beta_{,j}}) = F_* \left(k[\![t_Q]\!]/(t_Q^{\beta_{Q,j}})\right) & \text{if}\ Q\in S\\
         k[\![t_Q^p]\!] / (t_Q^{p(p-1)}) = F_* \left(k[\![t_Q]\!]/(t_Q^{p-1})\right) & \text{if}\ Q\in \sup(D_j) \\
          0  & \text{if}\ Q=Q'
    \end{cases}
\end{equation*}
considered as an $\O_{X,Q}$-module via $\O_{X,Q}\hookrightarrow \O_{X,Q}^{\wedge}\simeq k[\![t_Q]\!]$.
Putting these together, we define a skyscraper sheaf on $X$
\[
M_j := \bigoplus_{Q \in S_j'} Q_*\left(M_{j,Q} \right).
\]
\end{defn}

Note that by construction, if $Q\in S_j'$ then the stalk of $M_j$ at $Q$ is precisely 
$M_{j,Q}$, which justifies the notation.

\begin{remark}\label{rem:rmmeaning}
    For $Q\in S$, one checks easily that $m_{Q,j}$ is the largest integral multiple of $p$ with $-m_{Q,j} \geq -(p-1-j) d_Q +1$, and that $\beta_{Q,j}$ is the number of integer multiples of $p$ between $-m_{Q,j}$ and $-n_{Q,j}$ inclusive. In view of Corollary~\ref{cor:simplevanishing}, 
    %Lemma~\ref{lem:consolidatedvanishing} \ref{ordvanishQinS} (with $j=p-1$ and $n=0$),
    the skyscraper sheaves $M_j$ will record all of the possible ways in which $\omega_j$ could fail to be a section of $\Omega^1_X(E_j)$ when $(\omega_0,\ldots,\omega_{p-1}) = \iota(\nu)$ for a section $\nu$ of $\Gscr_{j+1}$.
\end{remark}

In the next sub-sections, we will define the maps (\ref{eq:cjmap}) stalk-by-stalk and check that they are surjective.

\subsection{Local Calculations above Ramified Points} \label{sec:lcrp} 

Fix $j$, let $Q \in S$ and as before let $t_Q$ be a uniformizer at $Q$.  
For $\nu:=(\nu_0,\ldots, \nu_{p-1}) \in \Gscr_{p,Q}$, put 
$(\omega_0,\ldots,\omega_{p-1}):=\iota(\nu)$ and define 
$\omega = \sum_{i=0}^{p-1} \omega_i y^i$, so that $\varphi_{\eta}(\omega)=\nu$ and $V_X(\omega)=0$.
 By \eqref{eq:reconstructioneq}, we have
\begin{equation}
 \omega_j =  \nu_j + s_j ( V_X (\xi))\quad\text{where}\quad \xi:=-\sum_{i=j+1}^{p-1} \binom{i}{j} \omega_i (-f)^{i-j}, \label{eq:omegajdecomp}
\end{equation}
and $\ord_Q(\xi) \geq -(p-1-j) d_Q$ thanks to Corollary~\ref{cor:simplevanishing}. 
Working in the completion $\O^\wedge_{X,Q}\simeq k[\![t_Q]\!]$, we may therefore expand
\begin{equation} \label{eq:powerseries}
 \xi = \sum_{\ell = -N+1}^\infty a_\ell t^\ell_Q \frac{dt_Q}{t_Q}
\end{equation}
where  $N := -(p-1-j) d_Q$. 
Using \eqref{eq:localcartier}, we then compute
\begin{equation} \label{eq:sj}
 s_j \left( V_X \left(\xi  \right) \right) = \sum_{\ell=-m_{Q,j}/p}^\infty a_{p \ell} s_j\left(t_Q^{\ell} \frac{dt_Q}{t_Q}\right),
\end{equation}
where $m_{Q,j}$ is as in Definition \ref{defn:notation}.  
If $p\ell-1 \geq -n_{Q,j}$ then $\ord_Q(s_j( t_Q^\ell \frac{dt_Q}{t_Q} )) \geq -n_{Q,j}$ by \eqref{eq:sectionpolarprop}.
%choice of $s_j$ in Corollary~\ref{cor:secondsplitting}.  
Since we know that $\ord_Q(\nu_j) \geq -n_{Q,j}$, it follows from \eqref{eq:omegajdecomp} that 
\[
 g_{j+1}(\nu_0,\ldots,\nu_{p-1}) \in \Im(g_{j+1})_Q \subset F_* \left( \Omega^1_{X,Q}\left[\frac{1}{t_Q}\right] / t_Q^{-n_{Q,j}} \Omega^1_{X,Q} \right).
\]
Furthermore, $ g_{j+1}(\nu_0,\ldots,\nu_{p-1})$ is determined by the $a_{p\ell}$ for $-m_{Q,j} \leq p \ell \leq -n_{Q,j}$.  There are $\beta_{Q,j}$ such integers by definition (see Remark \ref{rem:rmmeaning}).  
We thus obtain an injection
$c_{j,Q}: \Im(g_{j+1})_Q \into  M_{j,Q} = k[\![t_Q^p]\!] / (t_Q^{p \beta_{Q,j}})$ given by
\begin{equation} 
  c_{j,Q}:\sum_{\ell=-m_{Q,j}/p}^\infty a_{p \ell} s_j\left(t_Q^{\ell} \frac{dt_Q}{t_Q}\right) \bmod t^{-n_{Q,j}}k[\![t_Q]\!] dt_Q\mapsto \sum_{\ell=0}^{\beta_{Q,j}-1} a_{-m_{Q,j}+p\ell}t_Q^{p\ell}
  \bmod t_Q^{p\beta_{Q,j}}k[\![t]\!]
  .\label{eq:cjdef}
\end{equation}
One checks that this is a well-defined
and $\O_X$-linear map, where the action of $\O_X$ on the first factor is the natural one coming from localizing the action on $F_* \Omega^1_X$ and the action of the second factor comes from the 
action of $\O_{X,Q}$ on $F_* k[\![t_Q]\!]$ via the identification $\O_{X,Q}^{\wedge}\simeq k[\![t_Q]\!]$.

\begin{defn}\label{def:g'overS}
  For $Q\in S$ and $0\le j \le p-1$ we define $g'_{j+1,Q}$ as the composite map of $\O_{X,Q}$-modules
  \begin{equation*}
    \xymatrix@C=45pt{
     g'_{j+1,Q}: {\Gscr_{j+1,Q}} \ar[r]^-{(g_{j+1}|_{\Gscr_{j+1}})_Q} & {\Im(g_{j+1}|_{\Gscr_{j+1}})_Q} \ar[r]^-{c_{j,Q}} & M_{j,Q}   
    }.
  \end{equation*}
\end{defn}

%We let $g'_{Q,j} : \Gscr_j \to M_{Q,j}$ denote the composition of $c_{Q,j}$ with the map $g_j$ induces on %stalks at $Q$.

\begin{prop} \label{prop:surj1}
For $Q \in S$ and $0\le j \le p-1$, the map $g'_{j+1,Q}$ is surjective.  
\end{prop}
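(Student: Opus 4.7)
If $j=p-1$, then $n_{Q,p-1}=0$ and hence $\beta_{Q,p-1}=0$, so $M_{p-1,Q}=0$ and the claim is vacuous. Henceforth assume $0\le j\le p-2$, so that $p\nmid (p-1-j)$ and therefore $p\nmid(p-1-j)d_Q$ since $p\nmid d_Q$ by Lemma~\ref{lem:ramification}.

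Unwinding Definitions~\ref{defn:gjdef}--\ref{defn:notation} and formula~\eqref{eq:cjdef}, producing a preimage of $m\in M_{j,Q}$ under $g'_{j+1,Q}$ amounts to producing $\nu\in\Gscr_{j+1,Q}$ whose associated $\xi=-\sum_{i=j+1}^{p-1}\binom{i}{j}\omega_i(-f)^{i-j}$ has prescribed coefficients at $t_Q^{-m_{Q,j}+p\ell}\,dt_Q/t_Q$ for $0\le\ell\le\beta_{Q,j}-1$. My plan is to take $\nu_i=0$ for $i\le j$ and choose the local expansions of $\nu_{j+1},\ldots,\nu_{p-1}$ at $Q$ by solving an explicit triangular linear system.

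The main idea is the ansatz $\nu_{p-1}=\sum_{\ell=0}^{\beta_{Q,j}-1}c_\ell\, t_Q^{r+p\ell}\,dt_Q/t_Q$, where $r\in\{1,\ldots,p-1\}$ is the residue of $(p-1-j)d_Q$ modulo $p$; note that $(p-1-j)d_Q-r=m_{Q,j}$. This $\nu_{p-1}$ is regular at $Q$ and lies in $\ker V_X$ since $p\nmid r+p\ell$. With $\omega_{p-1}=\nu_{p-1}$ and the other $\nu_i$ set to $0$, the contribution of the $i=p-1$ term to $\xi$ is computed using $(-f)^{p-1-j}=(-c_f)^{p-1-j}t_Q^{-(p-1-j)d_Q}(1+O(t_Q))$ with $c_f\ne 0$; each $c_{\ell'}$ contributes to $t_Q^{-m_{Q,j}+p\ell}\,dt_Q/t_Q$ only for $\ell\ge\ell'$, and the diagonal entry $-\binom{p-1}{j}(-c_f)^{p-1-j}c_\ell$ is nonzero since $\binom{p-1}{j}\equiv(-1)^j\pmod p$. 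Thus the induced map $(c_\ell)\mapsto(\text{target coefficients})$ is triangular with nonzero diagonal, giving a bijection $k^{\beta_{Q,j}}\simeq M_{j,Q}$.

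The remaining step is to ensure $\nu\in\Gscr_{j+1,Q}$, i.e.\ that $\ord_Q\omega_i\ge -n_{Q,i}$ for $j+1\le i\le p-1$ (automatic for $i=p-1$). With $\nu_i=0$ for $i<p-1$, the recursion $\omega_i=s_i(V_X(-\sum_{i'>i}\binom{i'}{i}\omega_{i'}(-f)^{i'-i}))$ combined with Lemma~\ref{lem:consolidatedvanishing} and property~\eqref{eq:sectionpolarprop} of $s_i$ bound $\ord_Q\omega_i$ from below, but not always by $-n_{Q,i}$. The main obstacle I anticipate is regularizing the offending $\omega_i$ without disturbing the prescribed target coefficients. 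I would address this by also allowing nonzero $\nu_{p-2},\ldots,\nu_{j+1}$ (with $\ord_Q\nu_i\ge -n_{Q,i}$ and in $\ker V_X$) and solving the combined linear system simultaneously: the equations encode both regularizing the $\omega_i$'s and hitting the target in $M_{j,Q}$. Ordering the unknowns and equations by descending $i$, each diagonal block is again triangular with nonzero diagonal (by the same argument applied with $(-f)^{p-1-j}$ replaced by $(-f)^{i-j}$ and $\binom{p-1}{j}$ replaced by $\binom{i}{j}\not\equiv 0\pmod p$), so the system is solvable.
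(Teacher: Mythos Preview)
Your approach differs from the paper's, and the regularization step contains a genuine gap.

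The paper's proof is organized around a key lemma (Lemma~\ref{lem:choosingomegas}): given any $\omega_0,\ldots,\omega_{p-2}\in k(\!(t_Q)\!)dt_Q$ with $\ord_Q(\omega_i)\ge -(p-1-i)d_Q$, one can construct $\omega_{p-1}$ \emph{regular} at $Q$ so that $\omega=\sum_i\omega_iy^i$ lies in $\ker V_Y$. Granting this, the paper simply sets $\omega_j:=s_j\bigl(t_Q^{-m_{Q,j}/p}\,\tfrac{dt_Q}{t_Q}\bigr)$, chooses the remaining $\omega_i$ (for $i\neq j,p-1$) with $\ord_Q(\omega_i)\ge -n_{Q,i}$ arbitrarily, and invokes the lemma to produce $\omega_{p-1}$. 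One then checks $\nu:=\varphi_\eta(\omega)\in\Gscr_{j+1,Q}^\wedge$ (in particular $\nu_j=r_j(s_j(\cdot))=0$) and $g'_{j+1,Q}(\nu)=1$. Because $g'_{j+1,Q}$ is $\O_{X,Q}$-linear and $M_{j,Q}$ is cyclic, hitting the single generator $1$ already gives surjectivity---there is no need to hit an arbitrary element of $M_{j,Q}$ as you attempt.

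The gap in your plan is the regularization of $\omega_i$ for $j<i<p-1$. You propose to cancel the ``bad'' pole of $s_i(V_X(\xi_i))$ by choosing $\nu_i$, and assert that the resulting combined system is block triangular with invertible diagonal. But $\nu_i$ is constrained to satisfy $\ord_Q(\nu_i)\ge -n_{Q,i}$, so it has \emph{no} terms at exponents $\le -n_{Q,i}$ in the $t_Q^{\bullet}\,\tfrac{dt_Q}{t_Q}$ expansion; it therefore cannot cancel any term of $s_i(V_X(\xi_i))$ at those exponents. In other words, the block you identify as ``diagonal'' (the effect of $\nu_i$ on the regularity constraints for $\omega_i$) is zero, not invertible. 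The condition $\ord_Q(\omega_i)\ge -n_{Q,i}$ is genuinely a constraint on $\nu_{i'}$ for $i'>i$ (through $\xi_i$), so the system is coupled in the \emph{opposite} direction to what your triangularity claim requires, and it is not at all clear that these constraints are compatible with the $\beta_{Q,j}$ equations needed to hit a prescribed target in $M_{j,Q}$. The paper sidesteps this entirely by constructing $\omega$ (rather than $\nu$) directly and absorbing all of the compatibility conditions into the single unknown $\omega_{p-1}$ via Lemma~\ref{lem:choosingomegas}.
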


To prove this, we will make use of the following technical result.  As above, 
we identify $\O^{\wedge}_{X,Q}$ with $k[\![t_Q]\!]$ and
note that the completion of $\Omega_{X,Q}^1$ is isomorphic to $\Omega_{X,Q}^1 \tensor{\O_{X,Q}} \O^{\wedge}_{X,Q} \simeq k[\![t_Q]\!] dt_Q$.

\begin{lem} \label{lem:choosingomegas}
 Given $\omega_0, \ldots, \omega_{p-2} \in k(\!(t_Q)\!) dt_Q$ satisfying $\ord_Q(\omega_i) \geq -(p-1-i) d_Q$ for $0\le i\le p-2$, there exists $\omega_{p-1} \in k[\![t_Q]\!] dt_Q$ such that $\omega := \sum_{i=0}^{p-1} \omega_i y^i$ is an element of $(\pi _* \ker V_Y)_Q \tensor{\O_{X,Q}} k(\!(t_Q)\!)$.
\end{lem}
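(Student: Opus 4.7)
My plan is to reformulate the condition via Corollary~\ref{cor:determined} as a system of local equations on $\omega_{p-1}$, and then solve this system by a successive-approximation argument in the $t_Q$-adic topology on $k[\![t_Q]\!]dt_Q$.

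Writing $\omega=\sum_{i=0}^{p-1}\omega_iy^i$ and applying Corollary~\ref{cor:determined}, the condition $V_Y(\omega)=0$ becomes $p$ equations. Isolating the $i=p-1$ contribution in each (using $\binom{p-1}{j}\equiv(-1)^j\pmod p$ and $(-1)^{p-1}=1$) yields the system
\begin{equation*}
V_X(\omega_{p-1})=0 \quad\text{and}\quad V_X(f^{p-1-j}\omega_{p-1})=C_j \quad (0\le j\le p-2),
\end{equation*}
where $C_j\in k(\!(t_Q)\!) dt_Q$ depends only on $f$ and the prescribed $\omega_0,\ldots,\omega_{p-2}$. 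The pole hypothesis on the $\omega_i$ guarantees $\ord_Q(C_j)\ge -\lceil(p-1-j)d_Q/p\rceil$, matching the largest possible pole of $V_X(f^{p-1-j}\omega_{p-1})$ for a regular $\omega_{p-1}$.

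I would then construct $\omega_{p-1}$ iteratively. Starting from $\omega_{p-1}^{(0)}=0$, define residuals $R_j^{(n)}:=V_X(f^{p-1-j}\omega_{p-1}^{(n)})-C_j$ and inductively produce corrections $\delta^{(n+1)}\in \ker V_X\cap k[\![t_Q]\!] dt_Q$, vanishing to sufficiently high order at $Q$, so that every $R_j^{(n+1)}$ vanishes to strictly higher order than $R_j^{(n)}$. The $t_Q$-adic limit $\omega_{p-1}:=\sum_n\delta^{(n)}$ then lies in $k[\![t_Q]\!] dt_Q\cap \ker V_X$ and satisfies all the required equations, so that $\omega=\sum\omega_iy^i$ lies in the target space.

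The main obstacle is arranging, at each approximation step, that a single correction $\delta^{(n+1)}$ simultaneously improves all $p-1$ residuals. The key inputs are the explicit local formula \eqref{eq:localcartier} for $V_X$ together with the distinct pole orders $\ord_Q(f^{p-1-j})=-(p-1-j)d_Q$: a correction vanishing to order $N$ at $Q$ affects each $V_X(f^{p-1-j}\delta^{(n+1)})$ at a pole order controlled by $\lfloor(N-(p-1-j)d_Q)/p\rfloor$, which is monotonic in $j$. By choosing $\delta^{(n+1)}$ to cancel the leading pole of the most singular residual and tracking the effect on the remaining residuals, one can iteratively drive all $R_j$ to zero. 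An alternative, possibly cleaner, approach is to parameterize via the $\nu$-tuples of Proposition~\ref{prop:isogeneric}: any choice of $\nu_{p-1}\in\ker V_X\cap k[\![t_Q]\!] dt_Q$ uniquely determines candidate $\nu_j$ for $j<p-1$ by inverting the recursion~\eqref{eq:reconstructioneq}, and the problem reduces to choosing $\nu_{p-1}$ so that each induced $\nu_j$ actually lies in $\ker V_X$; the flexibility afforded by the infinite-dimensional space $\ker V_X\cap k[\![t_Q]\!] dt_Q$, combined with the pole hypothesis on the $\omega_i$, should ensure solvability.
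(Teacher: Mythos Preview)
Your reduction to the system $V_X(\omega_{p-1})=0$ and $V_X((-f)^{p-1-j}\omega_{p-1})=C_j$ is correct and matches the paper's setup.  However, the construction step is where your proposal remains vague, and the essential organizing idea is missing.

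The paper does not use successive approximation; it solves directly for the coefficients $b_N$ of $\omega_{p-1}=\sum_N b_N t_Q^N\,dt_Q/t_Q$ one at a time.  The key point---which your sketch does not articulate---is that the equations and unknowns are in \emph{bijection}: since $p\nmid d_Q$, for each $N\ge 1$ there is a unique $j=j(N)\in\{0,\ldots,p-1\}$ with $(p-1-j)d_Q\equiv N\pmod p$, and then $m(N):=N-(p-1-j(N))d_Q$ is the unique multiple of $p$ such that the coefficient equation at level $m(N)$ for index $j(N)$ involves $b_N$ (with nonzero coefficient, namely the leading coefficient of $(-f)^{p-1-j(N)}$) together with only those $b_i$ for $i<N$.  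This makes the infinite linear system lower-triangular with nonvanishing diagonal, so one simply solves for $b_1,b_2,\ldots$ in order.  The case $j(N)=p-1$ (which occurs exactly when $p\mid N$) gives $b_N=0$, consistent with $\omega_{p-1}\in\ker V_X$.

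Your successive-approximation scheme could in principle be made to work, but its convergence hinges on exactly this bijection: a monomial correction $b_N t_Q^N\,dt_Q/t_Q$ affects only the residual $R_{j(N)}$ at leading order, because for $j\ne j(N)$ the leading exponent of $f^{p-1-j}\cdot t_Q^N$ is not divisible by $p$ and is thus invisible to $V_X$.  Without making this explicit, your claim that ``a single correction simultaneously improves all $p-1$ residuals'' is unjustified; the monotonicity of $\lfloor(N-(p-1-j)d_Q)/p\rfloor$ in $j$ is not the relevant fact.  Your alternative approach via Proposition~\ref{prop:isogeneric} is circular: inverting \eqref{eq:reconstructioneq} gives $\nu_j=\omega_j-s_j(V_X(\xi_j))$, and the condition $\nu_j\in\ker V_X$ is equivalent to $V_X(\omega_j)=V_X(\xi_j)$, which is precisely the equation you are trying to solve.
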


\begin{proof}[Proof of Proposition \ref{prop:surj1}]
Assuming Lemma~\ref{lem:choosingomegas}, we can prove Proposition~\ref{prop:surj1} easily. As in the proof of Lemma \ref{lem:twisted}, 
%The key is that  $\O^\wedge_{X,Q}$ is faithfully flat over $\O_{X,Q}$, so 
it suffices to check surjectivity after tensoring with $\O_{X,Q}^{\wedge}$
for each $Q\in S$.  Note that the completion $\Gscr_{j+1,Q}^\wedge$ is isomorphic to $\Gscr_{j+1,Q} \tensor{\O_{X,Q}} \O_{X,Q}^\wedge$.

Pick arbitrary  $\omega_0,\ldots \omega_{j-1} , \omega_{j+1}, \ldots , \omega_{p-2} \in k(\!(t_Q)\!) dt_Q$ with $\ord_Q(\omega_i)\ge -n_{Q,i}$ 
%$\omega_i \in F_* (\Omega^1_{X}(-n_{Q,i}))_Q$ 
for $i \neq j, p-1$, and set
\[
 \omega_j := s_j\left(t_Q^{-m_{Q,j}/p} \frac{dt_Q}{t_Q} \right).
\]
By Corollary~\ref{cor:secondsplitting} and Definition \ref{defn:notation}, we have
$\ord_Q (\omega_j) \geq -m_{Q,j} -1 \geq -(p-1-j)d_Q$.  As we visibly also have $-n_{Q,i} \geq -(p-1-i) d_Q$
for $i\neq j,p-1$, we may apply Lemma~\ref{lem:choosingomegas} to find $\omega_{p-1}$ such that $ \omega := \sum_{i=0}^{p-1} \omega_i y^i$ lies in  $(\pi_* \ker V_Y)_Q \tensor{\O_{X,Q}} k(\!(t_Q)\!)$ and $\ord_Q(\omega_{p-1}) \geq 0$.  Put $(\nu_0,\ldots , \nu_{p-1}) := \varphi_{\eta}(\omega)$, and note that as $\nu_i = r_i(\omega_i)$ for all $i$, when $i \neq j$ we have $\ord_Q(\nu_i) \geq -n_{Q,i}$.  On the other hand, $\nu_j = 0$ since $r_j \circ s_j = 0$ (as $r(m) = m - s(V_X(m))$ in Corollary~\ref{cor:secondsplitting}).  Thus $(\nu_0,\ldots, \nu_{p-1}) \in \Gscr_{j+1,Q}^{\wedge}$.  
By the definition of $c_{j,Q}$ in (\ref{eq:cjdef}) and the choice of $\omega_j$, we have arranged that 
$$g'_{j+1,Q} (\nu_0,\ldots \nu_{p-1}) = 1 \in k[\![t_Q^p]\!]/(t^{p \beta_{Q,j}})=M_{j,Q} =  M_{j,Q}\otimes_{\O_{X,Q}} \O^\wedge_{X,Q}.$$  This suffices to show that $g'_{j+1,Q}$ is surjective, as it is a map of $\O_{X,Q}$-modules.  
\end{proof}

%We now prove Lemma~\ref{lem:choosingomegas}.

\begin{proof}[Proof of Lemma \ref{lem:choosingomegas}]
For $h\in k(\!(t_Q)\!)$, let us write $\coef_m ( h)$ for the coefficient of $t_Q^m$ in $h$, and 
for $\xi \in k(\!(t_Q)\!) dt_Q$, write
$\coef_m(\xi)$ for the coefficient of $t_Q^m \frac{dt_Q}{t_Q}$ in $\xi$.
Given $\omega_0,\ldots,\omega_{p-2}$ as in the statement of the lemma,
it follows from the formula
\[
 0 = V_Y(\omega) = \sum_{j=0}^{p-1}\left( \sum_{i=j}^{p-1} V_X\left(\binom{i}{j} \omega_i (-f)^{i-j} \right)\right) y^j
\]
of Lemma~\ref{lem:cartierformula} that our goal is to 
construct $\omega_{p-1}\in k[\![t_Q]\!]dt_Q$ satisfying
%$\displaystyle \omega_{p-1} = \sum_{i=1}^\infty b_i t^i_Q \frac{dt_Q}{t_Q}$ must satisfy
\begin{equation}
 V_X\left(\binom{p-1}{j} \omega_{p-1} (-f)^{p-1-j} \right) = - \sum_{i=j}^{p-2} V_X\left(\binom{i}{j} \omega_i (-f)^{i-j} \right)\label{eq:VXidentity}
\end{equation}
for $0 \leq j < p-1$. 
  By \eqref{eq:localcartier} it suffices to check that for each $j$ and every $m \equiv 0 \pmod{p}$, the $m$-th coefficients of both sides of (\ref{eq:VXidentity}) agree.  So we need to show that
\begin{equation} \label{eq:reconstruct}
  \sum_{i} \binom{p-1}{j} \coef_i(\omega_{p-1}) \coef_{m-i} \left( (-f)^{p-1-j} \right) = -\coef_m \left( \sum_{\ell=j}^{p-2}  \binom{\ell}{j} \omega_\ell (-f)^{\ell-j} \right).
\end{equation}
Note the right side is determined by the choice of $\omega_0 , \ldots, \omega_{p-2}$.  We observe:
\begin{enumerate}
 \item $\coef_i \left( (-f)^{p-1-j} \right) =0$ for $i < -(p-1-j) d_Q$; \label{fcoef}
 \item $\coef_{-(p-1-j) d_Q} \left( (-f)^{p-1-j} \right) \neq 0$; \label{fleading}
 \item  $\displaystyle \coef_m\left( \sum_{\ell=j}^{p-2}  \binom{\ell}{j} \omega_\ell (-f)^{\ell-j} \right) = 0$ for $m \leq -(p-1-j) d_Q$.  \label{detcoef}
\end{enumerate}
where \ref{fcoef}, \ref{fleading} are immediate consequences
of the fact that $\ord_Q(f)=-d_Q$, while \ref{detcoef}
follows from the fact that by hypothesis
\[
 \ord_Q(\omega_\ell (-f)^{\ell-j} ) \geq -(p-1-\ell)d_Q + d_Q(j-\ell) = -(p-1-j)d_Q. 
\]
We construct $\omega_{p-1}=\sum_{i} b_i t_Q^i \frac{dt_Q}{t_Q}$ by specifying $b_i=\coef_i(\omega_{p-1})$ inductively as follows.
For $i\le 0$, set $b_i =0$; this choice implies that for each $j$ and any $m\leq -(p-1-j) d_Q$ that is a multiple of $p$, 
the left side of  (\ref{eq:reconstruct}) vanishes,
%$\coef_k \left( \binom{p-1}{j} \omega_{p-1} (-f)^{p-1-j} \right) =0$ 
and likewise for the right side by \ref{detcoef}.  
%We inductively define $b_i$ for $i > 0$.  
To specify $b_1$, 
%(the coefficient of $t_Q \frac{dt_Q}{t_Q} = dt_Q$ in $\omega_{p-1}$), 
choose $0 \leq j < p$ so $ d_Q(p-1-j)  \equiv 1 \pmod{p}$.  
In light of \ref{fcoef} and the fact that $b_i=0$ for $i \leq 0$, \eqref{eq:reconstruct} 
with $m=-d_Q(p-1-j)+1$
specifies that
\[
\ldots + 0 + \binom{p-1}{j} b_1 \coef_{-d_Q(p-1-j)} \left( (-f)^{p-1-j} \right) + 0 + \ldots = -\coef_{-d_Q(p-1-j)+1} \left( \sum_{\ell=j}^{p-2}  \binom{\ell}{j} \omega_\ell (-f)^{\ell-j} \right)
\]
where the right side has already been specified.  By \ref{fleading}, there is a unique solution $b_1$.

In general, if $N$ is any positive integer and 
 $b_{i}$ has been chosen for all $i < N$, first choose $0 \leq j < p$ so that 
 $d_Q(p-1-j) \equiv N\bmod p$.  The right side of \eqref{eq:reconstruct} with $m = - d_Q(p-1-j)+N$ is already specified. The finitely many non-zero terms of the left side with $i < N$ are determined by our previous choices, while the terms with $i> N$ are zero by \ref{fcoef}.  By \ref{fleading}, we may uniquely solve for $b_N$.
Since $p \nmid d_Q$, when considering $b_i$ with $i$ between $b$ and $b + (p-1)$, each $j$ satisfying $0\leq j < p$ occurs once. Thus the inductive choice of the $b_i$'s makes 
\eqref{eq:reconstruct} hold for every $j$ and every $m$ that is a multiple of $p$.  This completes the proof.
\end{proof}

\subsection{Local Calculations at Poles of Sections} \label{sec:lcps}
We now repeat the analysis of \S \ref{sec:lcrp} for points $Q\in \sup(D_j)$.
Fix $j$ and $Q\in \sup(D_j)$, let $t_Q$ be a uniformizer of $\O_{X,Q}$,
and let $\nu:=(\nu_0,\ldots,\nu_{p-1})\in \Gscr_{p,Q}$.
As before, we put $\omega:=\sum_{i=0}^{p-1} \omega_i y^i$
where $(\omega_0,\ldots,\omega_{p-1}) =\iota(\nu)$, and note that $V_Y(\omega)=0$.

We have $\ord_Q(\omega_j) \geq -p$ thanks to Lemma~\ref{lem:consolidatedvanishing}\ref{ordvanishQinD},
whence a local expansion 
$\omega_j =  \sum_{i=-p}^{\infty} b_i t_Q^i dt_Q $ in the completed stalk at $Q$.
By definition, we then have 
$g_{j+1,Q}(\nu) \equiv \sum_{i=-p}^{-1} b_i t_Q^i dt_Q \bmod k[\![t_Q]\!]dt_Q$,
and we define a map $c_{j,Q} : \Im(g_{j+1})_Q \rightarrow M_{j,Q}=k[\![t_Q]\!]/(t_Q^{p-1})$ by
\begin{equation} \label{eq:cpole}
 c_{j,Q}:  \sum_{i=-p}^{-1} b_i t_Q^i dt_Q \bmod k[\![t_Q]\!]dt_Q  \mapsto  \sum_{i=0}^{p-2} b_{i-p} t_Q^i \bmod t_Q^{p-1} k[\![t]\!].
\end{equation}
This is a well-defined map of $\O_{X,Q}$-modules, using the natural maps $\O_{X,Q} \into \O_{X,Q}^\wedge \simeq k[\![t_Q]\!]$.  

\begin{defn}
    For $0\le j\le p-1$ and $Q\in \sup(D_j)$ we define $g'_{j+1,Q}$ as the $\O_{X,Q}$-linear composite
    \begin{equation*}
    \xymatrix@C=45pt{
     g'_{j+1,Q}: {\Gscr_{j+1,Q}} \ar[r]^-{(g_{j+1}|_{\Gscr_{j+1}})_Q} & {\Im(g_{j+1}|_{\Gscr_{j+1}})_Q} \ar[r]^-{c_{j,Q}} & M_{j,Q}   
    }.
  \end{equation*}
\end{defn}

In other words, $g'_{j+1,Q}$ extracts the coefficients of the monomials $t_Q^{-i} dt_Q$ for $2\le i\le p$ in the local expansion of $\omega_j$ at $Q$.  This suffices to check regularity at $Q$ in the following sense: 

\begin{lem}\label{lem:checkregatQ}
If $\nu \in \Gscr_{j+1,Q}$ and $g'_{j+1,Q}(\nu) =0$, then $\ord_Q(\omega_j) \ge -n_{Q,j}=0$, {\em i.e.}
$\nu \in \Gscr_{j,Q}$.
\end{lem}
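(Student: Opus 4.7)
The plan is to exploit the fact that $Q \in \sup(D_j)$ lies outside $S'$, so $f$, the $E_i$, and the $\omega_i$ for $i > j$ all behave very tamely at $Q$, forcing the image of $\omega_j$ under the Cartier operator to vanish at $Q$ to order at least one. This order-of-vanishing condition will then rule out the one remaining potential pole of $\omega_j$ at $Q$ that is not already killed by the hypothesis $g'_{j+1,Q}(\nu) = 0$.

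More explicitly, first I would expand $\omega_j = \sum_{i \ge -p} b_i t_Q^i\, dt_Q$ in the completed stalk at $Q$ (the lower bound $-p$ being provided by Lemma~\ref{lem:consolidatedvanishing}\ref{ordvanishQinD}). From the definition (\ref{eq:cpole}) of $c_{j,Q}$, the hypothesis $g'_{j+1,Q}(\nu) = 0$ is equivalent to $b_{-p} = b_{-p+1} = \cdots = b_{-2} = 0$, so the only possibly nonzero polar term of $\omega_j$ at $Q$ is $b_{-1}t_Q^{-1}\,dt_Q$. Since $n_{Q,j} = 0$ for $Q \notin S'$ by Remark~\ref{rem:nQzero}, I will have shown $\nu \in \Gscr_{j,Q}$ as soon as I prove $b_{-1} = 0$.

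To force $b_{-1} = 0$, I will compare $V_X(\omega_j)$ computed in two different ways. On the one hand, the reconstruction formula \eqref{eq:reconstructioneq} gives $\omega_j = \nu_j + s_j(V_X(\xi))$ with $\xi = -\sum_{i=j+1}^{p-1}\binom{i}{j}\omega_i(-f)^{i-j}$, so applying $V_X$ and using $V_X(\nu_j) = 0$ together with the fact that $V_X \circ s_j$ is the natural inclusion yields $V_X(\omega_j) = V_X(\xi)$. Because $\nu \in \Gscr_{j+1,Q}$ the differentials $\omega_i$ for $i > j$ are sections of $F_*(\Omega^1_X(E_i))_Q$, and $\ord_Q(E_i) = 0$ since $Q \notin S'$; together with $\ord_Q(f) = 0$, this gives $\ord_Q(\xi) \ge 0$. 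On the other hand, applying the local formula \eqref{eq:localcartier} to $\xi$, every surviving power of $t_Q$ in $V_X(\xi)$ comes from a power $t_Q^{pk}$ of $\xi$ with $pk \ge 1$, so $k \ge 1$ and hence $\ord_Q(V_X(\xi)) \ge 1$; in particular the coefficient of $\frac{dt_Q}{t_Q}$ in $V_X(\omega_j)$ vanishes.

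Finally, I will directly compute that coefficient from the expansion of $\omega_j$ itself: applying \eqref{eq:localcartier} to $\omega_j = b_{-1}\frac{dt_Q}{t_Q} + \sum_{i \ge 0} b_i t_Q^{i+1}\frac{dt_Q}{t_Q}$ produces $V_X(\omega_j) = b_{-1}^{1/p}\frac{dt_Q}{t_Q} + \sum_{k \ge 1} b_{pk-1}^{1/p} t_Q^k \frac{dt_Q}{t_Q}$. Combining with the conclusion of the previous paragraph gives $b_{-1}^{1/p} = 0$, hence $b_{-1} = 0$ and $\ord_Q(\omega_j) \ge 0$. The only delicate point, which is the essential content of the argument, is keeping careful track of the distinction between the local expansions of $\omega_j$ in the basis $t_Q^i dt_Q$ (used in the definition of $c_{j,Q}$) versus the basis $t_Q^i \frac{dt_Q}{t_Q}$ (used in the Cartier formula), but this is purely bookkeeping.
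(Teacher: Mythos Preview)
Your proposal is correct and follows essentially the same approach as the paper: both show $V_X(\omega_j)=V_X(\xi)$ with $\xi$ regular at $Q$, deduce that $\omega_j$ has no $t_Q^{-1}dt_Q$ term (you via direct application of the Cartier formula to the local expansion of $\omega_j$, the paper via the decomposition $\omega_j=\nu_j+s_j(V_X(\omega_j))$ and the observation that neither summand contributes such a term), and then combine with the hypothesis $g'_{j+1,Q}(\nu)=0$ to eliminate the remaining polar terms $t_Q^{-i}dt_Q$ for $2\le i\le p$. One small slip: your claim ``$\ord_Q(V_X(\xi))\ge 1$'' should read $\ge 0$, but the conclusion you actually use---that the coefficient of $\frac{dt_Q}{t_Q}$ in $V_X(\xi)$ vanishes---is correct.
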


\begin{proof}
 Recall \eqref{eq:omegaj} that $\omega_j =  \nu_j + s_j( V_X(\omega_j))$.
 By definition of $\Gscr_{j+1}$ (Definition \ref{defn:gs}), we have
 $\ord_Q(\omega_i) \geq 0$ for $i > j$, 
and $\ord_Q(f)=0$ by the very choice of the divisors $D_j$ in \S\ref{sec:stalks}.
It follows that the differential 
$$\xi:= -\sum_{i=j+1}^{p-1} \binom{i}{j} \omega_i (-f)^{i-j} $$
is regular at $Q$.  By Corollary~\ref{cor:determined} and the fact that $V(\omega)=0$, we have
$V_X(\omega_j)=V_X(\xi)$, which must also then be regular at $Q$, so 
working in the completed stalk at $Q$ we may expand
it locally as
$V_X(\omega_j)=(a_{1}+ a_2t_Q + \ldots) dt_Q$.
But then
\begin{equation}
    \omega_j = \nu_j + s_j( V_X(\omega_j)) = \nu_j + a_1^p s_j(dt_Q) + a_2^p t_Q^p s_j(dt_Q)+ \ldots,
    \label{eq:omegaandv}
\end{equation}
Now $V_X(\nu_j)=0$, so the local expansion of $\nu_j$ has no $t_Q^{-1}dt_Q$-term, 
and since $V_X \circ s_j$ is the identity, $s_j(dt_Q)$ has no $t_Q^{-1} dt_Q$-term either.
From \eqref{eq:omegaandv} 
and the fact that $\ord_Q(s_j(dt_Q))\ge -p$
(by the very construction of $s_j$), we conclude
that the local expansion
of $\omega_j$ has no $t_Q^{-1}dt_Q$-term.  If in addition $g'_{j+1,Q}(\nu)=0$, then
the local expansion of $\omega_j$
has
no $t_{Q}^{-i} dt_Q$-terms for $2 \leq i \leq p$ either, and $\omega_j$ is regular at $Q$.
\end{proof}

\begin{lem} \label{lem:surj2}
For $0\le j\le p-1$ and $Q \in \sup(D_j)$, the map $g'_{j+1,Q}$ is surjective.
\end{lem}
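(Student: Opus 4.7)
The plan is to reduce surjectivity of $g'_{j+1,Q}$ to a statement about completed stalks, as in the proof of Proposition~\ref{prop:surj1}, and then construct an explicit preimage for each target element of $M_{j,Q}$. The key simplification compared with the ramified case is that $Q\in\sup(D_j)$ lies outside the branch locus $S$, so $d_Q=0$, $n_{Q,i}=0$ for all $i$, and $\pi$ is \'etale above $Q$. Consequently $M_{j,Q}\simeq k[\![t_Q]\!]/(t_Q^{p-1})$ has $k$-dimension $p-1$, matching precisely the $p-1$ possible polar coefficients of a $V_X$-killed differential with a pole of order at most $p$ at $Q$.

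Given a target $m=\sum_{i=0}^{p-2}c_i\,t_Q^{i}\bmod t_Q^{p-1}$, I would work in the completion $\O_{X,Q}^{\wedge}$ and set
\[
\nu_j := \sum_{\ell=-p}^{-2} c_{\ell+p}\,t_Q^{\ell}\,dt_Q \qquad\text{and}\qquad \nu_i := 0 \text{ for } i\neq j.
\]
Rewriting $\nu_j=\sum_{\ell=-p+1}^{-1} c_{\ell-1+p}\,t_Q^{\ell}\,dt_Q/t_Q$ and invoking~\eqref{eq:localcartier}, I would observe that $V_X(\nu_j)=0$ because no exponent $\ell\in\{-p+1,\ldots,-1\}$ is divisible by $p$; moreover $\ord_Q(\nu_j)\geq -p$, which equals the order at $Q$ of the polar divisor $E_j+pD_j$. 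Hence $\nu_j\in\ker V_X(F_*(E_j+pD_j))_Q^{\wedge}$, and trivially $\nu_i\in\ker V_X(F_*(E_i+pD_i))_Q^{\wedge}$ for $i\neq j$.

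To verify that $\nu\in\Gscr_{j+1,Q}^{\wedge}$, I would run the recursive reconstruction~\eqref{eq:reconstructioneq} from the proof of Proposition~\ref{prop:isogeneric}. Since $\nu_i=0$ for all $i>j$, a straightforward descending induction (essentially Remark~\ref{rmk:omegav}) gives $\omega_i=0$ for $i>j$ and then $\omega_j=\nu_j+s_j(0)=\nu_j$. The vanishing of the $\omega_i$ for $i>j$ trivially fulfills the condition of Definition~\ref{defn:gs} that they lie in $F_*(\Omega^1_X(E_i))$ at $Q$, so $\nu$ does lie in $\Gscr_{j+1,Q}^{\wedge}$. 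The $\omega_i$ for $i<j$ may well be nonzero---they involve $s_i$ applied to $V_X$ of products of $\omega_j$ with powers of $f$---but Definition~\ref{defn:gs} imposes no constraint on them, so they cause no trouble.

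Finally, I would compute $g'_{j+1,Q}(\nu)$ directly from~\eqref{eq:cpole}: since $\omega_j=\nu_j$ has polar coefficients $b_\ell=c_{\ell+p}$ for $-p\leq\ell\leq -2$ and $b_{-1}=0$, the formula yields $\sum_{i=0}^{p-2} b_{i-p}\,t_Q^{i}=\sum_{i=0}^{p-2} c_i\,t_Q^{i}=m$ modulo $t_Q^{p-1}$, establishing surjectivity. The main, if modest, subtlety is simply to observe that the $V_X$-kernel condition forbids a $t_Q^{-1}\,dt_Q$ contribution to $\nu_j$, which aligns exactly with the fact that $M_{j,Q}$ (via Definition~\ref{defn:notation}) parametrizes only the coefficients at $t_Q^{-p},\ldots,t_Q^{-2}$. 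Unlike Proposition~\ref{prop:surj1}, no analog of Lemma~\ref{lem:choosingomegas} is needed here, because $f$ has neither pole nor zero at $Q$ and the reconstruction is purely formal.
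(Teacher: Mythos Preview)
Your proof is correct and follows essentially the same approach as the paper's: pass to completed stalks and exhibit a preimage by choosing $\nu_j$ freely subject only to $V_X(\nu_j)=0$ and $\ord_Q(\nu_j)\ge -p$. Your choice to set $\nu_i=0$ for $i\neq j$ (forcing $\omega_i=0$ for $i>j$ and hence $\omega_j=\nu_j$ exactly) is a clean simplification that makes the computation of $g'_{j+1,Q}(\nu)$ immediate; the paper instead invokes the analysis from the proof of Lemma~\ref{lem:checkregatQ}, but the content is the same.
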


\begin{proof}
    Such surjectivity may be checked after passing to completions, where it follows immediately
     from the proof of Lemma \ref{lem:checkregatQ}
      as $(\ker V_X)_Q^{\wedge}[\frac{1}{t_Q}]$ is (topologically) generated as
     a $k$-vector space by $\{t_Q^{i-1} dt_Q\ :\  p \nmid i\}$, and the only restriction on $\nu_j$ is that $\ord_Q(\nu_j) \geq -p$.
\end{proof}

\subsection{Local Calculations at $Q'$} \label{sec:Q'}
Finally, we analyze the behavior at $Q'$. Because we chose to work with the sheaf $\Fscr_0$ instead of $\pi_* \Omega^1_Y$, the relationship between $\Gscr_j$ and $\Gscr_{j+1}$ at $Q'$ is particularly simple when $0 \leq j \le p-1$:

\begin{lem} \label{lem:surj3}
For $0 \leq j \le p-1$, the natural inclusion $\Gscr_{j,Q'} \to \Gscr_{j+1,Q'}$ is an isomorphism.
\end{lem}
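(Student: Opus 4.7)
The plan is to reduce the proof to checking the sole extra condition distinguishing membership in $\Gscr_{j,Q'}$ from membership in $\Gscr_{j+1,Q'}$: namely that, writing $\iota(\nu)=(\omega_0,\ldots,\omega_{p-1})$ for a section $\nu$ of $\Gscr_{j+1,Q'}$, one has $\ord_{Q'}(\omega_j)\ge -n_{Q',j}=-(p-1-j)d_{Q'}$. This is essentially part (iii) of Lemma~\ref{lem:consolidatedvanishing}, whose proof is deferred to here. If $d_{Q'}=0$ the $Q'$-condition is vacuous and there is nothing to show, so I would assume $d_{Q'}>0$; by the minimality of $f$ we then have $p\mid d_{Q'}$, and I write $d_{Q'}=pe$.

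The case $j=p-1$ is immediate: the sum defining $\xi$ in \eqref{eq:omegaj} is empty, so $\omega_{p-1}=\nu_{p-1}$, and $\ord_{Q'}(\nu_{p-1})\ge -n_{Q',p-1}=0$ by the definition of $\Gscr_p$.

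For $0\le j\le p-2$ I would apply the reconstruction identity
\begin{equation*}
\omega_j \,=\, \nu_j + s_j(V_X(\xi)), \qquad \xi \,:=\, -\sum_{i=j+1}^{p-1}\binom{i}{j}\omega_i(-f)^{i-j}
\end{equation*}
from \eqref{eq:omegaj}. The hypothesis $\nu\in\Gscr_{j+1,Q'}$ gives $\ord_{Q'}(\omega_i)\ge -(p-1-i)d_{Q'}$ for $i>j$, and combined with $\ord_{Q'}(f)=-d_{Q'}$ the termwise estimate $\ord_{Q'}(\omega_i(-f)^{i-j})\ge -(p-1-i)d_{Q'}-(i-j)d_{Q'}=-(p-1-j)d_{Q'}$ yields $\ord_{Q'}(\xi)\ge -(p-1-j)d_{Q'}$. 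Using the local formula \eqref{eq:localcartier} together with the divisibility $p\mid d_{Q'}$, a short calculation (tracking the conversion between $dt$-form and $\tfrac{dt}{t}$-form) shows $\ord_{Q'}(V_X(\xi))\ge -(p-1-j)e$. Since $n_{Q',j}>0$ we have $Q'\in\sup(E_j)$, so the polar bound \eqref{eq:sectionpolarprop} for the chosen splitting $s_j$ delivers $\ord_{Q'}(s_j(V_X(\xi)))\ge (-(p-1-j)e+1)p-1=-(p-1-j)d_{Q'}+p-1$. Combining this with $\ord_{Q'}(\nu_j)\ge -(p-1-j)d_{Q'}$ proves $\ord_{Q'}(\omega_j)\ge -(p-1-j)d_{Q'}$, so $\nu\in\Gscr_{j,Q'}$.

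The main (mild) obstacle is bookkeeping through the Cartier operator: one must keep straight whether a differential is written as $\sum b_i t_{Q'}^i dt_{Q'}$ or $\sum c_i t_{Q'}^i\tfrac{dt_{Q'}}{t_{Q'}}$ when invoking \eqref{eq:localcartier}. The divisibility $p\mid d_{Q'}$ is used precisely at the step bounding $\ord_{Q'}(V_X(\xi))$, ensuring that the Cartier operator drops the order by exactly a factor of $p$ rather than losing a unit to a ceiling; after this, the polar property \eqref{eq:sectionpolarprop} of $s_j$ in fact gives a bound exceeding the target by $p-1$, so the conclusion holds with room to spare. The auxiliary divisors $D_i$ play no role here since they were chosen disjoint from $\{Q'\}$.
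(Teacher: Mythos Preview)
Your proof is correct and follows essentially the same approach as the paper: both reduce to showing $\ord_{Q'}(\omega_j)\ge -(p-1-j)d_{Q'}$ via the reconstruction identity \eqref{eq:reconstructioneq}, the estimate $\ord_{Q'}(\xi)\ge -(p-1-j)d_{Q'}$, and the polar property \eqref{eq:sectionpolarprop} of $s_j$. The only difference is cosmetic: you track the Cartier and section steps separately (using $p\mid d_{Q'}$ explicitly), whereas the paper packages them into the single observation $\ord_{Q'}(s_j(V_X(\xi)))\ge \ord_{Q'}(\xi)$, which holds for any meromorphic $\xi$ without invoking the divisibility.
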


\begin{proof}
We check surjectivity.  Consider $\nu:=(\nu_0,\ldots,\nu_{p-1}) \in \Gscr_{p,Q'}$ 
with $\iota(\nu)=(\omega_0,\ldots,\omega_{p-1})$.
By Lemma~\ref{lem:consolidatedvanishing} \ref{ordvanishQ'}, we in fact have
$\ord_{Q'}(\omega_i) \geq - (p-1-i) d_{Q'}$ for {\em all} $i$, 
which implies in particular that $\nu\in \Gscr_{0,Q'}$, whence the composite
map $\Gscr_{0,Q'}\into \Gscr_{j,Q'}\into \Gscr_{j+1,Q'}\into \Gscr_{p,Q'}$ is an isomorphism,
which gives the claim.  
As the details of the proof of Lemma ~\ref{lem:consolidatedvanishing} \ref{ordvanishQ'}
were abridged, we will spell them out here (sans the descending induction,
which is unnecessary if we assume that $\nu\in \Gscr_{j+1,Q'}$).
So assume that $\nu\in \Gscr_{j+1,Q'}$, or equivalently that
$\ord_{Q'} (\omega_i) \geq - (p-1-i) d_{Q'}$ for $i\geq j+1$. 
Since $\ord_{Q'}(f) = -d _{Q'}$, we compute that
$$\ord_{Q'}(\omega_i(-f)^{i-j}) \ge -(p-1-i)d_{Q'}+(j-i)d_{Q'}=-(p-1-j)d_{Q'}$$
for $i\ge j+1$.
On the other hand, for {\em any} meromorphic
differential $\xi$ on $X$, Corollary~\ref{cor:secondsplitting} shows that $\ord_{Q'}(s_j (V_X(\xi)))\ge \ord_{Q'}(\xi)$.  Then \eqref{eq:reconstructioneq} shows that $\ord_{Q'}(\omega_j) \geq - (p-1-j) d_{Q'}$,
and hence that  $\nu  \in \Gscr_{j,Q'}$ as desired.
\end{proof}

The downside of working with the sheaf $\Fscr_0$ from Definition~\ref{defn:F0} is that we require a separate analysis to relate $\Gscr_0$ with the sheaf we care about, $\Im (\varphi) \simeq \pi_* \ker V_Y$.  Recall
(Definition~\ref{defn:gs}) that we defined 
 $\Gscr_{-1}:=\Im(\varphi) \subseteq \Gscr_0$. %and that $d_{Q'} = - \ord_{Q'}(f)$.

\begin{defn} \label{defn:-1notation}
 Define $\displaystyle M_{-1,Q'} := \bigoplus_{i=0}^{p-1} \left( k[\![t_{Q'}^p]\!] / (t_{Q'}^{(p-1-i) d_{Q'}}) \right) ^{\oplus (p-1)}$ and write $M_{-1}:=Q'_*(M_{-1,Q'})$ for the skyscraper sheaf on $X$ supported at $Q'$ with stalk $M_{-1,Q'}$.
 Put $S_{-1}':=\{Q'\}$.
 %and  let $g'_{0}$ be the canonical map from $\Gscr_0$ to the cokernel of the natural inclusion $\Gscr_{-1} \into \Gscr_0$.
\end{defn}

\begin{lem} \label{lem:fixQ}
The cokernel of the natural inclusion map $\Gscr_{-1} \into \Gscr_0$ is isomorphic to $M_{-1}$.  
\end{lem}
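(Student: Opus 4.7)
The inclusion $\Gscr_{-1}\hookrightarrow\Gscr_0$ is an isomorphism on $X\setminus\{Q'\}$, since $\Fscr_0$ agrees with $\pi_*\Omega^1_Y$ there by Definition~\ref{defn:F0}. The cokernel is therefore a skyscraper sheaf supported at $Q'$, and the plan is to construct an $\O_{X,Q'}$-linear isomorphism $\Gscr_{0,Q'}/\Gscr_{-1,Q'}\simeq M_{-1,Q'}$. By Lemma~\ref{lem:surj3}, $\Gscr_{0,Q'}=\Gscr_{p,Q'}$, whose sections are tuples $\nu=(\nu_0,\ldots,\nu_{p-1})$ with $\nu_i$ a meromorphic element of $\ker V_X$ at $Q'$ satisfying $\ord_{Q'}(\nu_i)\geq -(p-1-i)d_{Q'}$; the associated $\omega=\iota(\nu)=\sum_i\omega_iy^i$ given by \eqref{eq:reconstructioneq} lies in $\Fscr_{0,Q'}$ and is killed by $V_Y$.

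Since $p\mid d_{Q'}$ and $V_X(\nu_i)=0$, the Laurent expansion of $\nu_i$ at $Q'$ omits all monomials $t_{Q'}^{pm}\,dt_{Q'}/t_{Q'}$; the principal part decomposes by residue class modulo $p$, yielding for each $r\in\{1,\ldots,p-1\}$ a $k$-vector space of dimension $(p-1-i)d_{Q'}/p$ spanned by the coefficients of $t_{Q'}^{pm+r}\,dt_{Q'}/t_{Q'}$ with $-(p-1-i)d_{Q'}/p\leq m\leq -1$, which canonically identifies with $k[\![t_{Q'}^p]\!]/(t_{Q'}^{(p-1-i)d_{Q'}})$. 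Assembling across $i$ and $r$ gives the principal-part extraction map
\begin{equation*}
g'_0\colon\Gscr_{0,Q'}\longrightarrow M_{-1,Q'}=\bigoplus_{i=0}^{p-1}\bigl(k[\![t_{Q'}^p]\!]/(t_{Q'}^{(p-1-i)d_{Q'}})\bigr)^{\oplus(p-1)},
\end{equation*}
which is visibly $\O_{X,Q'}$-linear, with kernel the tuples $\nu$ for which every $\nu_i$ is regular at $Q'$. Surjectivity is immediate: given any target, lift to $\nu_i$ with the prescribed principal parts plus arbitrary regular terms, and apply Lemma~\ref{lem:surj3}.

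The crux is identifying $\ker g'_0$ with $\Gscr_{-1,Q'}$. Above $Q'$ the map $\pi$ is \'etale with $p$ points $P'_0,\ldots,P'_{p-1}$, and under the isomorphisms $\O_{Y,P'_j}^{\wedge}\simeq\O_{X,Q'}^{\wedge}$ the function $y$ pulls back to $h+j$ where $h^p-h=f$ and $\ord_{Q'}(h)=-d_{Q'}/p$. Hence $\omega$ is regular above $Q'$ if and only if each local expansion $\lambda_j:=\sum_i\omega_i(h+j)^i$ is regular at $Q'$, and since the Vandermonde matrix $((h+j)^i)_{i,j}$ has determinant $\prod_{k<j}(j-k)\in\FF_p^{\times}$, this is equivalent to regularity of $\mu_\ell:=\sum_{i\geq\ell}\binom{i}{\ell}\omega_ih^{i-\ell}$ for $\ell=0,\ldots,p-1$. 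Descending induction on $\ell$, using the decomposition $\omega_\ell=\nu_\ell+s_\ell(V_X(\omega_\ell))$ of \eqref{eq:reconstructioneq} together with Corollary~\ref{cor:determined}, the splitting property $V_X\circ s_\ell=\mathrm{id}$, and the Artin--Schreier identity $h^p=h+f$, shows that once $\mu_{\ell+1},\ldots,\mu_{p-1}$ are regular the $s_\ell$-correction supplies exactly the polar cancellation demanded by the contributions $\binom{i}{\ell}\omega_ih^{i-\ell}$ with $i>\ell$, so $\mu_\ell$ is regular precisely when the principal part of $\nu_\ell$ vanishes. The main obstacle is carrying out this descending induction in detail: the required cancellation depends on a careful matching of Laurent coefficients through the local Cartier formula \eqref{eq:localcartier}, the splitting of $V_X$, and repeated rewriting of powers of $h$ via $h^p=h+f$. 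A dimension check $\dim_k M_{-1,Q'}=(p-1)^2d_{Q'}/2=\sum_{i=0}^{p-1}(p-1)(p-1-i)d_{Q'}/p$ corroborates the identification, giving the claimed isomorphism.
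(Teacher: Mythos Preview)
Your argument has a genuine gap at precisely the step you flag as ``the main obstacle.'' You define $g'_0$ by extracting the principal parts of the $\nu_i$ at $Q'$ and assert that $\ker g'_0 = \Gscr_{-1,Q'}$, but the descending induction you sketch does not close. Concretely, for $\ell=p-2$ one computes (using $f=h^p-h$ and $V_X(\nu_{p-1})=0$)
\[
    \mu_{p-2} \;=\; \nu_{p-2} \;+\; r_{p-2}\bigl((p-1)\,\nu_{p-1}\,h\bigr),
\]
so your claim requires that $r_{p-2}\bigl((p-1)\nu_{p-1}h\bigr)$ be regular at $Q'$ whenever $\nu_{p-1}$ is. But $r_{p-2}$ is the \emph{global} projector of Corollary~\ref{cor:splittings}, whose only guaranteed local behavior at $Q'$ is the twist bound coming from $\O_X$-linearity: an input with $\ord_{Q'}\ge -d_{Q'}/p$ is only forced to have output with $\ord_{Q'}\gtrsim -d_{Q'}/p$, not $\ge 0$. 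The cancellation you need is not forced by the construction of $r_{p-2}$, and for a generic choice of splitting it will not occur. The dimension check at the end confirms only the size of $M_{-1,Q'}$, not the identification of the kernel.

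The paper avoids this by changing coordinates at $Q'$: choose $g\in k(X)$ with $f':=f+g^p-g$ regular at $Q'$, set $y'=y+g$, and re-expand $\omega=\sum\omega'_i(y')^i$. Then regularity of $\omega$ above $Q'$ is exactly regularity of all $\omega'_i$ (this is your $\mu_\ell$ up to a unit shift, since $h+g$ is regular), while the condition defining $\Fscr_0$ is shown to be equivalent to $\ord_{Q'}(\omega'_i)\ge -(p-1-i)d_{Q'}$. Crucially, because $f'$ is regular the reconstruction formula $\omega'_j=\nu'_j + s(V_X(\cdots f'\cdots))$ with the \emph{explicit local} splitting of Remark~\ref{rmk:p1} preserves regularity, making the cokernel computation transparent. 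The missing idea in your proposal is this change to $(y',f')$, which renders the local splitting well-behaved; without it you are fighting the uncontrolled local behavior of the global $s_j$ at $Q'$.
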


We interpret Lemma \ref{lem:fixQ} as giving a short exact sequence
 \begin{equation} \label{eq:g'_0}
 \xymatrix{
  0 \ar[r] & {\Gscr_{-1}} \ar[r] &  {\Gscr_0} \ar[r]^-{g'_{0}} \ar[r] & { M_{-1}} \ar[r] & 0
  }
 \end{equation}
 where $g'_0$ is the composition of the natural map $\Gscr_0 \to \on{coker}( \Gscr_{-1} \to \Gscr_0)$ with the isomorphism of Lemma~\ref{lem:fixQ}. 

\begin{proof}
 We already know that the map $\Gscr_{-1}\rightarrow \Gscr_0$ is an inclusion, 
 and is an isomorphism away from $Q'$, so this is a local question at $Q'$.  
 We have $\Gscr_{-1} \simeq \pi_* \ker V_Y$ and $\Gscr_0 \simeq \pi_*\ker (V_Y|_{\Fscr_0})$ thanks to Lemma \ref{lem:filtered}, and $(\pi_* \ker V_Y)_{Q'}$ consists of meromorphic forms on $Y$ that are regular above $Q'$ and lie in the kernel of $V_Y$, while $\pi_*\ker(V_Y|_{\Fscr_0})_{Q'}$ consists of meromorphic $\omega = \sum_i \omega_i y^i$ in the kernel of $V_Y$ that satisfy $\ord_{Q'}(\omega_i) \geq -(p-1-i) d_{Q'}$ for all $i$ (Definition \ref{defn:F0}). 
 Despite the fact that $\pi$ is \'{e}tale over $Q'$, the decomposition $\omega = \sum_i \omega_i y^i$ is tricky to analyze since the element $f$ defining the Artin-Schreier extension of function fields has a pole at $Q'$ (as does $y$).
 %The analysis will take place in the completed stalks at $Q'$.

 By the very choice of $f$ in \S\ref{sec:asc}, we may find a meromorphic function $g\in K=k(X)$
 such that $f':=f+g^p-g$ has $\ord_{Q'}(f')=0$; necessarily
 %Pick a function $g \in \O_{X,Q'}\left [\frac{1}{t_{Q'}} \right]$ such that $f + g^p - g = f'$ is regular at $Q'$.
 %Then 
 $\ord_{Q'}(g) = - d_{Q'}/p$. 
 %since $\pi$ is \'{e}tale over $Q'$.  
 Let $y' = y + g$, so that 
 \[
  (y')^p- y' = f'
 \]
and $K'=k(Y)$ is the Artin-Schreier extension of $K$
given by adjoining $y'$.
Observe that $y'$ is regular above $Q'$.  Any meromorphic differential $\omega$ on $Y$ may be 
written as
\begin{equation}
    \omega = \sum_{i=0}^{p-1} \omega_i y^i = \sum_{i=0}^{p-1} \omega_i' (y')^i.\label{eq:diffprime}
\end{equation}
While the condition of $\omega$ being regular above $Q'$ is tricky to describe in terms of the $\omega_i$, it is simple to describe in terms of the $\omega'_i$.  Indeed,
the proof of Lemma~\ref{lem:regular} shows that
$\omega$ is regular above $Q'$ if and only if $\ord_{Q'}(\omega'_i) \geq 0$ for $0 \leq i \leq p-1$.
We deduce that the stalk of $\Gscr_{-1}$ at $Q'$ is isomorphic to 
$$N_{-1} := \left \{ \omega = \sum_{i=0}^{p-1} \omega'_i (y')^i : \omega \in (\pi_*\ker V_Y)_{Q'} \text{ and } \omega_i' \in \Omega^1_{X,{Q'}} \right \}.$$ 
On the other hand, substituting $y'=y+g$ in (\ref{eq:diffprime}) and collecting $y^j$-terms gives
 \[
  \omega_j = \sum_{i = j}^{p-1} \binom{i}{j} \omega'_i g^{i-j}.
 \]
A descending induction on $i$ using $\ord_{Q'}(g)=-d_{Q'}/p$ then shows that
$\ord_{Q'}(\omega_i) \geq - (p-1-i) d_{Q'}$ if and only if $\ord_{Q'}(\omega_i') \geq - (p-1-i) d_{Q'}$,
and we conclude that the stalk of $\Gscr_{0}$ at $Q'$ is isomorphic to
$$N_0 := \left \{ \omega = \sum_{i=0}^{p-1} \omega'_i (y')^i : \omega \in (\pi_*\ker V_Y)_{Q'} \text{ and }  \omega'_i \in t_{Q'}^{-(p-1-i) d_{Q'}} \Omega^1_{X,{Q'}} \right\}.$$
We view both $N_{-1}$ and $N_0$ as $k[t_{Q'}^p]$-modules.  
To complete the proof, it suffices to show the cokernel of the natural inclusion $N_{-1} \into N_0$ is isomorphic to $M_{-1,Q'}$.  It suffices to do so after completing.
%$$\bigoplus_{i=0}^{p-1} \left( k[\![t_{Q'}^p]\!] / (t_{Q'}^{(p-1-i) d_{Q'}}) \right) ^{\oplus (p-1)}.$$

To analyze the cokernel, we first observe that \eqref{eq:explicitsplitting}--\eqref{eq:explicitprojector} give a section
$s : (\Im V_X)_{Q'}^{\wedge} \to F_* \Omega_{X,Q'}^{1,\wedge}$ and projector $r: F_* \Omega_{X,{Q'}}^{1,\wedge} \to (\ker V_X)_{Q'}^{\wedge}$ to the 
completion of the exact sequence \eqref{eq:cartiersequence} at $Q'$.  By Corollary~\ref{cor:determined}, we have the relations
\begin{equation}
    V_X(\omega_j') = -\sum_{i=j+1}^{p-1} V_X\left(\binom{i}{j} \omega_i' (-f')^{i-j} \right)
    \label{eq:diffprimedetermine}
\end{equation}
for $0\le j\le p-1$, and in particular 
$V_X(\omega'_j)$ is determined by $\omega'_i$ for $i>j$. It follows that   
the map
\begin{equation*}
    \xymatrix{
     {\varphi'_{Q'}: (\pi_*\ker V_Y)_{Q'}^{\wedge}} \ar[r] & {\displaystyle\bigoplus_{i=0}^{p-1} (\ker V_X)_{Q'}^{\wedge}}   
    },
    \quad
    \sum_{i=0}^{p-1} \omega_i' (y')^i \mapsto (r(\omega'_0),\ldots, r(\omega'_{p-1}))
\end{equation*}
%$$ \varphi_{Q'}\left( \sum_i \omega_i' (y')^i \right) = (r(\omega'_0),\ldots, r(\omega'_{p-1}))$$
%induces an isomorphism between $(\pi_* \ker V_Y)_{Q'}\left[\frac{1}{t_{Q'}} \right]$ and $\bigoplus_i (\ker V_X)_{Q'} \left[\frac{1}{t_{Q'}} \right]$. 
is an isomorphism after inverting $t_{Q'}^p$.
Now for any differential $\xi$ on $X$, we have
$\ord_{Q'}(s(V_X(\xi))) \ge p\lfloor \ord_{Q'}(\xi)/p \rfloor$.
As $\ord_{Q'}(f')=0$, it follows from this and \eqref{eq:diffprimedetermine} that:
%$\ord_{Q'}( s(V_X(\omega_j'))) \ge 0$ (respectivlely $\ge -(p-1-j)d_{Q'}$)
%wnenever $\ord_{Q'}( s(V_X(\omega_i'))) \ge 0$ (resp. $\ge -(p-1-i) d_{Q'}$) for $i>j$.
\begin{itemize}
    \item $s (V_X(\omega'_j)) \in \Omega^{1,\wedge}_{X,{Q'}}$ if $\omega'_i \in \Omega^{1,\wedge}_{X,{Q'}}$ for $i>j$, 
    \item $s( V_X(\omega'_j)) \in t_{Q'}^{-(p-1-j) d_{Q'}} \Omega^{1,\wedge}_{X,{Q'}}$ if  $\omega'_i \in t_{Q'}^{-(p-1-i)d_{Q'}} \Omega^{1,\wedge}_{X,Q'}$ for $i>j$. 
\end{itemize}
As $\omega_j' = r(\omega'_j) + s(V_X(\omega'_j))$ for all $j$,
together with \eqref{eq:diffprimedetermine}  this shows
that $\varphi'_{Q'}$ induces identifications
\[
 {N_{-1}^\wedge}\simeq \bigoplus_{i=0}^{p-1} (\ker V_X)_{Q'}^{\wedge} \quad \text{and} \quad {N_0^\wedge} \simeq \bigoplus_{i=0}^{p-1} \left(t_{Q'}^p \right)^ {-(p-1-i) (d_{Q'}/p)} (\ker V_X)_{Q'}^{\wedge}
\]
as submodules of $\bigoplus_i (\ker V_X)_{Q'}^{\wedge} \left[\frac{1}{t_{Q'}^p} \right]$, 
with the natural inclusion 
$N_{-1}^{\wedge} \into N_0^{\wedge}$ corresponding to the canonical inclusion of direct summands.  As $(\ker V_X)_{Q'}^{\wedge}$ is a free $k[\![t_{Q'}^p]\!]$-module generated by $dt_{Q'}, \ldots , t_{Q'}^{p-1} dt_{Q'}$, this completes the proof.
\end{proof}

\begin{remark} \label{remark:sharper-1}
 Tracing through the proof of Lemma \ref{lem:fixQ}, we see that the image under $H^0(g_{0}')$ 
  of $H^0\left(X,\ker \left( (\tau-1)^{j+1} : \Gscr_0 \to \Gscr_0 \right) \right)$ has dimension at most 
 $$\sum_{i=0}^{j} \frac{(p-1-i) d_{Q'}}{p} (p-1)$$
 as $\omega_i = \omega'_i =0$ for $i>j$ whenever $\omega=\sum_i \omega_i y^i=\sum_i \omega'_i (y')^i$
 is killed by $(\tau-1)^{j+1}$.
\end{remark}

\subsection{Short Exact Sequences} \label{sec:ses}
Combining the local analyses of \S\ref{sec:lcrp}--\ref{sec:Q'},
we can finally construct the desired exact sequences relating the $\Gscr_j$.  Recall the definitions of the skyscraper sheaf $M_j$ in Definition~\ref{defn:notation} (for $0 \leq j \leq p-1$) and Definition~\ref{defn:-1notation} (for $j=-1$).  From \S\ref{sec:lcrp}, \ref{sec:lcps}, and \ref{sec:Q'} we have maps from the stalks of $\Gscr_j$ to pieces of these skyscraper sheaves.  We now put them together.

\begin{defn}\label{def:g'final}
For $0 \leq j \le p-1$, put $S_j:=S\cup\sup(D_j)$ and define 
\begin{equation*}
\xymatrix@C=55pt{
 {c_{j}  : \Im(g_{j+1})} \ar[r] &
 {\displaystyle\bigoplus_{Q \in S_j} Q_*(\Im(g_{j+1})_Q)}
 \ar[r]^-{\bigoplus_{Q \in S_j} Q_* (c_{j,Q})} & 
 {\displaystyle\bigoplus_{Q \in S_j} Q_*(M_{j,Q}) =: M_j}.
}
\end{equation*}
%\begin{equation}
%    \xymatrix@C=45pt{
%    {g'_{j+1} : \Gscr_{j+1}} \ar[r] & { \displaystyle \bigoplus_{Q \in S_j}     \ar[r]^-{\bigoplus Q_*(g_{j+1,Q}')} & {\displaystyle \bigoplus_{Q \in S_j} Q_*(M_{j,Q}) =: M_j}
%    }
%\end{equation}   
where the first map is the canonical one.   Let $g'_{j+1} := c_j \circ (g_{j+1}|_{\Gscr_{j+1}})$.
\end{defn}

Recall that we also defined $g'_{0}$ in \eqref{eq:g'_0}.

\begin{remark} \label{rem:defg'}
    Notice that the map induced by $g'_{j+1}$ on stalks at $Q$ coincides with 
the previously defined map $g'_{j+1,Q}$, justifying our notation.
\end{remark}

\begin{thm} \label{thm:technicalses}
For $-1 \leq j \le p-1$, there are short exact sequences of sheaves on $X$
\begin{equation*}
\xymatrix{
 0 \ar[r] & {\Gscr_{j}} \ar[r] &  {\Gscr_{j+1}} \ar[r]^-{g'_{j+1}} & {M_j} \ar[r] & 0.
 }
\end{equation*}
We have  $\Gscr_{-1} = \Im (\varphi) \simeq \pi_* \ker V_Y$ and $\Gscr_{p} = \bigoplus_{i=0}^{p-1} \ker V_X(F_*(E_i +p D_i))$.  Furthermore, $M_j$ is a skyscraper sheaf supported on $S_j := S  \cup \sup(D_j)$ for $j \geq 0$, and supported at $Q'$ for $j = -1$.  
\end{thm}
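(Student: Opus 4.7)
The plan is to treat the $j=-1$ case separately and then handle the cases $0\le j\le p-1$ in a uniform way by reducing everything to the local analyses of \S\ref{sec:lcrp}, \S\ref{sec:lcps}, and \S\ref{sec:Q'}. The identifications $\Gscr_{-1}\simeq \pi_*\ker V_Y$ and $\Gscr_p=\bigoplus_{i=0}^{p-1}\ker V_X(F_*(E_i+pD_i))$ are already in Lemma~\ref{lem:filtered}(iv)--(vi), and the support statement for $M_j$ is immediate from Definitions~\ref{defn:notation} and~\ref{defn:-1notation}, so the real content is the exactness of the sequence for each $j$.

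For $j=-1$ there is nothing new: Lemma~\ref{lem:fixQ} gives exactly the desired short exact sequence (\ref{eq:g'_0}), with $g'_{0}$ being the composition of the natural surjection $\Gscr_0\to \on{coker}(\Gscr_{-1}\hookrightarrow \Gscr_0)$ with the isomorphism identifying that cokernel with $M_{-1}$.

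For $0\le j\le p-1$, I would first check exactness away from $S_j$. The sheaf $M_j$ is a skyscraper sheaf supported on $S_j$, and by Lemma~\ref{lem:filtered}(ii) the inclusion $\Gscr_j\hookrightarrow \Gscr_{j+1}$ is an isomorphism on stalks outside $S_j'\supseteq S_j$, so the sequence is trivially exact at all points not in $S_j$. (At $Q'\in S_j'\setminus S_j$, Lemma~\ref{lem:surj3} still gives $\Gscr_{j,Q'}=\Gscr_{j+1,Q'}$, consistent with $M_{j,Q'}=0$ by Definition~\ref{defn:notation}.) It remains to verify exactness at each $Q\in S_j$. Since $g'_{j+1}=c_j\circ (g_{j+1}|_{\Gscr_{j+1}})$, Lemma~\ref{lem:filtered}(i) gives $\ker(g_{j+1}|_{\Gscr_{j+1}})=\Gscr_j$; to deduce $\ker(g'_{j+1})=\Gscr_j$ it suffices to observe that the local maps $c_{j,Q}$ defined in (\ref{eq:cjdef}) (for $Q\in S$) and (\ref{eq:cpole}) (for $Q\in\sup(D_j)$) are injective by construction — they merely repackage the nonzero coefficients of $\omega_j$ in the expansion modulo $t_Q^{-n_{Q,j}}\Omega^1_{X,Q}$. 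Thus $\ker g'_{j+1,Q}=\ker g_{j+1,Q}|_{\Gscr_{j+1,Q}}=\Gscr_{j,Q}$ for every $Q\in S_j$, which gives exactness in the middle.

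The remaining point, and what I expect to be the least mechanical step, is surjectivity of $g'_{j+1}$ onto $M_j$. This is again a stalk question at each $Q\in S_j$, and it is handled by the two key surjectivity lemmas: Proposition~\ref{prop:surj1} covers $Q\in S$ by using Lemma~\ref{lem:choosingomegas} to produce, for a prescribed singular term at $Q$, an element of $(\pi_*\ker V_Y)_Q\otimes k(\!(t_Q)\!)$ whose image in $M_{j,Q}$ is a chosen generator; and Lemma~\ref{lem:surj2} covers $Q\in\sup(D_j)$ by a direct construction using the splitting $s_j$ and the fact that over such $Q$ the form $f$ is regular with $\ord_Q(f)=0$. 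Assembling these local statements through the sum over $Q\in S_j$ in Definition~\ref{def:g'final} yields surjectivity of $g'_{j+1}$. Combining the three pieces (inclusion $\Gscr_j\hookrightarrow \Gscr_{j+1}$, identification of the kernel of $g'_{j+1}$, surjectivity onto $M_j$) completes the proof of the asserted short exact sequence for each $j$.
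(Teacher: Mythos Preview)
Your overall structure mirrors the paper's proof almost exactly: separate $j=-1$ via Lemma~\ref{lem:fixQ}, check exactness away from $S_j$ via Lemma~\ref{lem:filtered}\ref{it:outside} and Lemma~\ref{lem:surj3}, and then handle each $Q\in S_j$ locally using the analyses of \S\ref{sec:lcrp}--\ref{sec:lcps}. The identifications of $\Gscr_{-1}$, $\Gscr_p$, and the supports of $M_j$ are handled correctly.

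There is one genuine slip in your middle-exactness argument at points $Q\in\sup(D_j)$. You claim that $c_{j,Q}$ is ``injective by construction'' because it ``merely repackages the nonzero coefficients of $\omega_j$.'' That is true for $Q\in S$ (and the paper uses exactly this in its proof), but it is \emph{false} for $Q\in\sup(D_j)$: the map \eqref{eq:cpole} sends $(b_{-p},\ldots,b_{-1})$ to $(b_{-p},\ldots,b_{-2})$, deliberately discarding the coefficient $b_{-1}$ of $t_Q^{-1}dt_Q$. So $c_{j,Q}$ is not injective on the ambient principal-part module, and your deduction $\ker g'_{j+1,Q}=\ker g_{j+1,Q}|_{\Gscr_{j+1,Q}}$ does not follow from injectivity alone. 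What saves the day is precisely Lemma~\ref{lem:checkregatQ}: for $\nu\in\Gscr_{j+1,Q}$ the local expansion of $\omega_j$ never has a $t_Q^{-1}dt_Q$-term (this uses that $\ord_Q(\omega_i)\ge 0$ for $i>j$ and that $V_X\circ s_j$ is the identity), so vanishing of $g'_{j+1,Q}(\nu)$ really does force $\omega_j$ to be regular at $Q$. You should replace the injectivity claim for $Q\in\sup(D_j)$ with a citation of Lemma~\ref{lem:checkregatQ}, exactly as the paper does.
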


\begin{proof}
 We first establish the exact sequence.  The case $j=-1$ is just Lemma~\ref{lem:fixQ},  
 so suppose $0 \leq j \leq p-1$.  Left exactness is obvious from the definition of $\Gscr_j$ and $\Gscr_{j+1}$.  We can check the rest locally.  For points $Q$ not in $S'_j$, exactness in the middle and on the right is \ref{it:outside} of Lemma \ref{lem:filtered} plus the fact that $M_{j,Q}=0$.  At $Q'$ such exactness is the content of Lemma \ref{lem:surj3} plus the fact that $M_{j,Q'}=0$.  For $Q \in S$, exactness in the middle is simply the first statement of Lemma~\ref{lem:filtered} plus the observation that $c_{j,Q}$ is injective when $Q\in S$, while exactness on the right is Proposition \ref{prop:surj1}.  For $Q\in \sup(D_j)$, exactness in the middle and on the right 
 follow from
 Lemmas \ref{lem:checkregatQ} and \ref{lem:surj2}, respectively.
 Finally, the statements about the support of $M_j$ are clear from the definitions.
\end{proof}

\begin{example} \label{ex:followup}
We continue the notation of Example~\ref{ex:keyexample}, working with the two covers $Y_1, Y_2$ of $\PP^1$ given by $y^5-y = f_1$ and $y^5 -y =f_2$, respectively.  The fact that $\omega = \sum \omega_i y^i = \varphi_\eta^{-1}(\nu_{2,2} )$ is regular for one cover but not the is captured by the machinery of this section as follows.

Note that the element $\nu_{2,2}$ naturally lies in $H^0(\PP^1,\Gscr_2)$.  For both covers, it is moreover in the kernel of $H^0(g'_2)$: looking at the definition of $g'_2$ in \S\ref{sec:lcrp}, the computation that $H^0(g'_2)(\nu_{2,2})=0$ is equivalent to the computation that $\omega_1 = 0$.  Thus, in both cases we also have $\nu_{2,2} \in H^0(\PP^1,\Gscr_1)$.

In Example~\ref{ex:keyexample}, we computed $\omega_0$ using the relations
$$sV_{\PP^1}(\omega_0) = - sV_{\PP^1}(t^{-2} ( t^{-6} dt))=0 \quad \text{and} \quad sV_{\PP^1}(\omega_0) = - s V_{\PP^1}(t^{-2} ( t^{-6} + 2 t^{-5} + t^{-4})dt) = -t^{-6} dt$$
for the covers $Y_1$ and $Y_2$, respectively.  The definition of  $g'_1$ in \S\ref{sec:lcrp} is exactly recording the coefficient of $t^{-6} dt$.  Thus $H^0(g'_1)(\nu_{2,2})$ is $0$ in the case of $Y_1$, but non-zero for the cover $Y_2$, reflecting the fact that $\varphi^{-1}_\eta(\nu_{2,2})$ is regular for $Y_1$ but not $Y_2$. We therefore see that in the case of
$Y_1$ we have
$a_{Y_1}=\dim H^0(\PP^1,\Gscr_0)=  4$ while $a_{Y_2}=\dim H^0(\PP^1, \Gscr_0)  =3$ in the case of $Y_2$.
\end{example}

\section{Bounds} \label{sec:bounds}

We continue with the notation and conventions of the previous sections,
so $\pi : Y \to X$ is a degree-$p$ Artin--Schreier cover of smooth projective curves over $k$
with branch locus $S\subseteq X$.
In this section we will use the short exact sequences of Theorem~\ref{thm:technicalses}
to extract bounds on the $a$-number of $Y$ in terms of the 
$a$-number of $X$ and the breaks in the ramification filtrations at points above $Q\in S$.
Our starting point is the equality
%Recall that the $a$-number can be expressed as
\[
 a_Y=\dim_k \ker\left( V_Y : H^0(Y,\Omega^1_Y) \to H^0(Y,\Omega^1_Y) \right) = \dim_k H^0(Y, \ker V_Y) = \dim_k H^0(X,\Gscr_{-1}),
\]
which follows immediately from Lemma~\ref{lem:filtered}.

\subsection{Abstract Bounds} 

We will first obtain a relatively abstract upper bound on the $a$-number using the short exact sequences of Theorem~\ref{thm:technicalses}.  To streamline the analysis,
we first encode the information contained in these exact sequences
in the study of a single linear transformation.

\begin{defn} \label{defn:wideg} 
For $0\le j \le p-1 $ define 
\begin{equation}
    \xymatrix@C=45pt{
        {\widetilde{g}_{j+1}: H^0(X,\Gscr_p)} \ar[r]^-{H^0(c_j\circ g_{j+1})} & H^0(X,M_j) = \displaystyle\bigoplus_{Q\in S_j'} M_{j,Q} 
    }
\end{equation}
to be the map on global sections induced by $c_j\circ g_{j+1}: \Gscr_p\rightarrow M_j$,
where $c_j$ and $g_{j+1}$ are as in Definitions \ref{def:g'final} and \ref{defn:gjdef}, respectively.
Define 
\begin{equation}
    \xymatrix{
        {\widetilde{g}: H^0(X,\Gscr_p)} \ar[r] & {\displaystyle\bigoplus_{j=0}^{p-1} H^0(X,M_j)} 
    }
    \quad\text{by}\quad \widetilde{g}(\nu):=(\widetilde{g}_1(\nu), \ldots, \widetilde{g}_{p}(\nu))
\end{equation}
Finally define 
\begin{equation}
 \xymatrix{
    {\widetilde{g}_0  : H^0(X,\Gscr_0)} \ar[r]^-{H^0(g'_0)} &  {H^0(X,M_{-1}) = M_{-1,Q'}}
    }
\end{equation}
where $g'_0$ is as in \eqref{eq:g'_0} and $M_{-1,Q'}$ is as in Definition \ref{defn:-1notation}.
\end{defn}

\begin{remark}\label{rem:gmapcompat}
By construction, the restriction of $\widetilde{g}_{j+1}$ to $H^0(X,\Gscr_{j+1})\subseteq H^0(X,\Gscr_p)$ coincides with the map $H^0(g_{j+1}'): H^0(X,\Gscr_{j+1})\rightarrow H^0(X,M_j)$,
where $g_{j+1}'$ is as in Definition \ref{def:g'final} for $j\ge 0$
and as in \eqref{eq:g'_0} for $j=-1$.
While the use of the maps $g'_{j+1}$ is essential for Theorem ~\ref{thm:technicalses},
our analysis below is much simpler when phrased in terms of $\widetilde{g}$
and $\widetilde{g}_0$
as the following key Lemma indicates.
\end{remark}

\begin{lem} \label{lem:kerwidetildeg}
The kernel of $\widetilde{g}$ is $H^0(X,\Gscr_0)$.   
\end{lem}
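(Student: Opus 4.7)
The plan is to prove Lemma~\ref{lem:kerwidetildeg} by a direct double inclusion, with the non-trivial direction obtained via a descending induction that feeds the vanishing $\widetilde{g}_{j+1}(\nu)=0$ into the short exact sequences of Theorem~\ref{thm:technicalses}. The key reconciling observation (already flagged in Remark~\ref{rem:gmapcompat}) is that for any $\nu \in H^0(X,\Gscr_{j+1}) \subseteq H^0(X,\Gscr_p)$, one has $\widetilde{g}_{j+1}(\nu) = H^0(g'_{j+1})(\nu)$, since by Definitions~\ref{def:g'final} and \ref{defn:wideg} the map $c_j \circ g_{j+1}$ is defined on all of $\Gscr_p$ and its restriction to $\Gscr_{j+1}$ is precisely $g'_{j+1}$.

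For the easy inclusion $H^0(X,\Gscr_0) \subseteq \ker\widetilde{g}$, I would note that any $\nu \in H^0(X,\Gscr_0)$ lies in $H^0(X,\Gscr_j)$ for every $0 \le j \le p-1$ by the inclusions $\Gscr_0 \subseteq \Gscr_j \subseteq \Gscr_{j+1}$, so the reconciling observation applies and gives $\widetilde{g}_{j+1}(\nu) = H^0(g'_{j+1})(\nu)$. But from the exact sequence
\[
0 \to \Gscr_j \to \Gscr_{j+1} \xrightarrow{g'_{j+1}} M_j \to 0
\]
of Theorem~\ref{thm:technicalses}, the map $g'_{j+1}$ vanishes on $\Gscr_j \supseteq \Gscr_0$, so $\widetilde{g}_{j+1}(\nu)=0$ for each $j$ and hence $\widetilde{g}(\nu) = 0$.

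For the reverse inclusion $\ker\widetilde{g} \subseteq H^0(X,\Gscr_0)$, I would fix $\nu \in \ker\widetilde{g}$ and show by descending induction on $j$ that $\nu \in H^0(X,\Gscr_j)$ for every $0 \le j \le p$. The base case $j=p$ is tautological since $\Gscr_p$ is the ambient sheaf. For the inductive step, assume $\nu \in H^0(X,\Gscr_{j+1})$; applying $H^0$ to the short exact sequence of Theorem~\ref{thm:technicalses} yields the left-exact sequence
\[
0 \to H^0(X,\Gscr_j) \to H^0(X,\Gscr_{j+1}) \xrightarrow{H^0(g'_{j+1})} H^0(X,M_j).
\]
By the reconciling observation, $H^0(g'_{j+1})(\nu) = \widetilde{g}_{j+1}(\nu) = 0$, so exactness forces $\nu \in H^0(X,\Gscr_j)$. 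Iterating this argument from $j=p-1$ down to $j=0$ completes the induction and the proof.

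There is no real obstacle here: the content is purely formal once one has Theorem~\ref{thm:technicalses} and the compatibility noted in Remark~\ref{rem:gmapcompat}. The only thing to be careful about is keeping track of the distinction between $\widetilde{g}_{j+1}$ (which has source $H^0(X,\Gscr_p)$) and $H^0(g'_{j+1})$ (which has source $H^0(X,\Gscr_{j+1})$), and ensuring that one invokes the compatibility only after $\nu$ has been shown to lie in the smaller space $H^0(X,\Gscr_{j+1})$.
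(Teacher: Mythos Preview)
Your proof is correct and follows essentially the same approach as the paper: a descending induction using the short exact sequences of Theorem~\ref{thm:technicalses}, left exactness of $H^0$, and the compatibility between $\widetilde{g}_{j+1}$ and $H^0(g'_{j+1})$ from Remark~\ref{rem:gmapcompat}. The paper's version is slightly terser (it dismisses the inclusion $H^0(X,\Gscr_0)\subseteq\ker\widetilde{g}$ as ``clear''), but the substance is identical.
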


\begin{proof}
Let $\nu\in H^0(X,\Gscr_p)$ with $\widetilde{g}(\nu)=0$, 
{\em i.e.}~$\widetilde{g}_{j+1}(\nu)=0$
for $0\le j\le p-1$.  Supposing that $\nu\in H^0(X,\Gscr_{j+1})$
for {\em some} $j \le {p-1}$, we have $H^0(g_{j+1}')(\nu)=\widetilde{g}_{j+1}(\nu)=0$
by assumption and the compatibility between $g_{j+1}'$ and $\widetilde{g}_{j+1}$
noted in Remark \ref{rem:gmapcompat}.  Thus 
$\nu\in H^0(X,\Gscr_j)$ thanks to
Theorem~\ref{thm:technicalses} and left exactness of $H^0$.
We conclude by descending induction on $j$ that $\nu\in H^0(X,\Gscr_0)$,
whence $\ker(\widetilde{g})\subseteq H^0(X,\Gscr_0)$, and
the reverse containment is clear.
\end{proof}

\begin{defn} \label{defn:M}
Set $N_1(X,\pi):=\dim_k \Im(\widetilde{g})$ and $N_2(X,\pi):=\dim_k \Im(\widetilde{g}_0)$, and let
%Given $N_1(X,\pi)$ linearly independent elements in $\Im (\widetilde{g}) \subset \bigoplus_{j=0}^{p-1} H^0(X,M_j)$ and $N_2(X,\pi)$ linearly independent elements in $\Im H^0(g'_0) \subset H^0(M_{-1})$, let 
$N(X,\pi) = N_1(X,\pi) + N_2(X,\pi)$.  Define $$U(X,\pi) := \sum_{i=0}^{p-1} \dim_k H^0(X,\ker V_X( F_* (E_i + p D_i))) - N(X,\pi).$$
\end{defn}

\begin{lem} \label{lem:upperbound}
We have that $a_Y = U(X,\pi)$.
\end{lem}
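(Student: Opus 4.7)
The plan is to combine the identification $a_Y = \dim_k H^0(X,\Gscr_{-1})$ coming from Lemma~\ref{lem:filtered} with the exact sequences of Theorem~\ref{thm:technicalses} and the description of $\Im(\widetilde{g})$ in terms of successive quotients. Concretely, I would proceed in three short steps.

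First, I would compute $\dim_k H^0(X,\Gscr_0)$ in terms of $\dim_k H^0(X,\Gscr_p)$ using $\widetilde{g}$. By Lemma~\ref{lem:kerwidetildeg}, $\ker(\widetilde{g}) = H^0(X,\Gscr_0)$, so from the linear-algebra identity $\dim \ker + \dim \Im = \dim \text{source}$ one obtains
\begin{equation*}
\dim_k H^0(X,\Gscr_0) = \dim_k H^0(X,\Gscr_p) - N_1(X,\pi).
\end{equation*}

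Second, I would analyze the sequence at $j = -1$. The sequence of Theorem~\ref{thm:technicalses} for $j=-1$ reads
\begin{equation*}
 0 \to \Gscr_{-1} \to \Gscr_0 \xrightarrow{g'_0} M_{-1} \to 0,
\end{equation*}
and taking $H^0$ gives the left-exact sequence $0 \to H^0(X,\Gscr_{-1}) \to H^0(X,\Gscr_0) \xrightarrow{H^0(g'_0)} H^0(X,M_{-1})$. Since $\widetilde{g}_0 = H^0(g'_0)$ by Definition~\ref{defn:wideg}, we have
\begin{equation*}
\dim_k H^0(X,\Gscr_{-1}) = \dim_k H^0(X,\Gscr_0) - \dim_k \Im(\widetilde{g}_0) = \dim_k H^0(X,\Gscr_0) - N_2(X,\pi).
\end{equation*}

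Finally, I would combine the two displayed equalities with the fact that $\Gscr_p = \bigoplus_{i=0}^{p-1} \ker V_X(F_*(E_i+pD_i))$ (Lemma~\ref{lem:filtered}\ref{it:Gp}), which gives $\dim_k H^0(X,\Gscr_p) = \sum_{i=0}^{p-1} \dim_k H^0(X,\ker V_X(F_*(E_i+pD_i)))$, to conclude
\begin{equation*}
a_Y = \dim_k H^0(X,\Gscr_{-1}) = \sum_{i=0}^{p-1} \dim_k H^0(X,\ker V_X(F_*(E_i+pD_i))) - N(X,\pi) = U(X,\pi).
\end{equation*}
There is no real obstacle here: all the substantive content was already placed in Theorem~\ref{thm:technicalses} and Lemma~\ref{lem:kerwidetildeg}. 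The only point worth being careful about is confirming that $\widetilde{g}_{j+1}$ restricted to $H^0(X,\Gscr_{j+1})$ agrees with $H^0(g'_{j+1})$ (Remark~\ref{rem:gmapcompat}), so that the inductive unwinding producing Lemma~\ref{lem:kerwidetildeg} is valid, but this is by construction.
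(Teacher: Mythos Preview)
Your proposal is correct and follows essentially the same approach as the paper: both use Lemma~\ref{lem:kerwidetildeg} together with rank-nullity to get $\dim_k H^0(X,\Gscr_0) = \dim_k H^0(X,\Gscr_p) - N_1(X,\pi)$, then the $j=-1$ exact sequence of Theorem~\ref{thm:technicalses} with left exactness of $H^0$ to get $\dim_k H^0(X,\Gscr_{-1}) = \dim_k H^0(X,\Gscr_0) - N_2(X,\pi)$, and finally the description of $\Gscr_p$ from Lemma~\ref{lem:filtered}\ref{it:Gp}. Your added remark about Remark~\ref{rem:gmapcompat} is accurate but unnecessary here, since that compatibility is used in the proof of Lemma~\ref{lem:kerwidetildeg} itself rather than in this lemma.
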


\begin{proof}
Lemma~\ref{lem:kerwidetildeg} and the rank-nullity theorem give
\[
\dim_k H^0(X,\Gscr_0) = \dim_k H^0(X,\Gscr_p) - N_1(X,\pi).
\]
Likewise, the $j=-1$ case of Theorem~\ref{thm:technicalses}, left exactness of $H^0$
and the rank-nullity theorem yield 
\[
a_Y= \dim_k H^0(Y,\ker V_Y)  = \dim_k H^0(X,\Gscr_{-1}) = \dim_k H^0(X,\Gscr_0) - N_2(X,\pi).
\]
Combining these equations with the computation
\[ \dim_k H^0(X,\Gscr_p) = \sum_{i=0}^{p-1} \dim_k H^0(X,\ker V_X( F_* (E_i + p D_i))) , 
\]
which follows at once from Lemma \ref{lem:filtered} \ref{it:Gp},
gives the claimed equality.
\end{proof}

The lower bound is a more elaborate application of linear algebra.  The idea is to find a family of subspaces $U_j \subset H^0(X,\Gscr_{p})$ and choose $j$ so that $\dim_k U_j$ is ``large'' while it is easy to see that $\dim_k \widetilde{g} (U_j)$ is ``small''.  Then the rank-nullity theorem gives a lower bound on the kernel of $\widetilde{g}$.
Recall that $G = \ZZ/p \ZZ$ acts on $\pi_* \Omega^1_Y$ by having a generator $\tau\in G$ send $y \mapsto y+1$.  
%, and it is clear from the formulas in Lemma~\ref{lem:cartierformula} that $V_X$ preserves $W_i$.  F
By Lemma~\ref{lem:filtered}, this action preserves $\Gscr_j$.

\begin{defn} Let $i$ and $j$ be integers with  $ 0 \le i \leq j \le p-1$.  \label{defn:bounds} 
\begin{itemize}
    \item  Define $U_j := \ker \left( (\tau-1)^{j+1} : H^0(X,\Gscr_{j+1}) \to H^0(X,\Gscr_{j+1}) \right)$.

    \item  For $Q \in S$, define $c(i,j,Q)$ to be the number of integers $n$ congruent to $-1$ modulo $p$ such that $-n_{Q,j} - d_Q (j-i) \leq n < -n_{Q,i}$.  
    
    \item  For $Q \in \on{sup}(D_i)$, define $c(i,j,Q) = p-1$. 
    
    \item  Define $c(i,j,Q') = \frac{d_{Q'} (p-1) (p-1-i)}{p}$.
    
    \item   Define $L(X,\pi)$ to be
 \[
L(X,\pi) := \max_{0\le j \le p-1} \left( \dim_k  U_j  - \sum _{i=0} ^j \sum_{Q \in S_i'} c(i,j,Q) \right).
 \]
\end{itemize}
\end{defn}

\begin{lem} \label{lem:vk}
There is an isomorphism \[
 U_j \simeq \bigoplus_{i=0}^{j} H^0(X,\ker V_X(F_*( E_i + pD_i)))  .
\]
\end{lem}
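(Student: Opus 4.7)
The plan is to use the identification of $\Gscr_p$ with $\bigoplus_{i=0}^{p-1}\ker V_X(F_*(E_i+pD_i))$ from Lemma~\ref{lem:filtered}\ref{it:Gp} and to describe the action of $\tau-1$ under this identification by transporting it through the generic isomorphism $\varphi_\eta$. For $\nu=(\nu_0,\ldots,\nu_{p-1})\in H^0(X,\Gscr_{j+1})$, write $\iota(\nu)=(\omega_0,\ldots,\omega_{p-1})$ and $\omega=\sum_{i=0}^{p-1}\omega_iy^i\in(\pi_*\Omega^1_Y)_\eta$, so that $\tau$ acts by $\omega\mapsto\sum_i\omega_i(y+1)^i$. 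A direct expansion gives
\[
(\tau-1)\omega \;=\; \sum_{j=0}^{p-1}\Bigl(\sum_{i>j}\tbinom{i}{j}\omega_i\Bigr)y^j,
\]
so each application of $\tau-1$ strictly reduces the $y$-degree. It follows that $(\tau-1)^{j+1}\omega=0$ if and only if $\omega_i=0$ for every $i>j$.

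Next I would invoke Remark~\ref{rmk:omegav}, which gives that $\omega_i=0$ for all $i>j$ if and only if $\nu_i=0$ for all $i>j$. Combining with the previous step shows that $U_j$ consists precisely of those $\nu\in H^0(X,\Gscr_{j+1})$ whose last $p-1-j$ coordinates vanish.

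The remaining point is to show that the ``obvious'' candidate map
\[
\Psi:\bigoplus_{i=0}^{j} H^0(X,\ker V_X(F_*(E_i+pD_i)))\longrightarrow U_j,\qquad
(\nu_0,\ldots,\nu_j)\longmapsto(\nu_0,\ldots,\nu_j,0,\ldots,0)
\]
is a well-defined $k$-linear bijection. Injectivity is clear, and the previous paragraph shows that every element of $U_j$ is in the image. For well-definedness, one only needs to check that $(\nu_0,\ldots,\nu_j,0,\ldots,0)$ really lies in $H^0(X,\Gscr_{j+1})$, not merely in $H^0(X,\Gscr_p)$. But the definition of $\Gscr_{j+1}$ (Definition~\ref{defn:gs}) imposes on a section of $\Gscr_p$ only the additional requirement that the associated $\omega_i$ be a section of $F_*(\Omega^1_X(E_i))$ for each $i\ge j+1$, and since $\nu_i=0$ for $i>j$ forces $\omega_i=0$ for $i>j$ by the Remark, this condition holds vacuously. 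Hence $\Psi$ is the desired isomorphism.

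No step looks delicate; the only thing to be careful about is checking that the $\tau$-action transported through $\varphi_\eta$ really is the $y\mapsto y+1$ action on $\omega$ (which is immediate from the definitions of $\iota$ and $\varphi_\eta$), and that Remark~\ref{rmk:omegav} applies with index shift to give the $i>j$ version used above.
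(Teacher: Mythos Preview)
Your proof is correct and follows essentially the same approach as the paper. The paper's proof is a terse one-liner identifying both sides with the set $\{\omega=\sum_{i=0}^j\omega_iy^i\in(\pi_*\ker V_Y)_\eta:\varphi_\eta(\omega)\in H^0(X,\Gscr_p)\}$, citing exactly the ingredients you use: that $\tau-1$ reduces the $y$-degree, and the definition of $\varphi$; your argument just spells out the details (the explicit map $\Psi$, the appeal to Remark~\ref{rmk:omegav}, and the check that the extension-by-zeros lands in $\Gscr_{j+1}$) that the paper leaves implicit.
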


\begin{proof}
Both sides are isomorphic to
 $$\left \{ \omega = \sum_{i=0}^{j} \omega_i y^i \in (\pi_* \ker V_Y)_\eta: \varphi_\eta(\omega) \in H^0(X,\Gscr_{p})  \right\}.$$
 This follows from the formulas in Lemma~\ref{lem:cartierformula}, the fact that $\tau-1$ reduces the maximum power of $y$ appearing in a differential by one, and the definition of $\varphi$.
\end{proof}  %\{ \omega = \sum_{k=0}^i \omega_i y^i = \varphi_\eta^{-1} (v_0,\ldots,v_i, 0, \ldots, 0) : v_k \in H^0(X,\ker V_X(F_* E_k + pD)) \}.

\begin{lem} \label{lem:lowerbound}
We have that $a_Y \geq L(X,\pi)$.
\end{lem}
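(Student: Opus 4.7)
By Lemma~\ref{lem:filtered}, $a_Y = \dim_k H^0(X, \Gscr_{-1})$. Fix $j$ with $0 \le j \le p-1$; it suffices to prove
\[
a_Y \ge \dim_k U_j - \sum_{i=0}^j \sum_{Q \in S_i'} c(i, j, Q).
\]
The plan is to filter $U_j$ by its intersections with the $H^0(X, \Gscr_k)$ and bound the successive quotients via the short exact sequences of Theorem~\ref{thm:technicalses}. For $-1 \le k \le j+1$ define $U_j^{(k)} := U_j \cap H^0(X, \Gscr_k)$, so $U_j^{(j+1)} = U_j$ and $U_j^{(-1)} \subseteq H^0(X, \Gscr_{-1})$. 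Applying left exactness of $H^0$ to the sequences of Theorem~\ref{thm:technicalses} and intersecting with $U_j$ yields, for $-1 \le k \le j$,
\[
U_j^{(k+1)}/U_j^{(k)} \simeq H^0(g'_{k+1})(U_j^{(k+1)}) \subseteq H^0(X, M_k).
\]
Telescoping gives
\[
a_Y \ge \dim_k U_j^{(-1)} = \dim_k U_j - \sum_{k=-1}^{j} \dim_k H^0(g'_{k+1})(U_j^{(k+1)}),
\]
so it remains to bound the right-hand sum by $\sum_{i=0}^j \sum_{Q \in S_i'} c(i, j, Q)$.

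Every $\nu \in U_j$ satisfies $(\tau - 1)^{j+1}\nu = 0$, and from Proposition~\ref{prop:ws} together with Remark~\ref{rmk:omegav}, writing $\iota(\nu) = (\omega_0, \ldots, \omega_{p-1})$, we deduce $\nu_i = 0$ and $\omega_i = 0$ for all $i > j$. In particular, for $k > j$ the image of $g'_{k+1}$ on $U_j$ vanishes, so only indices $-1 \le k \le j$ contribute. For $0 \le k \le j$ the skyscraper $M_k$ splits as $\bigoplus_{Q \in S_k'} Q_*(M_{k, Q})$ with $M_{k, Q'} = 0$ by Definition~\ref{defn:notation}, so the image at $Q'$ is trivial; for $Q \in \sup(D_k)$ the trivial bound $\dim_k M_{k, Q} = p - 1 = c(k, j, Q)$ is exactly what we need. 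For $Q \in S$ the key point is that $\nu_j \in \ker V_X(F_*(E_j + p D_j))$ forces $\ord_Q(\nu_j) \ge -n_{Q,j}$ (since the support of $D_j$ is disjoint from $S$ by construction), so Lemma~\ref{lem:consolidatedvanishing}\ref{ordvanishQinS} applied with $n = n_{Q,j}$ gives $\ord_Q(\omega_k) \ge -n_{Q,j} - d_Q(j-k)$. Tracing through the explicit formula \eqref{eq:cjdef} for $c_{k, Q}$, the image of $\nu$ in $M_{k, Q}$ is determined by the coefficients of $\xi$ (from \eqref{eq:omegajdecomp}) at orders $n \equiv -1 \pmod p$ lying in $[-n_{Q,j} - d_Q(j-k), -n_{Q,k})$; by construction there are exactly $c(k, j, Q)$ such positions.

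The $k = -1$ contribution is bounded directly by Remark~\ref{remark:sharper-1}: since $U_j^{(0)} \subseteq \ker((\tau-1)^{j+1} : H^0(X, \Gscr_0) \to H^0(X, \Gscr_0))$, we have
\[
\dim_k H^0(g'_0)(U_j^{(0)}) \le \sum_{i=0}^{j} \frac{(p-1-i)d_{Q'}}{p}(p-1) = \sum_{i=0}^{j} c(i, j, Q').
\]
Combining the three types of contributions produces exactly $\sum_{i=0}^j \sum_{Q \in S_i'} c(i, j, Q)$, completing the proof. The main delicate step is the bound at $Q \in S$: one must pair the order-of-vanishing bound from Lemma~\ref{lem:consolidatedvanishing} with the precise coefficient-extraction rule \eqref{eq:cjdef}, and the definition of $c(i, j, Q)$ is calibrated exactly so that the resulting count matches.
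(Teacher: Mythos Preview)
Your proof is correct and follows essentially the same approach as the paper's: you filter $U_j$ by intersecting with the $H^0(X,\Gscr_k)$, bound the successive quotients via the maps $H^0(g'_{k+1})$, and handle the three types of points $Q\in S$, $Q\in\sup(D_k)$, $Q=Q'$ separately in exactly the way the paper does. The paper packages the $Q\in S$ estimate as a separate Lemma (Lemma~\ref{lem:imagegj}) and phrases the filtration argument as a descending induction rather than a telescoping sum, but the content is identical.
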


\begin{proof}
Fix $j$.  The key idea is to use descending induction on $i$ and the rank-nullity theorem to give a lower bound on $\dim_k (U_j \cap H^0(X,\Gscr_i))$ for $-1 \leq i \leq j+1$.  Taking $i=-1$ gives a lower bound (depending on $j$) on $\dim_k ( U_j \cap H^0(Y, \ker V_Y)) \leq a_Y$, which will establish the result.  
 
 For $i =j+1$, we see that 
 \begin{equation}
 \dim_k (U_j \cap H^0(X,\Gscr_{j+1})) = \dim_k U_j.\label{eq:indbasecase}
 \end{equation}
 To ease the notational burden, for
  $-1\le i\le j$ 
 let us write $\psi_{i+1}^j$ for the map
 \begin{equation}
    \xymatrix{
    {\psi_{i+1}^j: U_j\cap H^0(X,\Gscr_{i+1})} \ar[r] & H^0(M_i)
    }\quad\text{given by}\quad \psi_{i+1}^j:= H^0(g'_{i+1})\big|_{U_j\cap H^0(X,\Gscr_{i+1})}\label{eq:psimapdefn}
\end{equation}
Theorem~\ref{thm:technicalses} then gives exact sequences
 \begin{equation}
    \xymatrix{
    0 \ar[r] &{U_j \cap H^0(X,\Gscr_i)} \ar[r] & {U_j \cap H^0(X,\Gscr_{i+1})}\ar[r]^-{\psi_{i+1}^j}&
    {H^0(X,M_i) = \displaystyle \bigoplus_{Q\in S_i'} M_{i,Q}}
 }\label{eq:LBexseq}
 \end{equation}
 for $-1 \leq i \leq  j$, 
 %\[
 %0 \to U_j \cap H^0(X,\Gscr_{i}) \to U_j \cap H^0(X,\Gscr_{i+1}) \overset{ g'_{i+1}} \to %M_{i}
 %\]
 %where we abuse notation and use $M_{i}$ to denote both the skyscraper sheaf and its global sections, and let $g'_{i+1}$ also denote the map of vector spaces obtained by taking global sections of the map $g'_{i+1}$ of sheaves and restricting to $U_j$.
which need not be right exact.  Nonetheless, we may work stalk-by-stalk to obtain 
information about the image of $\psi_{i+1}^j$ as follows. 
For $Q\in S_i'$, let $\psi_{i+1,Q}^j$ denote the composition of $\psi_{i+1}^j$
with projection onto the factor of the direct sum indexed by $Q$;
by construction $\psi_{i+1,Q}^j$ coincides with the composition
\begin{equation}
    \xymatrix{
     {\psi_{i+1,Q}^j: U_j\cap H^0(X,\Gscr_{i+1})} \ar@{^{(}->}[r] & {H^0(X,\Gscr_{i+1})}
     \ar[r] &
     {\Gscr_{i+1,Q}} \ar[r]^-{g'_{i+1,Q}} & M_{i,Q}
     }\label{eq:psidefrecap}
\end{equation}
wherein the first two maps are the canonical ones.  We have the obvious inclusion 
\begin{equation}
    \Im(\psi_{i+1}^j) \subseteq \bigoplus_{Q\in S_i'} \Im(\psi_{i+1,Q}^j)\label{eq:dsumincl}
\end{equation}
so to bound $\dim_k \Im(\psi_{i+1}^j)$ from above it will suffice
to do so for each $\dim_k \Im(\psi_{i+1,Q}^j)$.
First suppose that $i \geq 0$.  Lemma~\ref{lem:imagegj} below will give an upper bound of $c(i,j,Q)$ on $\dim_k \Im(\psi_{i+1,Q}^j)$ for $Q \in S$.  For $Q \in \on{sup}(D_i)$, the dimension of  $\Im (\psi_{i+1,Q}^j)$ is at most $p-1$ by the very definition of $g'_{i+1,Q}$ in \S\ref{sec:lcps}, and we defined $c(i,j,Q) := p-1$ in this case.  
Applying the rank-nullity theorem to the exact sequence 
\eqref{eq:LBexseq} and using \eqref{eq:dsumincl}
then gives
 \[
 \dim_k( U_j \cap H^0(X,\Gscr_{i})) \geq \dim_k (U_j \cap H^0(X,\Gscr_{i+1})) - \sum_{Q \in S_i'} c(i,j,Q).
 \]
 By descending induction on $i$ and \eqref{eq:indbasecase}, we deduce
 \[
 \dim_k ( U_j \cap H^0(X,\Gscr_0)) \geq \dim_k U_j - \sum_{i=0}^j \sum_{Q \in S_i'} c(i,j,Q).
 \]
Finally, using Remark~\ref{remark:sharper-1} to analyze \eqref{eq:LBexseq} when $i=-1$, we conclude
\[
 \dim_k ( U_j \cap H^0(X,\Gscr_{-1})) \geq \dim_k U_j - \sum_{i=0}^j \sum_{Q \in S_i'} c(i,j,Q). \qedhere
\]
\end{proof}

\begin{lem} \label{lem:imagegj}
If $Q \in S$ and $0 \leq i \leq j < p$, 
then $\dim_k \Im(\psi_{i+1,Q}^j)\leq c(i,j,Q)$.
\end{lem}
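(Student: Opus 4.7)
The plan is as follows. Let $\nu = (\nu_0, \ldots, \nu_{p-1}) \in U_J \cap H^0(X, \Gscr_{I+1})$, where I write $I, J$ for the indices $i, j$ of the statement, and set $(\omega_0, \ldots, \omega_{p-1}) := \iota(\nu)$. First I would unpack what these memberships mean concretely. By Lemma~\ref{lem:vk} together with Remark~\ref{rmk:omegav}, the $U_J$ condition forces $\omega_\ell = 0$ for $\ell > J$, while membership in $\Gscr_{I+1}$ gives $\ord_Q(\omega_\ell) \geq -n_{Q,\ell}$ for every $\ell > I$. By the reconstruction formula \eqref{eq:reconstructioneq}, we have $\omega_I = \nu_I + s_I(V_X(\xi))$ where
\[
\xi := -\sum_{\ell=I+1}^{J} \binom{\ell}{I} \omega_\ell (-f)^{\ell-I},
\]
so that $\psi_{I+1,Q}^J(\nu)$ depends only on the principal part of $\xi$ at $Q$.

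Next I would obtain the key vanishing bound $\ord_Q(\xi) \geq -n_{Q,J} - (J-I)d_Q$. Each summand $\omega_\ell(-f)^{\ell-I}$ has $Q$-order at least $-n_{Q,\ell} - (\ell-I)d_Q$ since $\ord_Q(f) = -d_Q$, so the estimate reduces to showing $n_{Q,\ell} + (\ell-I)d_Q \leq n_{Q,J} + (J-I)d_Q$ for $I < \ell \leq J$, equivalently $n_{Q,\ell} - n_{Q,J} \leq (J-\ell)d_Q$. Using subadditivity of the ceiling function applied to $n_{Q,\ell} = \lceil (p-1-\ell)d_Q/p\rceil$, this reduces to $\lceil (J-\ell)d_Q/p\rceil \leq (J-\ell)d_Q$, which holds since $(J-\ell)d_Q \geq 1$.

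Finally I would match this vanishing bound against the coefficients extracted by $g'_{I+1,Q}$. Writing $\xi = \sum_k a_k t_Q^k \frac{dt_Q}{t_Q}$, our bound says $a_k = 0$ for all $k \leq -n_{Q,J} - (J-I)d_Q$. On the other hand, by formula \eqref{eq:cjdef} (combined with Definition~\ref{def:g'overS}) the value $\psi_{I+1,Q}^J(\nu) \in M_{I,Q} = k[\![t_Q^p]\!]/(t_Q^{p\beta_{Q,I}})$ is determined by the coefficients $a_k$ with $k$ a multiple of $p$ lying in $[-m_{Q,I}, -n_{Q,I}]$; the description of this range follows from Remark~\ref{rem:rmmeaning}. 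Hence $\Im(\psi_{I+1,Q}^J)$ is spanned by the $a_k$ for multiples of $p$ in the intersection, namely the half-open interval $(-n_{Q,J} - (J-I)d_Q,\, -n_{Q,I}]$. The substitution $n = k-1$ gives a bijection between these and the integers $n \equiv -1 \pmod{p}$ in $[-n_{Q,J} - (J-I)d_Q,\, -n_{Q,I})$, whose cardinality is $c(I,J,Q)$ by Definition~\ref{defn:bounds}. Since the map $c_{I,Q}$ is injective (as noted just before Definition~\ref{def:g'overS}), this yields the desired bound $\dim_k \Im(\psi_{I+1,Q}^J) \leq c(I,J,Q)$.

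The only delicate step is the ceiling estimate in the second paragraph; the rest of the proof is essentially careful bookkeeping between two parametrizations of the same set of multiples of $p$.
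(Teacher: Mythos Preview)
Your proof is correct and follows essentially the same approach as the paper's. The only difference is in how you obtain the key bound $\ord_Q(\xi)\ge -n_{Q,J}-(J-I)d_Q$: the paper invokes Lemma~\ref{lem:consolidatedvanishing}\ref{ordvanishQinS} with $n=n_{Q,J}$ (using only the $U_J$ condition), whereas you use the sharper per-term bounds $\ord_Q(\omega_\ell)\ge -n_{Q,\ell}$ available from $\Gscr_{I+1}$-membership together with the ceiling subadditivity estimate; both routes yield the same inequality, and the subsequent coefficient count is identical.
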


\begin{proof} 
We continue the notation of \S\ref{sec:lcrp}.
Fixing $0\le i\le j < p$ and
recalling Lemma \ref{lem:consolidatedvanishing}, for $\nu  \in U_j$ 
we have 
$$\nu = (\nu_0,\ldots,\nu_j,0,\ldots,0)\ \text{and}\ (\omega_0,\ldots,\omega_{j},0,\ldots,0):=\iota(\nu).$$ 
%= \varphi_\eta(\omega) = \varphi_\eta \left (\sum_{\ell=0}^j \omega_\ell y^\ell \right ).$$
From \eqref{eq:reconstructioneq}, we have
$\omega_i = \nu_i + s_i(V_X(\xi))$ where
\begin{equation}
    \xi:=-\sum_{\ell=i+1}^{j} \binom{\ell}{j} \omega_\ell (-f)^{\ell-i} 
\end{equation}
since $\nu_\ell$ and $\omega_\ell$ are $0$ for $\ell>j$.
From Lemma~\ref{lem:consolidatedvanishing} \ref{ordvanishQinS}
with $n=n_{Q,j}$ we have
\begin{equation} \label{eq:orderestimate}
 \ord_Q(\xi)   \geq -n_{Q,j} - (j-i) d_Q.
\end{equation}
By the very definitions (see \eqref{eq:psidefrecap}
and Definitions \ref{def:g'overS} and \ref{defn:gjdef}),
to compute $\psi_{i+1,Q}^j(\nu)$, we first expand $\xi$ locally at $Q$ as a power series
$\xi= \sum_n a_n t^n_Q \frac{dt_Q}{t_Q}$
and evaluate
\begin{equation}
s_i ( V_X(\xi))  = \sum_{n=-B}^\infty a_{p n} s_i\left(t_Q^{n} \frac{dt_Q}{t_Q}\right)\label{eq:siVx}
\end{equation}
for some integer $B$.  Then $\psi_{i+1,Q}^j(\nu)$ records only those coefficients on the right side of \eqref{eq:siVx}
where $n \geq -B$ and $p n -1 < -n_{Q,i}$.  In our situation we have {\em fewer} coefficients to record than in the general analysis of \S\ref{sec:lcrp}, precisely because
$\nu_{\ell}=0 = \omega_{\ell}$ for $\ell > j$.
Indeed, based on \eqref{eq:orderestimate} we may take $B$ to be the smallest integer greater than or equal to $\left(-n_{Q,j} - (j-i)d_Q+1\right)/p$.
In particular, the number
of potentially non-zero coefficients $a_{pn}$ is the number of multiples of $p$ between
$-n_{Q,j} - (j-i)d_Q+1$ and $-n_{Q,i}$ inclusive, which by definition is the integer $c(i,j,Q)$.  This integer therefore gives an upper bound on the dimension of
$\Im(\psi_{i+1,Q}^j)$
%$g'_{i+1,Q}(U_j \cap H^0(X,\Gscr_{i+1}))$.
\end{proof}

%Deduce this from upper and lower lemmas
\begin{prop} \label{prop:abstractbound}
The $a$-number of $Y$ satisfies
\[
 L(X,\pi) \leq a_Y = U(X,\pi).
\]
\end{prop}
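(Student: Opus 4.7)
The proof plan here is essentially mechanical: Proposition~\ref{prop:abstractbound} is just the concatenation of Lemma~\ref{lem:upperbound} and Lemma~\ref{lem:lowerbound}, so no new ideas are needed at this stage. The equality $a_Y = U(X,\pi)$ is precisely Lemma~\ref{lem:upperbound}, and the inequality $L(X,\pi) \leq a_Y$ is precisely Lemma~\ref{lem:lowerbound}. The plan is therefore to cite both results and combine them.

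That said, it is worth briefly recalling \emph{why} the pieces fit together. The equality half rests on the two-step identification
\[
\dim_k H^0(X,\Gscr_{-1}) = \dim_k H^0(X,\Gscr_0) - N_2(X,\pi) = \dim_k H^0(X,\Gscr_p) - N_1(X,\pi) - N_2(X,\pi),
\]
obtained by applying rank–nullity to the exact sequence of Theorem~\ref{thm:technicalses} at $j=-1$ (which contributes $N_2$) and to the characterization $\ker \widetilde g = H^0(X,\Gscr_0)$ of Lemma~\ref{lem:kerwidetildeg} (which contributes $N_1$), then evaluating $\dim_k H^0(X,\Gscr_p)$ via Lemma~\ref{lem:filtered}\ref{it:Gp}. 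Since $a_Y = \dim_k H^0(X,\Gscr_{-1})$ by Lemma~\ref{lem:filtered}\ref{it:G-1}, one reads off $a_Y = U(X,\pi)$.

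For the lower bound, one fixes the optimal $j$ achieving the maximum in the definition of $L(X,\pi)$ and tracks the subspace $U_j \subseteq H^0(X,\Gscr_{j+1})$ through the exact sequences of Theorem~\ref{thm:technicalses} by descending induction on $i$ from $i=j$ down to $i=-1$. At each stage one loses at most $\sum_{Q \in S_i'} c(i,j,Q)$ dimensions, using the stalk-wise bounds on $\Im(\psi_{i+1,Q}^j)$ from Lemma~\ref{lem:imagegj} for $Q \in S$, the trivial $p-1$ bound for $Q \in \sup(D_i)$, and Remark~\ref{remark:sharper-1} at $Q'$ to handle the final step $i=-1$.

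No new obstacle arises in Proposition~\ref{prop:abstractbound} itself; the genuine difficulty was already overcome in Lemma~\ref{lem:imagegj}, where one crucially uses that elements of $U_j$ have $\omega_\ell = 0$ for $\ell > j$ (via Remark~\ref{rmk:omegav}) to improve the naive estimate $\beta_{Q,j}$ on the image at $Q \in S$ down to the sharper $c(i,j,Q)$ coming from the order estimate \eqref{eq:orderestimate} on $\xi$.
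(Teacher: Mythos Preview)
Your proposal is correct and matches the paper's own proof exactly: the paper simply writes ``Combine Lemma~\ref{lem:upperbound} and Lemma~\ref{lem:lowerbound}.'' Your additional paragraphs recapitulating why those lemmas hold are accurate and helpful exposition, but not required for the proof of the proposition itself.
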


\begin{proof}
Combine Lemma~\ref{lem:upperbound} and Lemma~\ref{lem:lowerbound}.
\end{proof}

\subsection{Tools} \label{sec:tools}
The quantities $L(X,\pi)$ and $U(X,\pi)$ are quite abstract.  We now explain how to bound $L(X,\pi)$ and $U(X,\pi)$ in terms of the ramification of $\pi : Y \to X$ and the genus and $a$-number of $X$.

The key is a theorem of Tango which allows us to compute the dimension of the kernel of the Cartier operator on certain spaces of (global) meromorphic differential forms on $X$
whose poles are ``sufficiently bad''.  
Let $\sigma: k\rightarrow k$ denote the $p$-power Frobenius automorphism of $k$, and let
$g_X$ be the genus of $X$.  Attached to $X$ is its {\em Tango number}:
\begin{equation}
	\n(X) := \max\left\{ \sum_{x\in X(k)} \left\lfloor \frac{\ord_{x}(df)}{p}\right\rfloor\ : 
	f\in k(X) - k(X)^p \right\}.
\end{equation}
In Lemma 10 and Proposition 14 of
\cite{tango72}, one sees that $\n(X)$ is well-defined and is an integer satisfying
$-1\le \n(X)\le \lfloor (2g_X-2)/p\rfloor$, with the lower bound an equality if and only if $g_X=0$.

\begin{fact}[Tango's Theorem]\label{fact:tango}
	Let $\L$ be a line bundle on $X$.  If $\deg \L > \n(X)$ then the natural $\sigma$-linear map
	\begin{equation}
		\xymatrix{
			{F_X^*:H^1(X,\L^{-1})} \ar[r] & {H^1(X,\L^{-p})}
			}\label{Frob}
	\end{equation} 
	induced by pullback by the absolute Frobenius of $X$ is injective,
	and
	the $\sigma^{-1}$-linear Cartier operator
	\begin{equation}
		\xymatrix{
			{V_X :H^0(X,\Omega^1_{X/k}\otimes\L^p)} \ar[r] & {H^0(X,\Omega^1_{X/k}\otimes\L)}
			}\label{Cartier}
	\end{equation} 
	 is surjective.
\end{fact}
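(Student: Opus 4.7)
The key observation is that by Grothendieck--Serre duality for the finite flat absolute Frobenius $F: X \to X$, the Cartier operator $V_X$ is (upon appropriate twists) the transpose of the Frobenius pullback $F_X^\#$. Concretely, Serre duality identifies $H^1(X,\L^{-1}) \simeq H^0(X,\Omega^1_X\otimes\L)^\vee$ and $H^1(X,\L^{-p}) \simeq H^0(X,\Omega^1_X\otimes\L^p)^\vee$, and under these identifications $F_X^*$ becomes the transpose of $V_X$. Thus the two claimed statements are equivalent, and my plan is to establish injectivity of $F_X^*$.

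For this, I would invoke the fundamental four-term exact sequence of $\O_X$-modules
\[
 0\to \O_X \xrightarrow{F^\#} F_*\O_X \xrightarrow{d} F_*\Omega^1_X \xrightarrow{V_X} \Omega^1_X\to 0,
\]
whose exactness is a reformulation of Cartier's theorem together with the description of $\ker(d)$ as the sheaf of local $p$-th powers. Letting $B := \Im(d) \subset F_*\Omega^1_X$ be the sheaf of locally exact differentials, we split off the short exact sequence $0\to\O_X\to F_*\O_X\to B\to 0$. Tensoring with the flat sheaf $\L^{-1}$ and applying the projection formula $F_*\O_X\otimes\L^{-1}\simeq F_*(F^*\L^{-1})=F_*\L^{-p}$ yields
\[
 0 \to \L^{-1} \to F_*\L^{-p} \to B\otimes\L^{-1} \to 0.
\]
Since $\deg \L > \n(X) \ge -1$ forces $\deg \L \ge 0$, and in the only nontrivial case $g_X \ge 1$ we further have $\deg \L \ge 1$, we get $H^0(\L^{-1}) = H^0(\L^{-p}) = 0$. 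The long exact sequence of cohomology then identifies $\ker F_X^*$ with $H^0(X, B\otimes\L^{-1})$, and the problem reduces to showing this space vanishes.

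The heart of the proof is then to show $H^0(X, B\otimes\L^{-1}) = 0$. A nonzero class restricts at the generic point to a nonzero exact differential $\omega = df$ with $f \in k(X) \setminus k(X)^p$ (since generically $B$ identifies with the space of exact meromorphic differentials and $df \neq 0$). Fix an effective divisor $D$ with $\O_X(D) = \L$. The condition that $\omega$ extends to a global section of $B\otimes\L^{-1}$ imposes that for each $x \in X$ there exists a local section $g_x$ of $\L^{-p}$ at $x$ (i.e., with $\ord_x(g_x) \ge p\,\ord_x(D)$) satisfying $dg_x = \omega$. A direct local calculation, using that in characteristic $p$ every monomial $a t_x^{pm}$ in the Taylor expansion of $g_x$ contributes $0$ to its derivative, shows $\ord_x(dg_x) \ge p\,\ord_x(D)$, and so
\[
 \left\lfloor\frac{\ord_x(df)}{p}\right\rfloor \ge \ord_x(D).
\]
Summing over $x$ and invoking the definition of the Tango number gives
\[
 \n(X) \ge \sum_{x \in X(k)} \left\lfloor\frac{\ord_x(df)}{p}\right\rfloor \ge \deg D = \deg\L,
\]
contradicting the hypothesis. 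Therefore $H^0(X, B\otimes\L^{-1}) = 0$ and $F_X^*$ is injective; Serre duality then supplies the surjectivity of $V_X$.

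The main obstacle is the last local analysis, which requires interpreting a global section of $B\otimes\L^{-1}$ as arising from a single rational function $f$ (well-defined only modulo $k(X)^p$) and then accurately tracking the characteristic-$p$ vanishing of $p$-divisible Laurent terms in $dg_x$ to secure the crucial inequality $p\,\ord_x(D) \le \ord_x(df)$. All other steps are formal manipulation with short exact sequences, the projection formula, and Serre duality.
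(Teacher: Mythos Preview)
The paper does not give its own proof of this statement: it is recorded as a ``Fact'' with the remark that it is \cite[Theorem~15]{tango72}, together with the observation that the genus-zero case (excluded by Tango) can be checked directly. Your proposal is essentially a reconstruction of Tango's original argument, and it is correct in substance.

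Two minor points are worth cleaning up. First, you write ``Fix an effective divisor $D$ with $\O_X(D)=\L$.'' There is no reason $\L$ need have a nonzero global section, so an effective representative may not exist; fortunately, nothing in your local analysis uses effectivity. The inequality $\ord_x(g_x)\ge p\,\ord_x(D)$ and the conclusion $\lfloor \ord_x(df)/p\rfloor \ge \ord_x(D)$ are valid for arbitrary (possibly negative) $\ord_x(D)$, so simply take $D$ to be any divisor with $\O_X(D)\simeq\L$. Second, your handling of the low-degree and genus-zero cases is a bit compressed. The clean statement is: if $\deg\L\ge 1$ then $H^0(\L^{-p})=0$ and your long-exact-sequence identification $\ker F_X^*\simeq H^0(X,B\otimes\L^{-1})$ goes through verbatim in any genus; the only remaining case under the hypothesis $\deg\L>\n(X)$ is $\deg\L=0$ with $g_X=0$, where $H^1(\PP^1,\O)=0$ makes the injectivity vacuous. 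With these adjustments your argument is complete and matches the proof in Tango's paper.
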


\begin{remark}
This is \cite[Theorem 15]{tango72}; strictly speaking, Tango requires $g_X>0$;  however, by tracing through Tango's argument---or by direct calculation---one sees easily that 
	the result holds when $g_X=0$ as well.
\end{remark}

To simplify notation, let $\delta(H^0(X,\Omega^1_X(E)))$ denote the dimension of the kernel of $V_X$ on that space of differentials. 
Tango's theorem tells us the following:

\begin{cor} \label{cor:tango}
Let $D, R$ be divisors on $X$ with  $R =\sum r_i P_i$ where $0 \le r_i < p$.  If $\deg(D) > \max(\n(X),0)$, then 
$$\delta (H^0(X,\Omega^1_X(pD+R))) = (p-1) \deg(D) + \sum_i \left( r_i- \left\lceil \frac{r_i}{p} \right\rceil\right). $$
For arbitrary $D$, we have the weaker statement
$$0 \leq  \delta(H^0(X,\Omega^1_X(pD+R))) -  \left( (p-1) \deg(D) + \sum_i \left( r_i- \left\lceil \frac{r_i}{p} \right\rceil\right) \right) \leq a_X. $$
\end{cor}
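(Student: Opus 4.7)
The plan is to derive both parts of the corollary from the short exact sequence
\[
 0 \to \ker V_X(F_*E) \to F_*\Omega^1_X(E) \xrightarrow{V_X} \Omega^1_X(\overline{E}) \to 0
\]
of Lemma~\ref{lem:twisted} applied with $E := pD + R$, so that $\overline{E} = D + \overline{R}'$ where $\overline{R}' := \sum_i \lceil r_i/p\rceil P_i$ is the reduced support of the positive part of $R$ (using $0 \le r_i < p$). Since $F$ is finite we have $\chi(F_*\Fscr) = \chi(\Fscr)$, so Riemann--Roch yields
\[
\chi(\ker V_X(F_*E)) = (p-1)\deg D + \sum_i \left(r_i - \left\lceil \tfrac{r_i}{p}\right\rceil\right) =: \chi_0.
\]
Writing $h^i := \dim_k H^i(X,\ker V_X(F_*E))$, we identify $h^0 = \delta(H^0(X,\Omega^1_X(pD+R)))$, and $h^0 - h^1 = \chi_0$. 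The lower bound $\delta \ge \chi_0$ in both parts of the corollary is then immediate from $h^1 \ge 0$.

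For the upper bound $\delta \le \chi_0 + a_X$ in the general case, the next step is to bound $h^1 \le a_X$. Using the local formula \eqref{eq:localcartier}, the natural inclusion $\ker V_X \hookrightarrow \ker V_X(F_*E)$ has skyscraper cokernel, so the cohomology long exact sequence gives a surjection $H^1(X,\ker V_X) \twoheadrightarrow H^1(X,\ker V_X(F_*E))$. Applying the Euler characteristic computation above to the tautological Cartier sequence (the $E=0$ case of Lemma~\ref{lem:twisted}) shows $\chi(\ker V_X) = 0$, so $h^1(\ker V_X) = h^0(\ker V_X) = a_X$, yielding $h^1 \le a_X$.

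For the precise equality when $\deg D > \max(\n(X),0)$, the strategy is to show $h^1 = 0$ by reducing to the case $R=0$ and invoking Tango's theorem. The inclusion $\ker V_X(F_*pD) \hookrightarrow \ker V_X(F_*(pD+R))$ also has skyscraper cokernel---the crucial local observation being that for $0 < r_i < p$ the ``new'' allowable pole orders at points of $\sup R$ all correspond to indices that are non-multiples of $p$, which lie in $\ker V_X$ locally by \eqref{eq:localcartier}---so as before it suffices to prove $H^1(X,\ker V_X(F_*pD)) = 0$. Applying Tango's theorem (Fact~\ref{fact:tango}) to $\L = \O_X(D)$, the hypothesis $\deg D > \n(X)$ yields surjectivity of $V_X : H^0(\Omega^1_X(pD)) \to H^0(\Omega^1_X(D))$; combined with $H^1(\Omega^1_X(pD)) = 0$ (by Serre duality, using $\deg D > 0$), the long exact sequence attached to the $E = pD$ case of Lemma~\ref{lem:twisted} forces $H^1(\ker V_X(F_*pD)) = 0$, as desired. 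The main routine check is verifying the two skyscraper-cokernel short exact sequences stalk-locally using \eqref{eq:localcartier}; no serious obstacles are expected.
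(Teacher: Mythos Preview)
Your proof is correct and takes a genuinely different route from the paper's. The paper argues elementarily: it first handles the case $R=0$ by combining Tango's surjectivity with two Riemann--Roch computations, and then builds up the general case by adding one point of $R$ at a time, tracking at each step whether $\delta$ can jump by $0$ or $1$ (it cannot jump when the new pole order becomes a multiple of $p$). The ``arbitrary $D$'' inequality is obtained by the same stepping argument, sandwiching $\delta(H^0(X,\Omega^1_X(pD+R)))$ between $a_X=\delta(H^0(X,\Omega^1_X))$ and $\delta(H^0(X,\Omega^1_X(pD')))$ for a large $D'$.

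Your approach is more cohomological and arguably cleaner: you compute $\chi(\ker V_X(F_*E))$ once via additivity and Riemann--Roch, reducing everything to bounding $h^1(\ker V_X(F_*E))$. The lower bound is then trivial, the upper bound $h^1\le a_X$ comes from the skyscraper-cokernel surjection on $H^1$, and the vanishing $h^1=0$ under the Tango hypothesis follows from the long exact sequence plus Serre duality. One thing the paper's stepwise argument buys is that it simultaneously \emph{constructs} the differentials $\xi_{Q_j,n}$ of Corollary~\ref{cor:omegaqi} (whose existence is invoked later in \S6C); your argument establishes the dimension count but does not directly produce these witnesses, so a separate (easy) construction would be needed there. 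Both arguments share the implicit effectiveness assumption $pD+R\ge 0$ needed for the inclusion $\ker V_X\hookrightarrow \ker V_X(F_*E)$; this is harmless since all applications in the paper use effective divisors.
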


\begin{proof}
When $R=0$, the first case follows from the surjectivity of the Cartier operator in Fact~\ref{fact:tango} taking $\L = \O_X(D)$, plus the fact that 
$$\dim_k H^0(X,\Omega^1_X(D)) = g_X-1+ \deg(D) \quad \text{and} \quad \dim_k H^0(X,\Omega^1_X(pD)) = (g_X-1) + p \deg(D)$$
from the Riemann-Roch theorem.  

We can build on this to prove the remaining cases of the first statement and establish the inequalities of the second statement.
We know that for any divisor $E$ with $\deg(E) \geq 0$ and any closed point $P$ of $X$,
\[
\dim_k H^0(X,\Omega^1_X(E + [P])) \le \dim_k H^0(X,\Omega^1_X(E)) + 1.
\]
with equality whenever $\deg(E)>0$.
Thus we know that 
\[
0 \leq \delta(H^0(X,\Omega^1_X(E + [P]))) - \delta( H^0(X,\Omega^1_X(E)) ) \leq 1.
\]
Furthermore, if $p | \ord_P(E)$ then this difference is $0$, as a differential in $H^0(X,\Omega^1_X(E + [P]))$ not in $H^0(X,\Omega^1_X(E))$ must have a non-zero $t_P^{-\ord_P(E)}\frac{dt_P}{t_p}$-term in the completed stalk at $P$, which forces the differential to not lie in the kernel of the Cartier operator.

When $\deg(D) > \max(\n(X),0)$ and $R \neq 0$, we prove the equality by induction on the number of points in the support of $R$.  
Assume the equality holds for some fixed $R$ and all $D$ with $\deg(D) > \max(\n(X),0)$.
Pick another point $P$ (not in the support of $R$) and $0 < r < p$.  Then 
\[
 \delta(H^0(X,\Omega^1_X(p D + R))) \leq \delta( H^0(X,\Omega^1_X(pD +R + [P])))\leq \ldots \leq \delta(H^0(X,\Omega^1_X(p(D+[P]) +R))) 
\]
and by the inductive hypothesis the last is $p-1$ more than the first.  This means that at each step after the first, the dimension of the kernel must increase by one.  
This shows that
$$\delta(H^0(X,\Omega^1_X(p D + R + r [P]))) = \delta(H^0(X,\Omega^1_X(pD +R ))) + (r-1),$$
which completes the induction. 

The second statement follows from the same type of reasoning.  When passing from
$$a_X = \delta( H^0(X,\Omega^1_X)) \quad \text{to} \quad \delta (H^0(X,\Omega^1_X(pD +R))),$$ there are $(p-1) \deg(D) + \sum (r_i-\lceil r_i/p\rceil)$ times the dimension might increase by one.  When passing from $$\delta (H^0(X,\Omega^1_X(pD +R))) \quad \text{to} \quad  \delta(H^0(X,\Omega^1_X( p D'))) = (p-1) \deg(D')$$ with $D'$ chosen so that $pD' \geq p D+R$ and $D' > \n(X)$, there are $(p-1) \deg(D'-D) + \sum (p-1-r_i)$ chances for the dimension to increase by one.  This completes the proof.
\end{proof}

\begin{remark}
Choosing $D' \geq D$ with $\deg(D') > \n(X)$ and $p D' \geq pD +R$, we also obtain the bound $\delta( H^0(X,\Omega^1_X(pD+R)) ) \leq (p-1) \deg D'$  from 
the inclusion $H^0(X,\Omega^1_X(p D + R)) \subset H^0(X,\Omega^1_X(pD'))$ and Tango's theorem.
\end{remark}

Fix a divisor $E = p D + R = \sum_i a_i Q_i$ with $R = \sum r_i Q_i$ and $0 \leq r_i < p$.  For fixed $j$ and $1 \leq n \leq a_j$, 
\[
\delta(H^0(X,\Omega^1_X(\sum_{i <j} a_i Q_i + (n-1)Q_j))) \leq \delta(H^0(X,\Omega^1_X(\sum_{i <j} a_i Q_i + n Q_j))).
\]
If the dimension increases by one, let $\xi_{Q_j,n}$ be a differential in the larger space not in the smaller space.  This differential satisfies the following properties:
\begin{enumerate}
    \item $V_X(\xi_{Q_j,n}) =0$;
    \item $\ord_{Q_i} (\xi_{Q_j,n}) \geq -a_i $ for $i < j$;
    \item  $\ord_{Q_j}( \xi_{Q_j,n}) = -n$;
    \item and $\ord_{Q_i} (\xi_{Q_j,n}) \geq 0$ for $i>j$.
\end{enumerate}
Note that such a differential never exists if $n \equiv 1 \pmod{p}$.

\begin{example}
Suppose $X = \PP^1$.
    For any positive integer $n$ with $n \not \equiv 1 \pmod{p}$ and closed point $Q = [\alpha]$ of $X$, we may take
$\xi_{Q,n}$ to be $(t-\alpha)^{-n} dt$.
\end{example}

\begin{cor} \label{cor:omegaqi}
The differentials $\xi_{Q_j,n}$ are linearly independent.  There are at least  $(p-1) \deg D + \sum_i (r_i - \lceil r_i / p\rceil) - a_X$ of them.
\end{cor}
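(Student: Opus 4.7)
The plan is to dispatch the two assertions separately, both by fairly routine filtration/counting arguments.

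For linear independence, fix the enumeration $Q_1, Q_2, \ldots$ of the points used in the statement and equip the indices $\{(j,n) : 1 \leq n \leq a_j\}$ with the lexicographic order. Write $D_{j,n} := \sum_{i<j} a_i Q_i + n Q_j$, and consider the ascending filtration of $\ker V_X$ by
\[ W_{j,n} := H^0(X, \Omega^1_X(D_{j,n})) \cap \ker V_X, \]
with the convention $W_{j,0} = W_{j-1, a_{j-1}}$. By the very way $\xi_{Q_j,n}$ is defined, it lies in $W_{j,n}$ but not in $W_{j,n-1}$---this is the content of the dimension jump being realized by this particular element. A direct divisor comparison shows $W_{j',n'} \subseteq W_{j,n-1}$ whenever $(j',n') < (j,n)$ lexicographically: if $j' < j$ then $D_{j',n'} \leq \sum_{i \leq j'} a_i Q_i \leq \sum_{i<j} a_i Q_i \leq D_{j,n-1}$, and if $j' = j$ then $n' \leq n-1$ gives $D_{j',n'} \leq D_{j,n-1}$ directly. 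The standard filtration argument---pick the lex-largest $(j^*, n^*)$ with nonzero coefficient in a hypothetical relation $\sum c_{j,n}\xi_{Q_j,n} = 0$ and observe that this would force $\xi_{Q_{j^*}, n^*} \in W_{j^*, n^*-1}$, contradicting its defining property---then establishes linear independence.

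For the count, note that a $\xi_{Q_j,n}$ is produced at precisely those steps of this lex-ordered filtration at which $\delta$ jumps up by one, and at no other step (dimensions can only change by $0$ or $1$ per step, as observed in the discussion preceding the Corollary). The total number of such $\xi_{Q_j,n}$ is therefore the total increase
\[ \delta(H^0(X, \Omega^1_X(E))) - \delta(H^0(X, \Omega^1_X)) = \delta(H^0(X, \Omega^1_X(pD+R))) - a_X, \]
and invoking the lower bound $\delta(H^0(X,\Omega^1_X(pD+R))) \geq (p-1)\deg D + \sum_i(r_i - \lceil r_i/p\rceil)$ from Corollary~\ref{cor:tango} delivers the desired count.

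I do not anticipate any serious obstacle; the only step requiring a moment's care is the filtration containment $W_{j',n'}\subseteq W_{j,n-1}$ underlying the linear independence argument, but this reduces to the elementary divisor inequalities above.
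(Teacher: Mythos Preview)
Your proof is correct and follows essentially the same route as the paper, which dispatches the corollary in one line by pointing back to the proof of Corollary~\ref{cor:tango}. Your filtration argument for linear independence and your identification of the count as $\delta(H^0(X,\Omega^1_X(E))) - a_X$ together with the lower bound on $\delta$ from Corollary~\ref{cor:tango} are exactly what that reference unpacks to.
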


\begin{proof}
This follows from the proof of the previous Corollary.
\end{proof}

 We can use Corollary~\ref{cor:tango} to relatively easily give a formula for $\dim_k U_j$ using Lemma~\ref{lem:vk} provided that the auxiliary divisors $D_i$ are sufficiently large.  This will allow us to bound $L(X,\pi)$; we do so in the proof of Theorem~\ref{thm:cleanbounds}.  
 We can also use Corollary~\ref{cor:tango} to obtain a bound on 
 the number $N(X,\pi)$ of Definition \ref{defn:M}; this is more involved, and is the subject of the next section.  

\subsection{Estimating $N(X,\pi)$}

We will use Corollary~\ref{cor:omegaqi} to construct differential forms to give a lower bound on $N(X,\pi)$, and hence an upper bound on $U(X,\pi)$.  

\begin{defn} \label{defn:T}
Define $T$ to be the set of triples $(Q,n,j)$ where:
\begin{itemize}
 \item $0 \leq j \leq p-1$ and  $Q \in S'_j$;
 \item $ 0 < n \leq \ord_Q(E_j + p D_j)$;
 \item  $n \not \equiv 1 \pmod{p}$;
 \item if $Q \in S$, there exists an integer $m$ with $0\leq m \leq j$ such that $m \equiv j + (n-1) d_Q^{-1} \pmod{p}$.
\end{itemize}
\end{defn}

We fix an ordering on $ \bigcup_j S'_j$ with $Q'$ the smallest element.
For $(Q,n,j) \in T$, suppose the element $\xi_{Q,j}$ from Corollary~\ref{cor:omegaqi} exists (taking  $E = E_j + p D_j$ and $a_i = n_{Q_i,j}$ in the notation there, and using the ordering on $ \bigcup_j S'_j$). 
We set
\[
 \nu_{Q,n,j} := (0 , \ldots, 0 , \xi_{Q,j}, 0, \ldots, 0) 
 \in H^0(X,\Gscr_{j+1})
\]
where $\xi_{Q,j}$ occurs in the $j$-th component.
In that case, we say that {\em $\nu_{Q,n,j}$ exists}.  As $Q'$ is smallest in the chosen ordering, we know that $\nu_{Q',n,j} \in H^0(X,\Gscr_0)$.
Our aim is to prove:

\begin{prop} \label{prop:t}
Let $N(X,\pi)$ be as in Definition \ref{defn:M}.
Then $N(X,\pi) \geq \#T - B$, where $B$ is the number of triples $(Q,n,j) \in T$ such that $\nu_{Q,n,j}$ does not exist.
\end{prop}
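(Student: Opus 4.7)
The plan is to show that for each triple $(Q,n,j) \in T$ for which $\nu_{Q,n,j}$ exists---there are $\#T - B$ of them---the corresponding image either under $\widetilde{g}$ (for $Q \ne Q'$) or under $\widetilde{g}_0$ (for $Q = Q'$) provides a linearly independent contribution, giving the claimed inequality $N(X,\pi) = \dim_k \Im(\widetilde{g}) + \dim_k \Im(\widetilde{g}_0) \ge \#T - B$. Recall that $\nu_{Q',n,j}$ lies in $H^0(X,\Gscr_0) \subseteq \ker \widetilde{g}$ and so contributes to $\Im(\widetilde{g}_0)$, while $\nu_{Q,n,j}$ with $Q \ne Q'$ lies in $H^0(X,\Gscr_{j+1}) \subseteq H^0(X,\Gscr_p)$ and contributes to $\Im(\widetilde{g})$.

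The first step is to identify, for each such triple, a distinguished ``target slot'' at which the image is nonzero. For $Q \in S$, the congruence condition in Definition~\ref{defn:T} picks out a unique $m$ with $0 \le m \le j$ and $m \equiv j + (n-1) d_Q^{-1} \pmod p$. Applying the reconstruction formula \eqref{eq:reconstructioneq} expresses $\omega_m = s_m(V_X(\xi))$ with $\xi = -\sum_{\ell = m+1}^j \binom{\ell}{m} \omega_\ell (-f)^{\ell - m}$, and the congruence condition is exactly what ensures that the leading pole term of the $\ell = j$ summand $-\binom{j}{m}\xi_{Q,n,j}(-f)^{j-m}$, at $\frac{dt_Q}{t_Q}$-index $-n - d_Q(j-m) + 1$, survives $V_X$. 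The explicit formula \eqref{eq:cjdef} for $c_{m,Q}$ then extracts this leading coefficient (nonzero by construction of $\xi_{Q,n,j}$) as a specific entry in $M_{m,Q}$. For $Q \in \on{sup}(D_j)$ one uses \eqref{eq:cpole} instead, and for $Q = Q'$ one traces through the proof of Lemma~\ref{lem:fixQ} (using the change of variable $y' = y+g$) to find a distinguished coordinate in $M_{-1,Q'}$.

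The second step is to order the triples---say, lexicographically by $(j, n)$ in decreasing order, or equivalently by target slot---and show that the matrix of images at the distinguished slots is upper-triangular with nonzero diagonal. Since $\omega_i = 0$ for $i > j$ in the reconstruction from $\nu_{Q,n,j}$ by Remark~\ref{rmk:omegav}, triples with larger $j$ produce no contribution at indices $i > j$, which anchors the triangular structure across varying $j$. Within a fixed $(Q,j)$, varying $n$ changes both the target row $m$ (via $n \bmod p$) and column $\ell$ (via the magnitude of $n$); and distinct $Q$'s contribute to disjoint skyscraper summands.

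The main obstacle is controlling the secondary contributions of each $\nu_{Q,n,j}$: because the pole of $\xi_{Q,n,j}$ at $Q$ propagates through the recursion down to every $\omega_i$ with $i < j$, the image $\widetilde{g}(\nu_{Q,n,j})$ has entries at many slots beyond its distinguished one, and in principle collisions with the distinguished slots of other triples can occur (for instance from pairs of the form $(Q,n,j)$ and $(Q, n - d_Q, j+1)$). The key to ruling out bad collisions is to combine the pole-order estimates of Lemma~\ref{lem:consolidatedvanishing}\ref{ordvanishQinS} with the defining property $V_X(\xi_{Q,n,j}) = 0$ to show that the secondary entries of any triple always lie strictly ``after'' its own distinguished slot in the chosen ordering. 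Making this accounting precise is the technical core of the argument.
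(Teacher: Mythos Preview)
Your strategy is essentially the paper's: split into $Q\ne Q'$ (contributing to $N_1$) and $Q=Q'$ (contributing to $N_2$), assign to each existing $\nu_{Q,n,j}$ a distinguished coordinate in the target skyscraper at which its image is nonzero, and argue linear independence via triangularity with respect to an ordering on $T$. The identification of the distinguished coordinate for $Q\in S$ via the congruence $m\equiv j+(n-1)d_Q^{-1}\pmod p$, and the use of Lemma~\ref{lem:consolidatedvanishing}\ref{ordvanishQinS} to control pole orders, are exactly right.

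Two points need correction. First, the ordering must be by \emph{increasing} $(j,Q,n)$ lexicographically (with a fixed total order on $\bigcup_j S'_j$), not decreasing $(j,n)$; including $Q$ is essential. The paper implements this as Definition~\ref{defn:UQnj} together with Lemmas~\ref{lem:orderprop}--\ref{lem:t2}: for any $(P,a,i)<(Q,n,j)$ one shows $\ord_Q(\omega'_j)>-n$, and Lemma~\ref{lem:consolidatedvanishing}\ref{ordvanishQinS} then gives $\ord_Q(\omega'_m)>-n-d_Q(j-m)$, so the smaller triple vanishes at the larger triple's distinguished slot. Your assertion that ``triples with larger $j$ produce no contribution at indices $i>j$'' does not by itself yield triangularity, since the distinguished level $m$ of a triple at $j$ may be any $m\le j$; and ``distinct $Q$'s contribute to disjoint skyscraper summands'' is true only of the \emph{distinguished} slots---$\xi_{Q,n}$ may have poles at points preceding $Q$, so secondary contributions of $\nu_{Q,n,j}$ can land at other points. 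The property $V_X(\xi_{Q,n})=0$ is a prerequisite for $\nu_{Q,n,j}\in H^0(X,\Gscr_p)$ but is not otherwise used in the collision analysis.

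Second, for $Q=Q'$ the paper does not locate a distinguished coordinate in $M_{-1,Q'}$ by unwinding Lemma~\ref{lem:fixQ}. Instead (Lemma~\ref{lem:t3}) one supposes a relation $\sum c_{Q',n,j}\,\omega_{Q',n,j}\in H^0(X,\Gscr_{-1})$, applies $(\tau-1)^j$ for the largest surviving $j$, observes that the result is regular above $Q'$ (since $\pi$ is unramified there) while each $j!\,\xi_{Q',n}$ appearing has a distinct negative order at $Q'$, and concludes $c_{Q',n,j}=0$ by descending induction. This is considerably cleaner than tracing through the $y'=y+g$ change of variable.
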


The idea is to show that the images under 
$\widetilde{g}$ of the $\nu_{Q,n,j}$ for $(Q,n,j) \in T$ with $Q \neq Q'$ are linearly independent, and similarly for the images under $\widetilde{g}_0$ of the $\nu_{Q',n,j}$ with $(Q',n,j) \in T$.

\begin{defn}
\label{defn:UQnj}
We define an ordering on $T$ by setting $(P,a,j) < (P',a',j')$ provided 
\begin{enumerate}
    \item $j < j'$;
    \item or $j =j'$ and $P < P'$ ;
    \item or $j=j'$ and $P = P'$ and $a<a'$.
\end{enumerate}
For $(Q,n,j) \in T$, we then define
\[
 U_{Q,n,j} := \spn_k \left \{ \widetilde{g}( \nu_{P,a,i} ) : 
 \nu_{P,a,i}\ \text{exists}\ \text{and}\ 
 (P,a,i) < (Q,n,j)  \right \}.
\]
\end{defn}

\begin{lem}\label{lem:orderprop}
    Let $(P,a,i), (Q,n,j)\in T$ with $(P,a,i)< (Q,n,j)$
    and suppose that the differentials $\nu_{P,a,i}$
    and $\nu_{Q,n,j}$ both exist.  Writing 
    $(\omega_0,\ldots,\omega_{p-1}) = \imath(\nu_{Q,n,j})$
and $(\omega'_0,\ldots, \omega_{p-1}') = \imath(\nu_{P,a,i})$,
we have 
$\ord_{Q}(\omega'_j) > \ord_{Q}(\omega_j) = - n$.
\end{lem}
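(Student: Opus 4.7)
The plan is to proceed by a direct case analysis on the three possibilities enumerated in the ordering of Definition~\ref{defn:UQnj}, using the recursive construction of $\iota$ from the proof of Proposition~\ref{prop:isogeneric} together with the defining properties of the differentials $\xi_{(\cdot),(\cdot)}$ recorded in the list immediately preceding Corollary~\ref{cor:omegaqi}.

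First I will compute $\omega_j$ explicitly. By definition $\nu_{Q,n,j}$ has only its $j$-th coordinate nonzero, equal to $\xi_{Q,j}$. The inductive formula \eqref{eq:reconstructioneq} starts with $\omega_{p-1} = \nu_{p-1} = 0$ and then forces $\omega_\ell = 0$ for every $\ell > j$ by descending induction, because both $\nu_\ell$ and the correction terms $s_\ell(\cdots)$ vanish (the latter since the relevant sum is empty). Plugging these vanishings into \eqref{eq:reconstructioneq} at level $\ell = j$ shows $\omega_j = \nu_j = \xi_{Q,j}$, and hence $\ord_Q(\omega_j) = -n$ by property (3) in the list preceding Corollary~\ref{cor:omegaqi}. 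The same recursive argument applied to $\nu_{P,a,i}$ gives $\omega'_\ell = 0$ for $\ell > i$ and $\omega'_i = \xi_{P,i}$.

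With these descriptions in hand, the inequality $\ord_Q(\omega'_j) > -n$ follows by splitting on the three cases. If $i < j$ then $\omega'_j = 0$ and the claim is trivial. If $i = j$ and $P < Q$ in the fixed ordering on $\bigcup_j S_j'$, then $\omega'_j = \xi_{P,j}$, which by property (4) in the list preceding Corollary~\ref{cor:omegaqi} is regular at every point of $\bigcup_j S_j'$ occurring after $P$; in particular $\ord_Q(\omega'_j) \geq 0 > -n$. If $i = j$, $P = Q$, and $a < n$, then $\omega'_j$ is the $\xi$-differential at $Q$ with pole of order $a$, so $\ord_Q(\omega'_j) = -a > -n$.

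There is no genuine obstacle here beyond careful bookkeeping: the only subtle point is ensuring the recursive vanishing of the higher coordinates $\omega_\ell$ (resp.\ $\omega'_\ell$) from \eqref{eq:reconstructioneq}, after which each case becomes an immediate application of exactly one of the order-of-vanishing properties (3) or (4) of $\xi_{(\cdot),(\cdot)}$ recorded in the preamble to Corollary~\ref{cor:omegaqi}.
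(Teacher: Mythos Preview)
Your proof is correct and follows essentially the same three-case analysis as the paper's own argument. The only cosmetic difference is that where you unwind the recursion \eqref{eq:reconstructioneq} directly to obtain $\omega'_\ell=0$ for $\ell>i$ and $\omega'_i=\xi_{P,i}$, the paper instead cites Lemma~\ref{lem:consolidatedvanishing}\ref{ordvanishQinS} (equivalently Remark~\ref{rmk:omegav}) for the same conclusion; the remaining two cases and their appeals to properties (3) and (4) of the $\xi$-differentials are identical.
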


\begin{proof}
    Suppose first that $i < j$.  Since the $\ell$-th component 
    of $\nu_{P,a,i}$ is zero for $\ell >i$, we have 
    $\omega_{\ell}'=0$ for $\ell>i$ by Lemma \ref{lem:consolidatedvanishing} \ref{ordvanishQinS},
    and in particular $\omega_{j}'=0$ which immediately gives the claim.
    So suppose that $i=j$ and $P < Q$.  In this case, 
    by very construction of the differentials in Corollary \ref{cor:omegaqi}
    and our choice of ordering, we have
    $\ord_Q(\omega'_j)=\ord_Q(\xi_{P,a})\ge 0$ while 
    $\ord_Q(\omega_j) = \ord_Q(\xi_{Q,n}) = -n$.
    Finally, suppose that $i=j$, $P=Q$, and $a< n$.  Then,
    \begin{equation*}
        \ord_Q(\omega'_j) = \ord_Q(\xi_{Q,a}) = -a > -n = \ord_Q(\xi_{P,n}) = \ord_Q(\omega_j)
    \end{equation*}
    as claimed.
\end{proof}

\begin{lem} \label{lem:t1}
Suppose $\nu_{Q,n,j}$ exists 
with $(Q,n,j) \in T$ and $Q\in S$.
Then $\widetilde{g} (\nu_{Q,n,j})\not\in U_{Q,n,j}$.
\end{lem}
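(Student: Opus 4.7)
The strategy is to exhibit a linear functional on $\bigoplus_\ell H^0(X, M_\ell)$ that vanishes on $U_{Q,n,j}$ but takes a non-zero value on $\widetilde{g}(\nu_{Q,n,j})$. By Definition~\ref{defn:T} there is a unique integer $m$ with $0\le m\le j$ and $m\equiv j+(n-1)d_Q^{-1}\pmod{p}$; equivalently $n+(j-m)d_Q\equiv 1\pmod{p}$. Since $n\not\equiv 1\pmod{p}$ and $p\nmid d_Q$ (Lemma~\ref{lem:ramification}), necessarily $m<j$. The functional will be the composition of $\widetilde{g}$ with projection onto the single entry of $c_{m,Q}$ identified by (\ref{eq:cjdef}) with the coefficient of $t_Q^{-(n+(j-m)d_Q)}\,dt_Q$ in $\xi_m:=-\sum_{\ell=m+1}^{j}\binom{\ell}{m}\omega_\ell(-f)^{\ell-m}$: the congruence on $m$ ensures this index is an integer $p$-multiple in the ``$t_Q^\ell\tfrac{dt_Q}{t_Q}$''-basis, and elementary estimates using $n\le n_{Q,j}$ and $n_{Q,m}\le d_Q$ place it inside the recorded range $0\le\ell\le\beta_{Q,m}-1$.

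First I would compute this entry for $\nu_{Q,n,j}$. Writing $\imath(\nu_{Q,n,j})=(\omega_0,\ldots,\omega_{p-1})$, Remark~\ref{rmk:omegav} gives $\omega_\ell=0$ for $\ell>j$ and $\omega_j=\xi_{Q,n}$. The term $-\binom{j}{m}\xi_{Q,n}(-f)^{j-m}$ of $\xi_m$ has a non-zero leading coefficient at order $-(n+(j-m)d_Q)$, using $\binom{j}{m}\not\equiv 0\pmod{p}$ for $0\le m\le j\le p-1$ together with the non-vanishing of the leading coefficients of $\xi_{Q,n}$ and $f$ at $Q$. For each intermediate $\omega_\ell$ with $m<\ell<j$, the congruence on $m$ together with $p\nmid d_Q$ forces $n+(j-\ell)d_Q\not\equiv 1\pmod{p}$; a descending induction on $\ell$ combining (\ref{eq:localcartier}) with (\ref{eq:sectionpolarprop}) then yields the strict improvement $\ord_Q(\omega_\ell)>-(n+(j-\ell)d_Q)$, so $\omega_\ell(-f)^{\ell-m}$ contributes nothing at the target order. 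Thus the target entry equals the non-zero leading coefficient of $-\binom{j}{m}\xi_{Q,n}(-f)^{j-m}$.

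Next I would show the same entry vanishes on $\widetilde{g}(\nu_{P,a,i})$ for every $(P,a,i)<(Q,n,j)$ for which $\nu_{P,a,i}$ exists. Writing $\imath(\nu_{P,a,i})=(\omega'_0,\ldots,\omega'_{p-1})$ and $\xi'_m$ for the analogous sum, when $i=j$ Lemma~\ref{lem:orderprop} gives $\ord_Q(\omega'_j)>-n$; re-running the argument of Lemma~\ref{lem:consolidatedvanishing}\ref{ordvanishQinS} with this strict improvement propagates to $\ord_Q(\omega'_\ell)>-(n+(j-\ell)d_Q)$ for every $\ell<j$. When $i<j$ we have $\omega'_\ell=0$ for $\ell>i$, and the inequality $n+(j-i)d_Q>a$---which follows from $a\le n_{Q,i}\le d_Q$ combined with $(j-i)d_Q\ge d_Q$ and $n\ge 1$---yields the same strict improvement. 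In every case each summand of $\xi'_m$ has order at $Q$ strictly greater than $-(n+(j-m)d_Q)$, so the target coefficient in $\xi'_m$ vanishes, whence $\widetilde{g}(\nu_{Q,n,j})\notin U_{Q,n,j}$. The main obstacle will be the descending-induction step for the intermediate $\omega_\ell$ in the second paragraph: a case-analysis on the residue class of $n+(j-\ell)d_Q$ modulo $p$ is needed to show via (\ref{eq:sectionpolarprop}) and the interaction of $V_X$ with $s_\ell$ that failure of the key congruence for $\ell\ne m$ kills the naive leading coefficient under $V_X$ and produces the strict order improvement, regardless of whether finer non-leading coefficients happen to vanish.
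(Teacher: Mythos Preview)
Your approach is essentially the paper's: both isolate the coefficient $a_{1-n-(j-m)d_Q}$ of $\xi$ via the $Q$-component of $\widetilde{g}_{m+1}$, show it is nonzero for $\nu_{Q,n,j}$ because only the $i=j$ summand contributes at that order, and then use Lemma~\ref{lem:orderprop} together with Lemma~\ref{lem:consolidatedvanishing}\ref{ordvanishQinS} to show the same coefficient vanishes for every smaller triple. Your ``main obstacle'' is overstated: no residue-class case analysis is needed, since Lemma~\ref{lem:consolidatedvanishing}\ref{ordvanishQinS} (with $\nu_\ell=0$) already gives the sharp bound $\ord_Q(\omega_\ell)\ge -\mu_{Q,\ell}-1$, and the single observation that $n+(j-\ell)d_Q\not\equiv 1\pmod p$ for $\ell\neq m$ forces $\mu_{Q,\ell}+1<n+(j-\ell)d_Q$, which is exactly the strict improvement you want. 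One small imprecision: in your $i<j$ case you invoke ``$a\le n_{Q,i}$'', but $a$ is the pole order at $P$, not $Q$; what you actually need (and what holds for any $P$) is $\ord_Q(\omega'_i)=\ord_Q(\xi_{P,a})\ge -n_{Q,i}$, after which the same inequality $n_{Q,i}\le d_Q<n+(j-i)d_Q$ finishes the job.
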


\begin{proof}
In this proof, let $\widetilde{g}_{m+1,Q}$ denote the composition of $\widetilde{g}_{m+1}$ with the projection onto $M_{j,Q}$.
Write $(\omega_0,\ldots,\omega_j,0,\ldots,0)=\iota(\nu_{Q,n,j})$.
As $Q \in S$, by Definition~\ref{defn:T} there exists an integer $0 \leq m \leq j$ 
such that  $m \equiv j + (n-1) d_Q^{-1} \pmod{p}$. 
Looking at the definition of $g'_{m+1,Q}$ in \S\ref{sec:lcrp}, to compute $\widetilde{g}_{m+1,Q}(\nu_{Q,n,j})$ 
we work in the completed stalk at $Q$ and write
 \begin{equation}  \label{eq:xim}
    \xi := - \sum_{i=m+1} ^{j} \binom{i}{m} \omega_i (-f)^{i-m}  = \sum_{i = -d_Q(p-1-m)+1}^\infty a_i t^i_Q \frac{dt_Q}{t_Q}.
\end{equation}
 Now $\widetilde{g}_{m+1,Q}$ records the $a_i$ for which $p|i$ and $-d_Q(p-1-m) \leq i < -n_{Q,m}$.

Using Lemma~\ref{lem:consolidatedvanishing} \ref{ordvanishQinS} we find
for $i\le j$
\begin{equation} \label{eq:strongordervanishing}
\ord_Q(\omega_i (-f)^{i-m} ) \geq -n -(j-m) d_Q 
\end{equation}
In fact, we know more: for $i\ne j$, the $i$-th component of
$\nu_{Q,n,j}$ is {\em zero}, whence
Lemma~\ref{lem:consolidatedvanishing} \ref{ordvanishQinS} shows that for $m < i < j$
%\begin{equation} \label{eq:strongerordervanishing}
%\ord_Q(\omega_i (-f)^{i-m}) > -n -(j-m) d_Q .
%\end{equation}
\begin{equation}
\begin{aligned}
\ord_Q(\omega_i (-f)^{i-m}) &= \ord_Q(\omega_i) - (i-m)d_Q\\
&\ge - p\left\lfloor \frac{n+(j-i)d_Q - 1}{p}\right\rfloor - 1 - (i-m) d_Q\\
&> -(n+(j-i)d_Q-1) - 1 -(i-m) d_Q = -n -(j-m)d_Q
\end{aligned}\label{eq:strongerordervanishing}
\end{equation}
where the strict inequality comes from the
fact that $n + (j-i) d_Q \not \equiv 1 \pmod{p}$ for $m < i \leq j$, 
as $m$ is by definition the {\em unique} solution modulo $p$ to
\[
n + (j-x) d_Q \equiv 1 \pmod{p}.
\]

We now adopt the notation from the proof of Lemma~\ref{lem:choosingomegas} for coefficients of differentials and functions.  
As $\omega_j = \xi_{Q,n}$, we have $\coef_{1-n}(\omega_j)\neq 0$.  As $\ord_Q(f) = -d_Q$, we have $\coef_{-d_Q}( f) \neq 0$.  By \eqref{eq:strongerordervanishing}, we know that $\coef_{1-n-(j-m)d_Q }(\omega_i (-f)^{i-m}) =0$ for $m < i < j$.
Then looking at coefficients in \eqref{eq:xim} we see that 
\begin{align}
\begin{split}
    a_{1-n-(j-m)d_Q} &= \coef_{1-n-(j-m)d_Q}\left(- \sum_{i=m+1} ^{j} \binom{i}{m} \omega_i (-f)^{i-m}\right) \\
    &= 
    \coef_{1-n-(j-m)d_Q}\left(-\binom{j}{m}\omega_j (-f)^{j-m}\right) \\
    &= -\binom{j}{m}\coef_{1-n}(\omega_j)\cdot 
    \coef_{-(j-m)d_Q}((-f)^{j-m})\neq 0.
\end{split}
\end{align}
But $1-n - (j-m) d_Q \equiv 0 \pmod{p}$, so 
 $a_{1-n-(j-m)d_Q}$ is one of the {\em nonzero} terms that $\widetilde{g}_{m+1,Q}$ records.
 %whence $g'_{m+1,Q}(\nu_{Q,n,j}) \neq 0$.
  This is enough to show that $\widetilde{g}(\nu_{Q,n,j})$ does not lie 
 in $U_{Q,n,j}$.  Indeed, suppose 
$(P,a,i)\in T$ with $(P,a,i) < (Q,n,j)$ and that $\nu_{P,a,i}$ exists.
Set $(\omega'_0, \ldots,\omega_{p-1}') = \imath(\nu_{P,a,i})$.  
By Lemma \ref{lem:orderprop},
we have $\ord_Q(\omega_j') > -n$, 
so thanks to Lemma~\ref{lem:consolidatedvanishing} \ref{ordvanishQinS} we see that
$\ord_Q(\omega'_\ell) > -n - d_Q(j-\ell)$ for all $\ell \le j$.
It follows that
  \[
   \coef_{1-n-(j-m)d_Q} (\omega'_m) = 0.
  \]
As $U_{Q,n,j}$ is spanned by $\widetilde{g}(\nu_{P,a,i})$ 
for such triples $(P,a,i)$, this
shows $\widetilde{g}(\nu_{Q,n,j})$ does not lie in $U_{Q,n,j}$
as claimed.
\end{proof} 
 
\begin{lem} \label{lem:t2}
Suppose $\nu_{Q,n,j}$ exists 
with $(Q,n,j) \in T$ and $Q\in D_j$.
Then $\widetilde{g} (\nu_{Q,n,j})\not\in U_{Q,n,j}$.
%Suppose $(Q,n,j) \in T$ with $Q \in D_j$ such that $\nu_{Q,n,j}$ exists.  Then $\widetilde{g} (\nu_{Q,n,j})$ does not lie in $U_{Q,n,j}$.
\end{lem}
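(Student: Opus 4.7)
The plan is to show that the image $\widetilde{g}(\nu_{Q,n,j})$ has a distinguished nonzero coordinate in $M_{j,Q}$ that vanishes for every $\widetilde{g}(\nu_{P,a,i})$ with $(P,a,i)<(Q,n,j)$. Since the cover is unramified at $Q\in\sup(D_j)$ and $f$ has no pole or zero there, the local analysis of \S\ref{sec:lcps} applies cleanly.

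First I would unpack the construction. Write $(\omega_0,\ldots,\omega_{p-1})=\iota(\nu_{Q,n,j})$. By Lemma~\ref{lem:consolidatedvanishing}\ref{ordvanishQinS} and Remark~\ref{rmk:omegav}, we have $\omega_\ell=0$ for $\ell>j$, and the reconstruction formula \eqref{eq:reconstructioneq} therefore collapses to $\omega_j=\nu_j=\xi_{Q,n}$. In particular $\ord_Q(\omega_j)=-n$, and the local expansion $\omega_j=\sum_i b_i t_Q^i\, dt_Q$ satisfies $b_{-n}\neq 0$. Since $2\leq n\leq p$ by Definition~\ref{defn:T} (recall $\ord_Q(E_j+pD_j)=p$ on $\sup(D_j)$ and $n\not\equiv 1\bmod p$), the coefficient $b_{-n}$ is exactly one of the $p-1$ coefficients recorded by the local map $c_{j,Q}$ of \eqref{eq:cpole}; under the identification $M_{j,Q}=k[\![t_Q]\!]/(t_Q^{p-1})$, it corresponds to the $t_Q^{p-n}$-coefficient, which is nonzero. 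Thus the projection of $\widetilde{g}(\nu_{Q,n,j})$ onto the $t_Q^{p-n}$-direction of $M_{j,Q}$ is nonzero.

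Next I would verify that this same coordinate vanishes on every generator of $U_{Q,n,j}$. Let $(P,a,i)<(Q,n,j)$ with $\nu_{P,a,i}$ existing, and set $(\omega'_0,\ldots,\omega'_{p-1})=\iota(\nu_{P,a,i})$. Lemma~\ref{lem:orderprop} yields $\ord_Q(\omega'_j)>\ord_Q(\omega_j)=-n$, so the coefficient of $t_Q^{-n}\,dt_Q$ in $\omega'_j$ vanishes. Consequently the $t_Q^{p-n}$-component of $\widetilde{g}_{j+1,Q}(\nu_{P,a,i})\in M_{j,Q}$ is zero, and since only $\widetilde{g}_{j+1,Q}$ contributes to the $M_{j,Q}$-factor of $\bigoplus_j H^0(X,M_j)$, the same conclusion holds for the $M_{j,Q}$-projection of $\widetilde{g}(\nu_{P,a,i})$.

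Combining these two steps, the $t_Q^{p-n}$-coefficient in the $M_{j,Q}$-component gives a linear functional on $\bigoplus_j H^0(X,M_j)$ that vanishes on $U_{Q,n,j}$ but not on $\widetilde{g}(\nu_{Q,n,j})$, proving $\widetilde{g}(\nu_{Q,n,j})\notin U_{Q,n,j}$. The only subtlety to watch is the \textbf{interplay of orderings}: Lemma~\ref{lem:orderprop} delivers the strict inequality $\ord_Q(\omega'_j)>-n$ in all three cases of Definition~\ref{defn:UQnj} (whether $i<j$, or $i=j$ with $P<Q$, or $i=j,P=Q,a<n$), which is exactly what is needed to wipe out the critical coefficient. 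After that, the argument is purely formal—this is the analogue, in the unramified-auxiliary-divisor setting, of the ramified case already handled in Lemma~\ref{lem:t1}, but simpler because $\ord_Q(f)=0$ eliminates the combinatorial bookkeeping tied to $d_Q$ that appears there.
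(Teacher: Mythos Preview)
Your proof is correct and follows essentially the same approach as the paper: both identify $\omega_j=\xi_{Q,n}$, observe that the coefficient of $t_Q^{-n}\,dt_Q$ in $\omega_j$ is nonzero and recorded by $c_{j,Q}$, and then use Lemma~\ref{lem:orderprop} to show that this same coefficient vanishes for every $\omega'_j$ coming from a smaller triple. Your version is slightly more explicit (e.g., pinning down the $t_Q^{p-n}$-coordinate in $M_{j,Q}$ via \eqref{eq:cpole}), but the argument is the same.
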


\begin{proof}
In this proof, let $\widetilde{g}_{m+1,Q}$ denote the composition of $\widetilde{g}_{m+1}$ with the projection onto $M_{j,Q}$.
As $\ord_Q(E_j + p D_j) = p$ and we assumed $n \neq 1$, we know $1 < n \leq p$.  Set
$(\omega_0,\ldots,\omega_{p-1})=\iota(\nu_{Q,n,j})$
%$\omega = \sum_j \omega_j y^j  = \varphi^{-1}_{\eta} (\nu_{Q,n,j})$, 
and notice that $\omega_j  = \xi_{Q,n}$.
Looking at the definition of $g'_{j+1,Q}$ in \S\ref{sec:lcps}, we see that $g'_{j+1,Q}$ extracts the coefficients of $t_Q^{-p} dt_Q$ through $t_Q^{-2} dt_Q$ in a local expansion of $\omega_j$ at $Q$.  In particular, since $\ord_Q(\omega_j) = -n$,
we see that the component of $\widetilde{g}_{j+1,Q}(\nu_{Q,n,j})$ 
corresponding to the coefficient of $t_Q^{-n}dt_Q$ 
in the local expansion of $\omega_j$ at $Q$
is {\em nonzero}.
On the other hand, 
%Thus $g'_{j+1,Q}(\nu_{Q,n,j}) \neq 0$ 
%as the coefficient of $t_Q^{-n} dt_Q$ is non-zero.
for any $(P,a,i)\in T$ such that $\nu_{P,a,i}$ exists and
$(P,a,i) < (Q,n,j)$, 
consider $(\omega'_0,\ldots, \omega'_{p-1}) = \imath(\nu_{P,a,i})$.  By 
Lemma \ref{lem:orderprop} we have
%the definition of the ordering it is clear that 
$\ord_Q(\omega'_j) > - n$ so that the 
component
of $\widetilde{g}_{j+1,Q}(\nu_{P,a,i})$
corresponding to the coefficient of $t_Q^{-n}dt_Q$ 
in the local expansion of $\omega'_j$ at $Q$
is by contrast {\em zero}.
%coefficient of $t_Q^{-n} dt_Q$
%in the local expansion of $\omega_j'$ at $Q$ is {\em zero}
As the $\widetilde{g}(\nu_{P,a,i})$ for $(P,a,i)<(Q,n,j)$ span $U_{Q,n,j}$, this shows 
$\widetilde{g}(\nu_{Q,n,j})\not\in U_{Q,n,j}$ as desired.
\end{proof}

\begin{lem} \label{lem:t3}
The elements
\[
 \{ \widetilde{g}_0( \nu_{Q',n,j}) : (Q',n,j) \in T \text{ and } \nu_{Q',n,j} \text{ exists }\}
\]
are linearly independent.
\end{lem}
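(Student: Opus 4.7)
The plan is to analyze the image of each $\nu_{Q',n,j}$ under $\widetilde{g}_0$ using the primed coordinate system introduced in the proof of Lemma~\ref{lem:fixQ}, and then establish linear independence by filtering first by the $y'$-degree $j$ and then by the order of vanishing at $Q'$.

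First I would fix $(Q',n,j) \in T$ with $\nu_{Q',n,j}$ existing, and set $(\omega_0,\ldots,\omega_{p-1}) := \iota(\nu_{Q',n,j})$. Since $\nu_{Q',n,j}$ has $\xi_{Q',n}$ in position $j$ and zero elsewhere, the recursion \eqref{eq:reconstructioneq} immediately gives $\omega_i = 0$ for $i > j$ and $\omega_j = \xi_{Q',n}$, with $V_X(\xi_{Q',n}) = 0$ and $\ord_{Q'}(\xi_{Q',n}) = -n$. Passing to the primed coordinates $y' = y + g$ from the proof of Lemma~\ref{lem:fixQ} and writing $\omega = \sum_i \omega'_i(y')^i$, the binomial expansion $\omega'_k = \sum_{i \geq k} \binom{i}{k}(-g)^{i-k}\omega_i$ yields $\omega'_i = 0$ for $i > j$ and $\omega'_j = \omega_j = \xi_{Q',n}$.

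Next I would interpret $\widetilde{g}_0(\nu_{Q',n,j})$ under the explicit identification
\[
M_{-1,Q'} \simeq \bigoplus_{i=0}^{p-1} t_{Q'}^{-(p-1-i)d_{Q'}}(\ker V_X)_{Q'}^\wedge/(\ker V_X)_{Q'}^\wedge
\]
from Lemma~\ref{lem:fixQ}. Since this isomorphism is induced by $\omega \mapsto (r(\omega'_0),\ldots,r(\omega'_{p-1}))$, the vanishing of $\omega'_i$ for $i > j$ makes the $i$-th component of $\widetilde{g}_0(\nu_{Q',n,j})$ trivial for $i > j$, while the $j$-th component equals $r(\xi_{Q',n}) = \xi_{Q',n}$ (since $\xi_{Q',n} \in \ker V_X$), viewed as a class in $t_{Q'}^{-(p-1-j)d_{Q'}}(\ker V_X)_{Q'}^\wedge/(\ker V_X)_{Q'}^\wedge$.

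To conclude, I would assume a relation $\sum c_{n,j}\widetilde{g}_0(\nu_{Q',n,j}) = 0$ and let $j^*$ be the largest $j$ with some $c_{n,j^*} \neq 0$. Projecting onto the $j^*$-th component of $M_{-1,Q'}$ annihilates every term with $j < j^*$ by the previous paragraph, giving $\sum_n c_{n,j^*}[\xi_{Q',n}] = 0$ in $t_{Q'}^{-(p-1-j^*)d_{Q'}}(\ker V_X)_{Q'}^\wedge/(\ker V_X)_{Q'}^\wedge$. Since the $\xi_{Q',n}$ have distinct strictly negative orders $-n$ at $Q'$, their classes are linearly independent in this quotient: any non-trivial combination has a unique leading term of minimal negative order coming from the largest $n$, and so cannot be regular at $Q'$. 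This forces $c_{n,j^*} = 0$ for all $n$, contradicting the choice of $j^*$, and iterating on $j^*$ completes the argument. The main technical obstacle is cleanly relating the unprimed decomposition (natural from the construction of $\nu_{Q',n,j}$) to the primed one (natural for describing $M_{-1,Q'}$) so that both $\omega'_i = 0$ for $i > j$ and $\omega'_j = \xi_{Q',n}$ fall out; once these identities are in hand, the rest is an order-of-vanishing argument.
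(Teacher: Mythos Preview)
Your proof is correct and rests on the same two-step idea as the paper's: first filter by the $y$-degree $j$, then within a fixed degree use that the $\xi_{Q',n}$ have pairwise distinct strictly negative orders at $Q'$. The execution, however, differs. You trace through the explicit primed-coordinate description of $M_{-1,Q'}$ from the proof of Lemma~\ref{lem:fixQ}, verifying the triangular change of variables $\omega'_k=\sum_{i\ge k}\binom{i}{k}(-g)^{i-k}\omega_i$ so that $\omega'_i=0$ for $i>j$ and $\omega'_j=\xi_{Q',n}$, and then project onto the $j^*$-th summand of $M_{-1,Q'}$. The paper instead works entirely on the $\Gscr_0$ side and uses the $G$-action: applying $(\tau-1)^{j}$ to a putative relation $\sum c_{Q',n,j}\,\omega_{Q',n,j}\in H^0(X,\Gscr_{-1})$ kills all terms of $y$-degree $<j$ and returns $j!\sum_n c_{Q',n,j}\,\xi_{Q',n}$, which must be regular at $Q'$ since $\pi$ is \'etale there; the same leading-order argument then forces all $c_{Q',n,j}=0$.

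The advantage of the paper's route is that it never needs to unpack the isomorphism $\operatorname{coker}(\Gscr_{-1}\hookrightarrow\Gscr_0)\simeq M_{-1}$: it uses only that $(\tau-1)^j$ extracts the leading $y$-coefficient and that elements of $H^0(X,\Gscr_{-1})$ are regular above $Q'$. Your route, by contrast, makes the target $M_{-1,Q'}$ completely explicit and shows \emph{where} each $\widetilde{g}_0(\nu_{Q',n,j})$ lands, which is arguably more informative; the cost is relying on the internals of the proof of Lemma~\ref{lem:fixQ} rather than just its statement.
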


\begin{proof}
For notational convenience, let $T'\subseteq T$ be the subset of triples
of the form $(Q',n,j)$ such that $\nu_{Q',n,j}$ exists, and note that
%Recalling the definition of $g'_0$ in \eqref{eq:g'_0},
%note that 
%for $(Q',n,j) \in T'$ we have
$\nu_{Q',n,j}\in H^0(X,\Gscr_0)$ as $n \leq (p-1-j)d_{Q'} = \ord_{Q'} (E_j)$.
%Remembering that $g'_0$ is the cokernel morphism from the inclusion $\pi_* \ker V_Y \simeq \Gscr_{-1} \into \Gscr_0$.  
%For $(Q',n,j) \in T$ such that $\nu_{Q',n,j}$ exists, note that $\nu_{Q',n,j}$ lies in %$H^0(X,\Gscr_0)$ as $n \leq (p-1-j)d_{Q'} = \ord_{Q'} (E_j)$.
Let $\omega_{Q',n,j} = \varphi^{-1}_\eta( \nu_{Q',n,j})$; we have $(\tau-1)^j \omega_{Q',n,j} = j! \xi_{Q',n}$ and $(\tau-1)^{j+1} \omega_{Q',n,j} = 0$.

Suppose that there exist scalars $c_{Q',n,j} \in k$ for $(Q',n,j)\in T'$ such that
%Given $c_{Q',n,j} \in k$ for the $(Q',n,j) \in T$ for which $\nu_{Q',n,j}$ exists, suppose that 
\[
 \sum_{(Q',n,j)\in T'}  c_{Q',n,j} \omega_{Q',n,j} = \omega' \in H^0(X,\Gscr_{-1}) = H^0(Y,\ker V_Y).
\]
We will show that $c_{Q',n,j}=0$ for all $(Q',n,j)\in T'$ by
descending induction on $j$.  
First observe that
there are no triples of the form $(Q',n,p-1)$ in $T$ (so {\em a fortioti} there are none in $T'$) as $\ord_{Q'}(E_{p-1}+pD_{p-1})=0$.  
Now fix $j$ and suppose $c_{Q',n,i}=0$ for all $n$ whenever $i > j$.  We compute 
\begin{equation}
 (\tau-1)^{j} \omega' = j! \sum_{\{n\ :\ (Q',n,j)\in T'\}} c_{Q',n,j} \xi_{Q',n}
 \label{eq:tautrick}
\end{equation}
since $(\tau-1)^j \omega_{Q',n,i} \neq 0$ implies that $i \geq j$.  As  $\ord_{Q'}(\xi_{Q',n})=-n$, each of the non-zero terms on the right of \eqref{eq:tautrick} has a different, negative valuation at $Q'$, while the left side is {\em regular} at $Q'$ as $ (\tau-1)^{j} \omega'$ is regular at the points above $Q'$ (in fact, everywhere) and $\pi:Y\rightarrow X $ is unramified over $Q'$.
%obviously has a different polar part at $Q'$, and the linear combination is regular at$Q'$
This forces $c_{Q',n,j}=0$ for all $n$ in the sum, as desired.

We conclude that the images of the $\omega_{Q',n,j}$ are linearly independent in the cokernel of the inclusion $H^0(Y,\pi_* \ker V_Y) \simeq H^0(X,\Gscr_{-1}) \into H^0(X,\Gscr_0)$ and the result follows.
\end{proof}

%Putting these Lemmas together will prove Proposition~\ref{prop:t}.
\begin{proof}[Proof of Proposition~\ref{prop:t}]
    By Definition \ref{defn:M}, we have $N(X,\pi):=N_1(X,\pi)+N_2(X,\pi)$
    where $N_1(X,\pi):=\dim_k \Im(\widetilde{g})$ and $N_2(X,\pi):=\dim_k \Im(\widetilde{g}_0)$.
    Let $T_1 \subseteq T$ be the set of triples $(Q,n,j)\in T$ with $Q\in S_j$
    for some $0\le j\le p-1$ such that $\nu_{Q,n,j}$ exists
    and $T_2\subseteq T$ the set of triples $(Q',n,j)\in T$
    such that $\nu_{Q',n,j}$ exists.  
    Thanks to Lemmas \ref{lem:t1} and \ref{lem:t2} we have $\# T_1 \le N_1(X,\pi)$,
    while $\# T_{2} \le N_2(X,\pi)$ due to Lemma~\ref{lem:t3}.
    It follows that 
    $$N(X,\pi)=N_1(X,\pi)+ N_2(X,\pi) \ge  \#T_1 + \#T_2 = \#T - B$$
    where $B$ is the number of triples $(Q,n,j)\in T$ for which $\nu_{Q,n,j}$
    does not exist.
% The proof is simply linear algebra using Lemma~\ref{lem:t1}, Lemma~\ref{lem:t2}, and Lemma~\ref{lem:t3} (recalling Definition~\ref{defn:M}). 
\end{proof}

\begin{remark} 
 Using Corollary~\ref{cor:tango} and the observation that at most $a_X$ of the $\xi_{Q,n}$ do not exist, we will use Proposition \ref{prop:t} to bound $U(X,\pi)$,
 thereby obtaining bounds on $a_Y$. A precise statement will be given in
  Theorem~\ref{thm:cleanbounds}.
\end{remark}

\begin{remark}
 Let $Q \in S$.  This argument uses very limited information about the coefficients of
 a local expansion of $f$ at $Q$.  In particular, it only uses that $f$ has a leading term (a non-zero $t_Q^{-d_Q}$ term).  If there were additional non-zero coefficients, the image of $\widetilde{g}$ could very well be larger, and $N(X,\pi)$ could be strictly larger than $\# T - B$.
 However, Example~\ref{ex:monomial} shows it is not possible to do better in general.  This is the main place the argument loses information.
\end{remark}

\subsection{Explicit Bounds}

We continue to use the notation from \S\ref{sec:asc};
in particular, 
%we fix an Artin Schreier cover 
$\pi:Y\rightarrow X$ is a fixed degree-$p$ Artin--Schreier cover
of smooth, projective, and connected curves over $k$
with branch locus $S\subseteq X(\overline{k})$, and $d_Q$ is the unique break in the ramification filtration at the unique point of $Y$ above $Q\in S$.
For nonnegative integers $d,i$ with $p\nmid d$ let $\tau_p(d,i)$ be the number of positive integers $n \le \lfloor i d/p\rfloor $ 
with the property that $-n \equiv m d\bmod p$ for some $m$ with $0 < m \le p-1-i$.

\begin{thm} \label{thm:cleanbounds}
    With notation as above, 
    \begin{align}
        a_Y \le pa_X + \sum_{Q\in S} \sum_{i=1}^{p-1} \left( \left\lfloor\frac{id_Q}{p}\right\rfloor - \left\lfloor\frac{id_Q}{p^2}\right\rfloor -\tau_p(d_Q,i)\right) 
        %&\le  pa_X + \sum_{Q\in S} \sum_{i=1}^{p-1} \left( \left\lfloor\frac{id_Q}{p}\right\rfloor - (p-i)\left\lfloor\frac{id_Q}{p^2}\right\rfloor \right)
        %pa_X + \sum_{Q\in S} \sum_{i=1}^{p-1} \left( \left\lfloor\frac{id_Q}{p}\right\rfloor-\left\lfloor\frac{id_Q}{p^2}\right\rfloor - (p-1-i) \left\lfloor  \frac{1}{p}\left\lceil\frac{id_Q}{p}\right\rceil\right\rfloor\right).
         \label{eq:explicitupper}
    \end{align}
    and for any $j$ with $1\le j\le p-1$
    \begin{equation}
        a_Y\ge \sum_{Q\in S} \sum_{i=j}^{p-1}\left( \left\lfloor \frac{id_Q}{p} \right\rfloor - \left\lfloor \frac{id_Q}{p} - \left(1-\frac{1}{p}\right)\frac{jd_Q}{p}\right\rfloor\right).
        \label{eq:explicitlower}
    \end{equation}
    Moreover, for any nonnegative integers  $d,i$ with $p\nmid d$ and $i \le p-1$,
    \begin{equation}
        \tau_p(d,i) \ge (p-1-i)\left\lfloor  \frac{1}{p}\left\lceil\frac{id_Q}{p}\right\rceil\right\rfloor \ge (p-1-i)\left\lfloor \frac{id_Q}{p^2}\right\rfloor.\label{eq:taubound}
    \end{equation}
\end{thm}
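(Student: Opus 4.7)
The strategy is to apply Proposition~\ref{prop:abstractbound}, which gives $L(X,\pi) \le a_Y = U(X,\pi)$, and evaluate both abstract bounds using Tango's theorem in the form of Corollary~\ref{cor:tango}, together with the combinatorial lower bound on $N(X,\pi)$ from Proposition~\ref{prop:t}. The auxiliary divisors $D_i$ may be chosen with $\deg D_i$ arbitrarily large (while keeping supports disjoint from $S'$ and from each other), so we may always satisfy the hypothesis of Corollary~\ref{cor:tango}. The crucial point is that contributions involving the $D_i$ cancel in both bounds: each $Q \in \sup(D_j)$ contributes $c(i,j,Q) = p-1$ to the relevant combinatorial sum and exactly $p-1$ triples to $\#T$ (since $\ord_Q(E_j+pD_j)=p$ forces $n \in \{2,\ldots,p\}$), matching the $(p-1)\deg D_j$ term from Tango's formula. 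Similarly, since $p \mid d_{Q'}$, the contributions at $Q'$ from Tango and from the combinatorial counts both equal $d_{Q'}(p-1)^2/2$ after summing over $i$ and cancel exactly.

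For the upper bound~\eqref{eq:explicitupper}, I would combine $U(X,\pi) = \sum_{i=0}^{p-1} \dim_k H^0(X,\ker V_X(F_*(E_i+pD_i))) - N(X,\pi)$ with $N(X,\pi) \ge \#T - B$ and the bound $B \le pa_X$ coming from Corollary~\ref{cor:omegaqi} (at most $a_X$ of the differentials $\xi_{Q,n}$ fail to exist in each of the $p$ spaces indexed by $i$). After applying Corollary~\ref{cor:tango} and performing the cancellations above, only contributions at $Q \in S$ remain. Using the identity $n-\lceil n/p\rceil = (p-1)\lfloor n/p\rfloor + \max(0,(n\bmod p) -1)$, the surviving Tango contribution at $Q \in S$ equals $\sum_{i=0}^{p-1}(n_{Q,i}-\lceil n_{Q,i}/p\rceil)$; reindexing via $i \mapsto p-1-i$ and applying the identity $\lceil i d_Q/p\rceil - \lceil id_Q/p^2\rceil = \lfloor id_Q/p\rfloor - \lfloor id_Q/p^2\rfloor$ (valid since $p \nmid id_Q$ for $1 \le i \le p-1$) converts this into $\sum_{i=1}^{p-1}(\lfloor id_Q/p\rfloor - \lfloor id_Q/p^2\rfloor)$. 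For the combinatorial term, substituting $n' = n-1$ and $i' = p-1-j$ in Definition~\ref{defn:T} converts the congruence $m \equiv j + (n-1)d_Q^{-1} \pmod p$ (with $0 \le m \le j$) into precisely the defining condition of $\tau_p(d_Q,i')$, so the number of triples in $T$ with fixed $Q \in S$ equals $\sum_{i'=1}^{p-1}\tau_p(d_Q,i')$.

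For the lower bound~\eqref{eq:explicitlower}, I would choose the index $j_L = p-1-j$ in $L(X,\pi)$ (where $j$ is the theorem's parameter), use Lemma~\ref{lem:vk} to compute $\dim_k U_{j_L} = \sum_{i=0}^{j_L} \dim_k H^0(X, \ker V_X(F_*(E_i+pD_i)))$, and subtract $\sum_{i=0}^{j_L} \sum_Q c(i,j_L,Q)$. After the same cancellations, a direct calculation from Definition~\ref{defn:bounds} yields $c(i,j_L,Q) = \lceil (n_{Q,j_L}+d_Q(j_L-i))/p\rceil - \lceil n_{Q,i}/p\rceil$ for $Q \in S$. The $\lceil n_{Q,i}/p\rceil$ cancels against the corresponding Tango term, leaving $\sum_{Q\in S}\sum_{i=0}^{j_L}\bigl(n_{Q,i} - \lceil (n_{Q,j_L}+d_Q(j_L-i))/p\rceil\bigr)$. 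Reindexing $i \mapsto p-1-i$ and rewriting $(1-1/p)j d_Q/p = (p-1)jd_Q/p^2$ should yield the claimed bound after elementary manipulation.

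Finally, the inequality~\eqref{eq:taubound} follows from an elementary count: for each $m \in \{1,\ldots,p-1-i\}$, the residue class $-md \bmod p$ is a distinct nonzero residue (as $p\nmid d$), and the number of integers in $[1,\lfloor id/p \rfloor]$ in any fixed nonzero residue class is at least $\lfloor \lfloor id/p\rfloor/p\rfloor = \lfloor id/p^2 \rfloor$; a sharper case analysis (when $\lfloor id/p\rfloor \equiv p-1 \pmod p$, every nonzero residue class contributes one more) yields $\lfloor \lceil id/p\rceil/p\rfloor$ uniformly. The main obstacle in executing this plan will be carefully verifying the combinatorial identifications, especially the identity transforming $\lceil i'd_Q/p\rceil - \lceil(\lceil j'd_Q/p\rceil + d_Q(i'-j'))/p\rceil$ into $\lfloor i'd_Q/p\rfloor - \lfloor i'd_Q/p - (1-1/p)j'd_Q/p\rfloor$ in the lower bound computation.
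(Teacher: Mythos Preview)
Your proposal is correct and follows essentially the same approach as the paper's proof: enlarge the $D_i$ so Corollary~\ref{cor:tango} applies, feed Proposition~\ref{prop:t} and Lemma~\ref{lem:vk} into the abstract bounds of Proposition~\ref{prop:abstractbound}, observe that the contributions from $\sup(D_i)$ and $Q'$ cancel term-by-term, and reindex $i\mapsto p-1-i$ to obtain the stated formulae. The obstacle you flag in the lower bound is handled in the paper by the elementary identity $n_{Q,j_L}+(j_L-i)d_Q = (p-1-i)d_Q - (1-\tfrac{1}{p})(p-1-j_L)d_Q$ inside the ceiling, after which the conversion $\lceil x\rceil = \lfloor x\rfloor+1$ for $x\notin\ZZ$ yields \eqref{eq:explicitlower} directly; your justification of the sharper bound in \eqref{eq:taubound} is slightly off (the point is that adjoining $n=0$ to the range $[1,\lfloor id/p\rfloor]$ costs nothing because $0$ lies in the excluded residue class, giving $\lfloor(\lfloor id/p\rfloor+1)/p\rfloor=\lfloor\lceil id/p\rceil/p\rfloor$ complete blocks), but the conclusion is correct.
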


\begin{proof}
 Let $\n(X)$ be the Tango number of $X$, and for $0\le i\le p-1$ let $D_i$ be as 
 above Definition \ref{defn:varphi} (see also Corollary \ref{cor:splittings}).
 Adding in more points to each $D_i$ as needed,
we may without loss of generality assume that $\deg(D_i) > \n(X)$ for all $i$.  Using this assumption,
we will analyze the lower and upper bounds from Proposition~\ref{prop:abstractbound} separately.  
Recall that 
we have fixed an Artin--Schreier equation $y^p-y=f$ giving the extension of function fields $k(Y)/k(X)$ with $f$
minimal in the sense of Definition \ref{def:minimal}; in particular $f$
has a pole of order $d_Q$ at each $Q\in S$, as well as a pole of
some order $d_{Q'}$ that is divisible by $p$ at the fixed point $Q'\in X(k)- S$.
As in Definition \ref{defn:collecteddefs}, for 
$Q\in S'=S\cup\{Q'\}$ and
$0\le i \le p-1$ we set $n_{Q,i}:=\lceil (p-1-i) d_Q/p \rceil$ if $Q\neq Q'$
and $n_{Q',i}:=(p-1-i)d_{Q'}$, and we put $E_i:=\sum_{Q\in S'} n_{Q,i}[Q]$.
We now define
\[
 A_i := \sum_{Q \in S'} \left \lfloor \frac{n_{Q,i}}{p} \right \rfloor [Q], \quad \quad R_i= \sum_{ Q \in S} r(Q,i) [Q] :=  E_i - p A_i.  
\]
Note that $R_i$ is supported on $S$, as $n_{Q',i}$ is a multiple of $p$.
Let $s(Q,i) = r(Q,i) -1$ if $r(Q,i) >0$, and $0$ otherwise.

We will first establish \eqref{eq:explicitupper}.  By proposition~\ref{prop:abstractbound} we have
\begin{equation} \label{eq:abstractupper}
 a_Y \leq \sum_{i=0}^{p-1} \dim_k H^0(X,\ker V_X( F_* (E_i + p D_i))) - N(X,\pi).
\end{equation}
We will understand the first sum using Corollary~\ref{cor:tango}, and $N(X,\pi)$ using Proposition~\ref{prop:t}.  % treating the $i=p-1$ case separately.

Fix $i$.   As $\deg(D_i)>n(X)$, the first part of Corollary~\ref{cor:tango} implies that
\begin{gather}
 \dim_k H^0(X,\ker V_X(F_*(E_i + p D_i))) = \dim_k H^0(X,\ker V_X(F_*(p (A_i + D_i) +R_i )) \nonumber \\
 = (p-1) \deg(A_i + D_i) + \sum_{Q \in S}  s(Q,i) \nonumber \\
  = (p-1)\left( \sum_{Q \in S} \left \lfloor \frac{n_{Q,i}}{p} \right \rfloor + \frac{d_{Q'}}{p} (p-1-i) + \# \sup(D_i) \right) + \sum_{Q \in S}  s(Q,i). \label{eq:uppersizes}
\end{gather}

We also want to count elements of the form $(Q,n,i)$ in the set $T$ of Proposition~\ref{prop:t} for fixed $i$ and $Q\in S_i'=S'\cup \sup(D_i)$.  If $Q \in \sup(D_i)$, then there are $p-1$ such elements by definition since $\ord_Q(E_i + p D_i) = p$.  If $Q = Q'$, there are $(p-1) (p-1-i) d_{Q'}/p$ such elements since $\ord_{Q'}(E_i) = (p-1-i) d_{Q'}$.  When $Q\in S$, the number of elements of the form $(Q,n,i)$ in $T$ is precisely $\tau_p(d_Q,p-1-i)$
since the set of positive integers $n \le n_{Q,i}$ with the property that $i+(n-1)d_{Q}^{-1}\equiv m\bmod p$ for some $m$ with $0\le m < i$
is in bijection with the set of positive integers $n' \le \lfloor (p-1-i)d_Q/p \rfloor$ with $-n'\equiv m' d_Q \bmod{p}$ for some $m'$ with $0 < m' \le i$
via $n':= n-1$ and $m' = i-m$.
Putting this together, we find that there are exactly
\begin{equation}\label{eq:exactt}
     \sum_{Q\in S} \tau_p(d_Q,p-1-i) + (p-1) \# \sup(D_i) + (p-1) (p-1-i) \frac{d_{Q'}}{p}
\end{equation}
elements of $T$ of the form $(Q,n,i)$.

Putting \eqref{eq:abstractupper} together with \eqref{eq:uppersizes} and the bound on $N(X,\pi)$ coming from \eqref{eq:exactt}, Proposition~\ref{prop:t}, and the observation that at most $p a_X$ of the $\nu_{Q,n,j}$ do not exist (since at most $a_X$ of the $\xi_{Q,n}$ do not exist),  we conclude that
\begin{equation}
 a_Y \leq p \cdot a_X + \sum_{i=0}^{p-2} \sum_{Q \in S}  (p-1) \left \lfloor \frac{n_{Q,i}}{p}   \right \rfloor +s(Q,i) -\tau_p(d_Q,p-1-i) .\label{eq:almostupper}
\end{equation}
Using the very definition of $s(Q,i)$, one finds the formula
\begin{equation}
    s(Q,i) =n_{Q,i} - p\cdot \left \lfloor \frac{n_{Q,i}}{p}   \right \rfloor  -\left(\left \lceil \frac{n_{Q,i}}{p}   \right \rceil - \left \lfloor \frac{n_{Q,i}}{p}   \right \rfloor\right) = n_{Q,i}-\left \lceil \frac{n_{Q,i}}{p}   
    \right \rceil-(p-1)\cdot\left \lfloor \frac{n_{Q,i}}{p}   \right \rfloor \label{eq:sval}
\end{equation}
Substituting \eqref{eq:sval} into \eqref{eq:almostupper}, re-indexing the sum $i\mapsto p-1-i$ and using the equality $\lceil x\rceil = \lfloor x\rfloor +1$ for $x\not\in \ZZ$ 
gives \eqref{eq:explicitupper}.  

To make this bound more explicit, 
we must bound $\tau_p(d,i)$ from below.
To do this, simply note that for any $0 < m \le p-1-i$ and any interval of length $p$, there is a unique $n$ in that interval such that 
\[
 m d \equiv -n \pmod{p}
\]
and that necessarily $n \not \equiv 0 \pmod{p}$ as $p\nmid d$.  Thus, 
\begin{equation}\label{eq:taupbound}
        \tau_p(d_Q,i) \ge (p-1-i) \cdot  \left\lfloor\frac{1}{p}\left(\left\lfloor \frac{id_Q}{p} \right\rfloor + 1\right)\right\rfloor=
        (p-1-i) \cdot  \left\lfloor\frac{1}{p}\left\lceil \frac{id_Q}{p} \right\rceil \right\rfloor
        \ge (p-1-i)\left\lfloor\frac{id_Q}{p^2}\right\rfloor,
\end{equation}
as $0 \le n \leq \lfloor id_Q/p\rfloor = \lceil id_Q/p\rceil - 1$.  

We now turn to the lower bound \eqref{eq:explicitlower}.  Proposition~\ref{prop:abstractbound} and Definition~\ref{defn:bounds}
give that for each $0 \leq j \leq p-1$,
\begin{equation} \label{eq:lowerbound}
 \dim_k U_j - \sum_{i=0}^j \sum_{Q \in S_j'} c(i,j,Q) \leq a_Y.
\end{equation}
Using Lemma~\ref{lem:vk}, Corollary~\ref{cor:tango} and the assumption that $\deg D_i > n(X)$ for all $i$, we calculate
\begin{align*}
 \dim_k U_j &= \sum_{i=0}^j \dim_k H^0( X,\Omega^1_X(F_*(E_i + p D_i))) \\
 & =\sum_{i=0}^j \left((p-1) \deg(D_i) + (p-1)(p-1-i) \frac{d_{Q'}}{p} +  \sum_{Q \in S} (p-1) \left \lfloor \frac{n_{Q,i}}{p} \right \rfloor +s(Q,i) \right) .
\end{align*}
 Now for $Q \in \on{sup}(D_i)$ we have $c(i,j,Q) =p-1$, while $c(i,j,Q') = (p-1) (p-1-i) d_{Q'}/p$.  If $Q \in S$, then $c(i,j,Q)$ is the  number of integers $n\equiv -1\bmod p$ satisfying $-n_{Q,j} - d_Q (j-i) \leq n < -n_{Q,i}$.  That is,
 \begin{align*}
  c(i,j,Q) &=   \left \lceil \frac{n_{Q,j} + d_Q (j-i)}{p} \right \rceil - \left \lceil \frac{n_{Q,i}}{p} \right \rceil. 
 \end{align*}
Thus, the contributions from $D_i$ and $Q'$ in \eqref{eq:lowerbound} cancel, so the formula (\ref{eq:sval}) for $s(Q,i)$ yields
\begin{equation*} 
  \sum_{i=0}^j  \sum_{Q \in S}\left( n_{Q,i}  - \left \lceil \frac{n_{Q,j} + d_Q (j-i)}{p} \right \rceil  \right)  \leq a_Y
\end{equation*}
for all $0\le j< p-1$. Using the equality 
\[
    \left\lceil \frac{n_{Q,j}+(j-i) d_Q}{p}\right\rceil = \left\lceil \frac{(p-1-i) d_Q}{p} - \left(1-\frac{1}{p}\right)\frac{(p-1-j) d_Q}{p} \right \rceil,
\]
changing variables $i\mapsto p-1-i$ and $j\mapsto p-1-j$, and employing again the equality $\lceil x\rceil = \lfloor x \rfloor + 1$ for $x\not\in \ZZ$
then gives \eqref{eq:explicitlower}.
%We then take $j = \frac{p-3}{2}$.  Remark~\ref{remark:choice} motivates this choice.  
\end{proof}

\begin{cor} \label{cor:explicit}
Suppose $p$ is odd.
With the hypotheses of Theorem \ref{thm:cleanbounds} we have
\[
  a_Y \leq p \cdot a_X +  \left( \frac{(p-1)(p-2)}{2} + \left(1-\frac{1}{p}\right)^2 \right) \cdot \# S + \left(1-\frac{1}{p}\right)\sum_{Q \in S} \frac{(2p-1)}{6} d_Q
\]
and
\[
  a_Y \geq  \left(1-\frac{1}{p}\right)^2 \left(\sum_{Q \in S} \frac{ (p+1)}{4} d_Q   - \#S\cdot \frac{p}{2}\right).
\]
\end{cor}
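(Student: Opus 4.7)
The plan is to derive both bounds from the explicit estimates of Theorem~\ref{thm:cleanbounds} by elementary floor-function manipulation.

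\emph{Upper bound.} I would combine the upper bound in Theorem~\ref{thm:cleanbounds} with the sharper estimate $\tau_p(d_Q,i) \ge (p-1-i)\lfloor\lceil id_Q/p\rceil/p\rfloor$ from \eqref{eq:taubound}. Writing $a_i := \lfloor id_Q/p\rfloor$, $b_i := \lfloor id_Q/p^2\rfloor$, and $\delta_i := \mathbf{1}[a_i \equiv p-1 \pmod p]$, the fact that $p \nmid id_Q$ (so $\lceil id_Q/p\rceil = a_i + 1$) gives $\lfloor\lceil id_Q/p\rceil/p\rfloor = b_i + \delta_i$; thus each summand is at most $a_i - (p-i)b_i - (p-1-i)\delta_i$. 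Using the identities $\sum_{i=1}^{p-1} a_i = (p-1)(d_Q-1)/2$ (since the fractional parts $\{id_Q/p\}$ permute $\{1/p,\ldots,(p-1)/p\}$) and $\sum_{i=1}^{p-1} i(p-i) = p(p-1)(p+1)/6$, the total reduces to the desired $d_Q$-linear main term plus a correction $\sum_{i}\bigl[(p-i)\{id_Q/p^2\} - (p-1-i)\delta_i\bigr]$. The crucial refinement is that when $\delta_i = 0$, the condition $p \nmid (id_Q \bmod p^2)$ and $\lfloor(id_Q \bmod p^2)/p\rfloor \le p-2$ force $\{id_Q/p^2\} \le 1 - 1/p - 1/p^2$; taking the pointwise maximum of the two resulting case-bounds (the first dominates for $i\le p-2$, the second for $i=p-1$) and summing gives the constant $(p^3 - 3p^2 + p + 3)/(2p)$, which is dominated by $(p-1)(p-2)/2 + (1-1/p)^2$ since the difference reduces to the elementary inequality $3p^2 - 7p + 2 \ge 0$.

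\emph{Lower bound.} Take $j = (p+1)/2$ in Theorem~\ref{thm:cleanbounds}, and set $y := (p^2-1)d_Q/(2p^2)$ and $N := (p-1)/2$, noting $Ny = (1-1/p)^2(p+1)d_Q/4$. Using the identity $\lfloor x\rfloor - \lfloor x-y\rfloor = \lfloor y\rfloor + \mathbf{1}[\{x\}<\{y\}]$ for $y\notin\ZZ$ (the integer case is direct), the per-$Q$ sum equals $T_Q = N\lfloor y\rfloor + m$, where $m$ counts those $i\in[(p+1)/2,p-1]$ for which $\{id_Q/p\} < \{y\}$. I would then perform a dichotomy on $\{y\}$: if $\{y\} > (p-1)/p$, then $\{id_Q/p\} = r_i/p \le (p-1)/p < \{y\}$ for \emph{every} $i$, so $m = N$ and $T_Q = N\lceil y\rceil \ge Ny$; otherwise $N\{y\} \le (p-1)^2/(2p) = (1-1/p)^2 p/2$, and the trivial bound $m \ge 0$ yields $T_Q \ge N\lfloor y\rfloor = Ny - N\{y\} \ge Ny - (1-1/p)^2 p/2$. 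In both cases $T_Q \ge (1-1/p)^2\bigl[(p+1)d_Q/4 - p/2\bigr]$; summing over $Q\in S$ completes the proof.

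\emph{Main obstacle.} The principal subtlety lies in the lower bound: the naive estimate $\lfloor y\rfloor \ge y - 1$ produces only the constant $-(p-1)/2$, which is \emph{larger} in absolute value than the required $-(1-1/p)^2 p/2 = -(p-1)^2/(2p)$ by a factor of $p/(p-1)$. The pigeonhole dichotomy above, which exploits the fact that when $\{y\}$ exceeds $(p-1)/p$ \emph{every} interval $(id_Q/p - y, id_Q/p]$ is forced to contain $\lceil y\rceil$ rather than merely $\lfloor y\rfloor$ integers, closes exactly this gap.
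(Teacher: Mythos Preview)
Your proposal is correct. For the upper bound, your argument fleshes out what the paper leaves as a one-line remark (``follows easily from \eqref{eq:explicitupper} and the first inequality in \eqref{eq:taubound} by basic properties of the floor function and \eqref{eq:floorsum}''); the strategy is the same, and your explicit constant $(p^3-3p^2+p+3)/(2p)$ checks out.

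For the lower bound, both you and the paper take $j=(p+1)/2$, but the paper's argument is considerably shorter and avoids your dichotomy entirely. Rather than rewriting $\lfloor x\rfloor - \lfloor x-y\rfloor$ as $\lfloor y\rfloor + \mathbf{1}[\{x\}<\{y\}]$ and then splitting on the size of $\{y\}$, the paper bounds the two floors \emph{separately}: since $p\nmid id_Q$ one has $\lfloor id_Q/p\rfloor \ge (id_Q-(p-1))/p$, and trivially $\lfloor z\rfloor \le z$ for the second term. This yields directly
\[
\left\lfloor\frac{id_Q}{p}\right\rfloor - \left\lfloor\frac{id_Q}{p}-\Bigl(1-\tfrac{1}{p}\Bigr)\frac{jd_Q}{p}\right\rfloor \ \ge\ \Bigl(1-\tfrac{1}{p}\Bigr)\Bigl(\frac{jd_Q}{p}-1\Bigr),
\]
which after summing over the $p-j=(p-1)/2$ values of $i$ gives exactly $Ny - (p-1)^2/(2p)$ in your notation. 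The ``obstacle'' you identify---that $\lfloor y\rfloor\ge y-1$ loses a factor $p/(p-1)$---is an artifact of first passing to $\lfloor y\rfloor$; the paper never forms $\lfloor y\rfloor$ and so never encounters it. Both routes exploit the same key fact $\{id_Q/p\}\le (p-1)/p$, but the paper applies it to the first floor directly, while you apply it inside the indicator after the $\lfloor y\rfloor$ decomposition. Your approach has the mild advantage of showing that in the case $\{y\}>(p-1)/p$ one actually gets the stronger bound $T_Q\ge Ny$, but for the stated corollary the paper's one-line estimate suffices.
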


\begin{proof}
The upper bound follows easily from \eqref{eq:explicitupper} and the first inequality in \eqref{eq:taubound} by basic properties of the floor function and
the well-known equality
\begin{equation}
 \sum_{i=1}^{n-1} \left\lfloor \frac{id}{n}\right\rfloor = \frac{(n-1)(d-1)}{2}\label{eq:floorsum}
\end{equation}
for any positive and co-prime integers $d,n$.  For the lower bound, we have
\begin{align*}
     \left\lfloor \frac{id_Q}{p} \right\rfloor - \left\lfloor \frac{id_Q}{p} - \left(1-\frac{1}{p}\right)\frac{jd_Q}{p}\right\rfloor
     &\ge
     \frac{id_Q-(p-1)}{p} - \left( \frac{id_Q}{p} - \left(1-\frac{1}{p}\right)\frac{jd_Q}{p}\right)\\
     &=\left(1-\frac{1}{p}\right)\left(\frac{jd_Q}{p}-1\right).
\end{align*}
Summing over $i$ with $j\le i \le p-1$ and then $Q$ and using \eqref{eq:explicitlower} gives the lower bound
\begin{equation}\label{eq:optimizeeq}
     \left(1-\frac{1}{p}\right)\sum_{Q\in S} \left(\frac{d_Q}{p}(p-j)j-(p-j)\right)
\end{equation}
which holds for all $1\le j\le p-1$; we then take $j = \frac{p+1}{2}$.  Remark~\ref{remark:choice} motivates this choice.    
\end{proof}

\begin{remark} \label{remark:choice}
 The choice of $j = \frac{p+1}{2}$ in the proof of Corollary~\ref{cor:explicit} is optimized for $\sum d_Q$ large relative to $\#S$.
 Indeed, \eqref{eq:optimizeeq} is a quadratic function in $j$ which attains its maximum when 
 \begin{equation}\label{jchoice}
     j\approx \frac{p}{2}\left(1-\frac{\#S}{\sum_{Q\in S} d_Q}\right).
 \end{equation}
 %When $j=-1$ it is zero (since the sum is empty) and when $j=p-1$ a calculation shows it is also zero (the calculation checks that the dimension of the skyscraper sheafs in Theorem~\ref{thm:technicalses} and the dimension of $H^0(X,\Gscr_p)$ are equal).  Thus the maximum is achieved when $j \approx p/2$.  
 Any nearby value of $j$ will give a similar bound.  
 %Examples like Example~\ref{ex:numerics} motivate the specific choice of $j$, as they show that for sufficiently large $d_Q$, a maximum is achieved at $j= \frac{p-3}{2}$.
 Supposing that $\sum d_Q $ is large relative to $\#S$ gives optimal choice $j= \lceil p/2\rceil = (p+1)/2$.  When all $d_Q$ are small and $p$ is large, one can get a better
 explicit lower bound by choosing a value of $j$ in accordance with \eqref{jchoice}; \emph{cf}. Example~\ref{ex:numerics}.
\end{remark}

\begin{remark}
 For fixed $X$, $p$ and $S$ with all $d_Q$ becoming large, the dominant terms of the lower and upper bounds in Corollary \ref{cor:explicit} are respectively
 \[
 \sum_{Q \in S} d_Q \frac{p}{4}  \quad \text{and} \quad  \sum_{Q \in S} d_Q \frac{p}{3}.
 \]
 On the other hand, the dominant term in the Riemann--Hurwitz formula for the genus of $Y$ is $\sum_{Q\in S} d_Qp/2$,
 so that, for large $d_Q$, the $a$-number is approximately between $1/2$ and $2/3$ of the genus of $Y$. %; {\em cf.} Remark \ref{rem:Igusalimit}.
\end{remark}

\begin{remark} \label{rmk:char2}
When $p=2$, the statement and proof of the Corollary do not work as written.  If we take $j=1$ in Theorem~\ref{thm:cleanbounds} we obtain
\[
\sum_{Q \in S} \left \lfloor \frac{d_Q}{2} \right \rfloor - \left \lfloor \frac{d_Q}{4} \right \rfloor \leq a_Y \leq 2 a_X + \sum_{Q \in S} \left \lfloor \frac{d_Q}{2} \right \rfloor - \left \lfloor \frac{d_Q}{4} \right \rfloor .
\]
In particular, when $X$ is ordinary (i.e. $a_X=0$) we obtain an exact formula for $a_Y$.  This recovers \cite[Theorem 2]{VolochChar2} (note that the formula there is for the rank of the Cartier operator, and that for $Q_i \in S$, our $d_{Q_i}$ is $2 n_{i}-1$).
\end{remark}

Similarly, Corollary~\ref{cor:anumformulaordinary} will give an exact formula for $a_Y$ when $p$ is odd, $X$ is ordinary, and $d_Q | (p-1)$.  To derive it, we will need to investigate situations when   
 it is possible to derive an exact formula for the quantity 
\begin{equation}
    \tau_p(d):=\sum_{i=0}^{p-1} \tau_p(d,i)
\end{equation}
occurring in the upper bound \eqref{eq:explicitupper}:

\begin{prop}\label{prop:taupformula}
    Let $p>2$ and suppose that $d\equiv d'\bmod p^2$.  Then 
    \begin{equation}
        \tau_p(d) = \tau_p(d') + (d-d')\frac{(p-1)(p-2)}{6p}.
    \end{equation}
    Moreover, $\tau_p(1)=0$ and for $1< d < p$ we have
    \begin{equation}
        \tau_p(d) = u_p(d)\cdot\frac{p}{d} + v_p(d)
    \end{equation}
    where $u_p(d)$ and $v_p(d)$ are the integers $($depending only on $p\bmod d$$)$ given by
     \begin{equation}
        u_p(d):= \sum_{j=0}^{b-1} \sum_{r\in T_j} \left((j+1)d-(b+1)r\right)\qquad
        v_p(d):= \sum_{j=0}^{b-1} \sum_{r\in T_j} \left( \frac{r}{d}-\left\{\frac{a(d-r)}{d}\right\}\right)
    \end{equation}
    %\begin{equation}
     %   u_p(d):= \sum_{j=0}^{b-1} \sum_{k\in S_j} \left((j+1)d-(b+1)k\right)\qquad
      %  v_p(d):= \sum_{j=0}^{b-1} \sum_{k\in S_j} \left( \frac{k}{d}-\left\{\frac{a(d-k)}{d}\right\}\right)
    %\end{equation}
    where $a:=p \bmod d$ and $b:=a^{-1}\bmod d$ with $0< a,b < d $ and
    \[
        T_j:=\{ r \in \ZZ\ :\ jd/b < r < (j+1)d/(b+1)\}.
    \]
    In particular, for $1< d < p$
    the quantity $\tau_p(d)$ depends only on $d$ and $p\bmod d$.
\end{prop}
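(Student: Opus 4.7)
The first assertion (periodicity modulo $p^2$) I would prove by a direct count. Replacing $d$ by $d+p^2$ increases $\lfloor id/p\rfloor$ by $ip$ while preserving the residues $-md \pmod p$ appearing in Definition~\ref{defn:T}. Hence $\tau_p(d+p^2,i) - \tau_p(d,i)$ counts integers in an interval of length $ip$ lying in one of $p-1-i$ fixed nonzero residue classes modulo $p$, which equals $i(p-1-i)$. Summing over $i$ gives $\sum_{i=0}^{p-1} i(p-1-i) = p(p-1)(p-2)/6 = p^2\cdot(p-1)(p-2)/(6p)$, and iterating handles arbitrary shifts by multiples of $p^2$. The base case $\tau_p(1)=0$ is immediate since $\lfloor i/p\rfloor=0$ for $0\le i\le p-1$.

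For the main formula with $1<d<p$, my approach is to reverse the order of summation. Since $\lfloor (p-1)d/p\rfloor = d-1$, the only integers $n$ that can contribute lie in $\{1,\ldots,d-1\}$, and for each such $n$ there is a unique $m(n)\in\{1,\ldots,p-1\}$ with $n\equiv -m(n)d\pmod p$. The admissible range of $i$ is $\lceil np/d\rceil \le i \le p-1-m(n)$, so
\[
\tau_p(d) = \sum_{n=1}^{d-1} \max\!\left(0,\,p - m(n) - \lceil np/d\rceil\right).
\]
Writing $p = qd + a$ with $0<a<d$, I parametrise by $j\in\{1,\ldots,d-1\}$ via $jp = m(n)d + n$, equivalently $j\equiv nb\pmod d$. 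Using $m(n) = jq + \lfloor ja/d\rfloor$, $\lceil np/d\rceil = nq + \lfloor na/d\rfloor + 1$, $q = (p-a)/d$, and the elementary identities $\{aj/d\}=n/d$ and $\{an/d\}=1-\{a(d-n)/d\}$, a direct computation should produce the key identity
\[
p - m(n) - \lceil np/d\rceil \;=\; \frac{(d-j-n)\,p}{d} + \frac{n}{d} - \left\{\frac{a(d-n)}{d}\right\}.
\]

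Next I would set $j':=\lfloor nb/d\rfloor$, so that $j = nb - j'd$ and the coefficient of $p/d$ becomes $(j'+1)d - (b+1)n$. Since $p/d>1$ while the remaining two terms lie in $(-1,1)$, the right-hand side of the identity is strictly positive precisely when $(j'+1)d - (b+1)n > 0$, vanishes on equality, and is negative otherwise. Combined with the automatic strict inequality $n > j'd/b$ (which holds because $\gcd(b,d)=1$ implies $d\nmid nb$), positivity becomes exactly the condition $n\in T_{j'}$ with $r:=n$. As $r$ ranges over $\{1,\ldots,d-1\}$, $j'=\lfloor rb/d\rfloor$ ranges over $\{0,\ldots,b-1\}$, and grouping the positive contributions by $j'$ should yield
\[
\tau_p(d) = \sum_{j'=0}^{b-1}\sum_{r\in T_{j'}} \!\left(\bigl((j'+1)d - (b+1)r\bigr)\frac{p}{d} + \frac{r}{d} - \left\{\frac{a(d-r)}{d}\right\}\right) = u_p(d)\,\frac{p}{d} + v_p(d).
\]
The main technical obstacle will be deriving the closed-form identity above; once it is in hand, the matching of the positivity condition with membership in $T_{j'}$ is a routine manipulation, and the dependence of $u_p(d)$ and $v_p(d)$ only on $d$ and $p\bmod d$ is immediate from their definitions in terms of $a$ and $b$.
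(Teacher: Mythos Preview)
Your proposal is correct and follows essentially the same route as the paper's proof: reverse the order of summation to rewrite $\tau_p(d)=\sum_{n=1}^{d-1}\max(0,\,p-m(n)-\lceil np/d\rceil)$, express $m(n)$ via the auxiliary integer $j$ with $jp=m(n)d+n$, and manipulate using $p=qd+a$ to reach the closed form $((j'+1)d-(b+1)n)\,p/d + n/d - \{a(d-n)/d\}$; the positivity analysis and grouping by $j'=\lfloor nb/d\rfloor$ then match the paper's Definition of $T_{j'}$ exactly. One small point worth making explicit when you write it up: your claim that the expression ``vanishes on equality'' when $(j'+1)d=(b+1)n$ is most cleanly justified by observing that $p-m(n)-\lceil np/d\rceil$ is an \emph{integer}, so once the coefficient of $p/d$ vanishes the residual term in $(-1,1)$ must be $0$.
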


Before proving Proposition \ref{prop:taupformula}, let us give some indication of its utility.
For example, if $p\equiv 1\bmod d$, we have $a=b=1$, whence 
\begin{equation*}
    u_p(d) = \sum_{r=1}^{\lfloor d/2\rfloor} (d - 2r) = \left\lfloor\frac{d}{2}\right\rfloor \cdot \left\lfloor\frac{d-1}{2}\right\rfloor =\left\lfloor\frac{(d-1)^2}{4}\right\rfloor 
\end{equation*}
and
\begin{equation*}
    v_p(d) = \sum_{r=1}^{\lfloor d/2\rfloor}\frac{r}{d}-\frac{d-r}{d} =  \sum_{r=1}^{\lfloor d/2\rfloor}\frac{2r-d}{d}=-\left\lfloor\frac{(d-1)^2}{4}\right\rfloor. 
\end{equation*}
Thus, 
\begin{equation} \label{eq:tauformula}
    \tau_p(d) = \left\lfloor\frac{(d-1)^2}{4}\right\rfloor \cdot \frac{p-1}{d}
\end{equation}
whenever $d<p$ and $p\equiv 1\bmod d$.  Another simple example is when $p\equiv -1\bmod d$, so that $a=b=d-1$.  In this case,
$T_j:=\{r\in \ZZ\ :\ jd/(d-1) < r < (j+1)\}$ is the empty set for all $j$, whence $\tau_p(d)=0$. 

\begin{proof}
    The first assertion follows easily from the fact that, as observed immediately prior to \eqref{eq:taupbound},
    for any $0 <  m \le p-1-i$ and any interval of length $p$, there is a unique $n$ in that interval such that 
\[
 m d \equiv -n \pmod{p}
\]
and  necessarily $n \not \equiv 0 \pmod{p}$.  So suppose that $d < p$, and for integers $m,i$, define 
\[
\chi(m,i):=\begin{cases}
    1 & \text{if}\ m \le i\\
    0 & \text{otherwise}
\end{cases}.
\]
We will use the convention that ``$x\bmod y$'' denotes the unique integer $x_0$ with $0\le x_0 < y$ and $x\equiv x_0\bmod y$, and we set 
$a:=p\bmod d$ and $b:=a^{-1}\bmod d$.
By definition, we have
\begin{equation}
    \tau_p(d) = \sum_{i=0}^{p-1} \sum_{\substack{0<j\le n_i(d) \\ j\not\equiv 0 \bmod p}} \chi(-jd^{-1}\bmod p, p-1-i),\label{eq:taupdef}
\end{equation}
where $n_i(d):=\lfloor id/p\rfloor$.  For an integer $r$, observe that $r \le  n_i(d) < d$ if and only if $ i> rp/d$,
and that
$-jd^{-1} \equiv j(bp-1)/d\bmod p$ (note that $d|(bp-1)$).
Taking this into account, we swap the order of summation in \eqref{eq:taupdef}, collecting all terms with $j=r$ together to obtain %in the inner sum to obtain 
\begin{equation}
         \sum_{r=1}^{d-1} \sum_{i=\lceil rp/d\rceil}^{p-1} \chi(r\frac{bp-1}{d}\bmod p, p-1-i).\label{eq:taupreorder}
\end{equation}
%interchanging the order of summation, this equals
%\begin{equation}
%         \sum_{k=0}^{d-1} \sum_{i+1=\lceil kp/d\rceil}^{\lfloor(k+1)p/d\rfloor}  \sum_{\substack{0<j\le d-k \\ j\not\equiv 1\bmod p}} \chi((j-1)\frac{bp-1}{d}\bmod p, i),\label{eq:taupdef}
%\end{equation}
Since $r<d < p$ and $rb\equiv rp^{-1}\not\equiv 0\bmod d$, we have 
\begin{equation*}
    r\frac{bp-1}{d} \bmod p = \frac{(rb\bmod d)\cdot p - r}{d}
\end{equation*}
and moreover $rb\bmod d = rb - \ell d$ for $\ell d/b \le r < (\ell+1)d/b$.
  Breaking the sum over $r$ up into these regions converts \eqref{eq:taupreorder} into
\begin{equation}
         \sum_{\ell=0}^{b-1}\sum_{\substack{ \ell d/b < r \\ r < (\ell+1)d/b}} \sum_{i=\lceil rp/d\rceil}^{ p-1} \chi(\frac{(rb-\ell d)p-r}{d}, p-1-i).\label{eq:taupreorder2}
\end{equation}
Indeed, the quantity $\ell d/b$
is either $0$ or 
{\em non-integral} since $\gcd(b,d)=1$
%(recall $ab\equiv 1\bmod d$)
and $0\le \ell < b$; thus $\ell d/b \le r$ is equivalent to $\ell d/b < r$
since $r$ ranges over all positive integers less than $d$. 
The innermost sum in \eqref{eq:taupreorder2} has value the cardinality of the subset of positive integers
\[
    T(r,\ell):=\left\{i \in \NN\ :\ \frac{(rb-\ell d)p-r}{d}\le p-1-i \le p-1-\left\lceil\frac{rp}{d}\right\rceil = \left\lfloor\frac{(d-r)p}{d}\right\rfloor-1\right\}.
\]
Now $T(r,\ell)$ is empty unless 
\begin{equation}
    \frac{(d-r)p}{d} - 1 > \frac{(rb-\ell d)p - r}{d},\label{eq:kineq}
\end{equation}
in which case 
\begin{align}
    \# T(r,\ell)=\left\lfloor\frac{(d-r)p}{d}\right\rfloor - \frac{(rb-\ell d)p-r}{d}& = \frac{(d-r)(p-a)}{d}+ \left\lfloor \frac{a(d-r)}{d}\right\rfloor - \frac{(rb-\ell d)p-r}{d}\nonumber \\
    & = \left((\ell+1)d-(b+1)r\right)\frac{p}{d} + \left( \frac{r}{d}-\left\{\frac{a(d-r)}{d}\right\}\right).\label{eq:taupsummand}
\end{align}
The inequality \eqref{eq:kineq} is equivalent to $((\ell+1)d - (b+1)r)p > (d-r)$ which, as $p> d > d-r$ is equivalent to $(\ell+1)d > (b+1)r$,
or what is the same thing, $r < (\ell+1)d/(b+1)$.
In other words, the contribution from the innermost sum \eqref{eq:taupreorder} for $r$ in the ranges $(\ell+1)d/(b+1) < r < (\ell+1)d/b$ is zero
so that these values of $r$ in the middle sum may be omitted, and the expression \eqref{eq:taupsummand} then substituted for the innermost sum,
which yields the claimed formula for $\tau_p(d)$ when $d<p$.  
\end{proof}

Using the bounds of Theorem \ref{thm:cleanbounds}, we are able to generalize the main result of \cite{fp13}, which provides an $a$-number {\em formula}
for branched $\ZZ/p\ZZ$-covers of $X=\PP^1$ with all ramification breaks $d$ dividing $p-1$, to the case of
{\em arbitrary} ordinary base curves $X$:

\begin{cor}\label{cor:anumformulaordinary}
    Let $\pi:Y\rightarrow X$ be a branched $\ZZ/p\ZZ$-cover with $a_X=0$, and suppose $p$ is odd.  If $d_Q$ divides $p-1$ for every branch point $Q$, then
    \begin{equation}
        a_Y = \sum_{Q} a_Q\qquad\text{where}\qquad a_Q:=\frac{(p-1)}{2}(d_Q-1) - \frac{p-1}{d_Q}\left\lfloor \frac{(d_Q-1)^2}{4}\right\rfloor .\label{eq:aYformula}
    \end{equation}
\end{cor}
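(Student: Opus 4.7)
The plan is to apply Theorem \ref{thm:cleanbounds}, whose upper and lower bounds I will show both collapse to $\sum_Q a_Q$ under the present hypotheses, thereby pinning down $a_Y$ exactly.

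For the upper bound \eqref{eq:explicitupper}, the hypothesis $a_X = 0$ kills the first term. Since $d_Q \mid (p-1)$ forces $d_Q < p$, one has $id_Q < p^2$ for all $1 \le i \le p-1$, so each $\lfloor id_Q/p^2 \rfloor$ vanishes. The classical identity \eqref{eq:floorsum} yields $\sum_{i=1}^{p-1}\lfloor id_Q/p\rfloor = (p-1)(d_Q-1)/2$, and because $d_Q \mid (p-1)$ means $p \equiv 1 \pmod{d_Q}$, the special case \eqref{eq:tauformula} of Proposition \ref{prop:taupformula} gives $\sum_{i=1}^{p-1}\tau_p(d_Q,i) = \lfloor (d_Q-1)^2/4\rfloor\cdot(p-1)/d_Q$. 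Assembling these identities produces exactly the upper bound $\sum_Q a_Q$.

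For the lower bound \eqref{eq:explicitlower}, take $j = (p+1)/2$ as suggested by Remark \ref{remark:choice}, and set $c_Q := (1-1/p)(p+1)d_Q/(2p) = d_Q/2 - d_Q/(2p^2)$. Letting $\{\cdot\}$ denote fractional part, the elementary identity
\[
\lfloor x\rfloor - \lfloor x - c_Q\rfloor = \lfloor c_Q\rfloor + \epsilon(x), \qquad \epsilon(x) = \begin{cases} 1 & \text{if } \{x\} < \{c_Q\}, \\ 0 & \text{otherwise}, \end{cases}
\]
reduces the lower-bound summand to $\lfloor c_Q\rfloor$ plus an indicator. A short parity check gives $\lfloor c_Q\rfloor = \lfloor(d_Q-1)/2\rfloor$ in both parities of $d_Q$, and since $d_Q/(2p^2) < 1/(2p)$, the condition $\{id_Q/p\} < \{c_Q\}$ for $i \in [(p+1)/2, p-1]$ is automatic when $d_Q$ is even and is equivalent to $id_Q \bmod p \le (p-1)/2$ when $d_Q$ is odd.

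The main calculational obstacle is the count $N_Q := \#\{i \in [(p+1)/2, p-1] : id_Q \bmod p \le (p-1)/2\}$ in the odd-$d_Q$ case. Write $p - 1 = e d_Q$; then $e$ is even (since $p-1$ is even and $d_Q$ is odd), and each $i \in \{1,\ldots,p-1\}$ corresponds uniquely to a pair $(k,t)$ with $0 \le k \le d_Q - 1$ and $1 \le t \le e$ via $i = ke + t$ and $id_Q \bmod p = t d_Q - k$ (using $p = ed_Q + 1$). Both defining inequalities of $N_Q$ become linear in $(k,t)$; casework on $t \le e/2$ versus $t > e/2$ shows that only $t \in \{1,\ldots,e/2\}$ contribute, each admitting exactly $(d_Q-1)/2$ valid values of $k$, so $N_Q = e(d_Q-1)/4 = (p-1)(d_Q-1)/(4d_Q)$. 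The even case is trivial: $N_Q = (p-1)/2$. Summing $(p-1)/2 \cdot \lfloor c_Q\rfloor + N_Q$ produces $(p-1)d_Q/4$ when $d_Q$ is even and $(p-1)(d_Q^2-1)/(4d_Q)$ when $d_Q$ is odd; elementary algebra confirms both equal $a_Q$, closing the sandwich $\sum_Q a_Q \le a_Y \le \sum_Q a_Q$.
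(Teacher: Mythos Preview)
Your proof is correct and follows the same overall strategy as the paper: compute the upper bound \eqref{eq:explicitupper} and the lower bound \eqref{eq:explicitlower} with $j=(p+1)/2$ from Theorem~\ref{thm:cleanbounds} and show they coincide. Your treatment of the upper bound is identical to the paper's, invoking \eqref{eq:floorsum} and \eqref{eq:tauformula}.

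Where you diverge is in the arithmetic of the lower bound. The paper writes $p-1=f_Q d_Q$ and simplifies each floor separately via $\lfloor id_Q/p\rfloor = \lfloor i/f_Q\rfloor - [f_Q\mid i]$ and $\lfloor id_Q/p - c_Q\rfloor = \lfloor (i-j)/f_Q\rfloor$, reducing the inner sum to a shifted difference of $\sum \lfloor i/f_Q\rfloor$'s minus a divisibility count. You instead apply the generic identity $\lfloor x\rfloor - \lfloor x-c_Q\rfloor = \lfloor c_Q\rfloor + [\{x\}<\{c_Q\}]$, isolate the constant part $\lfloor c_Q\rfloor = \lfloor (d_Q-1)/2\rfloor$, and in the odd case count the indicator via the bijection $i\leftrightarrow (k,t)$ with $i=ke+t$ and $id_Q\bmod p = td_Q-k$. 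Both routes are elementary and of comparable length; the paper's has the virtue of keeping everything in terms of $\lfloor\,\cdot\,/f_Q\rfloor$, while yours isolates a single nontrivial count $N_Q$ and dispatches it cleanly with a lattice-point argument.
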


\begin{proof}
 We will compute the upper and lower bounds for $a_Y$ given by \eqref{eq:explicitupper} and \eqref{eq:explicitlower}, and show that these bounds coincide
 when $d_Q|(p-1)$ for all $Q$ and $a_X=0$, and agree with the stated formula.  Using the hypothesis $a_X=0$ together with the explicit formula for $\tau_p(d)$ when $d\equiv 1\bmod p$
 provided by \eqref{eq:tauformula}, the upper bound \eqref{eq:explicitupper}
  becomes
 \begin{equation}
     a_Y \leq    \sum_{Q \in S}  \sum_{i=1}^{p-1}\left( \left \lfloor \frac{i d_Q}{p} \right \rfloor - \left \lfloor \frac{i d_Q}{p^2} \right \rfloor \right)  -\sum_{Q\in S} \frac{p-1}{d_Q}\left\lfloor \frac{(d_Q-1)^2}{4}\right\rfloor .
     \label{eq:aYupper}
 \end{equation}
 Now $d_Q < p$ for all $Q$, so $\left \lfloor \frac{i d_Q}{p^2} \right \rfloor = 0$ for $1\le i \le p-1$.  Using \eqref{eq:floorsum}, we conclude that
 \begin{equation}\label{eq:aYfirstsum}
      \sum_{Q \in S}  \sum_{i=1}^{p-1}\left( \left \lfloor \frac{i d_Q}{p} \right \rfloor - \left \lfloor \frac{i d_Q}{p^2} \right \rfloor \right)  = \frac{(p-1)}{2}(d_Q-1)   .
 \end{equation}
Combining \eqref{eq:aYupper} and \eqref{eq:aYfirstsum} gives the formula \eqref{eq:aYformula} as the upper bound of $a_Y$.
 It remains to prove that $a_Y$ is also bounded below by the same quantity.
 
 Set $j:=(p+1)/2$, and write $d_Q \cdot f_Q = p-1$.  For $i < p$, we see that
 \[
 \left \lfloor \frac{i d_Q}{p} \right \rfloor = \begin{cases}
 \lfloor i /f_Q \rfloor  & \text{ when } f_Q \nmid  i \\
 \lfloor i /f_Q \rfloor  - 1 & \text{ when } f_Q | i.
 \end{cases}
 \]
 Likewise we can check that
 \[
 \left\lfloor \frac{id_Q}{p} - \left(1-\frac{1}{p}\right)\frac{jd_Q}{p}\right\rfloor = \left\lfloor \frac{i-j}{f_Q} \right \rfloor.
 \]
 The inner sum from  \eqref{eq:explicitlower} becomes
 \[
 \sum_{i = (p+1)/2}^{p-1} \left \lfloor \frac{i}{f_Q} \right \rfloor  - \sum_{i=0}^{(p-3)/2} \left \lfloor \frac{i}{f_Q} \right \rfloor  - \# \bigg \{ (p+1)/2 \leq i \leq p-1 : f_Q |i \bigg \}.
 \]
The term in the first sum can be rewritten as $\lfloor d_Q/2 + (i - (p-1)/2) \rfloor$.  When $d_Q$ is even, $d_Q/2$ can be removed from the floor function, allowing cancellation with the second sum giving that the inner sum from  \eqref{eq:explicitlower} is
\[
 \frac{(p-1) d_Q}{4} .
\]
This equals $a_Q$ from \eqref{eq:aYformula} when $d_Q$ is even.  When $d_Q$ is odd, a similar argument removing $\lfloor d_Q/2 \rfloor = \frac{d_Q-1}{2}$ from the first sum shows that the lower bound is 
\[
\lfloor d_Q/2 \rfloor \frac{p-1}{2} + \frac{f_Q \cdot \lfloor d_Q/2 \rfloor}{2} - f_Q /2 =
f_Q \cdot \frac{(d_Q+1)(d_Q-1)}{4}.
\]
Again, this matches the formula for $a_Q$.  Summing over $Q$ completes the proof.
\end{proof}

\begin{remark}
    Note that the proof shows that one has the alternative expression
     \[
     a_Q = \begin{cases}
         \frac{(p-1)d_Q}{4} & d_Q \text{ is even} \\
         \frac{(p-1) (d_Q +1)(d_Q-1)}{4 d_Q} & d_Q \text{ is odd.}
       \end{cases},
   \]
   for $a_Q$ as in \eqref{eq:aYformula}; {\em cf.} \cite[Theorem 1.1]{fp13}.
   
\end{remark}

\subsection{Unramified Covers} \label{sec:unramified}

Now suppose that $\pi : Y \to X$ is an unramified degree-$p$ Artin-Schreier cover.  (No such covers exist when $X = \PP^1$.)  Theorem~\ref{thm:cleanbounds} applies with $S = \emptyset$, but the analysis is not optimized for this situation.  In particular, the argument uses the dimension of $H^0(X, \ker V_X)$ (the $a$-number) but no extra information about the dimension of $H^0(X, \ker V_X(F_* D))$ for other divisors $D$.  This is not an issue when $\pi$ has a substantial amount of ramification, but would be essential to obtaining sharper bounds for unramified covers.  We now illustrate this.

In this setting, the bounds of Theorem~\ref{thm:cleanbounds} become
\[
0 \leq a_Y \leq p \cdot a_X.
\]
On the other hand, since $E_0 = 0$ the trivial bounds are
\[
a_X \leq a_Y \leq p \cdot (g_X -  f_X) .
\]
Note that $p \cdot (g_X -f_X ) \geq p \cdot a_X$, so our upper bound is an improvement, but the trivial lower bound is better!

Any of these bounds are enough to re-derive the following fact (which is typically deduced from the Deuring-Shafarevich formula):

\begin{cor}
Let $\pi : Y \to X$ be an unramified degree-$p$ Artin-Schreier cover.  Then $Y$ is ordinary if and only if $X$ is ordinary.
\end{cor}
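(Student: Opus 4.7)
The plan is to deduce the equivalence immediately from combining the two complementary bounds mentioned just before the statement. Specifically, for an unramified cover the branch locus is empty, so $S=\emptyset$ and every $d_Q$-sum in Theorem~\ref{thm:cleanbounds} collapses, yielding the upper bound $a_Y\le p\cdot a_X$. On the other hand, the ``trivial'' lower bound of Remark~\ref{rmk:trivialbounds} specializes (again since $E_0=0$ and $S=\emptyset$) to the inclusion $\Omega^1_X\hookrightarrow \pi_*\Omega^1_Y$ compatible with the Cartier operator, which gives $a_X\le a_Y$.

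First I would record these two specializations explicitly. For the upper bound, I would note that with $S=\emptyset$, every sum $\sum_{Q\in S}(\cdots)$ in \eqref{eq:explicitupper} is empty, so Theorem~\ref{thm:cleanbounds} directly gives $a_Y\le p\cdot a_X$. For the lower bound, I would observe that $\Omega^1_X(E_0)=\Omega^1_X$ since $E_0=0$, so the inclusion $\ker V_X\hookrightarrow\pi_*\ker V_Y$ from Remark~\ref{rmk:trivialbounds} yields $a_X\le a_Y$ upon passing to global sections.

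Combining these two inequalities produces the chain
\begin{equation*}
a_X \le a_Y \le p\cdot a_X.
\end{equation*}
If $X$ is ordinary, then $a_X=0$, and the upper bound forces $a_Y=0$, so $Y$ is ordinary. Conversely, if $Y$ is ordinary, then $a_Y=0$, and the lower bound forces $a_X=0$, so $X$ is ordinary. There is no serious obstacle here; the work is entirely in setting up the bounds (which is done), and the deduction is a one-line sandwich argument.
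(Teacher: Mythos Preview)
Your proposal is correct and is exactly the approach the paper takes. The paper does not spell out a formal proof, but the surrounding discussion in \S\ref{sec:unramified} records precisely the two specializations you use (the upper bound $a_Y\le p\cdot a_X$ from Theorem~\ref{thm:cleanbounds} with $S=\emptyset$, and the trivial lower bound $a_X\le a_Y$ since $E_0=0$) and then states that ``any of these bounds are enough to re-derive'' the corollary; your sandwich argument is the intended one-line deduction.
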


The trivial lower bound is better because we lost some information in our applications of Tango's theorem.  In particular, in the proof of Theorem~\ref{thm:cleanbounds} the first step is to add more points to the auxiliary divisors $D_i$ (if needed) so that $\deg(D_i) > \n(X)$ for all $i$ and we can apply Corollary~\ref{cor:tango} to exactly calculate 
\[
\dim_k H^0(X,\ker V_X(F_* (E_i + p D_i))) = \dim_k H^0 (X,\ker V_X(F_*(p D_i)) = (p-1) \deg(D_i).
\]
The trivial bounds comes from the inclusion $\ker V_X \into \pi_* \ker V_Y$ given by $\omega \mapsto \omega y^0$, which would correspond to taking $j=0$ in Definition~\ref{defn:bounds} and being allowed to take $D_0 = 0$.  If that were the case, $S$ is empty and the lower bound is 
\[
\dim U_0 - \sum_{Q \in S_0} c(0,0,Q) = \dim H^0(X,\ker V_X( F_* 0 )) = a_X
\]
which matches the trivial lower bound.  
However, our analysis uses Corollary~\ref{cor:tango}, which requires a sufficiently large $D_0$. In that case,
\[
\dim U_0 - \sum_{Q \in S_0} c(0,0,Q) = \dim H^0(X,\ker V_X(F_* D_0)) = (p-1) \deg(D_0) - \sum_{Q \in D_0} (p-1) = 0
\]
as $c(0,0,Q) = (p-1)$ for $Q \in D_0$.  Thus the lower bound of Theorem~\ref{thm:cleanbounds} is zero.  
(In Theorem~\ref{thm:cleanbounds}, we re-indexed so there we are taking $j= p-1$.)

The technical requirements of using sufficiently large divisors $D_i$ in order to apply Tango's theorem are not an issue when $\pi$ is ``highly ramified'' in the sense that we do not need to increase the degree of the $D_i$ in order to apply Corollary~\ref{cor:tango} to compute the dimension of $H^0(X,\ker V_X(F_*(E_i + p D_i)))$.  
Once we have reached that point, increasing $D_i$ further does not change the bound: the dimension of this space increases, but the increase is canceled by additional $c(i,j,Q)$ terms for $Q \in D_i$.  This is why the divisor $D_i$ makes no appearance in Theorem~\ref{thm:cleanbounds}.

\section{Examples}\label{sec:examples}
As always, let $\pi : Y \to X$ be a degree-$p$ Artin-Schreier cover of curves.  We give some examples of the bounds given by Theorem~\ref{thm:cleanbounds} and  the trivial bounds 
\[
 \dim_k H^0(X,\ker V_X(F_* E_0 )) \leq a_Y \leq p \cdot g_X - p \cdot f_X + \sum_{Q \in S} \frac{p-1}{2} (d_Q-1)
\]
discussed in Remark~\ref{rmk:trivialbounds}.
When $p=3$, one checks that the lower bound $L(X,\pi)$ {\em coincides} with the trivial lower bound.  Outside of this special case, the bounds in Theorem~\ref{thm:cleanbounds} are {\em always} better
than the trivial bounds, and often sharp in the sense that there are degree-$p$ Artin--Schreier covers $\pi:Y\rightarrow X$ with $a_Y$ realizing our bounds in many cases.
We will give a number of examples illustrating these features.   Magma programs which do the calculations in the following examples are available on the authors' websites. 

\subsection{The Projective Line}
Suppose $X = \PP^1$. Then $\n(X) = -1$, $g_X = 0$, $a_X=0$, and $f_X=0$.
Remark~\ref{remark:p1} shows that we may choose $f\in k(X)$
so that the extension of function fields $k(Y)/k(X)$
is given by adjoining the roots of $y^p-y=f$ with $f\in k(X)$
regular outside the branch locus $S$ of $\pi$, and we may further assume
that $f$ has a pole of order $d_Q$ at each $Q\in S$,
where $d_Q$ is the unique break in the lower-numbering ramification filtration above $Q$.

For $Q\in S$ put $n_{Q,i} = \left \lceil (p-1-i) d_Q/p \right \rceil$; note that $n_{Q,0} = d_Q - \left \lfloor d_Q/p \right \rfloor$.  The trivial bounds are
\[
 \sum_{Q \in S}\left( n_{Q,0} - \left\lceil\frac{n_{Q,0}}{p}\right\rceil\right)  \leq a_Y \leq  \sum_{Q \in S} \frac{p-1}{2} (d_Q -1).
\]

\begin{example} \label{ex:numerics}
 Let $p=13$, and suppose $f$ has a single pole.  Table~\ref{tab:p13} shows the trivial upper and lower bounds, as well as the bounds from Theorem~\ref{thm:cleanbounds} for various values of $d_Q$.  
 When $d_Q >4$, an optimum value of $j$ to use in the lower bound turns out to be $\frac{p+1}{2} = 7$.
\begin{table}[t]
\begin{tabular}{c|lllllllllllllllll}
$d_Q=$ & $1$ & $2$ & $3$ & $4$ & $5$ & $6$ & $7$ &
$8$ & $9$ & $10$ & $11$ & $12$ & $14$ & $15$
& $32$ & $128$ & $1024$ \\
\hline
Trivial Lower &
$0$ & $1$ & $2$ & $3$ & $4$ & $5$ & $6$ &
$7$ & $8$ & $9$ & $10$ & $11$ & $12$ & $12$
& $27$ & $109$ & $873$  \\
$L(\PP^1,\pi)$ &
$0$ & $6$ & $8$ & $12$ & $15$ & $18$ & $21$
& $24$ & $26$ & $30$ & $33$ & $36$ & $42$ &
$45$ & $96$ & $382$ & $3054$ \\
$U(\PP^1,\pi)$&
$0$ & $6$ & $8$ & $12$ & $16$ & $18$ & $36$
& $30$ & $34$ & $36$ & $38$ & $36$ & $78$ &
$60$ & $120$ & $488$ & $3936$ \\
Trivial Upper &
$0$ & $6$ & $12$ & $18$ & $24$ & $30$ & $36$
& $42$ & $48$ & $54$ & $60$ & $66$ & $78$ &
$84$ & $186$ & $762$ & $6138$ \\
\end{tabular}
\caption{Bounds for a single pole with $p=13$}
\label{tab:p13}
\end{table}
Notice that our bounds are substantially better than the trivial bounds.
\end{example}

\begin{example} \label{ex:selectpolys}
 Using Magma \cite{magma} or a MAPLE program from Shawn Farnell's thesis \cite{farnell10}, we can compute the $a$-number for covers of $\PP^1$.  For example, let $p=13$ and suppose $f$ has a single pole of order $7$.  Our results show that the $a$-number of the cover is between $21$ and $36$.  Table~\ref{tab:polynomials} lists the $a$-numbers for some choices of $f$, and shows that our bounds are sharp in this instance.
  \begin{table}[t] 
 \begin{tabular}{c|c}
  Polynomial & $a_Y$ \\
  \hline
  $t^{-7} + 2 t^{-6} + 7 t^{-5}$ & $21$ \\
  $t^{-7} + t^{-2} + t^{-1}$ & $23$ \\
  $t^{-7} + 8 t^{-2}$ & $24$\\
  $t^{-7}+t^{-1}$ & $27$ \\
  $t^{-7}$ & $36$
 \end{tabular}
\caption{a-Numbers for some select Artin-Schreier curves}  
\label{tab:polynomials}
 \end{table}
\end{example}

\begin{example} 
Let us generalize Example~\ref{ex:keyexample} and Example~\ref{ex:followup}.  
As before, let $p=5$ and $X=\PP^1$,
but now consider covers $Y_{A,B}$ of $X$ given by $y^5 - y =  f_{A,B} = t^{-3} + A t^{-2} + B t^{-1}$.  Then $Y_{A,B}\rightarrow X$
is branched only over $Q:=0$ and has $d_Q=3$, so our bounds
are $3 \leq a_{Y_{A,B}} \leq 4$.  

Recall the maps $g'_i$ depend on the cover via the choice $f_{A,B}$.  It is clear that the elements, $\nu_{0,2}, \nu_{0,3}$, and $\nu_{1,2}$ lie in $H^0(\PP^1, \Gscr_1)$.  In fact, so does $\nu_{2,2}$
as $g'_2(\nu_{2,2})=0$: indeed, recalling 
 \S\ref{sec:lcrp}, we see that $g'_2(\nu_{2,2})$ records the coefficient of $t^{-6} dt$ in
\[
- 2 \cdot (t^{-2} dt) \cdot -( t^{-3} + A t^{-2} + B t^{-1})
\]
which is visibly zero.

To compute the $a$-number (equal to $\dim H^0(\PP^1,\Gscr_0)$ in this case), it therefore suffices to understand $\ker H^0(g'_1) = H^0(\PP^1,\Gscr_0)$.  It is clear that $\nu_{0,2}$ and $\nu_{0,3}$ lie in $H^0(\PP^1,\Gscr_0)$.  An identical calculation to the one above shows that $g'_1(\nu_{1,2})=0$ as well.  Finally, we see that $g'_1(\nu_{2,2})$ records the coefficient of $t^{-6} dt$ in
\[
 - t^{-2} (t^{-3} +A t^{-2} + B t^{-1} )^2 dt = - (t^{-8}+2 A t^{-7} + (A^2 + 2 B) t^{-6} + \ldots ) dt.
\]
Thus $g'_1(\nu_{2,2})=0$ when $A^2 + 2 B =0$, and is non-zero otherwise.  In particular, $a_{Y_{A,B}} =3$ when $A^2 + 2B \neq 0$ and $a_{Y_{A,B}}=4$ when $A^2 + 2B =0$.  
This generalizes Example~\ref{ex:followup}, again showing 
why the $a$-number of the cover cannot depend only on the $d_Q$ and must incorporate finer information.

The set $T$ in Proposition~\ref{prop:t} is an attempt to produce differentials not in the kernel of some $g'_i$.  It only uses the leading terms of powers of $f_{A,B}$ (since those are the only terms guaranteed to be nonzero).  For the cover $Y_{0,0}$, the leading term is the only term, and there are no differentials not in the kernel of some $g'_i$.  This is reflected in the fact that $T$ is empty in this case.  When $A^2 + 2B \neq 0$, the $a$-number was smaller because there was a differential not in $\ker H^0(g'_1)$; this relied on the non-leading terms of $f_{A,B}$.
\end{example}

\begin{example}
Let us now consider an example with multiple poles.  Let $p=5$, and suppose that $f$ has two poles of order $7$ (at $\infty$ and $-1$).  Then our bounds say that $14 \leq a_Y \leq 16$.  Computing the $a$-number for a thousand random choices of $f$ defined over $\FF_5$ subject to the constraint on the poles, $942$ of them had $a$-number $14$, $41$ had $a$-number $15$, and $1$ had $a$-number $16$.  The $f$ giving $a$-number $16$ was 
\[
 {t}^{7}+3{t}^{6}+{t}^{4}+{t}^{3}+2{t}^{2}+t+3 \left( t+1
 \right) ^{-1}+3 \left( t+1 \right) ^{-2}+4 \left( t+1 \right) ^{-
3}+4 \left( t+1 \right) ^{-4}+4 \left( t+1 \right) ^{-6}+3
 \left( t+1 \right) ^{-7}.
\]
It would be interesting to understand the geometry of the relevant moduli space of Artin-Schreier covers with prescribed ramification.  In this situation, these numerical examples seem to suggest that the locus of curves with $a$-numbers greater than or equal to $15$ (respectively $16$) is of codimension two in the locus with $a$-number greater than or equal to $14$ (respectively $15$).
%It would be interesting to investigate the distribution of $a$-numbers for covers with constrained ramification.
\end{example}

\begin{example} \label{ex:monomial} 
 For general $p$, take $f(t) = t^{-d}$ with $p \nmid d$.  We will show that this family achieves our upper bound.  As discussed in \cite[Remark 2.1]{fp13} (which extracts the results from \cite{pries05}), the resulting $a$-number is
 \[
  \sum_{b=0}^{d-2} \min\left(h_b,\left \lfloor \frac{p d - bp-p-1}{d} \right \rfloor\right)
 \]
where $h_b$ is the unique integer in $[0,p-1]$ such  that $h_b \equiv (-1 - b) d^{-1} \pmod{p}$.  Note that if $b \equiv -1 \pmod{p}$ then $h_b=0$.  This counts the number of elements in the set
\[
 T': = \left \{ (b,j) : 0 \leq b \leq d-2,\,\, 0 \leq j d \leq p( d-b-1) -d-1 ,\, \, j < h_b \right\}.
\]
On the other hand, our upper bound is $\displaystyle \bigoplus_{i=0}^{p-1} \dim_{\FF_p} H^0(\PP^1,\ker V_{\PP^1}(F_* E_i)) - \# T$.  For $0 \leq i \leq p-1$, the differentials $t^{-n} dt$ with  $0 < n \leq n_{Q,i} = \left \lceil (p-1-i) d/p \right \rceil$
and $n \not \equiv 1 \pmod{p}$ form a basis for $H^0(\PP^1,\ker V_{\PP^1}(F_* E_i))$.   The condition $n \leq n_{Q,i}$ can be expressed as $ p n \leq (p-1-i) d+(p-1)$, or equivalently
\[
  i \cdot d \leq (p-1) d + (p-1) - pn = p(d-n + 1) -1  -d.
\]
Assigning to each such basis element the triple $(Q,n,i)$, 
note that $(Q,n,j)$ {\em does not} lie in $T$ if there does not exist an integer $m \in [0,j]$ such that $m \equiv j - (n-1)d_Q^{-1} \pmod{p}$: this can be rephrased as $j <h_{n-2}$.  Thus our upper bound 
is the size of the set
\[
 T'': = \left\{ (n,j) : 2 \leq n \leq d, \, 0\leq j \cdot d \leq  p(d-n + 1) -1  -d ,\,  j< h_{n-2} \right\}.
\]
(The condition $n \not \equiv 1 \pmod{p}$ is implicit, as in that case $h_{n-2} =0$.)  But $T'$ and $T''$ have the same size: there is a bijection given by taking $b = n-2$.  Thus the covers given by $f(t) = t^{-d}$ for $p \nmid d$ realize the upper bound for a cover ramified at a single point.
\end{example}

\subsection{Elliptic Curves}  

We now suppose that $E$ is the elliptic curve over $\FF_p$ with affine equation $y^2 = x^3 - x$
(recall that $p>2$).  Of course, $g_E=1$ and it is not hard to compute that the Tango number of $E$ is $n(E)=0$
and that $E$ is ordinary (so $a_E=0$ and $f_E= 1$) when $p\equiv 1\bmod 4$
and supersingular ($a_E=1$ and $f_E=0$) when $p\equiv 3\bmod 4$.  In this simple case, we can say more than what Tango's theorem tells us.

\begin{lem}\label{lem:ellVexact}
    Let $D=\sum n_Q\cdot [Q] > 0$ be a divisor on $E$ with $n_Q \ge 2$ for some $Q$,
    and set $D':=\sum \lceil n_Q/p \rceil \cdot [Q]$.
    Then 
    \[
        V_E: H^0(E,\Omega^1_E(D)) \rightarrow H^0(E,\Omega^1_E(D'))
    \]
    is surjective.
\end{lem}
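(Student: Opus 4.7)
The plan is to reduce the statement to an equality of dimensions via Riemann--Roch, handle all cases with ``enough $p$-divisibility'' using Corollary~\ref{cor:tango}, and deal with the remaining supersingular case by a local computation exploiting the hypothesis that some $n_Q\ge 2$.

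Since $\Omega^1_E\simeq \O_E$ and both $D$ and $D'$ have positive degree, Riemann--Roch gives $h^0(E,\Omega^1_E(D))=\deg D$ and $h^0(E,\Omega^1_E(D'))=\deg D'$, so surjectivity of $V_E$ is equivalent to $\dim_k\ker V_E|_{H^0(\Omega^1_E(D))} = \deg D-\deg D'$. Writing $D=pD_0+R$ with $R=\sum_Q r_Q[Q]$ and $0\le r_Q<p$, a short calculation gives $\deg D-\deg D'=(p-1)\deg D_0+\sum_Q(r_Q-\lceil r_Q/p\rceil)$. When $\deg D_0\ge 1$, the first part of Corollary~\ref{cor:tango} (applicable since $\n(E)=0$) yields exactly this kernel dimension, so surjectivity follows; and when $\deg D_0=0$ while $E$ is ordinary (so $p\equiv 1\bmod 4$ and $a_E=0$), the weaker half of Corollary~\ref{cor:tango} still forces the desired equality.

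The remaining case is $p\equiv 3\bmod 4$ (so $a_E=1$), $D=R$ with all $r_Q<p$, and $r_{Q_0}\ge 2$ for some $Q_0$. Here Corollary~\ref{cor:tango} only yields $\dim\ker V_E\le \deg D-\deg D'+1$, and I must rule out the upper bound. I will filter $D$ by adding points one at a time, taking $E_1=[Q_0]$, $E_2=2[Q_0]$, and then the other points of $\sup D$; at each step $\dim\ker V_E$ grows by $0$ or $1$. The first step does not change $h^0$ (no differential on $E$ has a single simple pole), and at every subsequent ``first occurrence'' step the new differential carries a nonzero simple residue at the added point and hence lies outside $\ker V_E$. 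These account for $\deg D'$ non-increases, one short of the $\deg D'+1$ needed. The missing non-increase will come from the step $E_1\to E_2$, i.e.\ $[Q_0]\to 2[Q_0]$, provided the new differential, which has a genuine double pole at $Q_0$, is not killed by $V_E$. Since $k$ is algebraically closed, translation by $-Q_0$ is an automorphism of $E$ over $k$, and functoriality of the Cartier operator reduces the claim to $Q_0=\infty$; modulo the invariant form $\omega_0=dx/y$ (which lies in $\ker V_E$ as $E$ is supersingular) the relevant differential is then $x\,dx/y$, so it suffices to verify $V_E(x\,dx/y)\ne 0$.

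This last verification is the main obstacle. In a local uniformizer $t$ at $\infty$ with $u=1/x=\phi(t^2)$, where $\phi$ is the power series satisfying $\phi(s)=s+\phi(s)^3$, one computes
\[
x\,\frac{dx}{y}\;=\;\frac{-2\,dt}{3t^2-2\phi(t^2)}.
\]
Via the local formula~\eqref{eq:localcartier}, the non-vanishing of $V_E(x\,dx/y)$ reduces to the non-vanishing modulo $p$ of the coefficient of $t^{p-1}\,dt$ in this expansion. The coefficients of $\phi$ are Fuss--Catalan numbers, and the main technical step will be to show that the resulting coefficient of $t^{p-1}\,dt$ is non-zero modulo $p$ for every $p\equiv 3\bmod 4$; I expect to carry this out by analyzing the recursion for $\phi$ directly, with a possible cross-check via the classical expansion of $(x^3-x)^{(p-1)/2}$ that governs the Hasse invariant.
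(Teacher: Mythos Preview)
Your reduction is sound and parallels the paper's: both arrive at the single claim $V_E(x\,dx/y)\neq 0$ in the supersingular case.  The organizational details differ---the paper inducts on the size of $\sup(D)$ and uses a commutative diagram with rows $H^0(\Omega^1_E(2[P]))\hookrightarrow H^0(\Omega^1_E(2[P]+2[Q]))$ to reduce to $Q_0=P=\infty$, whereas you filter point-by-point and invoke translation---but both are correct and of comparable length.  Your residue argument at ``first occurrence'' steps is fine: the key point (implicit in your sketch) is that $V_E(\omega)$ still has a pole at the new point, hence lies outside $V_E(W)$ and not merely outside $\{0\}$.

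The gap is the final computation, which you leave as a plan.  Your proposed route through a local uniformizer and Fuss--Catalan numbers would work in principle but is needlessly painful, and you have already identified the clean method as your ``cross-check'': promote it to the main argument.  Since $y^2=x^3-x$ gives $y^{p-1}=(x^3-x)^{(p-1)/2}$, the $\sigma^{-1}$-semilinearity of $V_E$ yields
\[
V_E\!\left(x\,\frac{dx}{y}\right)=V_E\!\left(\frac{x\,y^{p-1}}{y^p}\,dx\right)=\frac{1}{y}\,V_E\!\left(x(x^3-x)^{(p-1)/2}\,dx\right),
\]
and the right-hand side is governed by the coefficient of $x^{p-1}$ in $x(x^3-x)^{(p-1)/2}$, namely $(-1)^{(p+1)/4}\binom{(p-1)/2}{(p-3)/4}$.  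This binomial coefficient has top entry less than $p$, so it is a unit in $\FF_p$, and the proof is complete.  This is exactly the Hasse-invariant style computation you allude to; there is no need for the power-series expansion at all.
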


\begin{proof}
   The Lemma is an immediate consequence of Corollary \ref{cor:tango} when $n_Q\ge p$ for some $Q$
   or when $a_E=0$,
   so we suppose that $n_Q < p$ for all $Q$  and $p\equiv 3\bmod 4$.  
   A straightforward induction on the size of the support of $D$
   reduces us to the case that $D=n\cdot [Q]$ with $n\ge 2$, and then as $D' = [Q]$ it suffices to treat the case $n=2$.
   Suppose that $Q$ is not the point $P$ at infinity on $E$.
   Consider the commutative diagram with exact rows
   \begin{equation*}
      \xymatrix{
        0 \ar[r] & {H^0(E,\Omega^1_E(2[P]))} \ar[r]\ar[d]^-{V} & {H^0(E,\Omega^1_E(2[P]+2[Q]))} \ar[r]\ar[d]^-{V} & {(k[x_Q]/x_Q^2) x_Q^{-2} dx_Q }\ar[r]\ar[d]^-{V} & 0\\
        0 \ar[r] & {H^0(E,\Omega^1_E([P]))} \ar[r] & {H^0(E,\Omega^1_E([P]+[Q]))} \ar[r] & {(k[x_Q]/x_Q) x_Q^{-1} dx_Q }\ar[r] & 0.
      } 
   \end{equation*}
   The local description of $V$ shows that the right vertical map is surjective, and it therefore 
   suffices to prove the lemma in the special case $D=2[P]$.  Now $\{dx/y, x dx/y\}$ is a basis
   of $H^0(E,\Omega^1_E(2[P]))$ and one calculates 
   \[
    V\left(x \frac{dx}{y}\right) = \frac{1}{y}V(x(x^3-x)^{(p-1)/2} dx) = (-1)^{(p+1)/4}\binom{\frac{p-1}{2}}{\frac{p-3}{4}} \frac{dx}{y} \neq 0.
   \]
   Since $V(dx/y)=0$, the image of $V$ is $1$-dimensional.  But $H^0(E,\Omega^1_E([P])) = H^0(E,\Omega^1_E)$
   because the sum of the residues of a meromorphic differential on a smooth projective curve is zero, and this
   space therefore has dimension one as well; the Lemma follows.
\end{proof}

We will first consider Artin-Schreier covers $Y\rightarrow E$ with defining equation of the form
\begin{equation*}
    z^p - z = f
\end{equation*}
with $f\in H^0(E,\O_E(d_P\cdot [P]))$ having a pole of exact order $d_P$ at $P$, for $P$ the point at infinity on $E$, where $d_P\ge 2$ is prime to $p$.
Such covers are branched only over 
$P$ with unique break in the ramification filtration
$d_P$.  Conversely, every $\ZZ/p\ZZ$-cover of $E$ that is defined
over $\FF_p$ and is
ramified only above $P$ with unique ramification break $d_P$ occurs this way by Lemma~\ref{lem:minimality}.

\begin{example}
For $p=5,7$ we compare the bounds given by Theorem~\ref{thm:cleanbounds} with the trivial upper and lower bounds, which are
    \begin{equation*}
         \left\lceil\frac{p-1}{p}d_P \right\rceil -\left\lceil\frac{p-1}{p^2}d_P \right\rceil \le a_Y \le p \cdot a_E + \frac{p-1}{2} (d_P-1)
    \end{equation*}
    thanks to Lemma \ref{lem:ellVexact} and the fact that $\lceil (p-1)d_P/p \rceil = d_P - \lfloor d_P/p\rfloor \ge 2$.
We summarize these computations in Tables \ref{tab:ellp5} and \ref{tab:ellp7}.
\end{example}

\begin{table}[t]
\begin{tabular}{c|lllllllllllllll}
$d_P=$ & $2$ & $3$ & $4$ & $6$ & $7$ & $8$ & $9$ &
$11$ & $12$ & $13$ & $14$ & $16$ & $32$ & $128$
& $1024$  \\
\hline
Trivial Lower &
$1$ & $2$ & $3$ & $4$ & $4$ & $5$ & $6$ &
$7$ & $8$ & $8$ & $9$ & $10$ & $20$ & $82$
& $656$   \\
Lower &
$2$ & $3$ & $4$ & $6$ & $7$ & $8$ & $9$
& $10$ & $12$ & $12$ & $14$ & $15$ & $31$ &
$123$ & $984$  \\
Upper &
$2$ & $4$ & $4$ & $10$ & $8$ & $10$ & $10$
& $14$ & $16$ & $14$ & $16$ & $20$ & $38$ &
$154$ & $1230$  \\
Trivial Upper &
$2$ & $4$ & $6$ & $10$ & $12$ & $14$ & $16$
& $20$ & $22$ & $24$ & $26$ & $30$ & $62$ &
$254$ & $2046$  \\
\end{tabular}
\caption{Bounds for a single pole at the point at infinity with $p=5$}
\label{tab:ellp5}
\end{table}

\begin{table}[t]
\begin{tabular}{c|llllllllllllllll}
$d_P=$ & $2$ & $3$ & $4$ & $5$ & $6$ & $8$ & $9$ &
$10$ & $11$ & $12$ & $13$ & $15$ & $16$ & $32$
& $128$ & $1024$  \\
\hline
Trivial Lower &
$1$ & $2$ & $3$ & $4$ & $5$ & $6$ & $6$ &
$7$ & $8$ & $9$ & $10$ & $11$ & $12$ & $24$
& $94$ & $752$   \\
Lower &
$3$ & $4$ & $6$ & $7$ & $9$ & $12$ & $13$
& $15$ & $16$ & $18$ & $20$ & $21$ & $24$ &
$47$ & $188$ & $1505$  \\
Upper&
$10$ & $11$ & $16$ & $15$ & $16$ & $28$ & $23$
& $24$ & $27$ & $30$ & $29$ & $37$ & $40$ &
$68$ & $244$ & $1910$ \\
Trivial Upper &
$10$ & $13$ & $16$ & $19$ & $22$ & $28$ & $31$
& $34$ & $37$ & $40$ & $43$ & $49$ & $52$ &
$100$ & $388$ & $3076$  \\
\end{tabular}
\caption{Bounds for a single pole at the point at infinity with $p=7$}
\label{tab:ellp7}
\end{table}

\begin{example}
  Again for $p=5,7$ and select $d_P$ in the tables above, we have computed $a_Y$
  for several thousand randomly selected $f\in H^0(E,\O_E(d_P\cdot [P]))$ (working over $\FF_p$).  In Tables \ref{tab:ellp5actual} and \ref{tab:ellp7actual},
  we record the values of $a_Y$ that we found, as well as a function $f$ that produced it.
 These values of $a_Y$ should be compared to the bounds in Tables \ref{tab:ellp5} and \ref{tab:ellp7}.
\end{example}

\begin{table}[t]
\renewcommand{\arraystretch}{1.2}
\begin{tabular}{||c|c||c|c||c|c||c|c||}
    \hline
     \multicolumn{2}{||c||}{$d_P=3$}  & \multicolumn{2}{c||}{$d_P=6$}  & \multicolumn{2}{c||}{$d_P=8$}  & \multicolumn{2}{c||}{$d_P=11$} \\
     \hline
      $a_Y$ & $f$ & $a_Y$ & $f$ & $a_Y$ & $f$ & $a_Y$ & $f$  \\
      \hline
      $3$ & $-2y-x$ &  $6$  & $x^3-2x^2$ & $8$ &  $x^4$    &    $10$    & $(x^4-x)y-2x^5-x^4$       \\
      $4$ & $2y$   &   $7$ & $x^3+x$    & $9$ & $2x^4-x^2+x$      &     $11$        &  $(-2x^4+2x+1)y+2x^5+x^2$       \\
      \cline{1-2}
\multicolumn{2}{c||}{}& $8$& $2x^3-xy$  & $10$ &    $-x^4-2xy-2x^2$   &  $12$       &    $(-2x^4-x^2-x)y-2x^5-1$   \\ 
\cline{5-6}
\multicolumn{2}{c||}{}& $9$& $-x^3+x$   & \multicolumn{2}{c||}{}       &    $13$         &     $(2x^4-x^2+2x)y-2x^5-x^3-x$      \\
\cline{3-4}\cline{7-8}
\end{tabular}
\caption{$a$-numbers of $\ZZ/p\ZZ$-Covers of $E$ branched only over $P$ with $p=5$}
\label{tab:ellp5actual}
\end{table}

\begin{table}[t]
\renewcommand{\arraystretch}{1.2}
\begin{tabular}{||c|c||c|c||c|c||}
    \hline
     \multicolumn{2}{||c||}{$d_P=6$}  & \multicolumn{2}{c||}{$d_P=8$}   & \multicolumn{2}{c||}{$d_P=10$} \\
     \hline
      $a_Y$ & $f$ & $a_Y$ & $f$ & $a_Y$ & $f$  \\
      \hline
      $10$ & $x^3+xy$ &  $12$  & $3xy+2x^4-2x^3$ &    $15$    & $-3x^5-3x^4-x^2+3x$       \\
      $11$ & $x^3+y$   &   $13$ & $2xy-3x^4-1$ &     $16$        &  $2x^5-3x^3+x^2+3x$       \\
      
      $12$ & $x^3$         & $14$& $2xy-2x^4+x-3$ &  $17$       &    $-x^5+x^3+2x^2-1$   \\ 
      $14$ &  $2x^3+x$       & $15$& $-x^4+2x^3+2x^2-3x-3$  & $18$ &$2xy-x^5+x^3+2x^2-x-1$      \\
      \cline{1-2}\cline{5-6}
      \multicolumn{2}{c||}{}       & $16$& $(x^2+x)y+3x^4-2x^3-3x^2-x+1$   &  \multicolumn{2}{c}{}\\
\cline{3-4}
\end{tabular}
\caption{$a$-numbers of $\ZZ/p\ZZ$-Covers of $E$ branched only over $P$ with $p=7$}
\label{tab:ellp7actual}
\end{table}

\begin{remark}
    %For fixed $E$, branch locus $S$ and ramification breaks $d_P$, 
    Larger values of $a_Y$
    are more rare.  For example, when $p=7$, $S=\{P\}$
    and $d_P=6$, among {\em all} $100842=6\cdot 7^5$ functions  $f\in H^0(E,\O_E(d_P\cdot [P]))$
    with a pole of exact order $6$ at $P$,
    there are 86436 ($=85.71 \%$) with $a_Y=10$,
    11760 ($=11.66\%$) with $a_Y=11$, 2562 ($=2.54\%$) with $a_Y=12$ and 84 ($=0.08\%$) with $a_Y=14$.
    Curiously, none had $a_Y=13$.
    In the spirit of \cite{CEZB}, it would be interesting to investigate the limiting distribution of $a$-numbers in branched $\ZZ/p\ZZ$-covers
    of a fixed base curve with fixed branch locus, as the sum of the ramification breaks tends to infinity.  Recent work of Soumya Sankar investigates the limiting distribution of non-ordinary ($a$-number greater than $0$) covers of the projective line \cite{sankar}.

    Note also that although $d_P=6$ divides $p-1$ when $p=7$, the $a$-number of $Y$ can be $10,11,12$ or $14$;
    in particular, the ordinarity hypothesis in Corollary \ref{cor:anumformulaordinary} is necessary.
\end{remark}

\begin{example}
    We now work out some examples with $\pi: Y\rightarrow E$ branched at exactly two points.  
    As before, let $P$ be the point at infinity on $E$ and $Q$ be the point $(0,0)$.  
    For $p=5$ we considered Artin-Schreier covers $Y$ of $E$ branched only over $P,Q$
    with $d_P=6$ and $d_Q=4$.  Our bounds are $10 \le a_Y \le 14$, and among a sample of
    10001 functions $f\in H^0(E,\O_E(d_P\cdot [P] + d_Q\cdot [Q]))$ with a pole of exact order
    $d_{\star}$ at $\star=P,Q$, we found $8021$ with $a_Y=10$, $1818$ with $a_Y=11$,
    149 with $a_Y=12$, and $13$ with $a_Y=13$.  One of the 13 functions we found giving $a$-number 
    $13$ was
    \[
       f=-\frac{1}{xy} + \frac{x^5-x^3+2x^2-2x+1}{x^2}.
    \]
    
    Similarly, with $p=7$ and $d_P=6$, $d_Q=8$ our bounds are $21 \le a_Y \le 37$.
    Among a sample of 5001 random functions satisfying the required constraints we found
    $4318$ with $a_Y=21$, $668$ with $a_Y=22$, $14$ with $a_Y=23$, and $1$ with $a_Y=24$. The function
    producing a cover $Y$ with $a$-number 24 was
    \[
       f=\frac{-x^4-3x^3+2x^2-x+1}{x^4y} + \frac{-2x^7+3x^5-3x^4+x^3+x^2+x+3}{x^4}.
    \]
\end{example}

%\bibliographystyle{amsalpha}
%\bibliography{anumber}

\begin{thebibliography}{FGM{\etalchar{+}}13}

\bibitem[AMBB{\etalchar{+}}]{anumber}
Fiona Abney-McPeek, Hugo Berg, Jeremy Booher, Sun~Mee Choi, Viktor Fukala,
  Miroslav Marinov, Theo Müller, Paweł Narkiewicz, Rachel Pries, Nancy Xu,
  and Andrew Yuan, \emph{Realizing {A}rtin-{S}chreier covers with minimal
  a-numbers in characteristic p}, preprint.

\bibitem[MAGMA]{magma}
Wieb Bosma, John Cannon, and Catherine Playoust, \emph{The {M}agma algebra
  system. {I}. {T}he user language}, J. Symbolic Comput. \textbf{24} (1997),
  no.~3-4, 235--265, Computational algebra and number theory (London, 1993).
  \MR{MR1484478}

\bibitem[Bos58]{boseck}
Helmut Boseck, \emph{Zur {T}heorie der {W}eierstrasspunkte}, Math. Nachr.
  \textbf{19} (1958), 29--63. \MR{0106221}

\bibitem[Cai18]{CaisHida1}
Bryden Cais, \emph{The geometry of {H}ida families {I}: {$\Lambda$}-adic de
  {R}ham cohomology}, Math. Ann. \textbf{372} (2018), no.~1-2, 781--844.
  \MR{3856828}

\bibitem[CEZB13]{CEZB}
Bryden Cais, Jordan~S. Ellenberg, and David Zureick-Brown, \emph{Random
  {D}ieudonn\'e modules, random {$p$}-divisible groups, and random curves over
  finite fields}, J. Inst. Math. Jussieu \textbf{12} (2013), no.~3, 651--676.
  \MR{3062874}

\bibitem[Con00]{Conrad}
Brian Conrad, \emph{Grothendieck duality and base change}, Lecture Notes in
  Mathematics, vol. 1750, Springer-Verlag, Berlin, 2000. \MR{1804902}

\bibitem[Cre84]{crew84}
Richard~M. Crew, \emph{Etale {$p$}-covers in characteristic {$p$}}, Compositio
  Math. \textbf{52} (1984), no.~1, 31--45. \MR{742696}

\bibitem[DF14]{Dummigan}
Neil Dummigan and Shabieh Farwa, \emph{Exact holomorphic differentials on a
  quotient of the {R}ee curve}, J. Algebra \textbf{400} (2014), 249--272.
  \MR{3147373}

\bibitem[Elk11]{ElkinCyclic}
Arsen Elkin, \emph{The rank of the {C}artier operator on cyclic covers of the
  projective line}, J. Algebra \textbf{327} (2011), 1--12. \MR{2746026}

\bibitem[EP07]{ElkinPriesanum1}
Arsen Elkin and Rachel Pries, \emph{Hyperelliptic curves with {$a$}-number 1 in
  small characteristic}, Albanian J. Math. \textbf{1} (2007), no.~4, 245--252.
  \MR{2367217}

\bibitem[EP13]{ElkinPriesChar2}
\bysame, \emph{Ekedahl-{O}ort strata of hyperelliptic curves in characteristic
  2}, Algebra Number Theory \textbf{7} (2013), no.~3, 507--532. \MR{3095219}

\bibitem[Far10]{farnell10}
Shawn Farnell, \emph{Artin-{S}chreier curves}, ProQuest LLC, Ann Arbor, MI,
  2010, Thesis (Ph.D.)--Colorado State University. \MR{2822029}

\bibitem[FGM{\etalchar{+}}13]{Suzuki}
Holley Friedlander, Derek Garton, Beth Malmskog, Rachel Pries, and Colin Weir,
  \emph{The {$a$}-numbers of {J}acobians of {S}uzuki curves}, Proc. Amer. Math.
  Soc. \textbf{141} (2013), no.~9, 3019--3028. \MR{3068955}

\bibitem[FP13]{fp13}
Shawn Farnell and Rachel Pries, \emph{Families of {A}rtin-{S}chreier curves
  with {C}artier-{M}anin matrix of constant rank}, Linear Algebra Appl.
  \textbf{439} (2013), no.~7, 2158--2166. \MR{3090462}

\bibitem[Fre18]{Frei}
Sarah Frei, \emph{The a-number of hyperelliptic curves}, Women in Numbers
  Europe II (Cham) (Irene~I. Bouw, Ekin Ozman, Jennifer Johnson-Leung, and
  Rachel Newton, eds.), Springer International Publishing, 2018, pp.~107--116.

\bibitem[{Joh}07]{Johnston}
O.~{Johnston}, \emph{{A Note on the a-numbers and p-ranks of Kummer Covers}},
  ArXiv e-prints (2007).

\bibitem[KW88]{Fermat}
Tetsuo Kodama and Tadashi Washio, \emph{Hasse-{W}itt matrices of {F}ermat
  curves}, Manuscripta Math. \textbf{60} (1988), no.~2, 185--195. \MR{924086}

\bibitem[MS18]{FermatHurwitz}
Maria Montanucci and Pietro Speziali, \emph{The {$a$}-numbers of {F}ermat and
  {H}urwitz curves}, J. Pure Appl. Algebra \textbf{222} (2018), no.~2,
  477--488. \MR{3694465}

\bibitem[Oda69]{Oda}
Tadao Oda, \emph{The first de {R}ham cohomology group and {D}ieudonn\'{e}
  modules}, Ann. Sci. \'{E}cole Norm. Sup. (4) \textbf{2} (1969), 63--135.
  \MR{0241435}

\bibitem[Pri05]{pries05}
Rachel~J. Pries, \emph{Jacobians of quotients of {A}rtin-{S}chreier curves},
  Recent progress in arithmetic and algebraic geometry, Contemp. Math., vol.
  386, Amer. Math. Soc., Providence, RI, 2005, pp.~145--156. \MR{2182774}

\bibitem[Re01]{ReBound}
Riccardo Re, \emph{The rank of the {C}artier operator and linear systems on
  curves}, J. Algebra \textbf{236} (2001), no.~1, 80--92. \MR{1808346}

\bibitem[{San}19]{sankar}
Soumya {Sankar}, \emph{{Proportion of ordinarity in some families of curves
  over finite fields}}, arXiv e-prints (2019), arXiv:1904.12173.

\bibitem[Sha01]{shabat}
G.~V. Shabat, \emph{Curves with many points}, Ph.D. thesis, Korteweg-de Vries
  Institute for Mathematics, 2 2001, available at
  \url{https://pure.uva.nl/ws/files/3265182/16638_Thesis.pdf}.

\bibitem[Sti09]{stichtenoth}
Henning Stichtenoth, \emph{Algebraic function fields and codes}, second ed.,
  Graduate Texts in Mathematics, vol. 254, Springer-Verlag, Berlin, 2009.
  \MR{2464941}

\bibitem[Tan72]{tango72}
Hiroshi Tango, \emph{On the behavior of extensions of vector bundles under the
  {F}robenius map}, Nagoya Math. J. \textbf{48} (1972), 73--89. \MR{0314851}

\bibitem[Vol88]{VolochChar2}
Jos\'e~Felipe Voloch, \emph{A note on algebraic curves in characteristic
  {$2$}}, Comm. Algebra \textbf{16} (1988), no.~4, 869--875. \MR{932639}

\bibitem[WK86]{CMHyper}
Tadashi Washio and Tetsuo Kodama, \emph{Hasse-{W}itt matrices of hyperelliptic
  function fields}, Sci. Bull. Fac. Ed. Nagasaki Univ. (1986), no.~37, 9--15.
  \MR{853690}
  
\bibitem[Zho19]{zhou19}
Z.~Zhou, \emph{The a-number and the {E}kedahl-{O}ort types of {J}acobians of
  curves}, Ph.D. thesis, KdVI, 2019.

\end{thebibliography}

\newcommand{\etalchar}[1]{$^{#1}$}
\providecommand{\bysame}{\leavevmode\hbox to3em{\hrulefill}\thinspace}
\providecommand{\MR}{\relax\ifhmode\unskip\space\fi MR }
% \MRhref is called by the amsart/book/proc definition of \MR.
\providecommand{\MRhref}[2]{%
  \href{http://www.ams.org/mathscinet-getitem?mr=#1}{#2}
}
\providecommand{\href}[2]{#2}

\end{document}